\numberwithin{equation}{section}
\numberwithin{figure}{section}
\theoremstyle{plain}
\newtheorem{thm}{\protect\theoremname}[section]
\theoremstyle{remark}
\newtheorem{rem}[thm]{\protect\remarkname}
\theoremstyle{plain}
\newtheorem{cor}[thm]{\protect\corollaryname}
\theoremstyle{plain}
\newtheorem{lem}[thm]{\protect\lemmaname}
\theoremstyle{plain}
\newtheorem{prop}[thm]{\protect\propositionname}
\newcommand{\cdl}{c\`{a}dl\`{a}g }
\newcommand{\Ito}{It\^o }
\newcommand{\Itos}{It\^o's }
\renewenvironment{abstract}
{
\begin{center}
\begin{minipage}{.9\textwidth}\small\textbf{Abstract}\noindent
}
{
\end{minipage}
\end{center}
}
\def\subjclass#1{{\renewcommand{\thefootnote}{}%
\footnote{\emph{Mathematics Subject Classification (2020):} #1}}}
\def\keywords#1{{\renewcommand{\thefootnote}{}%
\footnote{\emph{Keywords:} #1}}}
\providecommand{\corollaryname}{Corollary}
\providecommand{\lemmaname}{Lemma}
\providecommand{\propositionname}{Proposition}
\providecommand{\remarkname}{Remark}
\providecommand{\theoremname}{Theorem}
\begin{document}
\global\long\def\norm#1{\|#1\|}%
 
\global\long\def\bnorm#1{\left\Vert #1\right\Vert }%
 
\global\long\def\of#1{(#1)}%
 
\global\long\def\eps{\varepsilon}%
 
\global\long\def\R{\mathbb{R}}%
 
\global\long\def\T{\mathbb{T}}%
 
\global\long\def\Z{\mathbb{Z}}%
 
\global\long\def\N{\mathbb{N}}%
 
\global\long\def\p{\mathbb{P}}%
 
\global\long\def\E{\mathbb{E}}%
 
\global\long\def\I{\mathbb{I}}%
 
\global\long\def\H{\mathrm{H}}%
 
\global\long\def\rE{\mathrm{E}}%
 
\global\long\def\Cf{\mathrm{C}}%
 
\global\long\def\cL{\mathcal{L}}%
 
\global\long\def\F{\mathcal{F}}%
 
\global\long\def\cN{\mathcal{N}}%
 
\global\long\def\cS{\mathcal{S}}%
 
\global\long\def\cH{\mathcal{H}}%
 
\global\long\def\rM{\mathcal{\mathrm{M}}}%
 
\global\long\def\rR{\mathcal{\mathrm{R}}}%
 
\global\long\def\inn#1{\langle#1\rangle}%
 
\global\long\def\binn#1{\left\langle #1\right\rangle }%
 
\global\long\def\lrarr{\leftrightarrow}%
 
\global\long\def\var{\mathcal{\mathrm{Var\,}}}%
 
\global\long\def\cK{\mathcal{K}}%
\global\long\def\cM{\mathcal{M}}%
\global\long\def\pr{\mathrm{pr}}%
 
\global\long\def\C{\mathbb{C}}%
\global\long\def\Leb{\mathrm{Leb}}%
\global\long\def\i{\mathfrak{i}}%
\global\long\def\ex{\mathrm{ex}}%
\global\long\def\divv{\mathrm{div}}%
\global\long\def\cG{\mathcal{G}}%
\global\long\def\Tr{\mathrm{Tr}}%
\newcommandx\MLOgH[2][usedefault, addprefix=\global, 1=, 2=]{\cL_{#1}(H_{#2}(\T^{d}))}%
\newcommandx\MLOH[2][usedefault, addprefix=\global, 1=, 2=]{\cL_{#1}^{sym}(H_{#2}(\T^{d});\R)}%
\global\long\def\cov{\mathrm{Cov}}%
\newcommandx\Hrho[1][usedefault, addprefix=\global, 1=]{H_{#1}(\T^{d};[0,1])}%
\newcommandx\Hrhoint[1][usedefault, addprefix=\global, 1=]{H_{#1}(\T^{d};(0,1))}%
\global\long\def\supp{\mathrm{supp}}%
\global\long\def\Td{\T^{d}}%
 
\global\long\def\Tnd{\T_{n}^{d}}%
\global\long\def\Znd{\mathbb{Z}_{n}^{d}}%
 
\global\long\def\EEP{\{0,1\}^{\Tnd}}%
\global\long\def\EHE{[0,1]^{\Tnd}}%
\global\long\def\L{L_{2}(\T^{d})}%
\global\long\def\Ln{L_{2}(\Tnd)}%
 
\global\long\def\id{\mathrm{id}}%
\newcommandx\Cd[1][usedefault, addprefix=\global, 1=]{\mathrm{C}^{#1}(\T^{d})}%
\renewcommandx\H[1][usedefault, addprefix=\global, 1=J]{H_{#1}}%
 \newcommandx\MLOg[2][usedefault, addprefix=\global, 1=, 2=]{\cL_{#1}^{g}(#2)}%
\newcommandx\MLO[2][usedefault, addprefix=\global, 1=, 2=]{\cL_{#1}(#2)}%
\newcommandx\MLOHS[2][usedefault, addprefix=\global, 1=, 2=]{\cL_{#1}^{HS}(#2)}%
\newcommandx\CF[2][usedefault, addprefix=\global, 1=, 2=]{\Cf^{#1}(#2)}%
\newcommandx\CFl[2][usedefault, addprefix=\global, 1=, 2=]{\Cf_{l}^{#1}(#2)}%
\global\long\def\Her{\mathrm{Sym}}%
\newcommandx\normC[2][usedefault, addprefix=\global, 1=, 2=]{\|#2\|_{\mathrm{C}^{#1}}}%
\newcommandx\bnormC[2][usedefault, addprefix=\global, 1=, 2=]{\left\Vert #2\right\Vert _{\mathrm{C}^{#1}}}%
\newcommandx\normL[1][usedefault, addprefix=\global, 1=]{\|#1\|_{L_{2}}}%
\newcommandx\bnormL[1][usedefault, addprefix=\global, 1=]{\left\Vert #1\right\Vert _{L_{2}}}%
\newcommandx\normH[2][usedefault, addprefix=\global, 1=, 2=]{\|#2\|_{H_{#1}}}%
\newcommandx\bnormH[2][usedefault, addprefix=\global, 1=, 2=]{\left\Vert #2\right\Vert _{H_{#1}}}%
\newcommandx\normMLO[2][usedefault, addprefix=\global, 1=, 2=]{\|#2\|_{\cL_{#1}}}%
\newcommandx\bnormMLO[2][usedefault, addprefix=\global, 1=, 2=]{\left\Vert #2\right\Vert _{\cL_{#1}}}%
\global\long\def\tr{\mathrm{trace}}%
\global\long\def\Image{\mathrm{Im}}%
\global\long\def\law{\mathrm{Law}}%

\title{A quantitative central limit theorem for the simple symmetric exclusion
process}
\author{Benjamin Gess\thanks{Fakultät für Mathematik, Universität Bielefeld, 33615 Bielefeld, Germany.
E-mail: \protect\url{benjamin.gess@math.uni-bielefeld.de}} \thanks{Max Planck Institute for Mathematics in the Sciences, 04103 Leipzig,
Germany.} , Vitalii Konarovskyi\thanks{Faculty of mathematics, informatics and natural sciences, University
of Hamburg, 20146 Hamburg, Germany. E-mail: \protect\url{vitalii.konarovskyi@uni-hamburg.de}} \thanks{Institute of Mathematics of NAS of Ukraine, 01024 Kyiv, Ukraine.}}
\maketitle
\begin{abstract}
A quantitative central limit theorem for the simple symmetric exclusion
process (SSEP) on a $d$-dimensional discrete torus is proven. The
argument is based on a comparison of the generators of the density
fluctuation field of the SSEP and the generalized Ornstein-Uhlenbeck
process, as well as on an infinite-dimensional Berry-Essen bound for
the initial particle fluctuations. The obtained rate of convergence
is optimal. 
\end{abstract}
\keywords{Simple symmetric exclusion process, quantitative central limit theorem, density fluctuation field, generalized Ornstein-Uhlenbeck process, Berry-Esseen bound.}

\subjclass{60K35, 60F05, 60J25, 60H15.}

\tableofcontents{}

\section{Introduction}

We consider the simple symmetric exclusion process (SSEP) on the $d$-dimensional
discrete torus $\T_{n}^{d}:=\left\{ \frac{2\pi}{2n+1}k:\ k\in\left\{ -n,\ldots,n\right\} ^{d}\right\} $.
This is a continuous time Markov process that describes the evolution
of particles located at points of $\Tnd$, where each side can contain
at most one particle. A particle at site $x\in\Tnd$ attempts to jump
to one of the nearest neighboring sides after an exponential waiting
time. If the target side is occupied, then the jump does not take
place.

As usual, the state space for SSEP is $\EEP$, where $\eta(x)=1$
provided the side $x$ is occupied by a particle, and $\eta(x)=0$
otherwise. The generator of the SSEP is defined by
\begin{align}
\cG_{n}^{EP}F(\eta): & =\frac{(2n+1)^{2}}{2}\sum_{j=1}^{d}\sum_{x\in\T_{n}}\left[F(\eta^{x\lrarr x+e_{j}})-F(\eta)\right],\quad\eta\in\EEP,\label{eq:generator_of_SSEP}
\end{align}
for each function $F:\EEP\to\R$, where 
\[
\eta^{x\lrarr y}(z)=\begin{cases}
\eta(z), & z\not=x,y,\\
\eta(y), & z=x,\\
\eta(x), & z=y,
\end{cases}\quad z\in\Tnd,
\]
and $e_{j}=e_{j}^{n}$ denote the canonical vectors of $\Tnd$. For
a function $\rho:\Tnd\to[0,1]$, we let $\nu_{\rho}^{n}$ be the product
measure on $\EEP$ with marginals given by $\nu_{\rho}^{n}\left\{ \eta(x)=1\right\} =\rho(x)$,
$x\in\Tnd$. Let $\eta^{n}=(\eta_{t}^{n})_{t\ge0}$ be the SSEP with
the initial distribution $\nu_{\rho_{0}^{n}}^{n}$, where $\rho_{0}^{n}:\Tnd\to[0,1]$
and the sequence $\rho_{0}^{n}$, $n\ge1$, converges to a profile
$\rho_{0}:\T\to[0,1]$ as $n\to\infty$. 

It is well-known \citep[Theorem 2.1]{Kipnis_Landim:1999} that the
hydrodynamic limit of $\eta^{n}$, $n\ge1$, is given by the solution
to the heat equation
\begin{equation}
d\rho_{t}^{\infty}=2\pi^{2}\Delta\rho_{t}^{\infty}dt\label{eq:heat_PDE}
\end{equation}
on $\Td$ starting from $\rho_{0}$.

By \citep{Ferrari_Presutti:1988,GaKiSp,Ravishankar:1992}, also a
central limit theorem (CLT) is known. Precisely, it is known that
the density fluctuation field 
\[
\zeta_{t}^{n}(x):=(2n+1)^{d/2}\left(\eta_{t}^{n}(x)-\rho_{t}^{n}(x)\right),\quad x\in\Tnd,
\]
with $\rho_{t}^{n}(x):=\E\eta_{t}^{n}(x)$ converges to the solution
of the linear SPDE
\begin{align}
d\zeta_{t}^{\infty} & =2\pi^{2}\Delta\zeta_{t}^{\infty}dt+2\pi\nabla\cdot\left(\sqrt{\rho_{t}^{\infty}(1-\rho_{t}^{\infty})}dW_{t}\right)\label{eq:SPDE_for_OU_process}
\end{align}
in the Sobolev space $\H[-I]$ for $I>\frac{d}{2}+1$ started from
$\zeta_{0}$, where $(dW_{t})_{t\ge0}$ is a $d$-dimensional space-time
white noise, and $\zeta_{0}$ is a centered Gaussian distribution
in $\H[-I]$ with variance $\E\left[\inn{\zeta_{0},\varphi}^{2}\right]=\inn{\rho_{0}(1-\rho_{0})\varphi,\varphi}$
for smooth functions $\varphi$ on $\Td$.

By \citep[Theorem A.1]{Jara:2006}, the discretization error of the
heat equation $\|\E\eta^{n}-\rho^{\infty}\|_{\infty}$ behaves like
$(2n+1)^{-2}$. Therefore, informally, the CLT corresponds to the
expansion 
\begin{equation}
\eta_{t}^{n}(x)=\rho_{t}^{\infty}(x)+(2n+1)^{-d/2}\zeta_{t}^{\infty}(x)+(2n+1)^{-(d/2\wedge1)}o(1).\label{eq:expansion}
\end{equation}
Since the proof given in \citep{Ravishankar:1992} proceeds via a
compactness argument, the martingale central limit theorem, and the
Holley and Stroock theory \citep{Holley_Stroock:1979,Holley_Stroock:1978},
it does not allow the derivation of a quantitative convergence estimate
in the central limit theorem, nor in (\ref{eq:expansion}). This open
problem is solved in the present work, with an optimal rate of convergence.
It appears that this is the first result proving a quantitative central
limit theorem in the context of a non-equilibrium particle system\footnote{In contrast, in the setting of weakly interacting particle systems,
related high order expansions have been obtained in \citep{Chassagneux_Szpruch:2022}.}.

The proof developed in this work is instead based on the formula
\begin{equation}
\E F(\hat{\rho}_{t}^{n},\hat{\zeta}_{t}^{n})-\E F(\rho_{t}^{\infty,n},\zeta_{t}^{\infty,n})=\int_{0}^{t}\E\left[\left(\cG^{FF}-\cG^{OU}\right)P_{t-s}^{OU}F(\hat{\rho}_{s}^{n},\hat{\zeta}_{s}^{n})\right]ds,\label{eq:the_main_comparison_of_generators-2}
\end{equation}
see e.g. \citep[Lemma 1.2.5]{Ethier:1986}, which allows to deduce
estimates on the difference of the semigroups $(P_{t}^{FF})_{t\ge0}$
and $(P_{t}^{OU})_{t\ge0}$ associated with the Markov processes $(\rho^{n},\zeta^{n})$
and $(\rho^{\infty},\zeta^{\infty})$, from the difference of their
generators $\cG^{FF}$ and $\cG^{OU}$. Here, $\hat{f}=\ex_{n}f$
denotes smooth interpolation and $(\rho^{\infty,n},\zeta^{\infty,n})$
is a solution to (\ref{eq:heat_PDE}) and (\ref{eq:SPDE_for_OU_process})
started from the initial particle configuration $(\hat{\rho}_{0}^{n},\hat{\zeta}_{0}^{n})$. 

The estimation of the right hand side of (\ref{eq:the_main_comparison_of_generators-2}),
however, leads to several challenges: Firstly, the difference between
generators can be estimated only on sufficiently regular functions
$U=P_{t-s}^{OU}F$. Moreover, the obtained errors depend on higher-order
derivatives of $U$, the norms of $\hat{\rho}_{s}^{n}$, $\hat{\zeta}_{s}^{n}$
in corresponding Sobolev spaces and the expression $B(\zeta_{s}^{n}):=[\ex_{n}(\zeta_{s}^{n}\tau\zeta_{s}^{n})]^{2}$
for the shift operator $\tau$ on $\Tnd$. The differentiability of
$P_{t}^{OU}F$ is a non-trivial problem because the diffusion coefficient
$f(\rho^{\infty})=\sqrt{\rho^{\infty}(1-\rho^{\infty})}$ in (\ref{eq:SPDE_for_OU_process})
is not differentiable. Therefore, the standard approach to the preservation
of regularity of infinite-dimensional Kolmogorov equations, by proving
the regularity on the level of the corresponding SPDE, cannot be applied.
This issue is resolved in this work by a more careful infinite-dimensional
analysis based on the fact that the process $\zeta^{\infty}$ is Gaussian.
A second important ingredient to this part of the proof is a careful
choice of the extension operator $\ex_{n}$ in order to guarantee
the differentiability of $U$ at points $(\ex_{n}\rho,\ex_{n}\zeta)$
appearing in (\ref{eq:the_main_comparison_of_generators-2}), and
in order to quantitatively control discretization errors (lattice
effects) and interpolation errors, see e.g.~Proposition \ref{prop:expansion_of_cG_FEP}. 

Secondly, the control of the expectation of error terms requires additional
path properties of the SSEP compared to the proof of the non-quantified
CLT in \citep{Ravishankar:1992}. For instance, the bound of $\E[B(\zeta_{s}^{n})]$
fundamentally relies on the estimation of the four-point correlation
function $\E[\prod_{i=1}^{4}(\eta_{s}^{n}(x_{i})-\rho_{s}^{n}(x_{i}))]$,
while only the two point correlation function is used in \citep{Ravishankar:1992}. 

Thirdly, quantitative, optimal estimates for the initial fluctuations
\[
P_{t}^{OU}F(\hat{\rho}_{0}^{n},\hat{\zeta}_{0}^{n})-P_{t}^{OU}F(\rho_{0},\zeta_{0})=\E F(\rho_{t}^{\infty,n},\zeta_{t}^{\infty,n})-\E F(\rho_{t}^{\infty},\zeta_{t}^{\infty})
\]
are required. Compared to Stein's method in the finite-dimensional
context, see e.g., \citep{Meckes:2009,Reinert_Roellin:2009}, the
present situation is more challenging, since the dimension of $\zeta_{0}^{\infty,n}$
diverges with $n\to\infty$, for observables $F$ that are not assumed
to be of the specific form of partial sums. This difficulty is resolved
in the present work by carefully controling the constants appearing
in the application of Stein's method, thereby proving their independence
of the dimension. 

We refer the reader to Section \ref{subsec:Preliminaries} in the
appendix for the basic notation. As above, let $(\eta_{t}^{n})_{t\ge0}$
be the SSEP with the initial distribution $\nu_{\rho_{0}^{n}}^{n}$,
$(\rho_{t}^{n})_{t\ge0}$ its expectation field and $\left(\zeta_{t}^{n}\right)_{t\ge0}$
its density fluctuation field for each $n\ge1$. Let also $\left(\rho_{t}^{\infty}\right)_{t\ge0}$
be a solution to the heat equation (\ref{eq:heat_PDE}) started from
$\rho_{0}$, and $\left(\zeta_{t}^{\infty}\right)_{t\ge0}$ a solution
to (\ref{eq:SPDE_for_OU_process}) with the initial condition $\zeta_{0}$.
The following theorem is the main result of the paper. 

\pagebreak{}
\begin{thm}
\label{thm:main_result} Let $J>\frac{d}{2}\vee2$, $I>d+3$, $\tilde{J}>J+d+5$
and $F\in\Cf_{l,HS}^{1,3}(\H[J],\H[-I])$. Furthermore, assume that
$\rho_{0}\in\H[\tilde{J}]$ takes values in $[0,1]$ and $\rho_{0}^{n}$
is the restriction of $\rho_{0}$ to $\Tnd$ for each $n\ge1$. Then,
for each $T>0$ there exists a constant $C$ independent of $F$ and
$n$ such that 
\[
\sup_{t\in[0,T]}\left|\E F(\hat{\rho}_{t}^{n},\hat{\zeta}_{t}^{n})-\E F(\rho_{t}^{\infty},\zeta_{t}^{\infty})\right|\le\frac{C}{n^{\frac{d}{2}\wedge1}}\norm F_{\Cf_{l,HS}^{1,3}}.
\]
\end{thm}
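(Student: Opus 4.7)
The natural decomposition splits the error into a \emph{bulk} piece and an \emph{initial} piece by interposing the OU semigroup started from the discrete initial data:
\begin{align*}
\E F(\hat\rho_t^n,\hat\zeta_t^n)-\E F(\rho_t^\infty,\zeta_t^\infty)
&=\bigl[\E F(\hat\rho_t^n,\hat\zeta_t^n)-\E F(\rho_t^{\infty,n},\zeta_t^{\infty,n})\bigr]\\
&\quad+\bigl[\E F(\rho_t^{\infty,n},\zeta_t^{\infty,n})-\E F(\rho_t^\infty,\zeta_t^\infty)\bigr].
\end{align*}
The first term compares the Markov semigroups $P_t^{FF}$ and $P_t^{OU}$ on the same starting point $(\hat\rho_0^n,\hat\zeta_0^n)$, while the second measures how far the discrete particle fluctuations are from their Gaussian limit, as seen through $P_t^{OU}F$.

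\emph{Bulk term.} I would apply the semigroup comparison identity \eqref{eq:the_main_comparison_of_generators-2} and estimate the integrand $\E[(\cG^{FF}-\cG^{OU})P_{t-s}^{OU}F(\hat\rho_s^n,\hat\zeta_s^n)]$ pointwise. Using the careful choice of extension $\ex_n$ to control lattice/interpolation errors, Proposition~\ref{prop:expansion_of_cG_FEP} (which I assume) provides an expansion of $\cG^{FF}-\cG^{OU}$ in powers of $n^{-1}$ whose coefficients involve higher-order derivatives of $U:=P_{t-s}^{OU}F$ together with Sobolev norms of $\hat\rho_s^n,\hat\zeta_s^n$ and the quadratic form $B(\zeta_s^n)$. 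To take expectations I would establish (i) uniform-in-$s$ moment bounds on $\hat\zeta_s^n$ in $\H[-I]$ and on $\E[B(\zeta_s^n)]$ via a direct analysis of the four-point correlation $\E[\prod_{i=1}^4(\eta_s^n(x_i)-\rho_s^n(x_i))]$, and (ii) the requisite $\Cf_{l,HS}^{1,3}$-regularity of $P_t^{OU}F$ in $(\rho,\zeta)\in\H[J]\times\H[-I]$. For (ii), since $\sqrt{\rho(1-\rho)}$ is not differentiable in $\rho$, the standard SPDE pathwise differentiation in the initial condition fails; instead I would exploit the fact that, conditionally on the deterministic profile $\rho^\infty$, the process $\zeta^\infty$ is Gaussian, and differentiate at the level of the characteristic function, transferring $\zeta$-derivatives onto smooth Gaussian densities and $\rho$-derivatives onto the (once-differentiable) covariance operator $\int_0^t e^{(t-s)\Delta}\nabla\cdot(\rho_s^\infty(1-\rho_s^\infty)\nabla e^{(t-s)\Delta})\,ds$. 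Combining the generator expansion, the regularity of $U$, and the moment bounds yields a rate $n^{-(d/2\wedge1)}$ for the bulk term after integrating in $s\in[0,t]$.

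\emph{Initial term.} This is an infinite-dimensional Berry--Esseen problem: $\hat\zeta_0^n$ is the extension of a centered sum of independent bounded random variables $(\eta_0^n(x)-\rho_0^n(x))_{x\in\Tnd}$, tested against the smooth observable $V(\cdot):=P_t^{OU}F(\rho_0,\cdot)$. I would implement Stein's method in the spirit of Meckes/Reinert--R\"ollin, but applied to the smoothed observable $V$, whose third derivative is Hilbert--Schmidt with norm inherited from the regularity analysis of $U$ in the bulk step. The key technical point is that the resulting Stein bound on $|\E V(\hat\zeta_0^n)-\E V(\zeta_0)|$ depends only on (a) the Hilbert--Schmidt norm of the third derivative of $V$ and (b) the sum of third absolute moments of the independent increments, which is $O(|\Tnd|\cdot (2n+1)^{-3d/2})=O(n^{-d/2})$; crucially, no explicit factor of $\dim\zeta_0^n$ appears. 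This dimension-free control is what makes the method work in the $n\to\infty$ limit and produces the same rate $n^{-d/2}$ for the initial term. One also needs a small additional argument to pass from $\rho_0$ to $\hat\rho_0^n$ in the first argument of $F$, using the $\tilde J$-regularity of $\rho_0$ and the first-order derivative bound in the $\Cf_{l,HS}^{1,3}$ norm.

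\emph{Main obstacle.} The most delicate step is the regularity analysis of $P_t^{OU}F$ with third derivatives in Hilbert--Schmidt norm, despite the non-smoothness of $\sqrt{\rho(1-\rho)}$, because these derivatives must survive pairing with the (random, rough) arguments $\hat\rho_s^n,\hat\zeta_s^n$ in \eqref{eq:the_main_comparison_of_generators-2} \emph{and} they drive the Stein bound in the initial step. Once this regularity and the attendant quantitative estimates are in place, the four-point correlation bound for the SSEP and the dimension-free Stein estimate are essentially technical; combining all three pieces and taking the supremum over $t\in[0,T]$ yields the claimed bound with constant depending only on $\rho_0$, $T$, and the $\Cf_{l,HS}^{1,3}$-norm of $F$.
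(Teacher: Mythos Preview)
Your overall decomposition and the ingredients you identify---the semigroup comparison formula, Proposition~\ref{prop:expansion_of_cG_FEP}, the four-point correlation estimate, the Gaussian-based regularity of $U=P_t^{OU}F$ via differentiation of the covariance operator, and a dimension-free Stein bound for the initial fluctuations---match the paper's strategy closely.

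There is, however, one genuine gap. The identity \eqref{eq:the_main_comparison_of_generators-2} requires evaluating $P_{t-s}^{OU}F$ at the point $(\hat\rho_s^n,\hat\zeta_s^n)$, but $P_t^{OU}F(\rho,\zeta)$ is only defined when the diffusion coefficient $\sqrt{\rho(1-\rho)}$ in \eqref{eq:SPDE_for_OU_process} makes sense, i.e.\ when $\rho$ takes values in $[0,1]$. The extension $\hat\rho_s^n=\ex_n\rho_s^n$ is a trigonometric polynomial interpolating the $[0,1]$-valued lattice function $\rho_s^n$, and it generically overshoots the interval $[0,1]$. Hence the right-hand side of \eqref{eq:the_main_comparison_of_generators-2} is not well-defined as written, and your regularity analysis of $U$ cannot even be posed at the relevant points. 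The paper resolves this by replacing $\rho(1-\rho)$ in the SPDE by a smooth mollification $\Phi_\eps$ of $\rho(1-\rho)\vee0$ (with $\sup_\eps\|\Phi_\eps'\|_\Cf<\infty$), so that the resulting semigroup $U^\eps_t$ is defined on all of $\H[J]\times\H[-I]$ and has the needed $\Cf_{l,HS}^{1,3}$ bounds uniformly in $\eps$. The generator comparison then produces an additional error term
\[
\binn{\Tr\bigl(\partial_j^{\otimes2}D_2^2U^\eps_{t-s}(\hat\rho_s^n,\hat\zeta_s^n)\bigr),\ \hat\rho_s^n(1-\hat\rho_s^n)-\Phi_\eps(\hat\rho_s^n)},
\]
which must be split into a piece bounded by $\eps$ and a piece supported on $\{\hat\rho_s^n\notin[0,1]\}$; the latter is controlled by $\|\hat\rho_s^n-\rho_s^\infty\|$ via a pointwise convexity argument together with the discretization estimate of Corollary~\ref{cor:comparison_discrete_and_continuous_heat_equations}. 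Only after letting $\eps\to0$ (using Lemma~\ref{lem:whide_continuity_of_zeta}) does one recover the original OU process. Without this mollification-and-limit device your bulk argument cannot be executed.
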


\begin{rem}
The rate $\frac{1}{n^{\frac{d}{2}\wedge1}}$ cannot be improved in
the statement of Theorem \ref{thm:main_result}, since it also includes
the discretization error that equals $\frac{1}{n}$. 
\end{rem}

The following corollary directly follows from Theorem \ref{thm:main_result}. 
\begin{cor}
Under the assumptions of Theorem \ref{thm:main_result}, for each
$T>0$ and $m\ge1$ there exists a constant $C$ such that
\[
\sup_{t\in[0,T]}\left|\E f\left(\inn{\vec{\varphi},\zeta_{t}^{\infty}}\right)-\E f(\inn{\vec{\varphi},\zeta_{t}^{n}}_{n})\right|\le\frac{C}{n^{\frac{d}{2}\wedge1}}\norm f_{\Cf_{l}^{3}}\norm{\vec{\varphi}}_{\Cf^{\lceil I\rceil}}
\]
for all $n\ge1$, $f\in\Cf^{3}(\R^{m})$ and $\vec{\varphi}\in\left(\Cf^{\lceil I\rceil}(\Td)\right)^{m}$. 
\end{cor}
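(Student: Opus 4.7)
The plan is to reduce the corollary directly to Theorem \ref{thm:main_result}, applied to the test function
\[ F(\rho,\zeta) := f\bigl(\inn{\varphi_1,\zeta},\ldots,\inn{\varphi_m,\zeta}\bigr), \]
where each bracket denotes the duality pairing between $\H[I]$ and $\H[-I]$, and note that $F$ is independent of $\rho$. Since $\lceil I\rceil\ge I$, the continuous embedding $\Cf^{\lceil I\rceil}(\Td)\hookrightarrow\H[I]$ shows that every $\varphi_i$ lies in $\H[I]$ with norm controlled by $\normC[\lceil I\rceil]{\varphi_i}$, so $F$ is well defined on $\H[J]\times\H[-I]$.

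The first step is to verify that $F\in\Cf_{l,HS}^{1,3}(\H[J],\H[-I])$ with $\norm F_{\Cf_{l,HS}^{1,3}}\le C\norm f_{\Cf_l^3}\norm{\vec\varphi}_{\Cf^{\lceil I\rceil}}$, up to an appropriate power. Only the derivatives of $F$ in $\zeta$ are non-trivial, and for $k=1,2,3$ they are given explicitly by
\[ D_\zeta^k F(\rho,\zeta)[h_1,\ldots,h_k]=\sum_{i_1,\ldots,i_k=1}^m(\partial_{i_1}\cdots\partial_{i_k}f)(\inn{\vec\varphi,\zeta})\prod_{j=1}^k\inn{\varphi_{i_j},h_j}, \]
which are finite-rank (rank $\le m^k$) multilinear forms. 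Their Hilbert--Schmidt norms on $\H[-I]$ are therefore bounded by $\norm{\partial^k f}_\infty\prod_{j}\normH[I]{\varphi_{i_j}}$, and the at-most-linear growth of $F$ itself is immediate from $|F(\rho,\zeta)|\le\norm f_{\Cf_l}(1+\norm{\vec\varphi}_{\H[I]}\norm\zeta_{\H[-I]})$. Theorem \ref{thm:main_result} then yields the required rate for $\E F(\hat\rho_t^n,\hat\zeta_t^n)=\E f(\inn{\vec\varphi,\hat\zeta_t^n}_{L_2(\Td)})$ against $\E f(\inn{\vec\varphi,\zeta_t^\infty})$.

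It remains to replace the continuous $L_2(\Td)$-pairing against $\hat\zeta_t^n$ by the discrete pairing $\inn{\vec\varphi,\zeta_t^n}_n$ inside $f$. By the mean value theorem this reduces to bounding the quadrature/interpolation error $\E|\inn{\varphi_i,\hat\zeta_t^n}_{L_2(\Td)}-\inn{\varphi_i,\zeta_t^n}_n|$ against $\norm{\partial_i f}_\infty$. Using the smoothness of $\varphi_i$, the explicit construction of the extension operator $\ex_n$ developed for Theorem \ref{thm:main_result}, and uniform-in-$n$ moment bounds on $\zeta_t^n$ in a suitable negative Sobolev norm, this discrepancy can be shown to be of order at most $n^{-(d/2\wedge 1)}$, which absorbs into the main estimate. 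I expect this last matching of rates --- ensuring that neither the quadrature nor the interpolation discrepancy is slower than the optimal CLT rate --- to be the main technical point; the derivative formula, the finite-rank Hilbert--Schmidt bound, and the growth estimate needed for the "$l$" condition are otherwise routine.
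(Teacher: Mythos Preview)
Your approach is correct and is exactly what the paper intends: the paper gives no proof beyond the sentence ``directly follows from Theorem~\ref{thm:main_result}'', so applying the theorem to $F(\rho,\zeta)=f(\inn{\varphi_1,\zeta},\ldots,\inn{\varphi_m,\zeta})$ is the whole argument.

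Two small corrections. First, the discrepancy you worry about in the final paragraph is much easier than you suggest. Using the duality identity $\inn{\ex_n f,g}=\inn{f,\pr_n g}_n$ together with $\inn{f_1,f_2}_n=\inn{\ex_n f_1,\ex_n f_2}$ (Corollary~\ref{cor:connection_between_descrete_and_cont_inner_product}), one has
\[
\inn{\varphi_i,\hat\zeta_t^n}-\inn{\varphi_i,\zeta_t^n}_n=\inn{\hat\zeta_t^n,\varphi_i-\ex_n\varphi_i},
\]
and this is $O(1/n)$ by Lemma~\ref{lem:estimate_of_phi-ex_phi} (which gives $\norm{\varphi_i-\ex_n\varphi_i}_{H_{\tilde I}}\le Cn^{-1}\norm{\varphi_i}_{\Cf^{\lceil I\rceil}}$ for suitable $\tilde I>d/2$) combined with the moment bound of Lemma~\ref{lem:estimate_of_sobolev_norm_of_eta}. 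Since $1\ge\tfrac{d}{2}\wedge1$, this is never the bottleneck; it is routine bookkeeping, not ``the main technical point''.

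Second, your estimate $\norm F_{\Cf_{l,HS}^{1,3}}\le C\norm f_{\Cf_l^3}\norm{\vec\varphi}_{\Cf^{\lceil I\rceil}}$ cannot be linear in $\norm{\vec\varphi}$: the third derivative $D_2^3F$ carries three factors of $\varphi$, so the bound is cubic. Your hedge ``up to an appropriate power'' is correct; the corollary as printed is slightly imprecise on this point.
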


In \citep{Dirr_Fehrman:2020}, the SPDEs 
\[
d\eta_{t}^{n,\delta}=\partial_{xx}\eta_{t}^{n,\delta}dt+\frac{1}{\sqrt{n}}\partial_{x}\left(\sqrt{\eta_{t}^{n,\delta}(1-\eta_{t}^{n,\delta})}dW_{t}^{\delta}\right)
\]
have been analyzed as effective models for the one-dimensional SSEP,
where $(dW_{t}^{\delta})_{t\ge0}$ is a mollified $1$-dimensional
space-time white noise. In appropriate scaling regimes, it was concluded
that 
\begin{equation}
\E F(\hat{\eta}^{n})-\E F(\eta^{n,\delta})=o\left(n^{-\frac{1}{2}}\right)\label{eq:comparison_between_sol_to_nonlinear_equation_and_particle_system-1}
\end{equation}
which improves over the deterministic error 
\[
\E F(\hat{\eta}^{n})-\E F(\rho^{\infty})=O\left(n^{-\frac{1}{2}}\right).
\]
While one would expect (\ref{eq:comparison_between_sol_to_nonlinear_equation_and_particle_system-1})
to be of order $O(n^{-1})$, this was left open in \citep{Dirr_Fehrman:2020}
since a quantified CLT for a SSEP was missing, thus giving further
motivation for the questions addressed in the present work. 

\textbf{The work is organized as follows.} The basic notation and
some facts are collected and postponed to the appendix in Section
\ref{subsec:Preliminaries}. Section \ref{sec:Particle-system} is
devoted to an expansion of generators associated with the particle
system and an investigation of some path properties of the system.
In particular, the expansion of generators of the SSEP and its density
fluctuation field is obtained in Sections \ref{subsec:Expansion-of-generator}
and \ref{subsec:Fluctuations-of-SSEP}, respectively. Estimates of
the expectation of Sobolev norms of $\hat{\rho}_{s}^{n}$, $\hat{\zeta}_{s}^{n}$
and the control of $\E[B(\zeta_{s}^{n})]$ are obtained in Section
\ref{subsec:Some-properties-of}. The aim of Section \ref{sec:Generalized-Ornstein-Uhlenbeck}
is to show the regularity of the semigroup associated with the Ornstein-Uhlenbeck
process in both variables $\rho_{0}$ and $\zeta_{0}$. The differentiability
of $U$ in $\zeta_{0}$ straightforward follows from the linearity
of the SPDE (\ref{eq:SPDE_for_OU_process}). Therefore, the main focus
of this section is concentrated on the regularity of $U$ with respect
to $\rho_{0}$. The differentiability of the covariance operator of
$\zeta_{t}^{\infty}$ in $\rho_{0}$ is obtained in Section \ref{subsec:Differentiability-of-semigroup}.
Then, using a kind of the integration-by-parts formula for Gaussian
distributions, we get the differentiability of $U$. The Berry-Essen
bound on the rate of convergence of particle fluctuations $\hat{\zeta}_{0}^{n}$
to the Gaussian random distribution $\zeta_{0}^{\infty}$ in a corresponding
Sobolev space is obtained in Section \ref{sec:Berry-Esseen-bound}.
For this, we adapt the finite-dimensional approach, e.g, from \citep{Meckes:2009,Reinert_Roellin:2009},
to Sobolev spaces. The rest of the appendix is devoted to some properties
of $\pr_{n}$ and $\ex_{n}$ operators, multilinear operators on Sobolev
spaces and Frechet differentiable functions defined on Sobolev spaces
(see Sections \ref{subsec:pr_and_ex_operators}, \ref{subsec:Multilinear-operators-on-H}
and \ref{subsec:Differentiable-functions-on-H}, respectively).

\textbf{Comments on the literature. }For a comprehensive treatment
of equilibrium fluctuations, we refer to the monographs \citep{Kipnis_Landim:1999,KoLaOl2012}
and the detailed review of the literature contained therein.

In the case of gradient models and their perturbations, out-of-equilibrium
fluctuation results have been established in \citep{ChYa1992,DMFeLe1986,JaMe2018,PrSp1983},
including the central limit theorem for the weakly asymmetric simple
exclusion process in \citep{DMPrSc1989,DiGa1991}, and for the one-dimensional
symmetric zero-range process with constant jump rate in \citep{Ferrari_Presutti:1987}.
The central limit theorem for the symmetric simple exclusion process
was first established in \citep{GaKiSp,Ravishankar:1992}. Several
of these works build upon extensions of the equilibrium theories developed
by Holley and Stroock \citep{Holley_Stroock:1979,Holley_Stroock:1978},
as well as the Boltzmann--Gibbs principle \citep{BrRo1984}. A quantitative
form of the Boltzmann--Gibbs principle for independent random walkers,
and particle systems with duality has been obtained in \citep{Ayala_Carinci:2018}.
Additionally, non-equilibrium fluctuations for the boundary-driven
symmetric SSEP are discussed in \citep{LaMiOl2008}, the SSEP with
a slow bond in \citep{ErFrGoNeTa2020}, and for a tagged particle
in SSEP in \citep{Jara:2006}. In the recent contribution \citep{DiTeTi2024}
the joint fluctuations of current and occupation time of the one-dimensional
non-equilibrium simple symmetric exclusion process have been found.
We are not aware of any previous results providing quantitative central
limit theorems for out-of-equilibrium fluctuations in these contexts. 

For recent advances in the analysis of quantitative fluctuations for
non-gradient systems in equilibrium, see \citep{GuMoNi2024}. This
work also reviews a series of studies establishing the non-quantitative
equilibrium central limit theorem for several non-gradient systems.

Recent developments in the quantification of convergence in the law
of large numbers for both gradient and non-gradient systems are documented
in \citep{GiGuMo2022,MeMo2022} and the references cited therein.
Quantitative estimates of propagation of chaos for mean field systems
with singular kernels are provided in \citep{DrJaWa2023,JaWa2018}.
The study of fluctuations in this context has a longstanding history,
including works such as \citep{FeMe1997,It1983,Sz1984}, with recent
contributions in the setting of singular kernels found in \citep{WaZhZh2023}.
A deep analysis of central limit fluctuations around the Boltzmann
equation can be found in \citep{BoGaSaSe2023,Sp1981}.

Furthermore, fluctuation corrections of PDEs, leading to stochastic
PDEs, and their connection to higher-order fluctuation expansions
of particle systems and large deviations, have attracted significant
attention in recent years \citep{CoFi2023,CoFiInRa2023,Dirr_Fehrman:2020,DjKrPe2024,Gess:2019,Fehrman:arxiv:2021,Fe.Ge2023,Gess_SMFE:2022,Gess_SMF:2023}.

Since its development in \citep{St1972}, Stein's method for the derivation
of quantitative estimates on the distance to Gaussians has been an
active and fruitful field, an overview of which would go far beyond
the scope of this article. We restrict to mentioning a few points
of references, where further references to the theory may be found.
The main concepts of Stein's method is discussed in the survey article
\citep{Ross_2011}. Careful estimates for multivariate normal approximation
with Stein's method are obtained in \citep{Meckes:2009,Reinert_Roellin:2009}.
An early contribution extending Stein\textquoteright s method to the
context of approximations of processes, that is, to infinite dimension
is \citep{Ba1990}. See also \citep{CoDe2013} and the references
therein for subsequent generalizations. For applications of Stein\textquoteright s
method in the context of statistical mechanics, we refer to \citep{DeMu2023,EiLo2010}
and the references therein, where Berry-Esseen bounds for Curie-Weiss
and mean-field Ising models have been derived. Stein\textquoteright s
method in infinite dimension has been developed, for example, in \citep{Bourguin2020,Sh2011}
deriving Berry--Esséen type estimate for abstract Wiener measures
and in \citep{Ro2013} for high-dimensional settings. A significant
extension of Stein\textquoteright s method has been achieved by combination
with Malliavin calculus in a line of developments \citep{NoPe2009b,NoPe2009}
and the monograph \citep{NoPe2012}, which, in particular, allows
application going beyond observables taking the specific form of partial
sums. An extension of admissible functionals has been discussed in
\citep{BaRoZh2024}.

\section{Particle system\protect\label{sec:Particle-system}}

The goal of this section is to study some properties of the SSEP needed
for the proof of the main result. In particular, we expand the generator
of the density fluctuation field and show that the leading terms in
this expansion coincide with the generator of an Ornstein-Uhlenbeck
process. 

\subsection{Expansion of the generator of the SSEP\protect\label{subsec:Expansion-of-generator}}

We start from the expansion of the generator of the SSEP. Let $\eta_{t}^{n}=\left(\eta_{t}^{n}(x),\ x\in\T_{n}^{d}\right)$,
$t\ge0$, be the SSEP defined on the configuration space $\EEP\subset L_{2}(\T_{n}^{d})$.
Recall that it is a time continuous Markov Process whose generator
$\cG_{n}^{EP}$ is defined by (\ref{eq:generator_of_SSEP}). We extend
$(\eta_{t}^{n})_{t\ge0}$ to a $\Cf^{\infty}(\T^{d})$-valued process
by considering 
\[
\hat{\eta}_{t}^{n}=\ex_{n}\eta_{t}^{n},\quad t\ge0.
\]
According to (\ref{eq:ex_is_interpolation}), the restriction of $\hat{\eta}_{t}^{n}$
to the set $\T_{n}^{d}\subset\T^{d}$ coincides with $\eta_{t}^{n}$
for each $t\ge0$ and $n\in\N$. Next note that for each $J\in\R$
and $F\in\Cf(\H[J]),$ the process 
\[
M_{t}^{F}:=F(\hat{\eta}_{t}^{n})-F(\hat{\eta}_{0}^{n})-\int_{0}^{t}\cG_{n}^{EP}\left(F\circ\ex_{n}\right)(\eta_{s}^{n})ds,\quad t\ge0,
\]
is a right-continuous martingale with respect to the filtration $(\F_{t}^{\eta^{n}})_{t\ge0}$
generated by $\eta^{n}$. 

In the next statement, we derive an expansion of the generator $\cG_{n}^{EP}$
that will be used for an expansion of the generator of the density
fluctuation field later.
\begin{lem}
\label{lem:expansion_of_cG_EP}Let $I>\frac{d}{2}+1$. Then for each
$F\in\Cf_{l}^{3}(\H[-I])$ and $n\ge1$ 
\begin{align*}
\cG_{n}^{EP}\left(F\circ\ex_{n}\right)(\eta) & =2\pi^{2}\inn{\Delta_{n}\pr_{n}DF(\hat{\eta}),\eta}_{n}\\
 & +\frac{4\pi^{4}}{(2n+1)^{d+2}}\sum_{j=1}^{d}\binn{\Tr\left(\partial_{n,j}^{\otimes2}\pr_{n}^{\otimes2}D^{2}F(\hat{\eta})\right),\left[\partial_{n,j}\eta\right]^{2}}_{n}+R_{n}^{EP}(\eta),
\end{align*}
for all $\eta\in\EEP,$ where 
\[
\left|R_{n}^{EP}(\eta)\right|\le\frac{C_{I}}{(2n+1)^{2d+1}}\norm{D^{3}F}_{\Cf},\quad\eta\in\EEP.
\]
\end{lem}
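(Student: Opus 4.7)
The plan is to compute $\cG_{n}^{EP}(F\circ\ex_{n})(\eta)$ by Taylor expanding $F$ around $\hat{\eta}=\ex_{n}\eta$, exploiting that each elementary swap $\eta\mapsto\eta^{x\lrarr x+e_{j}}$ induces only a small perturbation of $\hat{\eta}$ in $\H[-I]$. Since $\ex_{n}$ is linear and $\eta^{x\lrarr x+e_{j}}-\eta=(\eta(x+e_{j})-\eta(x))(\delta_{x}-\delta_{x+e_{j}})$ on $\Tnd$, one has
\[
h_{x,j}(\eta):=\ex_{n}\bigl(\eta^{x\lrarr x+e_{j}}-\eta\bigr)=\bigl(\eta(x+e_{j})-\eta(x)\bigr)\ex_{n}(\delta_{x}-\delta_{x+e_{j}}).
\]
Taylor's formula with Lagrange remainder gives
\[
F(\hat{\eta}+h_{x,j})-F(\hat{\eta})=DF(\hat{\eta})[h_{x,j}]+\tfrac{1}{2}D^{2}F(\hat{\eta})[h_{x,j},h_{x,j}]+\tfrac{1}{6}D^{3}F(\xi_{x,j})[h_{x,j},h_{x,j},h_{x,j}]
\]
for some $\xi_{x,j}\in[\hat{\eta},\hat{\eta}+h_{x,j}]$; multiplying by $\tfrac{(2n+1)^{2}}{2}$ and summing in $j,x$ decomposes $\cG_{n}^{EP}(F\circ\ex_{n})(\eta)=I_{1}+I_{2}+R_{n}^{EP}(\eta)$ to match the three terms of the claim.

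For the first-order piece $I_{1}$, one pulls the scalar $\eta(x+e_{j})-\eta(x)$ out of $DF(\hat{\eta})$, so that with $\phi_{n}(x):=DF(\hat{\eta})[\ex_{n}\delta_{x}]$
\[
I_{1}=\tfrac{(2n+1)^{2}}{2}\sum_{j=1}^{d}\sum_{x\in\Tnd}\bigl(\eta(x+e_{j})-\eta(x)\bigr)\bigl(\phi_{n}(x)-\phi_{n}(x+e_{j})\bigr).
\]
By the construction of $\ex_{n}$, $\pr_{n}$ and the discrete inner product $\inn{\cdot,\cdot}_{n}$ recalled in Section \ref{subsec:pr_and_ex_operators}, $\phi_{n}$ equals $\pr_{n}DF(\hat{\eta})$ up to a fixed volume normalization. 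Two applications of discrete summation by parts transfer both forward differences onto $\phi_{n}$, producing the discrete Laplacian $\Delta_{n}$, whose built-in lattice-spacing factor $\bigl(\tfrac{2\pi}{2n+1}\bigr)^{-2}$ cancels the generator's $(2n+1)^{2}$ up to $4\pi^{2}$; combined with the $\tfrac{1}{2}$ this produces exactly $2\pi^{2}\inn{\Delta_{n}\pr_{n}DF(\hat{\eta}),\eta}_{n}$.

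For the second-order piece $I_{2}$, the scalar factor becomes $(\eta(x+e_{j})-\eta(x))^{2}=\tfrac{4\pi^{2}}{(2n+1)^{2}}[\partial_{n,j}\eta(x)]^{2}$, and the bilinear form $D^{2}F(\hat{\eta})$ evaluated on two copies of $\ex_{n}(\delta_{x}-\delta_{x+e_{j}})$ is, viewed through the kernel of $\pr_{n}^{\otimes2}D^{2}F(\hat{\eta})$, a discrete mixed second difference $\partial_{n,j}^{\otimes2}\pr_{n}^{\otimes2}D^{2}F(\hat{\eta})$ evaluated on the diagonal $(x,x)$. Summing over $x$ and recasting as $\inn{\cdot,\cdot}_{n}$ assembles the trace structure with the prefactor $\tfrac{4\pi^{4}}{(2n+1)^{d+2}}$. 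The remainder is bounded by
\[
|R_{n}^{EP}(\eta)|\le\tfrac{(2n+1)^{2}}{12}\sum_{j=1}^{d}\sum_{x\in\Tnd}\norm{D^{3}F}_{\Cf}\,\norm{h_{x,j}}_{\H[-I]}^{3},
\]
using $F\in\Cf_{l}^{3}(\H[-I])$. Expanding $\ex_{n}(\delta_{x}-\delta_{x+e_{j}})$ in Fourier over the bandlimited range $|k|\le n$, the symbol $1-e^{-ik\cdot e_{j}}$ is controlled by $\tfrac{2\pi|k|}{2n+1}$, and a Plancherel computation
\[
\norm{\ex_{n}(\delta_{x}-\delta_{x+e_{j}})}_{\H[-I]}^{2}\lesssim\tfrac{1}{(2n+1)^{2d+2}}\sum_{|k|\le n}\tfrac{|k|^{2}}{(1+|k|^{2})^{I}}
\]
with convergent right-hand side for $I>\tfrac{d}{2}+1$ yields $\norm{h_{x,j}}_{\H[-I]}\le\tfrac{C_{I}}{(2n+1)^{d+1}}$ via $|\eta(x+e_{j})-\eta(x)|\le1$. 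Inserting, the growth $(2n+1)^{d+2}$ from the prefactor and summation is beaten by the $(2n+1)^{-3(d+1)}$ decay of $\norm{h_{x,j}}_{\H[-I]}^{3}$, giving the claimed $(2n+1)^{-(2d+1)}$.

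The main obstacle is the sharp negative-Sobolev estimate on $\ex_{n}(\delta_{x}-\delta_{x+e_{j}})$: any crude $O(1)$ bound would produce a remainder growing like $(2n+1)^{d+2}$, useless for the downstream comparison of semigroups in (\ref{eq:the_main_comparison_of_generators-2}). The rest is careful bookkeeping of the $(2n+1)$-normalizations built into $\ex_{n}$, $\pr_{n}$ and $\inn{\cdot,\cdot}_{n}$, supplied by the appendix.
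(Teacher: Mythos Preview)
Your proposal is correct and follows essentially the same route as the paper's proof: Taylor-expand $F$ to third order around $\hat{\eta}$, identify the first-order sum with $2\pi^{2}\inn{\Delta_{n}\pr_{n}DF(\hat{\eta}),\eta}_{n}$ via discrete integration by parts, rewrite the second-order sum through the diagonal of $\pr_{n}^{\otimes2}D^{2}F(\hat{\eta})$, and bound the cubic remainder using the Fourier estimate $\norm{h_{x,j}}_{\H[-I]}\le C_{I}(2n+1)^{-(d+1)}$ (which the paper derives in exactly the form you wrote, via $|\mu_{k,j}^{n}|\le|k_{j}|$). The only cosmetic difference is that the paper computes the key identity \eqref{eq:increments_for_xi_lrarrow} for $\inn{\varphi,\eta^{x\lrarr x+e_{j}}-\eta}_{n}$ directly rather than introducing the $\delta_{x}$ notation, and performs a single integration by parts $\sum_{j}\inn{\partial_{n,j}\phi,\partial_{n,j}\eta}_{n}=-\inn{\Delta_{n}\phi,\eta}_{n}$ rather than two; your ``fixed volume normalization'' remark correctly flags the $(2n+1)^{d}$ factor hidden in $\phi_{n}(x)=\frac{1}{(2n+1)^{d}}\pr_{n}DF(\hat{\eta})(x)$.
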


\begin{proof}
To prove the lemma, we use the Taylor formula (\ref{eq:taylors_formula}).
For $\eta\in\EEP$ we get
\begin{align*}
\cG_{n}^{EP}\left(F\circ\ex_{n}\right)(\eta) & =\frac{(2n+1)^{2}}{2}\sum_{j=1}^{d}\sum_{x\in\T_{n}^{d}}\left(F(\hat{\eta}^{x\lrarr x+e_{j}})-F(\hat{\eta})\right)\\
 & =\frac{(2n+1)^{2}}{2}\sum_{j=1}^{d}\sum_{x\in\T_{n}^{d}}DF(\hat{\eta})\left[\hat{\eta}^{x\leftrightarrow x+e_{j}}-\hat{\eta}\right]\\
 & +\frac{(2n+1)^{2}}{4}\sum_{j=1}^{d}\sum_{x\in\T_{n}^{d}}D^{2}F(\hat{\eta})\left[\left(\hat{\eta}^{x\leftrightarrow x+e_{j}}-\hat{\eta}\right)^{\times2}\right]\\
 & +\frac{(2n+1)^{2}}{2}\sum_{j=1}^{d}\sum_{x\in\T_{n}^{d}}R_{j}(x,\hat{\eta})=:I_{1}+I_{2}+R^{EP},
\end{align*}
where $\hat{\eta}^{x\leftrightarrow x+e_{j}}=\ex_{n}\eta^{x\leftrightarrow x+e_{j}}$
and 
\[
\left|R_{j}(x,\hat{\eta})\right|\le\frac{\norm{D^{3}F}_{\Cf}}{3!}\norm{\hat{\eta}^{x\leftrightarrow x+e_{j}}-\hat{\eta}}_{H_{-I}}^{3}.
\]

Using the equality (\ref{eq:connection_between_ex_and_varsigma}),
we first estimate the expression
\begin{align*}
\norm{\hat{\eta}^{x\leftrightarrow x+e_{j}}-\hat{\eta}}_{H_{-I}}^{2} & =\sum_{k\in\Z^{d}}\frac{1}{\left(1+|k|^{2}\right)^{I}}\left|\inn{\hat{\eta}^{x\leftrightarrow x+e_{j}}-\hat{\eta},\varsigma_{k}}\right|^{2}\\
 & =\sum_{k\in\Z_{n}^{d}}\frac{1}{\left(1+|k|^{2}\right)^{I}}\left|\inn{\eta^{x\leftrightarrow x+e_{j}}-\eta,\varsigma_{k}}_{n}\right|^{2}
\end{align*}
for $x\in\T_{n}$ and $j\in[d]$. Note that for each $\varphi:\T_{n}^{d}\to\C$
\begin{align}
\inn{\varphi,\eta^{x\lrarr x+e_{j}}}_{n}-\inn{\varphi,\eta}_{n} & =\frac{1}{(2n+1)^{d}}\sum_{z\in\Tnd}\varphi(z)\eta^{x\lrarr x+e_{j}}(z)\nonumber \\
 & -\frac{1}{(2n+1)^{d}}\sum_{z\in\T_{n}^{d}}\varphi(z)\eta(z)\nonumber \\
 & =\frac{1}{(2n+1)^{d}}\left[\eta(x+e_{j})-\eta(x)\right]\varphi(x)\label{eq:increments_for_xi_lrarrow}\\
 & +\frac{1}{(2n+1)^{d}}\left[\eta(x)-\eta(x+e_{j})\right]\varphi(x+e_{j})\nonumber \\
 & =\frac{1}{(2n+1)^{d}}\left[\eta(x)-\eta(x+e_{j})\right]\left[\varphi(x+e_{j})-\varphi(x)\right]\nonumber \\
 & =-\frac{4\pi^{2}}{(2n+1)^{d+2}}\partial_{n,j}\eta(x)\partial_{n,j}\varphi(x).\nonumber 
\end{align}
Thus, using (\ref{eq:eignevalues_for_discrete_basis}) and Lemma \ref{lem:estimate_of_discrete_eigenvalues},
we estimate
\begin{align*}
\left|\inn{\eta^{x\leftrightarrow x+e_{j}}-\eta,\varsigma_{k}}_{n}\right| & =\frac{4\pi^{2}}{(2n+1)^{d+2}}\left|\partial_{n,j}\eta(x)\right|\left|\partial_{n,j}\varsigma_{k}(x)\right|\\
 & =\frac{4\pi^{2}}{(2n+1)^{d+2}}\left|\partial_{n,j}\eta(x)\right|\left|\mu_{k,j}^{n}\right|\le\frac{8\pi^{2}|k_{j}|}{(2n+1)^{d+1}}
\end{align*}
and, consequently,
\[
\norm{\hat{\eta}^{x\leftrightarrow x+e_{j}}-\hat{\eta}}_{H_{-I}}^{2}\le\frac{64\pi^{4}}{(2n+1)^{2d+2}}\sum_{k\in\Z_{n}^{d}}\frac{|k_{j}|^{2}}{\left(1+|k|^{2}\right)^{I}}\le\frac{64\pi^{4}C_{I}}{(2n+1)^{2d+2}}
\]
due to $I>\frac{d}{2}+1.$ This implies the inequality
\[
\left|R^{EP}(\eta)\right|\le\frac{(2n+1)^{2}}{2d}\norm{D^{3}F}_{\Cf}\sum_{j=1}^{d}\sum_{x\in\T_{n}}\frac{C_{I}}{(2n+1)^{3d+3}}\le\frac{C_{I}}{(2n+1)^{2d+1}}\norm{D^{3}F}_{\Cf}.
\]

In order to rewrite $I_{1}$ we use the fact that the derivative $DF(\hat{\eta})$
belongs to the dual space of $\H[-I]$. Hence, $DF(\hat{\eta})\in\H[I]$
and 
\begin{align*}
DF(\hat{\eta})\left[\hat{\eta}^{x\leftrightarrow x+e_{j}}-\hat{\eta}\right] & =\inn{DF(\hat{\eta}),\hat{\eta}^{x\leftrightarrow x+e_{j}}-\hat{\eta}}=\binn{\pr_{n}DF(\hat{\eta}),\eta^{x\leftrightarrow x+e_{j}}-\eta}_{n}\\
 & =-\frac{4\pi^{2}}{(2n+1)^{d+2}}\partial_{n,j}\pr_{n}DF(\hat{\eta})(x)\partial_{n,j}\eta(x),
\end{align*}
according to (\ref{eq:connection_betwwen_ex_and_pr}) and (\ref{eq:increments_for_xi_lrarrow}).
This implies
\begin{align*}
I_{1} & =-\frac{(2n+1)^{2}}{2}\frac{4\pi^{2}}{(2n+1)^{d+2}}\sum_{j=1}^{d}\sum_{x\in\Tnd}\partial_{n,j}\pr_{n}DF(\hat{\eta})(x)\partial_{n,j}\eta(x)\\
 & =-2\pi^{2}\sum_{j=1}^{d}\inn{\partial_{n,j}\pr_{n}DF(\hat{\eta}),\partial_{n,j}\eta}_{n}=2\pi^{2}\inn{\Delta_{n}\pr_{n}DF(\hat{\eta}),\eta}_{n},
\end{align*}
where we used the discrete integration by parts formula (\ref{eq:integration_by_parts_in_discrete_space}).

Since $D^{2}F(\hat{\eta})\in\cL_{2}(\H[-I])$, the equality (\ref{eq:connection_between_ex_and_pr_extension})
yields 
\[
D^{2}F(\hat{\eta})\left[\left(\hat{\eta}^{x\leftrightarrow x+e_{j}}-\hat{\eta}\right)^{\times2}\right]=\binn{\pr_{n}^{\otimes2}D^{2}F(\hat{\eta}),\left(\eta^{x\leftrightarrow x+e_{j}}-\eta\right)^{\otimes2}}_{n}.
\]
Similarly to the computation in (\ref{eq:increments_for_xi_lrarrow}),
we get that the right hand side in the expression above equals
\[
\frac{16\pi^{4}}{(2n+1)^{2d+4}}\partial_{n,j}^{\otimes2}\pr_{n}^{\otimes2}D^{2}F(\hat{\eta})(x,x)\left(\partial_{n,j}\eta(x)\right)^{2}.
\]
Hence, 
\begin{align*}
I_{2} & =\frac{16\pi^{4}(2n+1)^{2}}{4(2n+1)^{2d+4}}\sum_{j=1}^{d}\sum_{x\in\T_{n}^{d}}\partial_{n,j}^{\otimes2}\pr_{n}^{\otimes2}D^{2}F(\hat{\eta})(x,x)\left(\partial_{n,j}\eta(x)\right)^{2}\\
 & =\frac{4\pi^{4}}{(2n+1)^{d+2}}\sum_{j=1}^{d}\binn{\Tr\left(\partial_{n,j}^{\otimes2}\pr_{n}^{\otimes2}D^{2}F(\hat{\eta})\right),\left(\partial_{n,j}\eta\right)^{2}}_{n}.
\end{align*}
This completes the proof of the lemma.
\end{proof}

\subsection{Density fluctuation field for the SSEP and its generator\protect\label{subsec:Fluctuations-of-SSEP}}

The aim of this section is to consider the density fluctuation field
\[
\zeta_{t}^{n}(x)=(2n+1)^{d/2}(\eta_{t}^{n}(x)-\rho_{t}^{n}(x)),\ x\in\T_{n}^{d},\quad t\ge0,
\]
for the SSEP and obtain an expansion of its generator. It is easy
to see that the process
\[
\rho_{t}^{n}(x)=\E\eta_{t}^{n}(x),\quad x\in\T_{n}^{d},\ \ t\ge0,
\]
is a unique solution to the discrete heat equation
\begin{equation}
\rho_{t}^{n}(x)=\rho_{0}^{n}(x)+2\pi^{2}\int_{0}^{t}\Delta_{n}\rho_{s}^{n}(x)ds,\quad x\in\T_{n}^{d},\ \ t\ge0,\label{eq:descrete_heat_equation}
\end{equation}
with $\rho_{0}^{n}(x)=\E\eta_{0}^{n}(x)\in[0,1]$, $x\in\T_{n}^{d}$.
Moreover, $\rho_{t}^{n}\in\EHE\subset\Ln$ for all $t\ge0$. Using
the chain rule (see, e.g. Theorem \citep[Theorem 2.2.1]{Cartan:1971})
and the discrete integration-by-parts formula, we get for each $F\in\Cf^{1}(\Ln)$
\begin{align*}
F(\rho_{t}^{n}) & =F(\rho_{0}^{n})+2\pi^{2}\int_{0}^{t}\binn{DF(\rho_{s}^{n}),\Delta_{n}\rho_{s}^{n}}_{n}ds\\
 & =F(\rho_{0}^{n})+2\pi^{2}\int_{0}^{t}\binn{\Delta_{n}DF(\rho_{s}^{n}),\rho_{s}^{n}}_{n}ds,\quad t\ge0.
\end{align*}
In particular, this implies that $(\rho_{t}^{n},\eta_{t}^{n})$, $t\ge0$,
is a Markov process with generator 
\[
2\pi^{2}\binn{\Delta_{n}D_{1}F(\cdot,\eta)(\rho),\rho}_{n}+\left(\cG^{EP}F(\rho,\cdot)\right)(\eta).
\]
 Thus, the process $(\rho_{t}^{n},\zeta_{t}^{n}),$ $t\ge0$, is also
a Markov process and for each $F\in\Cf^{1}(\Ln^{2})$ 
\[
F(\rho_{t}^{n},\zeta_{t}^{n})-F(\rho_{0}^{n},\zeta_{0}^{n})-\int_{0}^{t}\cG^{FF}F(\rho_{s}^{n},\zeta_{s}^{n})ds,\quad t\ge0,
\]
is a martingale with respect to the filtration $(\F_{t}^{\zeta^{n}})_{t\ge0}$
generated by the process $\zeta^{n}$ that coincides with $(\F_{t}^{\eta^{n}})_{t\ge0}$.
Here 
\[
\cG^{FF}F(\rho,\zeta)=2\pi^{2}\binn{\Delta_{n}D_{1}G(\rho,\eta),\rho}_{n}+\cG^{EP}G(\rho,\cdot)(\eta),
\]
where $G(\rho,\eta):=F(\rho,\zeta)$ and $\eta=\rho+(2n+1)^{-d/2}\zeta$.

Similarly to the previous section, we extend $\rho_{t}^{n}$ and $\zeta_{t}^{n}$
to the domain $\Td$ by setting
\begin{equation}
\hat{\rho}_{t}^{n}:=\ex_{n}\rho_{t}^{n}\quad\text{and}\quad\hat{\zeta}_{t}^{n}:=\ex_{n}\zeta_{t}^{n}\label{eq:rho_n_and_eta_n}
\end{equation}
for all $t\ge0$ and consider $(\hat{\rho}_{t}^{n},\hat{\zeta}_{t}^{n})$,
$t\ge0$, as a process with values in $\H[J]\times\H[-I]$ for each
$I,J\in\R$. Since $(\hat{\rho}^{n},\hat{\zeta}^{n})$ is obtained
from the Markov process $(\rho^{n},\zeta^{n})$ using injective mappings,
it is a Markov process too. Furthermore, for each $F\in\Cf^{1}(\H[J]\times\H[-I])$
\begin{equation}
F(\hat{\rho}_{t}^{n},\hat{\zeta}_{t}^{n})-F(\hat{\rho}_{0}^{n},\hat{\zeta}_{0}^{n})-\int_{0}^{t}\cG^{FF}\hat{F}(\rho_{s}^{n},\zeta_{s}^{n})ds,\quad t\ge0,\label{eq:Itos_formula_for_eta_n}
\end{equation}
is a martingale with respect to $(\F_{t}^{\zeta^{n}})_{t\ge0}$, where
$\hat{F}(\rho,\zeta)=F(\ex_{n}\rho,\ex_{n}\zeta)$, $\rho,\zeta\in\Ln$,
and $\hat{F}\in\Cf^{1}(\Ln^{2})$, according to Lemma \ref{lem:differentiability_of_F_ex_n}.
Note that $\cG^{FF}\hat{F}(\rho_{s}^{n},\zeta_{s}^{n})$ can be rewritten
as $\cG^{FF}\hat{F}(\pr_{n}\hat{\rho}_{s}^{n},\pr_{n}\hat{\zeta}_{s}^{n})$
due to Lemma \ref{lem:continuity_of_pr_and_ex}. Consequently, setting
\[
\hat{\cG}^{FF}F(\rho,\zeta):=\cG^{FF}\hat{F}(\pr_{n}\rho,\pr_{n}\zeta)=\cG^{FF}\left(F\circ\ex_{n}^{\times2}\right)(\pr_{n}\rho,\pr_{n}\zeta),\quad\rho\in\H[J],\ \ \zeta\in\H[-I],
\]
where $\ex_{n}^{\times2}(f,g)=(\ex_{n}f,\ex_{n}g),$ $f,g\in\Ln$,
we conclude that for each $F\in\Cf^{1}(\H[J]\times H_{I})$
\begin{equation}
F(\hat{\rho}_{t}^{n},\hat{\zeta}_{t}^{n})-F(\hat{\rho}_{0}^{n},\hat{\zeta}_{0}^{n})-\int_{0}^{t}\hat{\cG}^{FF}F(\hat{\rho}_{s}^{n},\hat{\zeta}_{s}^{n})ds,\quad t\ge0,\label{eq:Itos_formula_for_eta_n-1}
\end{equation}
is a martingale with respect to $(\F_{t}^{\zeta^{n}})_{t\ge0}$. In
particular, the expectation of the martingale in (\ref{eq:Itos_formula_for_eta_n-1})
equals zero. 

We will need the following property for the case of time dependent
functions $F$.
\begin{lem}
\label{lem:Ito_formula_for_eta_t_dependence} Let $J,I\in\R$ and
$F\in\Cf^{1,1,1}([0,\infty),\H[J],H_{-I})$. Then, for all $t\ge0$
and $n\in\N$,
\[
\E F_{t}(\hat{\rho}_{t}^{n},\hat{\zeta}_{t}^{n})=\E F_{0}(\hat{\rho}_{0}^{n},\hat{\zeta}_{0}^{n})+\int_{0}^{t}\E\left[\partial F_{s}(\hat{\rho}_{s}^{n},\hat{\zeta}_{s}^{n})+\hat{\cG}^{FF}F_{s}(\hat{\rho}_{s}^{n},\hat{\zeta}_{s}^{n})\right]ds,\quad t\ge0.
\]
\end{lem}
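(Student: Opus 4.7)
The strategy is to reduce the time-dependent identity to the time-independent Dynkin identity obtained by taking expectations in the martingale (\ref{eq:Itos_formula_for_eta_n-1}). For fixed $t>0$ and $n\in\N$, I set $g(s):=\E F_{s}(\hat{\rho}_{s}^{n},\hat{\zeta}_{s}^{n})$ for $s\in[0,t]$, and aim to show that $g(t)-g(0)=\int_{0}^{t}\E[\partial F_{s}+\hat{\cG}^{FF}F_{s}](\hat{\rho}_{s}^{n},\hat{\zeta}_{s}^{n})ds$.

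The preparatory step is to record that for each fixed $n$ the state space $\EEP$ is finite and $\rho^{n}$ is the bounded deterministic solution of the discrete heat equation (\ref{eq:descrete_heat_equation}), so the path $s\mapsto(\hat{\rho}_{s}^{n},\hat{\zeta}_{s}^{n})$ stays in a bounded subset $K\subset\H[J]\times\H[-I]$ throughout $[0,t]$. Combined with the $\Cf^{1,1,1}$ assumption on $F$ and the explicit form of $\hat{\cG}^{FF}$ as a finite sum of first- and second-order difference quotients (inherited from the representation of $\cG_{n}^{EP}$ in (\ref{eq:generator_of_SSEP})), this makes $F_{s}$, $\partial F_{s}$, $\hat{\cG}^{FF}F_{s}$ uniformly bounded and jointly uniformly continuous in $(s,\rho,\zeta)\in[0,t]\times K$.

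Next, for a partition $0=s_{0}<s_{1}<\cdots<s_{N}=t$, I would telescope $g(t)-g(0)$ and split each increment into a time part $A_{k}:=F_{s_{k+1}}(\hat{\rho}_{s_{k+1}}^{n},\hat{\zeta}_{s_{k+1}}^{n})-F_{s_{k}}(\hat{\rho}_{s_{k+1}}^{n},\hat{\zeta}_{s_{k+1}}^{n})$ and a spatial part $B_{k}:=F_{s_{k}}(\hat{\rho}_{s_{k+1}}^{n},\hat{\zeta}_{s_{k+1}}^{n})-F_{s_{k}}(\hat{\rho}_{s_{k}}^{n},\hat{\zeta}_{s_{k}}^{n})$. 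The $A_{k}$ are handled by the fundamental theorem of calculus in time, giving $A_{k}=\int_{s_{k}}^{s_{k+1}}\partial F_{u}(\hat{\rho}_{s_{k+1}}^{n},\hat{\zeta}_{s_{k+1}}^{n})du$. For $B_{k}$, the time-independent Dynkin identity applied with the frozen test function $F_{s_{k}}$, together with the Markov property for $(\hat{\rho}^{n},\hat{\zeta}^{n})$ and the zero-expectation property of the martingale in (\ref{eq:Itos_formula_for_eta_n-1}), gives
\[
\E B_{k}=\int_{s_{k}}^{s_{k+1}}\E\bigl[\hat{\cG}^{FF}F_{s_{k}}(\hat{\rho}_{u}^{n},\hat{\zeta}_{u}^{n})\bigr]du.
\]
Summing over $k$ and taking expectation produces the desired identity with $F_{s_{k}}$ and $F_{u}$ replacing $F_{s}$ in the two integrands.

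Refining the partition and passing to the limit by dominated convergence then yields the claim: the uniform continuity of $\partial F$ and $\hat{\cG}^{FF}F$ on $[0,t]\times K$ established above ensures that $\partial F_{u}(\hat{\rho}_{s_{k+1}}^{n},\hat{\zeta}_{s_{k+1}}^{n})\to\partial F_{u}(\hat{\rho}_{u}^{n},\hat{\zeta}_{u}^{n})$ and $\hat{\cG}^{FF}F_{s_{k}}(\hat{\rho}_{u}^{n},\hat{\zeta}_{u}^{n})\to\hat{\cG}^{FF}F_{u}(\hat{\rho}_{u}^{n},\hat{\zeta}_{u}^{n})$ uniformly in $u\in[s_{k},s_{k+1}]$ as the mesh tends to $0$. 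The main obstacle is precisely this joint uniform continuity of $s\mapsto\hat{\cG}^{FF}F_{s}$ in the time variable along the trajectory; I would address it by expanding $\hat{\cG}^{FF}$ via the difference formula on the right-hand side of (\ref{eq:increments_for_xi_lrarrow}) and controlling the resulting expressions by $\partial D_{1}F$, $\partial D_{2}F$ and the second-order derivatives of $F_{s}$ on $K$, all of which are continuous in $s$ by the $\Cf^{1,1,1}$ hypothesis.
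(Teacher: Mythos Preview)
Your proposal is correct and follows essentially the same route as the paper: telescope along a partition, split each increment into a frozen-time spatial part handled by the Dynkin identity from \eqref{eq:Itos_formula_for_eta_n-1} and a frozen-position time part handled by the fundamental theorem of calculus, then pass to the limit by dominated convergence using that the process lives in a bounded (indeed compact) finite-dimensional set. Two minor remarks: the ``uniformly in $u$'' claim in your last paragraph is slightly too strong since $(\hat{\rho}^{n},\hat{\zeta}^{n})$ is only c\`adl\`ag, but right-continuity plus boundedness suffices for dominated convergence (as the paper does); and you do not need to invoke $\partial D_{1}F$, $\partial D_{2}F$ or second derivatives to get continuity of $s\mapsto\hat{\cG}^{FF}F_{s}$, since $\hat{\cG}^{FF}$ is built only from point evaluations of $F_{s}$ and of $D_{1}F_{s}$, whose joint continuity already follows from $F\in\Cf^{1,1,1}$.
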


\begin{proof}
Considering a partition $0=t_{0}<t_{1}<\ldots<t_{m}=t$, we get
\begin{align}
\E F_{t}(\hat{\rho}_{t}^{n},\hat{\zeta}_{t}^{n}) & -\E F_{0}(\hat{\rho}_{0}^{n},\hat{\zeta}_{0}^{n})=\sum_{k=1}^{m}\E\left[F_{t_{i}}(\hat{\rho}_{t_{i}}^{n},\hat{\zeta}_{t_{i}}^{n})-F_{t_{i-1}}(\hat{\rho}_{t_{i-1}}^{n},\hat{\zeta}_{t_{i-1}}^{n})\right]\nonumber \\
 & =\sum_{k=1}^{m}\E\left[F_{t_{i-1}}(\hat{\rho}_{t_{i}}^{n},\hat{\zeta}_{t_{i}}^{n})-F_{t_{i-1}}(\hat{\rho}_{t_{i-1}}^{n},\hat{\zeta}_{t_{i-1}}^{n})\right]\nonumber \\
 & +\sum_{k=1}^{m}\E\left[F_{t_{i}}(\hat{\rho}_{t_{i}}^{n},\hat{\zeta}_{t_{i}}^{n})-F_{t_{i-1}}(\hat{\rho}_{t_{i}}^{n},\hat{\zeta}_{t_{i}}^{n})\right]\label{eq:proof_of_itos_formula_discrete_case}\\
 & =\sum_{k=1}^{m}\E\left[\int_{t_{i-1}}^{t_{i}}\hat{\cG}^{FF}F_{t_{i-1}}(\hat{\rho}_{s}^{n},\hat{\zeta}_{s}^{n})ds\right]\nonumber \\
 & +\sum_{k=1}^{m}\E\left[\int_{t_{i-1}}^{t_{i}}\partial_{s}F_{s}(\hat{\rho}_{t_{i}}^{n},\hat{\zeta}_{t_{i}}^{n})ds\right],\nonumber 
\end{align}
where we used (\ref{eq:Itos_formula_for_eta_n-1}). Trivially, $E_{n}:=\ex_{n}(\EHE)\times\ex_{n}(n^{d/2}\EHE)$
is a compact subset of $\H[J]\times\H[-I]$ because it is closed,
bounded and finite-dimensional. Moreover, $(\hat{\rho}_{t}^{n},\hat{\zeta}_{t}^{n}),$
$t\ge0$, takes values in $E_{n}$. We also note that the functions
$(s,\rho,\zeta)\mapsto\hat{\cG}^{FF}F_{s}(\rho,\zeta)$ and $(s,\rho,\zeta)\mapsto\partial F_{s}(\rho,\zeta)$
are continuous and, thus, bounded on $[0,t]\times E_{n}$. Using the
right-continuity of $(\hat{\rho}_{t}^{n},\hat{\zeta}_{t}^{n}),$ $t\ge0$,
and the dominated convergence theorem, we conclude that the right
hand side of (\ref{eq:proof_of_itos_formula_discrete_case}) converges
to
\[
\int_{0}^{t}\E\left[\hat{\cG}^{FF}F_{s}(\hat{\rho}_{s}^{n},\hat{\zeta}_{s}^{n})+\partial_{s}F(\hat{\rho}_{s}^{n},\hat{\zeta}_{s}^{n})\right]ds,
\]
as the mesh of the partition goes to zero. This completes the proof
of the lemma.
\end{proof}
In the next statement, we get an expansion of $\hat{\cG}_{n}^{FF}F(\hat{\rho},\hat{\zeta})$
needed for its comparison with the generator of an Ornstein-Uhlenbeck
process. Let $\tau_{j}^{n}$ denote the shift operator on $\Tnd$
defined by $\tau_{j}^{n}f(x)=f(x+e_{j}^{n})$. 
\begin{prop}
\label{prop:expansion_of_cG_FEP}Let $J>2$, $I>d+2$, $\tilde{I}\ge0$,
$\lceil\tilde{I}\rceil+1+\frac{d}{2}<I$ and $F\in\Cf_{l,HS}^{1,3}(\H[J],\H[-I])$.
Then for each $n\ge1$ there exists a function $R_{n}:\EHE\times\R^{\Tnd}\to\R$
such that
\begin{align*}
\hat{\cG}_{n}^{FF}F(\hat{\rho},\hat{\zeta}) & =2\pi^{2}\binn{\Delta D_{1}F(\hat{\rho},\hat{\zeta}),\hat{\rho}}+2\pi^{2}\binn{\Delta D_{2}F(\hat{\rho},\hat{\zeta}),\hat{\zeta}}\\
 & +4\pi^{2}\sum_{j=1}^{d}\binn{\Tr\left(\partial_{j}^{\otimes2}D_{2}^{2}F(\hat{\rho},\hat{\zeta})\right),\hat{\rho}(1-\hat{\rho})}\\
 & +\frac{2\pi^{2}}{(2n+1)^{d}}\sum_{j=1}^{d}\binn{\Tr\left(\partial_{j}^{\otimes2}D_{2}^{2}F(\hat{\rho},\hat{\zeta})\right),\ex_{n}\left[\zeta\tau_{j}^{n}\zeta\right]}\\
 & +R_{n}^{FF}(\rho,\zeta),
\end{align*}
and 
\[
\left|R_{n}^{FF}(\rho,\zeta)\right|\le\frac{C_{J,I,\tilde{I}}}{n^{\frac{d}{2}\wedge1}}\norm F_{C_{l,HS}^{1,3}}\left(1+\norm{\hat{\rho}}_{\Cf^{\lceil d/2\rceil+4}}^{2}+\norm{\hat{\rho}}_{\Cf^{\lceil\tilde{I}\rceil}}\right)\left(1+\norm{\hat{\zeta}}_{H_{-I+2}}+\norm{\hat{\zeta}}_{\H[-\tilde{I}]}\right)
\]
for all $\rho\in\EHE$ and $\zeta=(2n+1)^{d/2}(\eta-\rho)$, $\eta\in\EEP$. 
\end{prop}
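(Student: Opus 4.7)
The plan is to substitute $\tilde F(\rho,\zeta):=F(\ex_n\rho,\ex_n\zeta)$ into the definition of $\cG^{FF}$, reduce the $\eta$-variable part to a form to which Lemma~\ref{lem:expansion_of_cG_EP} directly applies, and then replace the resulting discrete objects by their continuous analogs on $\Td$. The essential algebraic step is a cancellation of apparently divergent terms. With $\tilde G(\rho,\eta):=\tilde F(\rho,(2n+1)^{d/2}(\eta-\rho))$, the chain-rule identities $D(F\circ\ex_n)=\pr_n DF(\ex_n\cdot)$ and $D^2(F\circ\ex_n)=\pr_n^{\otimes 2}D^2F(\ex_n\cdot)$ collected in Section~\ref{subsec:pr_and_ex_operators} yield
\[
D_1\tilde G=\pr_n D_1F(\hat\rho,\hat\zeta)-(2n+1)^{d/2}\pr_n D_2F(\hat\rho,\hat\zeta),\qquad D_\eta^k\tilde G=(2n+1)^{kd/2}\pr_n^{\otimes k}D_2^kF(\hat\rho,\hat\zeta),
\]
for $k=1,2,3$. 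Inserting these into $\hat\cG_n^{FF}F(\hat\rho,\hat\zeta)=2\pi^2\langle\Delta_n D_1\tilde G,\rho\rangle_n+\cG_n^{EP}\tilde G(\rho,\cdot)(\eta)$ and applying Lemma~\ref{lem:expansion_of_cG_EP} to the second summand produces a term $2\pi^2(2n+1)^{d/2}\langle\Delta_n\pr_n D_2F,\eta\rangle_n$ of formally divergent order. Writing $\eta=\rho+(2n+1)^{-d/2}\zeta$, its $\rho$-component exactly cancels the matching $-(2n+1)^{d/2}$ contribution coming from $D_1\tilde G$, and the first-order part that survives is precisely $2\pi^2\langle\Delta_n\pr_n D_1F,\rho\rangle_n+2\pi^2\langle\Delta_n\pr_n D_2F,\zeta\rangle_n$.

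Next I would unpack the Hessian piece delivered by Lemma~\ref{lem:expansion_of_cG_EP}, which after absorbing the $(2n+1)^d$ factor coming from $D_\eta^2\tilde G$ has the form $\frac{4\pi^4}{(2n+1)^2}\sum_j\langle\mathrm{Tr}(\partial_{n,j}^{\otimes 2}\pr_n^{\otimes 2}D_2^2F),[\partial_{n,j}\eta]^2\rangle_n$. Since $\eta\in\{0,1\}^{\Tnd}$, one has the key identity $[\eta(x+e_j^n)-\eta(x)]^2=\eta(x)(1-\eta(x+e_j^n))+\eta(x+e_j^n)(1-\eta(x))$. Substituting $\eta=\rho+(2n+1)^{-d/2}\zeta$ and expanding, this splits into a leading $\rho$-piece proportional to $\rho(x)(1-\rho(x+e_j^n))+\rho(x+e_j^n)(1-\rho(x))$, cross $\rho\zeta$-terms of size $(2n+1)^{-d/2}$, and a quadratic $\zeta$-piece $-2(2n+1)^{-d}\zeta(x)\zeta(x+e_j^n)$. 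After using $\rho(x+e_j^n)=\rho(x)+O(n^{-1}\|\hat\rho\|_{\Cf^{1}})$ to symmetrise the leading piece and passing Riemann sums to $\Td$-integrals, these three contributions match respectively (up to controlled errors) the target term $4\pi^2\sum_j\langle\mathrm{Tr}(\partial_j^{\otimes 2}D_2^2F),\hat\rho(1-\hat\rho)\rangle$, the term $\frac{2\pi^2}{(2n+1)^d}\sum_j\langle\mathrm{Tr}(\partial_j^{\otimes 2}D_2^2F),\ex_n[\zeta\tau_j^n\zeta]\rangle$, and a cross contribution of order $n^{-d/2}$ that I push into $R_n^{FF}$.

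The last step is to convert each surviving discrete object ($\Delta_n\pr_n\to\Delta$, $\partial_{n,j}^{\otimes 2}\pr_n^{\otimes 2}\to\partial_j^{\otimes 2}$, $\langle\cdot,\cdot\rangle_n\to\langle\cdot,\cdot\rangle_{L_2(\Td)}$) to its continuous counterpart using the lattice identities of Section~\ref{subsec:pr_and_ex_operators}. The errors come from four sources: the eigenvalue mismatch from Lemma~\ref{lem:estimate_of_discrete_eigenvalues}, producing $O(n^{-2})$ per Fourier mode and hence $O(n^{-2})\|\hat\rho\|_{\Cf^{\lceil\tilde I\rceil}}\cdot\|\hat\zeta\|_{\H[-\tilde I]}$ after duality (which is where the condition $\lceil\tilde I\rceil+1+d/2<I$ enters); Riemann-sum and discrete-derivative errors of order $O(n^{-1})$ paired with $\|\hat\rho\|_{\Cf^{\lceil d/2\rceil+4}}$; the symmetrisation error of order $O(n^{-1}\|\hat\rho\|_{\Cf^{1}})$; and the remainder $R_n^{EP}$ from Lemma~\ref{lem:expansion_of_cG_EP} applied to $\tilde G$, which using $\|D_\eta^3\tilde G\|_{\Cf}=(2n+1)^{3d/2}\|\pr_n^{\otimes 3}D_2^3F\|_{\Cf}$ is bounded by $O(n^{-1-d/2})\|F\|_{\Cf_{l,HS}^{1,3}}$. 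Summing everything produces the claimed bound with the rate $n^{-(d/2\wedge 1)}$.

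The delicate step is the third one. The trace in the Hessian term is finite only because $D_2^2F$ is Hilbert--Schmidt, and preserving this finiteness while substituting the continuous Laplacian and derivatives requires a careful pairing of Sobolev and Hölder exponents: the loss of two derivatives from $\partial_j^{\otimes 2}$ forces the pairing of $\hat\zeta$ in $\H[-I+2]$, while the eigenvalue mismatch demands the extra smoothness $\|\hat\rho\|_{\Cf^{\lceil\tilde I\rceil}}$ paired with $\|\hat\zeta\|_{\H[-\tilde I]}$; the symmetrisation and Riemann-sum passages pick up $\|\hat\rho\|_{\Cf^{\lceil d/2\rceil+4}}$. Orchestrating these bounds so that the $\rho$-dependence remains polynomial, the $\zeta$-dependence linear, and the rate $n^{-(d/2\wedge 1)}$ is not degraded by any book-keeping loss is the real technical burden; no cancellation beyond the one already exploited in the first step is required.
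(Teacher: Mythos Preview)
Your proposal is correct and follows essentially the same route as the paper's proof: chain rule for $\tilde G$, application of Lemma~\ref{lem:expansion_of_cG_EP}, the cancellation of the $(2n+1)^{d/2}$ terms, the binary-valued identity $(\eta(x+e_j)-\eta(x))^2=\eta(x)+\eta(x+e_j)-2\eta(x)\eta(x+e_j)$, the substitution $\eta=\rho+(2n+1)^{-d/2}\zeta$, and the discrete-to-continuous conversions via the lemmas in Section~\ref{subsec:pr_and_ex_operators} and Proposition~\ref{prop:expansion_of_diffusion_term_for_ssep}. The paper organises the Hessian expansion slightly differently (it first writes $\tau_j^n\eta+\eta-2\eta\tau_j^n\eta=2\rho(1-\rho)+(\tau_j^n\eta-\rho)+(\eta-\rho)-2(\eta\tau_j^n\eta-\rho^2)$ and then substitutes $\eta=\rho+(2n+1)^{-d/2}\zeta$ only inside the last bracket), but the resulting terms and estimates are the same.

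One attribution to correct: the norm $\|\hat\zeta\|_{H_{-I+2}}$ in the remainder bound does \emph{not} come from the Hessian piece as you write; it arises from the first-order conversion $\langle\Delta_n\pr_n D_2F,\zeta\rangle_n\to\langle\Delta D_2F,\hat\zeta\rangle$ via Lemma~\ref{lem:discrete_and_continuous_laplace_operator}, which loses two derivatives on the $\zeta$-side. The Hessian-related $\zeta$-contributions are controlled in $\|\hat\zeta\|_{H_{-\tilde I}}$ through Proposition~\ref{prop:expansion_of_diffusion_term_for_ssep}, which is indeed where the hypothesis $\lceil\tilde I\rceil+1+\tfrac d2<I$ is used.
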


\begin{proof}
Take $\rho\in\EHE$, $\eta\in\EEP$ and $\zeta:=(2n+1)^{d/2}(\eta-\rho)$.
For $\hat{G}(\rho,\eta):=\hat{F}(\rho,(2n+1)^{d/2}(\eta-\rho))$,
where $\hat{F}(\rho,\zeta)=F(\hat{\rho},\hat{\zeta})$, we first rewrite
\begin{align*}
\binn{\Delta_{n}D_{1}\hat{G}(\rho,\eta),\rho}_{n} & =\binn{\Delta_{n}D_{1}\hat{F}(\rho,(2n+1)^{d/2}(\eta-\rho)),\rho}_{n}\\
 & -(2n+1)^{d/2}\binn{\Delta_{n}D_{2}\hat{F}(\rho,(2n+1)^{d/2}(\eta-\rho)),\rho}_{n}\\
 & =\binn{\Delta_{n}\pr_{n}D_{1}F(\hat{\rho},\hat{\zeta}),\rho}_{n}-(2n+1)^{d/2}\binn{\Delta_{n}\pr_{n}D_{2}F(\hat{\rho},\hat{\zeta}),\rho}_{n}
\end{align*}
due to Lemma \ref{lem:differentiability_of_F_ex_n}. Thus, using Lemma
\ref{lem:expansion_of_cG_EP}, we obtain 
\begin{align}
\cG_{n}^{FF}\hat{F}(\rho,\zeta) & =2\pi^{2}\binn{\Delta_{n}\pr_{n}D_{1}F(\hat{\rho},\hat{\zeta}),\rho}_{n}-2\pi^{2}(2n+1)^{d/2}\binn{\Delta_{n}\pr_{n}D_{2}F(\hat{\rho},\hat{\zeta}),\rho}_{n}\nonumber \\
 & +2\pi^{2}(2n+1)^{d/2}\inn{\Delta_{n}\pr_{n}D_{2}F(\hat{\rho},\hat{\zeta}),\eta}_{n}\nonumber \\
 & +\frac{4\pi^{4}}{(2n+1)^{2}}\sum_{j=1}^{d}\binn{\Tr\left(\partial_{n,j}^{\otimes2}\pr_{n}^{\otimes2}D_{2}^{2}F(\hat{\rho},\hat{\zeta})\right),\left(\partial_{n,j}\eta\right)^{2}}_{n}\label{eq:expansion_for_cG_FP}\\
 & +R_{n}^{EP}(\rho,\eta),\nonumber 
\end{align}
where the error term $R_{n}^{EP}$ satisfies
\[
\left|R_{n}^{EP}(\rho,\eta)\right|\le\frac{C_{I}}{(2n+1)^{d/2+1}}\norm{D_{2}^{3}F}_{\Cf}.
\]

For the first term of the equality (\ref{eq:expansion_for_cG_FP})
we have
\begin{align*}
\Bigg|\binn{\Delta_{n}\pr_{n}D_{1}F(\hat{\rho},\hat{\zeta}),\rho}_{n} & -\binn{\Delta D_{1}F(\hat{\rho},\hat{\zeta}),\hat{\rho}}\Bigg|\\
 & =\left|\binn{\ex_{n}\Delta_{n}\pr_{n}D_{1}F(\hat{\rho},\hat{\zeta}),\hat{\rho}}-\binn{\pr_{n}\Delta D_{1}F(\hat{\rho},\hat{\zeta}),\hat{\rho}}\right|\\
 & \le\bnorm{\ex_{n}\Delta_{n}\pr_{n}D_{1}F(\hat{\rho},\hat{\zeta})-\pr_{n}\Delta D_{1}F(\hat{\rho},\hat{\zeta})}_{H_{J-2}}\norm{\hat{\rho}}_{H_{-J+2}}\\
 & \le\frac{C}{n}\norm{D_{1}F(\hat{\rho},\hat{\zeta})}_{H_{J}}\norm{\hat{\rho}}\le\frac{C}{n}\norm{D_{1}F}_{\Cf},
\end{align*}
according to (\ref{eq:connection_between_continuous_and_discrete_inner_products}),
Lemma \ref{lem:discrete_and_continuous_laplace_operator} and the
fact that $J\ge2$. Similarly, we get

\begin{align*}
\left|\inn{\Delta_{n}\pr_{n}D_{2}F(\hat{\rho},\hat{\zeta}),\zeta}_{n}-\binn{\Delta D_{2}F(\hat{\rho},\hat{\zeta}),\hat{\zeta}}\right| & \le\frac{C}{n}\norm{D_{2}F(\hat{\rho},\hat{\zeta})}_{H_{I}}\norm{\hat{\zeta}}_{H_{-I+2}}\\
 & \le\frac{C}{n}\norm{D_{2}F}_{\Cf}\norm{\hat{\zeta}}_{H_{-I+2}}.
\end{align*}

To rewrite the fourth term in (\ref{eq:expansion_for_cG_FP}), which
will be denoted by $I_{4}$, we first set
\[
U_{j,n}(\rho,\eta):=\Tr\left(\partial_{n,j}^{\otimes2}\pr_{n}^{\otimes2}D_{2}^{2}F(\hat{\rho},\hat{\zeta})\right)
\]
 and note that 
\begin{align*}
\left(\partial_{n,j}\eta(x)\right)^{2} & =\frac{\left(2n+1\right)^{2}}{4\pi^{2}}\left(\eta(x+e_{j}^{n})-\eta(x)\right)^{2}\\
 & =\frac{\left(2n+1\right)^{2}}{4\pi^{2}}\left(\eta(x+e_{j}^{n})+\eta(x)-2\eta(x+e_{j}^{n})\eta(x)\right)
\end{align*}
for all $x\in\Tnd$. In terms of the shift operator $\tau_{j}^{n}$,
we get
\begin{align*}
I_{4} & =\pi^{2}\sum_{j=1}^{d}\binn{U_{j,n}(\rho,\eta),\tau_{j}^{n}\eta+\eta-2\eta\tau_{j}^{n}\eta}_{n}\\
 & =2\pi^{2}\sum_{j=1}^{d}\binn{U_{j,n}(\rho,\eta),\rho(1-\rho)}_{n}+\pi^{2}\sum_{j=1}^{d}\binn{U_{j,n}(\rho,\eta),\tau_{j}^{n}\eta-\rho}_{n}\\
 & +\pi^{2}\sum_{j=1}^{d}\binn{U_{j,n}(\rho,\eta),\eta-\rho}_{n}-2\pi^{2}\sum_{j=1}^{d}\binn{U_{j,n}(\rho,\eta),\eta\tau_{j}^{n}\eta-\rho^{2}}_{n}=:\sum_{i=1}^{4}I_{4,i}.
\end{align*}
We next estimate each term of the right hand side of the equality
above. 

$(I_{4,1})$: According to Proposition \ref{prop:expansion_of_diffusion_term_for_ssep},
there exists a function $R_{j,n}^{4,1}:\EHE\times\EEP\to\Ln$ such
that
\begin{equation}
U_{j,n}(\rho,\eta)=\pr_{n}\Tr\left(\partial_{j}^{\otimes2}D_{2}^{2}F(\hat{\rho},\hat{\zeta})\right)+R_{j,n}^{4,1}(\rho,\eta)\label{eq:function_U_j_n}
\end{equation}
due to $I>d+2$, where 

\begin{align}
\max_{x\in\T_{n}^{d}}\left|R_{j,n}^{4,1}(\rho,\eta)(x)\right| & \le\frac{C_{I}}{n}\sup_{\rho\in\EHE,\zeta\in\R^{\Tnd}}\norm{D_{2}^{2}F(\hat{\rho},\hat{\zeta})}_{\MLOHS[2][{\H[-I]}]}\le\frac{C_{I}}{n}\norm F_{\Cf_{l,HS}^{1,3}}.\label{eq:estimate_of_R_4_1}
\end{align}
We also note that
\begin{equation}
\bnorm{\ex_{n}\rho^{2}-\left(\ex_{n}\rho\right)^{2}}\le\frac{C}{n}\norm{\ex_{n}\rho}_{\Cf^{\lceil d/2\rceil+4}}^{2},\label{eq:estimate_of_ex_rho^2-ex^2_rho}
\end{equation}
by Lemma \ref{lem:exchange_of_ex_and_square}. Thus, setting $H(\rho,\eta):=\Tr\left(\partial_{j}^{\otimes2}D_{2}^{2}F(\hat{\rho},\hat{\zeta})\right)$
and using (\ref{eq:connection_between_continuous_and_discrete_inner_products}),
we can rewrite 
\begin{align*}
\binn{U_{j,n}(\rho,\eta),\rho(1-\rho)}_{n} & =\binn{\pr_{n}H(\rho,\eta),\rho}_{n}-\binn{\pr_{n}H(\rho,\eta),\rho^{2}}_{n}+\binn{R_{j,n}^{4,1}(\rho,\eta),\rho(1-\rho)}_{n}\\
 & =\binn{H(\rho,\eta),\hat{\rho}}-\binn{H(\rho,\eta),\ex_{n}\rho^{2}}+\binn{R_{j,n}^{4,1}(\rho,\eta),\rho(1-\rho)}_{n}\\
 & =\binn{H(\rho,\eta),\hat{\rho}(1-\hat{\rho})}+R_{j,n}^{4}(\rho,\eta),
\end{align*}
where $R_{j,n}^{4}(\rho,\eta):=\binn{R_{j,n}^{4,1}(\rho,\eta),\rho(1-\rho)}_{n}+\binn{H(\rho,\eta),\left(\ex_{n}\rho\right)^{2}-\ex_{n}\rho^{2}}$.
Using (\ref{eq:estimate_of_R_4_1}) and (\ref{eq:estimate_of_ex_rho^2-ex^2_rho}),
we get
\[
\left|R_{j,n}^{4}(\rho,\eta)\right|\le\frac{C_{I}}{n}\norm F_{\Cf_{l,HS}^{1,3}}+\frac{C}{n}\norm{H(\rho,\eta)}\norm{\ex_{n}\rho}_{\Cf^{\lceil d/2\rceil+4}}^{2}.
\]
Since $I>\frac{d}{2}+1$, we can use Lemmas \ref{lem:basic_properties_of_pr}
(iii), \ref{lem:tr_operator} and \ref{lem:norm_of_derivative_of_multilinear_operator}
to get 
\begin{align*}
\norm{H(\rho,\eta)} & \le C_{I}\norm{\partial_{j}^{\otimes2}D_{2}^{2}F(\hat{\rho},\hat{\zeta})}_{\MLOHS[2][{\H[-I+1]}]}\\
 & \le C_{I}\norm{D_{2}^{2}F(\hat{\rho},\hat{\zeta})}_{\MLOHS[2][{\H[-I]}]}\le C_{I}\norm F_{\Cf_{l,HS}^{1,3}}.
\end{align*}
Thus, 
\[
\left|R_{j,n}^{4}(\rho,\eta)\right|\le\frac{C_{I}}{n}\left[\norm F_{\Cf_{l,HS}^{1,3}}\left(1+\norm{\hat{\rho}}_{\Cf^{\lceil d/2\rceil+4}}^{2}\right)\right].
\]

$(I_{4,3})$: Using (\ref{eq:connection_between_continuous_and_discrete_inner_products})
and Proposition \ref{prop:expansion_of_diffusion_term_for_ssep},
we get 
\begin{align*}
\left|\binn{U_{j,n}(\rho,\eta),\eta-\rho}_{n}\right| & =\frac{1}{(2n+1)^{d/2}}\left|\binn{U_{j,n}(\rho,\eta),\zeta}_{n}\right|\\
 & =\frac{1}{(2n+1)^{d/2}}\left|\binn{\ex_{n}U_{j,n}(\rho,\eta),\hat{\zeta}}\right|\\
 & \le\frac{1}{n^{d/2}}\norm{\ex_{n}U_{j,n}(\rho,\eta)}_{H_{\tilde{I}}}\norm{\hat{\zeta}}_{H_{-\tilde{I}}}\\
 & \le\frac{C_{I,\tilde{I}}}{n^{d/2}}\norm{D_{2}^{2}F(\hat{\rho},\hat{\zeta})}_{\MLOHS[2][{\H[-I]}]}\norm{\hat{\zeta}}_{H_{-\tilde{I}}}\\
 & \le\frac{C_{I,\tilde{I}}}{n^{d/2}}\norm F_{\Cf_{l,HS}^{1,3}}\norm{\hat{\zeta}}_{H_{-\tilde{I}}}
\end{align*}
for each $\tilde{I}\ge0$ such that $\tilde{I}+1+\frac{d}{2}<I$.

$(I_{4,2}):$ We first rewrite
\begin{align*}
\binn{U_{j,n}(\rho,\eta),\tau_{j}^{n}\eta-\rho}_{n} & =\binn{U_{j,n}(\rho,\eta),\eta-\rho}_{n}-\binn{U_{j,n}(\rho,\eta),\tau_{j}^{n}\eta-\eta}_{n}\\
 & =\binn{U_{j,n}(\rho,\eta),\eta-\rho}_{n}-\frac{2\pi}{2n+1}\binn{U_{j,n}(\rho,\eta),\partial_{n,j}\eta}_{n}.
\end{align*}
The term $\binn{U_{j,n}(\rho,\eta),\eta-\rho}_{n}$ was estimated
above. We now estimate
\begin{align*}
\left|\binn{U_{j,n}(\rho,\eta),\partial_{n,j}\eta}_{n}\right| & =\left|\binn{\partial_{n,j}U_{j,n}(\rho,\eta),\eta}_{n}\right|=\left|\binn{\ex_{n}\partial_{n,j}U_{j,n}(\rho,\eta),\hat{\eta}}\right|\\
 & \le\norm{\ex_{n}\partial_{n,j}U_{j,n}(\rho,\eta)}\norm{\hat{\eta}}.
\end{align*}
Due to the fact that $\eta$ takes values from $\{0,1\}$ and Corollary
\ref{cor:connection_between_descrete_and_cont_inner_product}, we
get $\norm{\hat{\eta}}=\norm{\ex_{n}\eta}=\norm{\eta}_{n}\le1$. By
Lemma \ref{lem:estimate_of_norm_ex_partial} and Proposition \ref{prop:expansion_of_diffusion_term_for_ssep},
we obtain
\[
\norm{\ex_{n}\partial_{n,j}U_{j,n}(\rho,\eta)}\le\norm{\ex_{n}U_{j,n}(\rho,\eta)}_{H_{1}}\le C_{I}\norm F_{\Cf_{l,HS}^{1,3}},
\]
where we have used the fact that $I>2+\frac{d}{2}$. Thus, 
\[
\binn{U_{j,n}(\rho,\eta),\tau_{j}^{n}\eta-\rho}_{n}\le\frac{C_{I,\tilde{I}}}{n^{d/2}}\norm F_{\Cf_{l,HS}^{1,3}}\norm{\hat{\zeta}}_{H_{-\tilde{I}}}+\frac{C_{J}}{n}\norm F_{\Cf_{l,HS}^{1,3}}.
\]

$(I_{4,4}):$ Using the equality $\eta=\rho+n^{-d/2}\zeta$, we first
rewrite
\begin{align*}
\binn{U_{j,n}(\rho,\eta),\eta\tau_{j}^{n}\eta-\rho^{2}}_{n} & =\binn{U_{j,n}(\rho,\eta),\rho\tau_{j}^{n}\rho-\rho^{2}}_{n}\\
 & +\frac{1}{(2n+1)^{d/2}}\binn{U_{j,n}(\rho,\eta),\rho\tau_{j}^{n}\zeta}_{n}\\
 & +\frac{1}{(2n+1)^{d/2}}\binn{U_{j,n}(\rho,\eta),\zeta\tau_{j}^{n}\rho}_{n}\\
 & +\frac{1}{(2n+1)^{d}}\binn{U_{j,n}(\rho,\eta),\zeta\tau_{j}^{n}\zeta}_{n}.
\end{align*}
Let $I_{4,4,i}$, $i\in[4]$, denote the terms in the right hand side
of the equality above. We first estimate the term $I_{4,4,1}$ as
follows
\begin{align*}
|I_{4,4,1}| & =\frac{2\pi}{2n+1}\left|\binn{U_{j,n}(\rho,\eta)\rho,\partial_{n,j}\rho}_{n}\right|\le\frac{2\pi}{2n+1}\norm{U_{j,n}(\rho,\eta)\rho}_{n}\norm{\partial_{n,j}\rho}_{n}\\
 & \le\frac{2\pi}{2n+1}\norm{U_{j,n}(\rho,\eta)}_{n}\norm{\partial_{n,j}\rho}_{n}=\frac{2\pi}{2n+1}\norm{\ex_{n}U_{j,n}(\rho,\eta)}\norm{\ex_{n}\partial_{n,j}\rho}\\
 & \le\frac{C_{I}}{n}\norm{D_{2}^{2}F(\hat{\rho},\hat{\zeta})}_{\MLOHS[2][{\H[-I]}]}\norm{\hat{\rho}}_{H_{1}}\\
 & \le\frac{C_{I}}{n}\norm F_{\Cf_{l,HS}^{1,3}}\norm{\hat{\rho}}_{H_{1}},
\end{align*}
where we used (\ref{eq:connection_between_continuous_and_discrete_inner_products})
and Proposition \ref{prop:expansion_of_diffusion_term_for_ssep}.
According (\ref{eq:connection_between_continuous_and_discrete_inner_products})
and Lemmas \ref{lem:norm_of_ex_of_product} and \ref{lem:norm_of_ex_shift},
the estimate
\begin{align*}
|I_{4,4,2}| & =\frac{1}{(2n+1)^{d/2}}\left|\binn{U_{j,n}(\rho,\eta)\rho,\tau_{j}^{n}\zeta}_{n}\right|=\frac{1}{(2n+1)^{d/2}}\left|\binn{\ex_{n}\left(U_{j,n}(\rho,\eta)\rho\right),\ex_{n}\tau_{j}^{n}\zeta}\right|\\
 & \le\frac{1}{n^{d/2}}\bnorm{\ex_{n}\left(U_{j,n}(\rho,\eta)\rho\right)}_{H_{\tilde{I}}}\norm{\ex_{n}\tau_{j}^{n}\zeta}_{H_{-\tilde{I}}}\\
 & \le\frac{C_{I,\tilde{J}}}{n^{d/2}}\bnorm{\ex_{n}U_{j,n}(\rho,\eta)}_{H_{\lceil\tilde{I}\rceil}}\norm{\hat{\rho}}_{\Cf^{\lceil\tilde{I}\rceil}}\norm{\hat{\zeta}}_{H_{-\tilde{I}}}\\
 & \le\frac{C_{I,\tilde{I},\tilde{J}}}{n^{d/2}}\norm F_{\Cf_{l,HS}^{1,3}}\norm{\hat{\rho}}_{\Cf^{\lceil\tilde{I}\rceil}}\norm{\hat{\zeta}}_{H_{-\tilde{I}}}
\end{align*}
holds due to $\lceil\tilde{I}\rceil+1+\frac{d}{2}<I$. Here we estimated
$\bnorm{\ex_{n}U_{j,n}(\rho,\eta)}_{H_{\tilde{I}}}$ as in $(I_{4,3})$.
The term $I_{4,4,3}$, can be estimated similarly to $I_{4,4,2}$
by the same expression. Due to the equality (\ref{eq:function_U_j_n}),
we get
\begin{align*}
I_{4,4,4} & =\frac{1}{(2n+1)^{d}}\binn{U_{j,n}(\rho,\eta),\zeta\tau_{j}^{n}\zeta}_{n}\\
 & =\frac{1}{(2n+1)^{d}}\binn{\pr_{n}\Tr\left(\partial_{j}^{\otimes2}D_{2}^{2}F(\hat{\rho},\hat{\zeta})\right),\zeta\tau_{j}^{n}\zeta}_{n}+\binn{R_{j,n}^{4,1}(\rho,\eta),(\eta-\rho)\tau_{j}^{n}(\eta-\rho)}_{n}.
\end{align*}
Now, by (\ref{eq:estimate_of_R_4_1}) and the boundedness of $\eta$
and $\rho$, we obtain
\[
\left|\binn{R_{j,n}^{4,1}(\rho,\eta),(\eta-\rho)\tau_{j}^{n}(\eta-\rho)}_{n}\right|\le\frac{C_{I}}{n}\norm F_{\Cf_{l,HS}^{1,3}}.
\]
This completes the proof of the proposition.
\end{proof}

\subsection{Some properties of the density fluctuation field\protect\label{subsec:Some-properties-of}}

The goal of this section is to estimate the Sobolev norm of the density
fluctuation field and the expectation of the term $\binn{f,\ex_{n}\left[\zeta_{t}^{n}\tau_{j}^{n}\zeta_{t}^{n}\right]}$
appearing in the expansion of the generator $\cG^{FF}$. We first
prove an auxiliary statement.
\begin{lem}
\label{lem:estimate_of_inner_product_of_eta_with_varphi}Let $\rho_{0}^{n}\in\Ln$
take values in $[0,1]$, $\varphi\in\Cf(\T^{d})$ and $(\eta_{t}^{n})_{t\ge0}$
be the SSEP started from $\eta_{0}^{n}=(\eta_{0}^{n}(x))_{x\in\Tnd}$
for each $n\ge1$, where $\eta_{0}^{n}(x)$, $x\in\Tnd$, are independent
random variables with Bernoulli distribution with parameters $\rho_{0}^{n}(x),$
$x\in\Tnd$, respectively. Let also $\rho_{t}^{n}(x)=\E\eta_{t}^{n}(x)$,
$x\in\Tnd$, $t\ge0$, and $\zeta_{t}^{n}=\left(2n+1\right)^{d/2}(\eta_{t}^{n}-\rho_{t}^{n})$,
$t\ge0$. Then, for every $t\ge0$,
\[
\E\left[\inn{\ex_{n}\zeta_{t}^{n},\varphi}^{2}\right]\le\left(1+2\pi^{2}t\bnorm{\nabla_{n}\rho_{0}^{n}}_{n,\Cf}^{2}\right)\norm{\pr_{n}\varphi}_{n,\Cf}^{2}.
\]
\end{lem}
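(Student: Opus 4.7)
By the adjointness $\inn{\ex_n\zeta_t^n,\varphi} = \inn{\zeta_t^n,\pr_n\varphi}_n$ of $\ex_n$ and $\pr_n$, setting $\psi := \pr_n\varphi$ reduces the task to bounding $\E[Y_t^2]$ for $Y_t := \inn{\zeta_t^n,\psi}_n$. The strategy is to decompose $Y_t$ into an initial random variable plus a martingale via a backward heat evolution. Let $(T_r)_{r\ge 0}$ denote the semigroup on $\Ln$ with generator $2\pi^2\Delta_n$ and set $\psi_s := T_{t-s}\psi$. Applying Lemma \ref{lem:Ito_formula_for_eta_t_dependence} to the time-dependent functional $F_s(\eta) := \inn{\psi_s,\eta}_n$, the two drift contributions cancel exactly, since $\inn{\partial_s\psi_s,\eta}_n = -2\pi^2\inn{\Delta_n\psi_s,\eta}_n = -\cG_n^{EP}\inn{\psi_s,\cdot}_n(\eta)$. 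Because $\rho_s^n$ solves the discrete heat equation (\ref{eq:descrete_heat_equation}) deterministically, the analogous computation gives $\inn{\psi,\rho_t^n}_n = \inn{T_t\psi,\rho_0^n}_n$. Subtracting and multiplying by $(2n+1)^{d/2}$ yields
\begin{equation*}
Y_t = \inn{T_t\psi,\zeta_0^n}_n + \widetilde M_t,
\end{equation*}
where $\widetilde M$ is a martingale with respect to $(\F_s^{\eta^n})_{s\ge 0}$ satisfying $\widetilde M_0 = 0$.

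The initial term is $\F_0^{\eta^n}$-measurable, so orthogonality yields $\E[Y_t^2] = \E[\inn{T_t\psi,\zeta_0^n}_n^2] + \E[\widetilde M_t^2]$. The initial variance is controlled via the product Bernoulli structure of $\eta_0^n$ together with the maximum principle for $T_t$,
\begin{equation*}
\E[\inn{T_t\psi,\zeta_0^n}_n^2] = \frac{1}{(2n+1)^d}\sum_{x\in\Tnd}(T_t\psi(x))^2\rho_0^n(x)(1-\rho_0^n(x)) \le \tfrac14\|T_t\psi\|_{n,\Cf}^2 \le \tfrac14\|\psi\|_{n,\Cf}^2,
\end{equation*}
and the quadratic variation is computed from the carr\'e du champ of $\cG_n^{EP}$ (as in the proof of Lemma \ref{lem:expansion_of_cG_EP}),
\begin{equation*}
\E[\widetilde M_t^2] = \frac{2\pi^2}{(2n+1)^d}\int_0^t\sum_{j=1}^d\sum_{x\in\Tnd}\E[(\eta_s^n(x+e_j^n)-\eta_s^n(x))^2]\,(\partial_{n,j}\psi_s(x))^2\,ds.
\end{equation*}

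The key pointwise bound on $\E[(\eta_s^n(x+e_j^n)-\eta_s^n(x))^2]$ uses the classical graphical (``stirring'') representation $\eta_s^n(z) = \eta_0^n(W_s^z)$, where $(W^z)_z$ are coupled continuous-time random walks on $\Tnd$ that never collide when started from distinct sites. Conditioning on the walker positions and applying the identity $a+b-2ab = (a-b)^2 + a(1-a) + b(1-b)$ to the independent Bernoulli initial data gives
\begin{equation*}
\E[(\eta_s^n(x+e_j^n)-\eta_s^n(x))^2] = \E[(\rho_0^n(W_s^x)-\rho_0^n(W_s^{x+e_j^n}))^2] + \sum_{y\in\{x,\,x+e_j^n\}}\E[\rho_0^n(W_s^y)(1-\rho_0^n(W_s^y))].
\end{equation*}
The two entropy terms are each bounded by $1/4$; summing them against $(\partial_{n,j}\psi_s(x))^2$ and using the Dirichlet-form identity $\int_0^t\inn{-\Delta_n\psi_s,\psi_s}_n\,ds = \frac{1}{4\pi^2}(\|\psi\|_{n,L_2}^2-\|T_t\psi\|_{n,L_2}^2) \le \frac{1}{4\pi^2}\|\psi\|_{n,\Cf}^2$ (derived from $\frac{d}{dr}\|T_r\psi\|_{n,L_2}^2 = -4\pi^2\inn{-\Delta_n T_r\psi,T_r\psi}_n$) bounds their joint contribution to $\E[\widetilde M_t^2]$ by $\tfrac14\|\psi\|_{n,\Cf}^2$.

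For the remaining gradient term, the discrete Lipschitz estimate $|\rho_0^n(u)-\rho_0^n(v)|^2 \le \|\nabla_n\rho_0^n\|_{n,\Cf}^2\,|u-v|_{\Td}^2$ combined with a sharp diffusion bound on the coupled stirring pair, $\E|W_s^x - W_s^{x+e_j^n}|_{\Td}^2 \lesssim s$, yields $\E[(\rho_0^n(W_s^x)-\rho_0^n(W_s^{x+e_j^n}))^2] \lesssim \|\nabla_n\rho_0^n\|_{n,\Cf}^2\cdot s$. After integration, using the variant $\int_0^t s\inn{-\Delta_n\psi_s,\psi_s}_n\,ds \le \frac{t}{4\pi^2}\|\psi\|_{n,L_2}^2$ (obtained by integration by parts in $s$ and the monotonicity of $\|T_r\psi\|_{n,L_2}$), this contributes at most $2\pi^2 t\|\nabla_n\rho_0^n\|_{n,\Cf}^2\|\psi\|_{n,\Cf}^2$ to $\E[\widetilde M_t^2]$. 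Summing the three contributions gives the claim. The main obstacle is the sharp diffusion estimate for the coupled stirring pair $(W^x,W^{x+e_j^n})$: although they have the same marginal law as continuous-time random walks of diffusivity $2\pi^2$, their joint law is correlated through the non-collision constraint, and it is precisely this repulsive correlation that must be tracked in order to recover the sharp prefactor $2\pi^2$ in the final bound.
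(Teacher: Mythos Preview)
Your martingale decomposition via the backward heat semigroup is correct and genuinely different from the paper's proof. The paper expands $\E[Y_t^2]$ directly as a double sum over $x,y\in\Tnd$, bounds the diagonal trivially by $\|\psi\|_n^2$, and controls the off-diagonal two-point correlations $V(t,x,y)=\E[(\eta_t^n(x)-\rho_t^n(x))(\eta_t^n(y)-\rho_t^n(y))]$ by deriving a PDE for $V$ with source $\propto(\rho_s^n(u)-\rho_s^n(u+e))^2$ supported on adjacent pairs and representing the solution via duality with the two-particle exclusion semigroup (this is the Ravishankar argument the paper cites). Your approach is cleaner and actually yields a stronger conclusion, but you have overcomplicated the quadratic-variation step: the stirring representation and the ``sharp diffusion bound'' are unnecessary. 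Simply insert the trivial bound $(\eta_s^n(x+e_j)-\eta_s^n(x))^2\le 1$ into your quadratic-variation formula; the Dirichlet identity you already recorded then gives
\[
\E[\widetilde M_t^2]\le 2\pi^2\int_0^t\inn{-\Delta_n\psi_s,\psi_s}_n\,ds\le\tfrac12\|\psi\|_n^2,
\]
and together with the initial variance $\le\tfrac14\|\psi\|_{n,\Cf}^2$ you obtain $\E[Y_t^2]\le\tfrac34\|\psi\|_{n,\Cf}^2$ \emph{uniformly in $t$}, which already implies the lemma with room to spare. The obstacle you single out---a diffusion estimate on $\E|W_s^x-W_s^{x+e_j}|_{\Td}^2$ with the precise constant needed to recover the prefactor $2\pi^2$---is therefore self-imposed; it is also genuinely delicate in its own right, since the exclusion interaction is repulsive and makes the pair separate faster than two independent walkers while adjacent, so the route you sketch would not readily produce that sharp constant anyway.
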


\begin{proof}
We set $\varphi_{n}:=\pr_{n}\varphi$ and rewrite for $n\ge1$
\begin{align*}
\E\left[\inn{\ex_{n}\zeta_{t}^{n},\varphi}^{2}\right] & =\E\left[\inn{\zeta_{t}^{n},\pr_{n}\varphi}_{n}^{2}\right]=\frac{1}{(2n+1)^{2d}}\left[\sum_{x\in\Tnd}\zeta_{t}^{n}(x)\varphi_{n}(x)\right]^{2}\\
 & =\frac{1}{(2n+1)^{2d}}\sum_{x,y\in\Tnd}\E\left[\zeta_{t}^{n}(x)\zeta_{t}^{n}(y)\right]\varphi_{n}(x)\varphi_{n}(y)\\
 & =\frac{1}{(2n+1)^{d}}\sum_{x\in\Tnd}\E\left[\left(\eta_{t}^{n}(x)-\rho_{t}^{n}(x)\right)^{2}\right]\varphi_{n}^{2}(x)\\
 & +\frac{1}{(2n+1)^{d}}\sum_{x\not=y\in\Tnd}\E\left[(\eta_{t}^{n}(x)-\rho_{t}^{n}(x))(\eta_{t}^{n}(y)-\rho_{t}^{n}(y))\right]\varphi_{n}(x)\varphi_{n}(y).
\end{align*}

The first term of the right hand side of the equality above can be
estimated by 
\[
\frac{1}{(2n+1)^{d}}\sum_{x\in\T_{n}^{d}}\varphi_{n}^{2}(x)=\norm{\pr_{n}\varphi}_{n}^{2},
\]
due to the fact that $\eta_{t}^{n}(x)-\rho_{t}^{n}(x)\in[0,1]$ for
all $x\in\Tnd$ and $t\ge0$. The second term can be estimated by
\[
2\pi^{2}\sup_{s\in[0,t]}\max_{u\in\Tnd}\left|\nabla_{n}\rho_{s}^{n}(u)\right|^{2}\norm{\pr_{n}\varphi}_{n,\Cf}^{2}t
\]
similarly to the proof of the main theorem in \citep[p. 32]{Ravishankar:1992}
(see also Section \ref{subsec:Some-additional-facts} for the detailed
estimate). Combining both estimates together, we get
\begin{align*}
\E\left[\inn{\ex_{n}\zeta_{t}^{n},\varphi}^{2}\right] & \le\norm{\varphi_{n}}_{n}^{2}+2\pi^{2}\sup_{s\in[0,t]}\max_{u\in\Tnd}\left|\nabla_{n}\rho_{s}^{n}(u)\right|^{2}\norm{\varphi_{n}}_{n,\Cf}^{2}t\\
 & \le\left(1+2\pi^{2}t\sup_{s\in[0,t]}\bnorm{\nabla_{n}\rho_{s}^{n}}_{n,\Cf}^{2}\right)\norm{\varphi_{n}}_{n,\Cf}^{2}\\
 & \le\left(1+2\pi^{2}t\bnorm{\nabla_{n}\rho_{0}^{n}}_{n,\Cf}^{2}\right)\norm{\varphi_{n}}_{n,\Cf}^{2},
\end{align*}
according to the fact that $\rho_{t}^{n}$, $t\ge0$, is a solution
to (\ref{eq:descrete_heat_equation}) and the maximum principle. This
completes the proof of the lemma.
\end{proof}
\begin{lem}
\label{lem:estimate_of_sobolev_norm_of_eta}Let $I>\frac{d}{2}$.
Under the assumptions of Lemma \ref{lem:estimate_of_inner_product_of_eta_with_varphi},
for every $t\ge0$ one has
\[
\E\left[\norm{\ex_{n}\zeta_{t}^{n}}_{H_{-I}}^{2}\right]\le C_{I}\left(1+2\pi^{2}t\bnorm{\nabla_{n}\rho_{0}^{n}}_{n,\Cf}^{2}\right).
\]
\end{lem}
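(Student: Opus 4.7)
The plan is to expand the $H_{-I}$ norm in the orthonormal basis $\{\varsigma_k\}_{k\in\Z^d}$ and reduce the estimate to Lemma \ref{lem:estimate_of_inner_product_of_eta_with_varphi} applied coordinate-wise. By definition of the Sobolev norm,
\[
\bnorm{\ex_n \zeta_t^n}_{H_{-I}}^2 = \sum_{k\in\Z^d} \frac{1}{(1+|k|^2)^I}\left|\inn{\ex_n \zeta_t^n,\varsigma_k}\right|^2.
\]
As already exploited in the proof of Lemma \ref{lem:expansion_of_cG_EP} via the identity (\ref{eq:connection_between_ex_and_varsigma}), $\ex_n \zeta_t^n$ lies in the finite-dimensional subspace spanned by $\{\varsigma_k : k\in\Z_n^d\}$, so the sum collapses to $k\in\Z_n^d$ with $\inn{\ex_n \zeta_t^n,\varsigma_k} = \inn{\zeta_t^n,\varsigma_k}_n$.

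Next I would take expectations and apply Lemma \ref{lem:estimate_of_inner_product_of_eta_with_varphi} with $\varphi = \varsigma_k$ (treating real and imaginary parts separately, or equivalently using a real trigonometric basis, so that the $|\cdot|^2$ factorises into two applications of the lemma). This yields
\[
\E\left|\inn{\ex_n \zeta_t^n,\varsigma_k}\right|^2 \le C\left(1 + 2\pi^2 t\bnorm{\nabla_n \rho_0^n}_{n,\Cf}^2\right)\bnorm{\pr_n \varsigma_k}_{n,\Cf}^2,
\]
where the constant is independent of $k$ since each $\varsigma_k$ is a normalised trigonometric function on $\Td$ whose supremum norm is bounded by a universal constant, and $\pr_n$ is the sampling operator on $\Tnd$.

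Summing the resulting bound over $k\in\Z_n^d\subset\Z^d$,
\[
\E\bnorm{\ex_n \zeta_t^n}_{H_{-I}}^2 \le C\left(1 + 2\pi^2 t\bnorm{\nabla_n \rho_0^n}_{n,\Cf}^2\right)\sum_{k\in\Z^d} \frac{1}{(1+|k|^2)^I} \le C_I\left(1 + 2\pi^2 t\bnorm{\nabla_n \rho_0^n}_{n,\Cf}^2\right),
\]
which is finite precisely because $I>\frac{d}{2}$. No real obstacle is expected here: the only points requiring care are the uniform sup-norm bound on $\pr_n \varsigma_k$ and the reduction of the $k$-sum to $\Z_n^d$ via (\ref{eq:connection_between_ex_and_varsigma}), both of which are routine consequences of the construction of $\ex_n$ and of the discrete Fourier basis.
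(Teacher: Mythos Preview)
Your proof is correct and follows essentially the same approach as the paper's: expand the $H_{-I}$ norm in the Fourier basis, apply Lemma~\ref{lem:estimate_of_inner_product_of_eta_with_varphi} termwise, and sum using $I>\tfrac{d}{2}$. The only cosmetic difference is that the paper works directly with the real basis $\{\tilde{\varsigma}_k\}$ (which you correctly anticipate as the way to handle the complex $\varsigma_k$), and it disposes of the modes $k\notin\Z_n^d$ via $\pr_n\tilde{\varsigma}_k=0$ rather than via (\ref{eq:connection_between_ex_and_varsigma}); both routes are equivalent.
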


\begin{proof}
By the definition of $\norm{\cdot}_{H_{-I}}$ and Lemma \ref{lem:estimate_of_inner_product_of_eta_with_varphi},
we get
\begin{align*}
\E\left[\norm{\ex_{n}\zeta_{t}^{n}}_{H_{-I}}^{2}\right] & =\sum_{k\in\Z^{d}}\left(1+|k|^{2}\right)^{-I}\E\left|\inn{\ex_{n}\zeta_{t}^{n},\tilde{\varsigma}_{k}}\right|^{2}\\
 & \le\sum_{k\in\Z^{d}}\left(1+|k|^{2}\right)^{-I}\left(1+2\pi^{2}t\bnorm{\nabla_{n}\rho_{0}^{n}}_{n,\Cf}^{2}\right)\norm{\pr_{n}\tilde{\varsigma}_{k}}_{n,\Cf}^{2}\\
 & =\sum_{k\in\Z_{n}^{d}}\left(1+|k|^{2}\right)^{-I}\left(1+2\pi^{2}t\bnorm{\nabla_{n}\rho_{0}^{n}}_{n,\Cf}^{2}\right)\norm{\tilde{\varsigma}_{k}}_{n,\Cf}^{2}\\
 & \le C_{I}\left(1+2\pi^{2}t\bnorm{\nabla_{n}\rho_{0}^{n}}_{n,\Cf}^{2}\right),
\end{align*}
where we also used the boundedness of $\tilde{\varsigma}_{k}$ for
the estimate of $\norm{\tilde{\varsigma}_{k}}_{n,\Cf}$. The proof
of the lemma is complete.
\end{proof}
We recall that $\tau_{j}^{n}$ denotes the shift operator on $\Tnd$
defined by $\tau_{j}^{n}f(x)=\tau_{j}^{n}(x+e_{j}^{n})$.
\begin{lem}
\label{lem:estimate_of_eta_tau_eta} Let $J>\frac{d}{2}$. Under the
assumptions of Lemma \ref{lem:estimate_of_inner_product_of_eta_with_varphi},
for every $T>0$ there exists a constant $C$ depending on $J$, $T$
and $\sup_{n\ge1}\norm{\nabla_{n}\rho_{0}^{n}}_{n,\Cf}$ such that
for every random variable $f$ in $\H[J]$ with a finite second moment
and defined on the same probability space as $\zeta^{n}$ we have
\[
\left|\frac{1}{(2n+1)^{d}}\E\binn{f,\ex_{n}\left[\zeta_{t}^{n}\tau_{j}^{n}\zeta_{t}^{n}\right]}\right|\le\frac{C}{n^{\frac{d}{2}\wedge1}}\E\left[\norm f_{H_{J}}^{2}\right]^{\frac{1}{2}}
\]
for each $n\ge1$, $j\in[d]$ and $t\in[0,T]$.
\end{lem}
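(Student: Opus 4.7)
The plan is to reduce the claim to a second-moment estimate on $\ex_n[\zeta_t^n\tau_j^n\zeta_t^n]$ in $\H[-J]$, and to bound this second moment via the two- and four-point correlation functions of the SSEP. Using the duality $|\binn{f,\ex_n G}|\le \norm{f}_{H_J}\norm{\ex_n G}_{H_{-J}}$ together with Cauchy--Schwarz on the probability space, it suffices to show
\[
\E\bigl[\norm{\ex_n[\zeta_t^n\tau_j^n\zeta_t^n]}_{H_{-J}}^{2}\bigr]\le \frac{C(2n+1)^{2d}}{n^{d\wedge 2}},
\]
from which the conclusion follows after dividing by $(2n+1)^d$ and extracting the factor $\E[\norm{f}_{H_J}^2]^{1/2}$.

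Next, I would expand the $H_{-J}$ norm in the Fourier basis $\{\tilde\varsigma_k\}$ already used in Lemmas~\ref{lem:estimate_of_inner_product_of_eta_with_varphi}--\ref{lem:estimate_of_sobolev_norm_of_eta}. The identity $\binn{\ex_n g,\tilde\varsigma_k}=\binn{g,\varsigma_k}_n$ together with the fact that only the modes $k\in\Znd$ contribute rewrites the bound as a $k$-uniform estimate of $\E[|\binn{\zeta_t^n\tau_j^n\zeta_t^n,\varsigma_k}_n|^{2}]$, summable against $(1+|k|^2)^{-J}$ since $J>d/2$. Substituting $\zeta_t^n(x)\zeta_t^n(x+e_j^n)=(2n+1)^{d}\bar\eta_t^n(x)\bar\eta_t^n(x+e_j^n)$ with $\bar\eta_t^n:=\eta_t^n-\rho_t^n$ and expanding the square leads to a double sum
\[
(2n+1)^{2d}\sum_{x,y\in\Tnd}\varsigma_k(x)\overline{\varsigma_k(y)}\,\E\bigl[\bar\eta_t^n(x)\bar\eta_t^n(x+e_j^n)\bar\eta_t^n(y)\bar\eta_t^n(y+e_j^n)\bigr]
\]
of four-point correlation functions of the centered occupation variables.

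I would then decompose the joint expectation into Wick-type pairings of two-point correlation functions $C_t^n$ plus the truncated four-point cumulant $\kappa_t^n$. Combining the trivial diagonal bound $|C_t^n(x,x)|\le 1/4$, the off-diagonal estimate $|C_t^n(x,y)|\le C/(2n+1)^{d}$ for $x\ne y$ (which, for SSEP started from the product measure $\nu_{\rho_0^n}^n$, results from the closed ODE for two-point correlations with boundary data on the diagonal, equivalently from the duality with coalescing random walks), and the analogous $|\kappa_t^n|\le C/(2n+1)^{2d}$ bound for the truncated four-point function, the remaining task is careful index bookkeeping: each off-diagonal factor of $1/(2n+1)^d$ has to be paid for by the corresponding constraint that decreases the number of free summation indices. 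The dichotomy $d=1$ versus $d\ge 2$ in the rate comes from the ``on-diagonal'' pairing contributions (where the Wick pairing forces $y\in\{x-e_j^n,x,x+e_j^n\}$): in $d=1$ they only saturate the $1/\sqrt n$ bound, whereas for $d\ge 2$ the discretization-type error of order $1/n$ dominates.

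The main technical obstacle is the truncated four-point cumulant bound, which goes beyond the two-point estimate used in \citep{Ravishankar:1992}; as emphasized in the introduction, it is precisely this additional correlation input---derivable from the closed ODE (or equivalent coalescing-walk representation) satisfied by SSEP correlation functions starting from the product measure $\nu_{\rho_0^n}^n$, cf.\ Section~\ref{subsec:Some-additional-facts}---that distinguishes the quantitative argument from the classical non-quantitative one.
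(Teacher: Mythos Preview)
Your reduction via duality and Cauchy--Schwarz to a uniform-in-$k$ bound on $\E\bigl[\bigl|\inn{\varsigma_k,\zeta_t^n\tau_j^n\zeta_t^n}_n\bigr|^2\bigr]$, together with the diagonal/off-diagonal split of the $(x,y)$-sum, is exactly the paper's argument. The difference is in how the four-point correlation is handled: the paper does \emph{not} Wick-decompose. It directly invokes the bound
\[
\Bigl|\E\bigl[\bar\eta_t^n(x)\,\bar\eta_t^n(x+e_j^n)\,\bar\eta_t^n(y)\,\bar\eta_t^n(y+e_j^n)\bigr]\Bigr|\le \frac{C}{n^2}
\]
for four \emph{distinct} sites (citing \citep[Theorem~6.1]{Jara_Landim:2008} and \citep{Ferrari_Presutti:1991}), and controls the $O((2n+1)^d)$ pairs $(x,y)$ with a coincidence among $\{x,x+e_j^n,y,y+e_j^n\}$ by the trivial bound $1$. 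This yields $\frac{C}{n^2}+\frac{3}{(2n+1)^d}\le \frac{C}{n^{2\wedge d}}$ and hence the lemma.

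Your stated pointwise correlation exponents are not the SSEP rates and would not close the argument as written: the off-diagonal two-point function satisfies $|C_t^n(x,y)|\lesssim n^{-2}$ (up to a logarithm in $d=2$), not $(2n+1)^{-d}$, so for $d\ge 3$ your Wick-pairing bookkeeping overstates the decay; likewise the truncated four-point cumulant is $O(n^{-2})$, not $O((2n+1)^{-2d})$. Also, the relevant duality is the SSEP self-duality with stirring particles (as used in Section~\ref{subsec:Some-additional-facts}), not coalescing random walks. If you drop the Wick decomposition and instead use the direct four-point bound $C/n^2$ from the cited references, your proof coincides with the paper's.
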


\begin{proof}
Using Parseval's identity, (\ref{eq:connection_betwwen_ex_and_pr}),
the Cauchy-Schwarz inequality and (\ref{lem:basic_properties_of_pr})
(i), we get
\begin{align*}
\left|\frac{1}{(2n+1)^{d}}\E\binn{f,\ex_{n}\left[\zeta_{t}^{n}\tau_{j}^{n}\zeta_{t}^{n}\right]}\right|^{2} & =\left|\frac{1}{(2n+1)^{d}}\E\inn{\pr_{n}f,\zeta_{t}^{n}\tau_{j}^{n}\zeta_{t}^{n}}_{n}\right|^{2}\\
 & =\frac{1}{(2n+1)^{2d}}\left|\sum_{k\in\Znd}\E\left[\inn{\pr_{n}f,\varsigma_{k}}_{n}\inn{\varsigma_{k},\zeta_{t}^{n}\tau_{j}^{n}\zeta_{t}^{n}}_{n}\right]\right|^{2}\\
 & \le\frac{1}{(2n+1)^{2d}}\sum_{k\in\Znd}(1+|k|^{2})^{J}\E\left[\left|\inn{\pr_{n}f,\varsigma_{k}}_{n}\right|^{2}\right]\\
 & \qquad\qquad\cdot\sum_{k\in\Znd}\frac{1}{(1+|k|^{2})^{J}}\E\left[\left|\inn{\varsigma_{k},\zeta_{t}^{n}\tau_{j}^{n}\zeta_{t}^{n}}\right|^{2}\right]\\
 & \le\frac{C_{J}}{(2n+1)^{2d}}\E\left[\norm f_{H_{J}}^{2}\right]\max_{k\in\Znd}\E\left[\left|\inn{\varsigma_{k},\zeta_{t}^{n}\tau_{j}^{n}\zeta_{t}^{n}}\right|^{2}\right]
\end{align*}
since $J>\frac{d}{2}$. We next estimate for each $k\in\Znd$
\begin{align*}
 & \frac{1}{(2n+1)^{2d}}\E\left[\left|\inn{\varsigma_{k},\zeta_{t}^{n}\tau_{j}^{n}\zeta_{t}^{n}}\right|^{2}\right]=\frac{1}{(2n+1)^{2d}}\E\left[\inn{\varsigma_{k},\zeta_{t}^{n}\tau_{j}^{n}\zeta_{t}^{n}}\inn{\zeta_{t}^{n}\tau_{j}^{n}\zeta_{t}^{n},\varsigma_{k}}\right]\\
 & \qquad=\frac{1}{(2n+1)^{2d}}\sum_{x,y\in\Tnd}\varsigma_{k}(x)\varsigma_{-k}(y)\E\Big[(\eta_{t}^{n}(x)-\rho_{t}^{n}(x))(\eta_{t}^{n}(x+e_{j}^{n})-\rho_{t}^{n}(x+e_{j}^{n}))\\
 & \qquad\qquad\qquad\qquad\qquad\qquad\cdot(\eta_{t}^{n}(y)-\rho_{t}^{n}(y))(\eta_{t}^{n}(y+e_{j}^{n})-\rho_{t}^{n}(y+e_{j}^{n}))\Big].
\end{align*}

Following the observation in \citep[Theorem 6.1]{Jara_Landim:2008},
that in our setting will follow from similar computations \citep{Ferrari_Presutti:1991},
we can bound the expectation above by $\frac{C}{n^{2}}$ for distinct
$x,x+e_{j}^{n},y,y+e_{j}^{n}$, where the constant $C$ depends on
$T$ and $\sup_{n\ge1}\norm{\nabla_{n}\rho_{0}^{n}}_{n,\Cf}$. The
cardinality of the set 
\[
\bigg\{(x,y)\in\left(\Tnd\right)^{2}:\ x,x+e_{j}^{n},y,y+e_{j}^{n}\ \mbox{are not distinct}\bigg\}
\]
 is bounded by $3(2n+1)^{d}$. Thus, we can continue the estimate
by
\[
\frac{1}{(2n+1)^{2d}}\left[\frac{(2n+1)^{2d}C}{n^{2}}+3(2n+1)^{d}\right]=\frac{C}{n^{2}}+\frac{3}{(2n+1)^{d}}.
\]
Consequently, there exists a constant $C>0$ such that 
\[
\left|\frac{1}{(2n+1)^{d}}\E\binn{f,\ex_{n}\left[\zeta_{t}^{n}\tau_{j}^{n}\zeta_{t}^{n}\right]}\right|^{2}\le\frac{C}{n^{2\wedge d}}\E\left[\norm f_{H_{J}}^{2}\right].
\]
This completes the proof of the statement.
\end{proof}

\section{Generalized Ornstein-Uhlenbeck process\protect\label{sec:Generalized-Ornstein-Uhlenbeck}}

The main result of this section is the regularity of the solution
$U_{t}$, $t\ge0$, to the infinite-dimensional Kolmogorov backward
equation corresponding to the system of SPDEs (\ref{eq:heat_PDE}),
(\ref{eq:SPDE_for_OU_process}), which is defined by $U_{t}:=P_{t}^{OU}F$. 

The proof of this regularity faces several challenges due to the form
of the diffusion terms in (\ref{eq:SPDE_for_OU_process}). Firstly,
$\sqrt{\rho(1-\rho)}$ is not differentiable, which prevents from
following the usual approach to deduce the regularity of $U_{t}$
from the regularity of solutions to (\ref{eq:SPDE_for_OU_process})
with respect to their initial conditions. Secondly, the variance term
$\rho(1-\rho)$ is non-negative only for $\rho\in[0,1]$, and, as
a result, the function $U_{t}$ is well-defined only on a subset of
$\H[J]\times\H[-I]$. This is particularly problematic since the discrete
semigroup $(\hat{\rho}_{s}^{n},\hat{\zeta}_{s}^{n})$ does not necessarily
take values in this domain, since $\hat{\rho}_{s}^{n}$ is not a $[0,1]$-valued
function in general. However, this property is crucial for our main
approach based on (\ref{eq:the_main_comparison_of_generators-2}). 

To overcome the latter problem and also to avoid the discussion of
the differentiability of $U_{t}$ at boundary points of its domain,
in this section we first approximate $\rho(1-\rho)$ in the SPDE (\ref{eq:SPDE_for_OU_process})
by a smooth mollification $\Phi^{\eps}$ of the non-negative function
$\rho(1-\rho)\vee0$, such that $\sup_{\eps\in(0,1]}\norm{\Phi^{\eps}}_{\Cf^{1}}<\infty$.
This allows to approximate the function $U_{t}$ by solutions $U_{t}^{\eps}$
to Kolmogorov equations that now are well-defined on the complete
space $\H[J]\times\H[-I]$. Then, in Section \ref{sec:Comparison-of-processes},
we compare the corresponding generators on the functions $U_{t}^{\eps}$
and show that the additional mollification error can be well-controled. 

The remaining difficulty of the non-differentiability of the diffusion
coefficient $\sqrt{\rho(1-\rho)}$ is addressed in Section \ref{subsec:Differentiability-of-semigroup}
below. 

\subsection{Covariance and \Itos formula\protect\label{subsec:Covariance-and-Itos-formula}}

In this section, we fix a continuous bounded function $\Phi:\R\to[0,\infty)$
and build a Gaussian process in $\H[-I]$ for some $I$ that will
be used for the description of fluctuations of the SSEP. We first
consider the heat equation 
\begin{equation}
d\rho_{t}^{\infty}=2\pi^{2}\Delta\rho_{t}^{\infty}dt\label{eq:heat_PDE-1}
\end{equation}
in $\H$, for some $J\ge0$, with initial condition $\rho_{0}\in\H[J]$.
It is well-known that there exists a (continuous) $\H[J]$-valued
weak solution $(\rho_{t}^{\infty})_{t\ge0}$ to the heat equation
(\ref{eq:heat_PDE-1}). The semigroup associated with the PDE (\ref{eq:heat_PDE-1})
will be denoted by $P_{t}$, $t\ge0$. In particular, 
\[
\rho_{t}^{\infty}=P_{t}\rho_{0},\quad t\ge0.
\]
We next define the generalized Ornstein-Uhlenbeck process $(\zeta_{t}^{\infty})_{t\ge0}$
as the variational\footnote{See \citep[Definition 4.2.1]{Liu_Roeckner:2015} }
solution to the SPDE
\begin{align}
d\zeta_{t}^{\infty} & =2\pi^{2}\Delta\zeta_{t}^{\infty}dt+2\pi\nabla\cdot\left(\sqrt{\Phi(\rho_{t}^{\infty})}dW_{t}\right),\label{eq:SPDE_for_OU_process-1}
\end{align}
where $dW$ is a $d$-dimensional white noise. The differentiability
of the associated semigroup will follow from the differentiability
of the variance operator for the Ornstein-Uhlenbeck process whose
precise form is described in the next proposition.
\begin{prop}
\label{prop:well_possedness_of_OUP_SSEP}Let $\Phi$ be a bounded
non-negative continuous function. For each $I>\frac{d}{2}+1$, $\rho_{0}\in L_{2}(\T^{d})$
and $\zeta_{0}\in\H[-I]$ there exists a unique continuous $\H[-I]$-valued
variational solution $(\zeta_{t}^{\infty})_{t\ge0}$ to the SPDE (\ref{eq:SPDE_for_OU_process-1})
started from $\zeta_{0}$ and 
\[
\E\sup_{t\in[0,T]}\norm{\zeta_{t}^{\infty}}_{H_{-I}}^{2}<\infty
\]
for each $T>0$, where $(\rho_{t}^{\infty})_{t\ge0}$ solves the heat
equation (\ref{eq:SPDE_for_OU_process-1}) with initial condition
$\rho_{0}$. Moreover, $\zeta^{\infty}$ is a Gaussian process in
$\H[-I]$ with expectation
\begin{equation}
m_{t}(\zeta_{0})[\varphi]:=\E\left\langle \varphi,\zeta_{t}^{\infty}\right\rangle =\left\langle P_{t}\varphi,\zeta_{0}\right\rangle ,\quad\varphi\in\Cf^{\infty}(\Td),\label{eq:expectation_of_eta}
\end{equation}
 and covariance operator
\begin{align}
V_{t}(\rho)[\varphi,\psi]: & =\cov\left(\inn{\zeta_{t}^{\infty},\varphi},\inn{\psi,\zeta_{t}^{\infty}}\right)\nonumber \\
 & =2\pi^{2}\int_{0}^{t}\binn{\nabla P_{t-s}\varphi\cdot\nabla P_{t-s}\psi,\Phi\left(P_{s}\rho\right)}ds,\quad\varphi,\psi\in\Cf^{\infty}(\Td),\label{eq:eq:definition_of_covariance_operator}
\end{align}
for each $t>0$.
\end{prop}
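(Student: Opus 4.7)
The plan is to cast the SPDE (\ref{eq:SPDE_for_OU_process-1}) into the linear variational framework and then read the law off the resulting mild formula. I would work on the Gelfand triple $V:=\H[-I+1]\subset\cH:=\H[-I]\subset V^{*}=\H[-I-1]$, in which the drift $Au:=2\pi^{2}\Delta u$ is linear, bounded, and self-adjoint from $V$ to $V^{*}$, with $\binn{Au,u}_{V^{*},V}=-2\pi^{2}(\norm u_{V}^{2}-\norm u_{\cH}^{2})$. Hemicontinuity, monotonicity, and coercivity of $A$ are therefore automatic.

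The first nontrivial step is to verify that the diffusion coefficient $B(t)h:=2\pi\nabla\cdot(\sqrt{\Phi(P_{t}\rho_{0})}\,h)$ defines a Hilbert--Schmidt operator from $L_{2}(\Td;\R^{d})$ into $\cH$, uniformly in $t\in[0,T]$. This is the only place where the hypothesis $I>\tfrac{d}{2}+1$ enters: boundedness of $\Phi$ makes multiplication by $\sqrt{\Phi(P_{t}\rho_{0})}$ uniformly bounded on $L_{2}(\Td;\R^{d})$, the divergence is continuous from $L_{2}$ into $\H[-1]$, and the embedding $\H[-1]\hookrightarrow\H[-I]$ is Hilbert--Schmidt precisely when $\sum_{k\in\Z^{d}}(1+|k|^{2})^{-(I-1)}<\infty$, i.e.\ when $I-1>d/2$. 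Standard linear variational SPDE theory (see \citep{Liu_Roeckner:2015}) then produces a unique continuous $\cH$-valued variational solution of (\ref{eq:SPDE_for_OU_process-1}) together with the moment bound $\E\sup_{t\in[0,T]}\norm{\zeta_{t}^{\infty}}_{\H[-I]}^{2}<\infty$.

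To obtain Gaussianity and the explicit formulas, I would pass to the mild form
\[
\zeta_{t}^{\infty}=P_{t}\zeta_{0}+2\pi\int_{0}^{t}P_{t-s}\nabla\cdot\bigl(\sqrt{\Phi(\rho_{s}^{\infty})}\,dW_{s}\bigr),
\]
whose equivalence with the variational formulation is standard given the uniform Hilbert--Schmidt bound on $B(\cdot)$ and the analyticity of the heat semigroup on $\Td$. Testing against a smooth $\varphi$, using $P_{t}^{*}=P_{t}$, and integrating by parts yields
\[
\inn{\varphi,\zeta_{t}^{\infty}}=\inn{P_{t}\varphi,\zeta_{0}}-2\pi\int_{0}^{t}\binn{\nabla P_{t-s}\varphi,\sqrt{\Phi(\rho_{s}^{\infty})}\,dW_{s}}.
\]
The stochastic integrand is deterministic, so $\inn{\varphi,\zeta_{t}^{\infty}}$ is a Wiener integral, and hence Gaussian; applying the same identity to finite linear combinations of test functions shows that $\zeta^{\infty}$ is a Gaussian process in $\H[-I]$. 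Taking expectations kills the stochastic integral and produces (\ref{eq:expectation_of_eta}), while \Itos isometry applied componentwise in the $d$-dimensional white noise gives (\ref{eq:eq:definition_of_covariance_operator}) after the routine bookkeeping of the prefactor.

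The main obstacle, and the only step in which the precise Sobolev regularity is used, is the Hilbert--Schmidt bound on $B(t)$ in the scale above the critical embedding threshold. Once that is established, well-posedness is a direct application of classical linear variational SPDE theory, while Gaussianity and the mean and covariance formulas follow from the mild representation and \Itos isometry; in particular, Gaussianity is automatic because the diffusion coefficient depends only on the deterministic profile $\rho^{\infty}$.
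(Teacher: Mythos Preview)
Your proposal is correct and follows the paper's route for well-posedness: the same Gelfand triple, the same reference \citep{Liu_Roeckner:2015}, and the same key observation that $B(t)$ is Hilbert--Schmidt into $\H[-I]$ precisely when $I>\tfrac{d}{2}+1$. Your factorization argument (bounded multiplication, divergence into $\H[-1]$, Hilbert--Schmidt embedding $\H[-1]\hookrightarrow\H[-I]$) is equivalent to the paper's direct computation of $\norm{B(t)}_{HS}^{2}$ in the Fourier basis.

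The genuine methodological difference is in the second half. The paper argues Gaussianity by noting that the Galerkin approximations are solutions of linear finite-dimensional SDEs, hence Gaussian, and the limit of Gaussians is Gaussian; it then derives the mean and covariance via the time-reversed test function $\psi_{s}=P_{t-s}\varphi$ and \Itos formula applied to $\inn{\psi_{s},\zeta_{s}^{\infty}}$ and its square, staying entirely within the martingale-problem framework. You instead invoke the mild representation and read off Gaussianity, mean, and covariance directly from the Wiener integral with deterministic integrand and \Itos isometry. Your route is arguably more streamlined once the mild--variational equivalence is granted, while the paper's route avoids that equivalence altogether and never leaves the variational setting. Both are standard and yield the same formulas; the time-reversed test function trick in the paper is essentially the dual way of writing what you obtain from $P_{t-s}^{*}=P_{t-s}$ in the mild formula.
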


\begin{proof}
The existence and uniqueness of the variational solution to the SPDE
(\ref{eq:SPDE_for_OU_process-1}) follows from \citep[Theorem 4.2.4]{Liu_Roeckner:2015}
and the fact that $B(t):L_{2}(\T^{d};\R^{d})\to\H[-I]$ defined by
\begin{equation}
B(t)h:=2\pi\nabla\cdot\left(\sqrt{\Phi(\rho_{t})}h\right)\label{eq:definition_of_B}
\end{equation}
is a Hilbert-Schmidt operator with Hilbert-Schmidt norm
\begin{align*}
\norm{B(t)}_{HS}^{2} & :=\sum_{l=1}^{\infty}\norm{B(t)h_{l}}_{H_{-I}}^{2}=4\pi^{2}\sum_{l=1}^{\infty}\sum_{k\in\Z^{d}}(1+|k|^{2})^{-I}\left|\inn{\sqrt{\Phi(\rho_{t})}h_{l},\nabla\varsigma_{k}}\right|^{2}\\
 & =4\pi^{2}\sum_{l=1}^{\infty}\sum_{k\in\Z^{d}}(1+|k|^{2})^{-I}\left|\inn{h_{l},\sqrt{\Phi(\rho_{t})}k\varsigma_{k}}\right|^{2}\\
 & =4\pi^{2}\sum_{k\in\Z^{d}}(1+|k|^{2})^{-I+1}\norm{\sqrt{\Phi(\rho_{t})}\varsigma_{k}}^{2}\le4\pi^{2}\norm{\Phi}_{\Cf}\sum_{k\in\Z^{d}}(1+|k|^{2})^{-I+1}<\infty,
\end{align*}
where $\{h_{l},\ l\ge1\}$ is an orthonormal basis of $(L_{2}(\Td))^{d}$.
Note that the construction of the variational solution is obtained
by Galerkin approximation leading to linear SDEs \citep[(4.48)]{Liu_Roeckner:2015}.
This implies that the process $\zeta^{\infty}$ is Gaussian in $\H[-I]$
as a limit of Gaussian processes. 

Let $T>0$ and $\varphi\in\Cf^{\infty}(\T^{d})$ be fixed. Consider
$\psi_{t}:=P_{T-t}\varphi\in\H[I+2]$ for all $t\in[0,T]$ and use
the martingale problem for $\zeta^{\infty}$ and \Itos formula to
get
\begin{align*}
\inn{\psi_{t},\zeta_{t}^{\infty}} & =\inn{\psi_{0},\zeta_{0}}+\int_{0}^{t}\binn{\partial_{s}\psi_{s},\zeta_{s}^{\infty}}ds+2\pi^{2}\int_{0}^{t}\inn{\Delta\psi_{s},\zeta_{s}^{\infty}}ds+\mbox{mart.}\\
 & =\inn{P_{T}\varphi,\zeta_{0}}+\mbox{mart.}
\end{align*}
for all $t\in[0,T]$ a.s. Thus, taking the expectation and setting
$t=T$, we get
\begin{equation}
m_{T}(\zeta_{0})[\varphi]=\E\left\langle \varphi,\zeta_{T}^{\infty}\right\rangle =\left\langle P_{T}\varphi,\zeta_{0}\right\rangle .\label{eq:computaion_of_expectation_of_OU_process}
\end{equation}

Similarly, we compute
\begin{align*}
\inn{\psi_{t},\zeta_{t}^{\infty}}^{2} & =\inn{\psi_{0},\zeta_{0}}^{2}+2\int_{0}^{t}\inn{\psi_{s},\zeta_{s}^{\infty}}\binn{\partial_{s}\psi_{s},\zeta_{s}^{\infty}}ds+4\pi^{2}\int_{0}^{t}\inn{\psi_{s},\zeta_{s}^{\infty}}\inn{\Delta\psi_{s},\zeta_{s}^{\infty}}ds\\
 & +2\pi^{2}\int_{0}^{t}\binn{\left|\nabla\psi_{s}\right|^{2},\Phi(\rho_{s}^{\infty})}ds+\mbox{mart.}\\
 & =\inn{P_{T}\varphi,\zeta_{0}}^{2}+2\pi^{2}\int_{0}^{t}\binn{\left|\nabla\psi_{s}\right|^{2},\Phi(\rho_{s}^{\infty})}ds+\mbox{mart.}
\end{align*}
Therefore, using (\ref{eq:computaion_of_expectation_of_OU_process}),
we obtain for $t=T$
\begin{align*}
\var\inn{\varphi,\zeta_{T}^{\infty}} & =\var\inn{\psi_{T},\zeta_{T}^{\infty}}=\E\left[\inn{\psi_{T},\zeta_{T}^{\infty}}^{2}\right]-\left[\E\inn{\psi_{T},\zeta_{T}^{\infty}}\right]^{2}\\
 & =\inn{P_{T}\varphi,\zeta_{0}}^{2}+2\pi^{2}\int_{0}^{T}\binn{\left|\nabla\psi_{s}\right|^{2},\Phi(\rho_{s}^{\infty})}ds-\inn{P_{T}\varphi,\zeta_{0}}^{2}\\
 & =2\pi^{2}\int_{0}^{T}\binn{\left|\nabla P_{T-s}\varphi\right|^{2},\Phi(\rho_{s}^{\infty})}ds.
\end{align*}
The expression for the covariance operator $V_{t}(\rho)$ follows
from the polarization equality. This completes the proof of the proposition.
\end{proof}
\begin{rem}
\label{rem:equation_with_general_coefficient}The statement of the
theorem remains valid if $\Phi(\rho_{t})$ is replaced by $\Phi_{t}$
for each measurable locally bounded function $\Phi:[0,\infty)\to\L$
with $\Phi_{t}\ge0$ for all $t\ge0$. 
\end{rem}

\begin{lem}
\label{lem:whide_continuity_of_zeta}Let $\Phi^{n}:[0,\infty)\to\L$,
$n\in\N_{0}$, be locally bounded functions such that $\Phi_{t}^{n}\ge0$
for all $t\ge0$, $n\in\N_{0}$ and
\[
\sup_{t\in[0,T]}\norm{\Phi_{t}^{n}-\Phi_{t}^{0}}\to0,\quad n\to\infty,
\]
for each $T>0$. Additionally assume that $\zeta^{n}\to\zeta^{0}$
in $\H[-I]$ for some $I>\frac{d}{2}+1$ and $t_{n}\to t_{0}$ in
$[0,\infty)$ as $n\to\infty$. Let also $(\zeta_{t}^{\infty,n})_{t\ge0}$
be a (variational) solution to 
\[
d\zeta_{t}^{\infty,n}=2\pi^{2}\Delta\zeta_{t}^{\infty,n}dt+2\pi\nabla\cdot\left(\sqrt{\Phi_{t}^{n}}dW_{t}\right)
\]
started from $\zeta^{n}$ for every $n\in\N_{0}$. Then $\law\zeta_{t_{n}}^{\infty,n}\to\law\zeta_{t}^{\infty,0}$
in the 2-Wasserstein topology on the space of probability measures
on $\H[-I]$ with a finite second moment as $n\to\infty$. In particular,
for each $F\in\Cf_{l}^{1}(\H[-I])$ 
\[
\E F(\zeta_{t_{n}}^{\infty,n})\to\E F(\zeta_{t}^{\infty,0})
\]
as $n\to\infty$.
\end{lem}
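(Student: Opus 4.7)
The plan is to realize all processes $\zeta^{\infty,n}$, $n \in \N_0$, on a common probability space driven by a single cylindrical Wiener process, and reduce the claim to convergence in $L_{2}(\Omega;\H[-I])$, which is strictly stronger than $W_{2}$-convergence of the laws. Concretely, on a fixed stochastic basis carrying a cylindrical Wiener process $W$ on $L_{2}(\T^{d};\R^{d})$, represent each solution by the mild formula
\begin{equation*}
\zeta^{\infty,n}_{t} = P_{t}\zeta^{n} + 2\pi\int_{0}^{t} P_{t-s}\nabla\cdot\bigl(\sqrt{\Phi^{n}_{s}}\,dW_{s}\bigr),\qquad n \in \N_{0},
\end{equation*}
which coincides with the variational solution thanks to Proposition~\ref{prop:well_possedness_of_OUP_SSEP} and Remark~\ref{rem:equation_with_general_coefficient}. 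The same computation as in the proof of Proposition~\ref{prop:well_possedness_of_OUP_SSEP} yields the Hilbert--Schmidt bound $\|P_{t-s}\nabla\cdot\sqrt{\Phi^{n}_{s}}\,\cdot\|_{HS}^{2}\le C_{I}\|\Phi^{n}_{s}\|_{L_{2}}$ (using that the eigenfunctions $\varsigma_{k}$ are uniformly bounded), which is finite and uniform in $n\ge 0$ and $s\in[0,T]$ by the hypothesis $\sup_{s\le T}\|\Phi^{n}_{s}-\Phi^{0}_{s}\|_{L_{2}}\to 0$.

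The deterministic part of the error is handled by strong continuity of the heat semigroup on $\H[-I]$ combined with $\zeta^{n}\to\zeta^{0}$, which gives $P_{t_{n}}\zeta^{n}\to P_{t_{0}}\zeta^{0}$ in $\H[-I]$. For the stochastic part, \Itos isometry yields
\begin{equation*}
\E\bigl\|M^{n}_{t_{n}}-M^{0}_{t_{0}}\bigr\|_{\H[-I]}^{2} = \int_{0}^{t_{n}\vee t_{0}}\bigl\|\I_{s\le t_{n}}P_{t_{n}-s}B_{n}(s)-\I_{s\le t_{0}}P_{t_{0}-s}B_{0}(s)\bigr\|_{HS}^{2}\,ds
\end{equation*}
with $B_{n}(s)h:=2\pi\nabla\cdot(\sqrt{\Phi^{n}_{s}}h)$. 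The elementary inequality $|\sqrt{a}-\sqrt{b}|^{2}\le|a-b|$ for $a,b\ge0$ together with Cauchy--Schwarz on the finite measure space $\T^{d}$ upgrades the hypothesis to $\sup_{s\le T}\|\sqrt{\Phi^{n}_{s}}-\sqrt{\Phi^{0}_{s}}\|_{L_{2}}\to 0$. Combining this with strong continuity of $P_{t-s}$ in $t$ (on the fixed HS operator $\nabla\cdot\sqrt{\Phi^{0}_{s}}\,\cdot$, via its spectral expansion) and with $t_{n}\to t_{0}$, one obtains pointwise convergence of the integrand to zero, while the uniform HS bound above serves as the dominating majorant. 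Dominated convergence closes the argument.

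Combining the two parts gives $\E\|\zeta^{\infty,n}_{t_{n}}-\zeta^{\infty,0}_{t_{0}}\|_{\H[-I]}^{2}\to 0$. Since $W_{2}^{2}(\law X,\law Y)\le\E\|X-Y\|_{\H[-I]}^{2}$, the 2-Wasserstein convergence follows. For the final assertion, $F\in\Cf_{l}^{1}(\H[-I])$ satisfies $|F(x)-F(y)|\le C(1+\|x\|_{\H[-I]}+\|y\|_{\H[-I]})\|x-y\|_{\H[-I]}$, so Cauchy--Schwarz combined with the $L_{2}(\Omega;\H[-I])$ convergence and the uniform second-moment bound for Gaussian limits (from Proposition~\ref{prop:well_possedness_of_OUP_SSEP}) yields $\E F(\zeta^{\infty,n}_{t_{n}})\to\E F(\zeta^{\infty,0}_{t_{0}})$.

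The main obstacle is the stochastic term: one must simultaneously deal with the non-Lipschitz square-root transformation (bridged by $|\sqrt{a}-\sqrt{b}|^{2}\le|a-b|$) and the different terminal times inside the evolution operator $P_{t-s}$, all while working only with the $L_{2}(\T^{d})$-bound on the $\Phi^{n}$. The uniform HS majorant derived from this $L_{2}$-boundedness is what permits the dominated convergence step that finishes the proof.
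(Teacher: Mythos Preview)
Your proof is correct and takes a genuinely different route from the paper. The paper exploits Gaussianity: it shows that the means $P_{t_n}\zeta^n$ and the covariance operators $V_{t_n}^n$ (given explicitly by Proposition~\ref{prop:well_possedness_of_OUP_SSEP}) converge in $\H[-I]$ and in $\MLOHS[2][{\H[I]}]$ respectively, invokes a criterion for weak convergence of Gaussian measures in Hilbert space \citep[Example~3.8.15]{Bogachev:1998}, then separately verifies convergence of second moments and upgrades to $W_2$ via \citep[Theorem~I.6.9]{Villani:2009}. You instead couple all solutions through a single cylindrical Wiener process and establish the strictly stronger $L_2(\Omega;\H[-I])$-convergence via the mild formula, \Itos isometry, and dominated convergence, bypassing both external references and the Gaussian structure entirely. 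The paper's route has the advantage of making the covariance estimate~(\ref{eq:estimate_for_derivative_of_V}) explicit --- it is reused verbatim in Lemmas~\ref{lem:differentiability_of_variance_V} and~\ref{lem:uniform_convergenc_of_proj_of_zeta} --- whereas your coupling argument is self-contained and would apply equally to non-Gaussian linear equations. One minor simplification: since $F\in\Cf_l^1(\H[-I])$ means $DF$ is bounded, $F$ is globally Lipschitz and the final assertion follows directly from $\E\|\zeta^{\infty,n}_{t_n}-\zeta^{\infty,0}_{t_0}\|_{\H[-I]}\to 0$; the factor $(1+\|x\|_{\H[-I]}+\|y\|_{\H[-I]})$ and the appeal to uniform second moments are unnecessary.
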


\begin{proof}
We will first show that $\zeta_{t_{n}}^{\infty,n}\to\zeta_{t_{0}}^{\infty,0}$
in distribution as $n\to\infty$, using \citep[Example 3.8.15]{Bogachev:1998}.
For this we will show that the means $\E\zeta_{t_{n}}^{\infty,n}$
converge to $\E\zeta_{t_{0}}^{\infty,0}$ in $\H[-I]$, the covariance
operators $V_{t_{n}}^{n}$ of $\zeta_{t_{n}}^{\infty,n}$ converge
to the covariance operator $V_{t_{0}}^{0}$ of $\zeta_{t_{0}}^{\infty,0}$
in $\MLO[2][{\H[I]}]$ and $\E[\norm{\zeta_{t_{n}}^{\infty,n}}_{\H[-I]}^{2}]\to\E[\norm{\zeta_{t_{0}}^{\infty,0}}_{\H[-I]}^{2}]$. 

By Proposition \ref{prop:well_possedness_of_OUP_SSEP} and Remark
\ref{rem:equation_with_general_coefficient}, we get
\begin{align*}
\bnorm{\E\zeta_{t_{n}}^{\infty,n}-\E\zeta_{t_{0}}^{\infty,0}}_{\H[-I]} & =\bnorm{P_{t_{n}}\zeta^{n}-P_{t_{0}}\zeta^{0}}_{H_{-I}}\\
 & \le\bnorm{P_{t_{n}}\left(\zeta^{n}-\zeta^{0}\right)}_{H_{-I}}+\bnorm{P_{t_{n}}\zeta^{0}-P_{t_{0}}\zeta^{0}}_{H_{-I}}\\
 & \le\norm{\zeta^{n}-\zeta^{0}}_{\H[-I]}+\bnorm{P_{t_{n}}\zeta^{0}-P_{t_{0}}\zeta^{0}}_{H_{-I}}\to0
\end{align*}
as $n\to\infty$. We similarly estimate
\[
\bnorm{V_{t_{n}}^{n}-V_{t_{0}}^{0}}_{\cL_{2}}\le\norm{V_{t_{n}}^{n}-V_{t_{n}}^{0}}_{\cL_{2}}+\norm{V_{t_{n}}^{0}-V_{t_{0}}^{0}}_{\cL_{2}}.
\]
The fact that $\norm{V_{t_{n}}^{0}-V_{t_{0}}^{0}}_{\cL_{2}}\to0$
follows from the continuity of $(\zeta_{t}^{\infty,0})_{t\ge0}$ in
$\H[-I]$ and \citep[Example 3.8.15]{Bogachev:1998}. Next, using
Proposition \ref{prop:well_possedness_of_OUP_SSEP} and Remark \ref{rem:equation_with_general_coefficient}
again, we estimate
\begin{align*}
\bnorm{V_{t_{n}}^{n}-V_{t_{n}}^{0}}_{\cL_{2}}^{2} & \le\bnorm{V_{t_{n}}^{n}-V_{t_{n}}^{0}}_{\cL_{2}^{HS}}^{2}\\
 & =\sum_{k,l\in\Z^{d}}(1+|k|^{2})^{-I}(1+|l|^{2})^{-I}\left|V_{t_{n}}^{n}(\tilde{\varsigma}_{k},\tilde{\varsigma}_{l})-V_{t_{n}}^{0}(\tilde{\varsigma}_{k},\tilde{\varsigma}_{l})\right|^{2}\\
 & \le4\pi^{4}t_{n}\sum_{k,l\in\Z^{d}}(1+|k|^{2})^{-I}(1+|l|^{2})^{-I}\\
 & \qquad\qquad\qquad\cdot\int_{0}^{t_{n}}\left|\binn{\nabla P_{t_{n}-s}\tilde{\varsigma}_{k}\cdot\nabla P_{t_{n}-s}\tilde{\varsigma}_{l},\Phi_{s}^{n}-\Phi_{s}^{0}}\right|^{2}ds.
\end{align*}
According to the fact that 
\[
P_{t}\tilde{\varsigma}_{k}=e^{-2\pi^{2}|k|^{2}t}\tilde{\varsigma}_{k},\quad k\in\Z^{d},
\]
we get
\[
\nabla P_{t_{n}-s}\tilde{\varsigma}_{k}\cdot\nabla P_{t_{n}-s}\tilde{\varsigma}_{l}=-e^{-2\pi^{2}(|k|^{2}+|l|^{2})(t_{n}-s)}k\cdot l\tilde{\varsigma_{k}}\tilde{\varsigma_{l}}.
\]
We now separately estimate for $k,l\in\Z^{d}$ and a bounded measurable
function $f:[0,t]\to\L$
\begin{align}
 & \int_{0}^{t_{n}}\binn{\nabla P_{t_{n}-s}\tilde{\varsigma}_{k}\cdot\nabla P_{t_{n}-s}\tilde{\varsigma}_{l},f_{s}}^{2}ds\le|k|^{2}|l|^{2}\int_{0}^{t_{n}}e^{-4\pi^{2}(|k|^{2}+|l|^{2})(t_{n}-s)}\inn{\tilde{\varsigma_{k}}\tilde{\varsigma_{l}},f_{s}}^{2}ds\nonumber \\
 & \qquad\le|k|^{2}|l|^{2}\sup_{s\in[0,t_{n}]}\norm{f_{s}}^{2}\int_{0}^{t_{n}}e^{-4\pi^{2}(|k|^{2}+|l|^{2})(t_{n}-s)}ds\label{eq:estimate_for_derivative_of_V}\\
 & \qquad\le\frac{\sup_{s\in[0,t_{n}]}\norm{f_{s}}^{2}}{4\pi^{2}}\frac{|k|^{2}|l|^{2}}{|k|^{2}+|l|^{2}}.\nonumber 
\end{align}
Hence, due to the fact that $I>\frac{d}{2}+1$, we conclude that 
\begin{align*}
\bnorm{V_{t_{n}}^{n}-V_{t_{n}}^{0}}_{\cL_{2}^{HS}}^{2} & \le C_{I}t_{n}\sup_{s\in[0,t_{n}]}\norm{\Phi_{s}^{n}-\Phi_{s}^{0}}\to0
\end{align*}
as $n\to\infty$. The convergence of the second moments $\E[\norm{\zeta_{t_{n}}^{\infty,n}}_{\H[-I]}^{2}]$
to the second moment $\E[\norm{\zeta_{t}^{\infty,0}}_{\H[-I]}^{2}]$
can be proved similarly. Hence, by \citep[Example 3.8.15]{Bogachev:1998},
$\zeta_{t_{n}}^{\infty,n}\to\zeta_{t_{0}}^{\infty,0}$ in $\H[-I]$
in distribution as $n\to\infty$. Now, using the fact that $\E[\norm{\zeta_{t_{n}}^{\infty,n}}_{\H[-I]}^{2}]\to\E[\norm{\zeta_{t}^{\infty,0}}_{\H[-I]}^{2}]$
as $n\to\infty$, we can conclude that $\law\zeta_{t_{n}}^{\infty,n}\to\law\zeta_{t_{0}}^{\infty,0}$
in the 2-Wasserstein topology on the space of probability measures
on $\H[-I]$, by \citep[Theorem I.6.9]{Villani:2009}. This easily
implies the second part of the lemma.
\end{proof}
\begin{rem}
\label{rem:identity_for_solutions_in_sobolev_spaces} According to
the definition of variational solutions, we have
\[
\rho_{t}^{\infty}=\rho_{0}+2\pi^{2}\int_{0}^{t}\Delta\rho_{s}^{\infty}ds,\quad t\ge0,
\]
in $\H[J-2]$ and 
\[
\zeta_{t}^{\infty}=\zeta_{0}+2\pi^{2}\int_{0}^{t}\Delta\zeta_{s}^{\infty}ds+2\pi\int_{0}^{t}B(s)dW_{s},\quad t\ge0,
\]
in $\H[-I-2]$.
\end{rem}

We will provide here the \Ito formula for the process $(\rho^{\infty},\zeta^{\infty})$.
Note that while \Itos formula for Hilbert space valued processes
is available in the literature, we need to obtain the resulting \Ito-correction
term in a particular form. We therefore include the result. 
\begin{lem}
\label{lem:Ito_formula_for_eta} Let $I>\frac{d}{2}+1$, $J\ge0$,
$F\in\Cf^{1,1,2}([0,\infty),\H[J-2],H_{-I-2})$, $D_{2}^{2}F$ take
values in $\MLOHS[2][{\H[-I-2]}]$ and $(\rho_{t}^{\infty},\zeta_{t}^{\infty})$,
$t\ge0$, be a solution in $\H[J]\times\H[-I]$ to (\ref{eq:heat_PDE-1}),
(\ref{eq:SPDE_for_OU_process-1}) started from $(\rho_{0},\zeta_{0})\in\H[J]\times\H[-I]$.
Then
\begin{align*}
F_{t}(\rho_{t}^{\infty},\zeta_{t}^{\infty}) & =F_{0}(\rho_{0},\zeta_{0})+2\pi\int_{0}^{t}\binn{D_{2}F_{s}(\rho_{s}^{\infty},\zeta_{s}^{\infty}),B(s)dW_{s}}\\
 & +\int_{0}^{t}\partial F_{s}(\rho_{s}^{\infty},\zeta_{s}^{\infty})ds+2\pi^{2}\int_{0}^{t}\binn{\Delta D_{1}F_{s}(\rho_{s}^{\infty},\zeta_{s}^{\infty}),\rho_{s}^{\infty}}ds\\
 & +2\pi^{2}\int_{0}^{t}\binn{\Delta D_{2}F_{s}(\rho_{s}^{\infty},\zeta_{s}^{\infty}),\zeta_{s}^{\infty}}ds\\
 & +2\pi^{2}\int_{0}^{t}\sum_{j=1}^{d}\binn{\Tr\left(\partial_{j}^{\otimes2}D_{2}^{2}F_{s}(\rho_{s}^{\infty},\zeta_{s}^{\infty})\right),\Phi(\rho_{s}^{\infty})}ds
\end{align*}
for all $t\ge0$, where $(B(t))_{t\ge0}$ is defined by (\ref{eq:definition_of_B}).
\end{lem}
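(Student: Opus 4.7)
The plan is to invoke the standard infinite-dimensional It\^o formula for variational solutions and then identify the It\^o correction in the specific form stated in the lemma. By Remark \ref{rem:identity_for_solutions_in_sobolev_spaces}, the pair $(\rho^{\infty},\zeta^{\infty})$ satisfies (\ref{eq:heat_PDE-1})--(\ref{eq:SPDE_for_OU_process-1}) in $\H[J-2]$ and $\H[-I-2]$ respectively. Combined with the assumption $F\in\Cf^{1,1,2}([0,\infty),\H[J-2],\H[-I-2])$ with $D_{2}^{2}F$ taking values in $\MLOHS[2][{\H[-I-2]}]$, this matches exactly the regularity required by the classical Hilbert-space It\^o formula \citep[Theorem 4.2.5]{Liu_Roeckner:2015}. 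Applied to $F_{s}(\rho_{s}^{\infty},\zeta_{s}^{\infty})$, it produces the explicit time-derivative term $\partial F_{s}$, drift terms $D_{1}F_{s}[2\pi^{2}\Delta\rho_{s}^{\infty}]$ and $D_{2}F_{s}[2\pi^{2}\Delta\zeta_{s}^{\infty}]$, a stochastic integral against $B(s)\,dW_{s}$ (only the $\zeta$-equation contributes noise, so $\rho^{\infty}$ yields no martingale), and the It\^o correction $\tfrac{1}{2}\Tr\bigl(D_{2}^{2}F_{s}\,B(s)B^{*}(s)\bigr)$.

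The drift terms are then reshaped into the stated inner-product form by duality: since $D_{1}F_{s}\in\H[-(J-2)]$ and $\Delta\rho_{s}^{\infty}\in\H[J-2]$, and $\Delta$ is self-adjoint with respect to the $\H[J]$--$\H[-J]$ pairing, one has $D_{1}F_{s}[\Delta\rho_{s}^{\infty}]=\inn{\Delta D_{1}F_{s},\rho_{s}^{\infty}}$, and the $\zeta$ drift is rewritten analogously. This delivers the second and third drift terms.

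The main step is the identification of the It\^o correction. Fix an orthonormal basis $\{e_{j}\phi_{m}\}_{j\in[d],\,m\ge 1}$ of $(L_{2}(\T^{d}))^{d}$, with $\{\phi_{m}\}$ orthonormal in $L_{2}(\T^{d})$. Since $B(s)(e_{j}\phi_{m})=2\pi\,\partial_{j}(\sqrt{\Phi(\rho_{s}^{\infty})}\phi_{m})$, the trace expands as
\[
\tfrac{1}{2}\Tr\bigl(D_{2}^{2}F_{s}\,B(s)B^{*}(s)\bigr)=2\pi^{2}\sum_{j=1}^{d}\sum_{m\ge 1}D_{2}^{2}F_{s}\bigl[\partial_{j}(\sqrt{\Phi(\rho_{s}^{\infty})}\phi_{m})^{\otimes 2}\bigr].
\]
Moving the derivatives $\partial_{j}$ off $\phi_{m}$ and onto $D_{2}^{2}F_{s}$ by integration by parts in the bilinear form, and then collapsing the sum over $m$ via the completeness relation $\sum_{m}\phi_{m}(y)\phi_{m}(z)=\delta(y-z)$, each inner sum is identified with $\inn{\Tr(\partial_{j}^{\otimes 2}D_{2}^{2}F_{s}),\Phi(\rho_{s}^{\infty})}$, in the diagonal partial-trace sense used already in Proposition \ref{prop:expansion_of_cG_FEP}.

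The principal obstacle I foresee is making this partial-trace manipulation rigorous under only the Hilbert--Schmidt hypothesis on $D_{2}^{2}F_{s}$: the completeness relation is distributional, and the exchange of the sum with the bilinear pairing and with the derivatives needs justification. I would address this either by realising $\Tr(\partial_{j}^{\otimes 2}D_{2}^{2}F_{s})$ through its kernel representation on a suitable Sobolev space, with absolute convergence coming from the Hilbert--Schmidt bound (in the style of Lemmas \ref{lem:basic_properties_of_pr} and \ref{lem:tr_operator}), or, more pragmatically, by first approximating $F$ by functions cylindrical in $\zeta$ involving only finitely many $\phi_{m}$. For such cylindrical functions the It\^o formula and the trace identity are elementary, and the limit is taken using the Hilbert--Schmidt norm to dominate the correction term, together with standard dominated-convergence arguments for the remaining terms.
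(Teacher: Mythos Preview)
Your proposal is correct and follows essentially the same approach as the paper: apply the standard Hilbert-space It\^o formula in $\H[J-2]\times\H[-I-2]$, rewrite the drift terms by duality, and then identify the It\^o correction with the $\Tr(\partial_{j}^{\otimes2}D_{2}^{2}F_{s})$ pairing. For the last step the paper computes $B^{*}(s)\varsigma_{k}$ and $B(s)B^{*}(s)\varsigma_{k}$ explicitly in the Fourier basis and then matches the resulting double sum against the expansion of $\Tr$ from Lemma~\ref{lem:tr_operator}, which is exactly the first of the two routes you proposed and avoids the distributional completeness-relation argument altogether.
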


\begin{proof}
We first note that according to the assumptions on $J$ and $I$,
the process $(\rho_{t}^{\infty},\zeta_{t}^{\infty}),$ $t\ge0$, has
a continuous version in $\H[J]\times\H[-I]$ and thus the identities
of Remark \ref{rem:identity_for_solutions_in_sobolev_spaces} hold
in the spaces $\H[J-2]$ and $\H[-I-2]$, respectively. Using then
the infinite-dimensional \Ito formula\footnote{see e.g. \citep[Theorem 2.10]{Gawarecki:2011}}
in the Hilbert space $\H[J-2]\times\H[-I-2]$, we get
\begin{align}
F_{t}(\rho_{t}^{\infty},\zeta_{t}^{\infty}) & =F_{0}(\rho_{0},\zeta_{0})+2\pi\int_{0}^{t}\binn{D_{2}F_{s}(\rho_{s}^{\infty},\zeta_{s}^{\infty}),B(s)dW_{s}}\nonumber \\
 & +\int_{0}^{t}\partial F_{s}(\rho_{s}^{\infty},\zeta_{s}^{\infty})ds+2\pi^{2}\int_{0}^{t}\binn{D_{1}F_{s}(\rho_{s}^{\infty},\zeta_{s}^{\infty}),\Delta\rho_{s}^{\infty}}ds\label{eq:Ito_formula_in_hilbert_space}\\
 & +2\pi^{2}\int_{0}^{t}\binn{D_{2}F_{s}(\rho_{s}^{\infty},\zeta_{s}^{\infty}),\Delta\zeta_{s}^{\infty}}ds\nonumber \\
 & +\frac{1}{2}\int_{0}^{t}\tr\left[D_{2}^{2}F_{s}(\rho_{s}^{\infty},\zeta_{s}^{\infty})B(s)B^{*}(s)\right]ds,\nonumber 
\end{align}
where $B^{*}(s):\H[-I-2]\to(L_{2}(\Td))^{d}$ is the adjoint operator
to $B(s)$ and $U(s):=D_{3}^{2}F_{s}(\rho_{s},\zeta_{s})B(s)B^{*}(s)$
is interpreted as a bounded linear operator on $\H[-I-2]$ defined
by
\[
\inn{U(s)\varsigma_{k},\varsigma_{l}}_{H_{-I-2}}=D_{2}^{2}F_{s}(\rho_{s},\zeta_{s})\left[B(s)B^{*}(s)\varsigma_{k},\varsigma_{-l}\right],\quad k,l\in\Z^{d}.
\]
We next rewrite the last term in the right hand side of (\ref{eq:Ito_formula_in_hilbert_space}).
For this, we take the orthonormal basis $\left\{ \left(1+|k|^{2}\right)^{(I+2)/2}\varsigma_{k},\ k\in\Z^{d}\right\} $
on $\H[-I-2]$ and compute
\[
\tr\left[U(s)\right]=\sum_{k\in\Z^{d}}(1+|k|^{2})^{I+2}D_{2}^{2}F_{s}(\rho_{s},\zeta_{s})\left[B(s)B^{*}(s)\varsigma_{k},\varsigma_{-k}\right].
\]
Taking also an orthonormal basis $\{h_{l}=(h_{l}^{j})_{j\in[d]},\ l\in\N\}$
on $(L_{2}(\T^{d}))^{d}$, we can expand $B^{*}(s)\varsigma_{k}$
in the Fourier series
\begin{align*}
B^{*}(s)\varsigma_{k} & =\sum_{l=1}^{\infty}\inn{B^{*}(s)\varsigma_{k},h_{l}}h_{l}=\sum_{l=1}^{\infty}\inn{\varsigma_{k},B(s)h_{l}}_{H_{-I}}h_{l}\\
 & =(1+|k|^{2})^{-I-2}\sum_{l=1}^{\infty}\inn{\varsigma_{k},B(s)h_{l}}h_{l}\\
 & =2\pi(1+|k|^{2})^{-I-2}\sum_{j=1}^{d}\sum_{l=1}^{\infty}\binn{\varsigma_{k},\partial_{j}\left(\sqrt{\Phi(\rho_{s})}h_{l}^{j}\right)}h_{l}\\
 & =-2\pi(1+|k|^{2})^{-I-2}\sum_{j=1}^{d}\sum_{l=1}^{\infty}\i k_{j}\binn{\varsigma_{k}\sqrt{\Phi(\rho_{s})},h_{l}^{j}}h_{l}\\
 & =\left(-2\pi\i(1+|k|^{2})^{-I-2}k_{j}\varsigma_{k}\sqrt{\Phi(\rho_{s})}\right)_{j\in[d]}.
\end{align*}
Thus, 
\[
B(s)B^{*}(s)\varsigma_{k}=-4\pi^{2}(1+|k|^{2})^{-I-2}\i\sum_{j=1}^{d}k_{j}\partial_{j}\left(\varsigma_{k}\Phi(\rho_{s})\right)
\]
and, consequently,
\begin{align*}
\tr\left[U(s)\right] & =-4\pi^{2}\sum_{j=1}^{d}\sum_{k\in\Z^{d}}\i k_{j}D_{2}^{2}F_{s}(\rho_{s},\zeta_{s})\left[\partial_{j}\left(\varsigma_{k}\Phi(\rho_{s})\right),\varsigma_{-k}\right]\\
 & =4\pi^{2}\sum_{j=1}^{d}\sum_{k\in\Z^{d}}D_{2}^{2}F_{s}(\rho_{s},\zeta_{s})\left[\partial_{j}\left(\varsigma_{k}\Phi(\rho_{s})\right),\partial_{j}\varsigma_{-k}\right].
\end{align*}
Using the expansion of $\Phi(\rho_{s})$ in the Fourier series
\begin{align*}
\Phi(\rho_{s}) & =\sum_{l\in\Z^{d}}\inn{\Phi(\rho_{s}),\varsigma_{l}}\varsigma_{l}=\sum_{l\in\Z^{d}}\inn{\varsigma_{-l},\Phi(\rho_{s})}\varsigma_{l}\\
 & =\sum_{l\in\Z^{d}}\inn{\varsigma_{l},\Phi(\rho_{s})}\varsigma_{-l},
\end{align*}
we get
\begin{align*}
\tr\left[U(s)\right] & =4\pi^{2}\sum_{j=1}^{d}\sum_{k\in\Z^{d}}\sum_{l\in\Z^{d}}D_{2}^{2}F_{s}(\rho_{s},\zeta_{s})\left[\partial_{j}\varsigma_{k-l},\partial_{j}\varsigma_{-k}\right]\inn{\varsigma_{l},\Phi(\rho_{s})}\\
 & =4\pi^{2}\sum_{j=1}^{d}\binn{\sum_{k\in\Z^{d}}\sum_{l\in\Z^{d}}D_{2}^{2}F_{s}(\rho_{s},\zeta_{s})\left[\partial_{j}\varsigma_{k-l},\partial_{j}\varsigma_{-k}\right]\varsigma_{l},\Phi(\rho_{s})}\\
 & =4\pi^{2}\sum_{j=1}^{d}\binn{\Tr\left(\partial_{j}^{\otimes2}D_{2}^{2}F_{s}(\rho_{s},\zeta_{s})\right),\Phi(\rho_{s})},
\end{align*}
according to Lemma \ref{lem:tr_operator}. This completes the proof
of the lemma.
\end{proof}

\subsection{Differentiability of the Ornstein-Uhlenbeck semigroup\protect\label{subsec:Differentiability-of-semigroup}}

Let $(\rho_{t}^{\infty})_{t\ge0}$ and $(\eta_{t}^{\infty})_{t\ge0}$
be solutions to (\ref{eq:heat_PDE-1}) and $(\ref{eq:SPDE_for_OU_process-1})$
with a bounded continuous function $\Phi:\R\to[0,\infty)$, respectively.
In this section, we consider these processes as functions of their
initial conditions $\rho:=\rho_{0}^{\infty}$ and $\zeta:=\zeta_{0}^{\infty}$
and study the differentiability of
\[
U_{t}^{\Phi}(\rho,\zeta):=\E F(\rho_{t}^{\infty},\zeta_{t}^{\infty})
\]
with respect to $(\rho,\zeta)$ for $F\in\Cf_{l,HS}^{1,3}(\H[J],\H[-I])$.

The fact that $U^{\Phi}$ is three times continuously differentiable
with respect to $\zeta$ directly follows from the linearity of $\zeta^{\infty}$
in $\zeta$, see the proof of Proposition \ref{prop:fokker-plank-equation}
below. Hence, the main challenge is the regularity of $U^{\Phi}$
with respect to $\rho$. The main difficulty is that the diffusion
term $\sqrt{\Phi(\rho)}$ is not differentiable, and, therefore, we
cannot follow the usual approach to conclude the differentiability
of $U^{\Phi}$ from the differentiability of the solution $\zeta^{\infty}$
to the SPDE $(\ref{eq:SPDE_for_OU_process-1})$ as function of its
initial condition. This is solved in this section by exploiting the
Gaussianity of $\zeta^{\infty}$ together with an infinite-dimensional
integration-by-parts formula. 

We start from the following auxiliary statements. 
\begin{lem}
\label{lem:differentiability_of_variance_V}Let $I>\frac{d}{2}+1$,
$J>\frac{d}{2}$, $\zeta\in\H[-I]$ be fixed and $\Phi\in\Cf_{b}^{2}(\R)$.
Let also $(\zeta_{t}^{\infty})_{t\ge0}$ be a solution to $(\ref{eq:SPDE_for_OU_process-1})$
started from $\zeta$, where $(\rho_{t}^{\infty})_{t\ge0}$ is a solution
to the heat equation (\ref{eq:heat_PDE-1}) with the initial condition
$\rho_{0}^{\infty}=\rho\in\H[J]$. Then for each $t>0$ the covariance
$V_{t}(\rho)$ of $\zeta_{t}^{\infty}$ can be extended to an element
in $\MLOHS[2][{\H[I]}]$ also denoted by $V_{t}(\rho)$. Moreover,
the map $V_{t}$ belongs to $\Cf_{b}^{1}\left(\H[J];\MLOHS[2][{\H[I]}]\right)$
and its derivative at $\rho\in\H[J]$ in direction $h\in\H[J]$ is
given by
\begin{equation}
DV_{t}(\rho)[h][\varphi,\psi]=2\pi^{2}\int_{0}^{t}\binn{\nabla P_{t-s}\varphi\cdot\nabla P_{t-s}\psi,\Phi'\left(P_{s}\rho\right)P_{s}h}ds\label{eq:derivative_of_covariance_operator}
\end{equation}
for all $\varphi,\psi\in\H[J]$ and 
\begin{equation}
\norm{DV_{t}(\rho)[h]}_{\MLOHS[2][{\H[I]}]}\le tC_{I}\norm{\Phi'}_{\Cf}\norm h_{H_{J}}.\label{eq:estimate_of_norm_of_DV}
\end{equation}
\end{lem}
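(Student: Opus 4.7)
The plan is to first extend $V_t(\rho)$, defined on smooth test functions via~\eqref{eq:eq:definition_of_covariance_operator}, to a Hilbert--Schmidt bilinear form on $H_I$, and then to differentiate under the integral sign using a Taylor expansion of $\Phi$. The key technical tool throughout is the Fourier estimate~\eqref{eq:estimate_for_derivative_of_V} established inside the proof of Lemma~\ref{lem:whide_continuity_of_zeta}.

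For the extension, I compute $\|V_t(\rho)\|_{\cL_2^{HS}}^2$ in the basis $\{(1+|k|^2)^{-I/2}\tilde\varsigma_k\}_{k\in\Z^d}$ of $H_I$ and apply~\eqref{eq:estimate_for_derivative_of_V} (preceded by Cauchy--Schwarz in $s$) with $f_s=\Phi(P_s\rho)$, which is uniformly bounded by $\|\Phi\|_{\Cf}$. The summability of
\[
\sum_{k,l\in\Z^d}(1+|k|^2)^{-I}(1+|l|^2)^{-I}\frac{|k|^2|l|^2}{|k|^2+|l|^2}
\]
under $I>\tfrac{d}{2}+1$ (e.g., via $\tfrac{|k|^2|l|^2}{|k|^2+|l|^2}\le|k|\,|l|$) then yields $\|V_t(\rho)\|_{\cL_2^{HS}}\le C_I\sqrt{t}\,\|\Phi\|_{\Cf}$ uniformly in $\rho$, defining the extension by continuity and giving the $C_b^0$ bound.

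For Fr\'echet differentiability at $\rho\in H_J$, I substitute the pointwise Taylor expansion $\Phi(P_s(\rho+h))-\Phi(P_s\rho)=\Phi'(P_s\rho)P_sh+R_s$, with $|R_s|\le\tfrac12\|\Phi''\|_{\Cf}|P_sh|^2$, into~\eqref{eq:eq:definition_of_covariance_operator}. The linear part reproduces~\eqref{eq:derivative_of_covariance_operator}, while the quadratic remainder
\[
E_t(\rho,h)[\varphi,\psi]:=2\pi^2\int_0^t\binn{\nabla P_{t-s}\varphi\cdot\nabla P_{t-s}\psi,R_s}ds
\]
is controlled in $\cL_2^{HS}(H_I)$ by applying exactly the HS estimation from the extension step with $f_s=R_s$, giving $\|E_t(\rho,h)\|_{\cL_2^{HS}}\le C_I\sqrt{t}\sup_{s\le t}\|R_s\|$. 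Since $J>\tfrac{d}{2}$, the Sobolev embedding $H_J\hookrightarrow L_\infty$ together with the contractivity of $P_s$ on $H_J$ and $L_2$ gives $\|R_s\|\le C\|\Phi''\|_{\Cf}\|P_sh\|_{L_\infty}\|P_sh\|\le C\|\Phi''\|_{\Cf}\|h\|_{H_J}^2$, so $\|E_t(\rho,h)\|_{\cL_2^{HS}}=O(\|h\|_{H_J}^2)$ and~\eqref{eq:derivative_of_covariance_operator} is indeed the Fr\'echet derivative.

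The estimate~\eqref{eq:estimate_of_norm_of_DV} follows by applying the same HS estimation directly to~\eqref{eq:derivative_of_covariance_operator} with $f_s=\Phi'(P_s\rho)P_sh$, using $\sup_s\|f_s\|\le\|\Phi'\|_{\Cf}\|h\|_{H_J}$. Continuity of $\rho\mapsto DV_t(\rho)$ in the operator norm follows from the same estimation applied to $DV_t(\rho_n)[h]-DV_t(\rho)[h]$ and dominated convergence: for $\rho_n\to\rho$ in $H_J$ one has $\|\Phi'(P_s\rho_n)-\Phi'(P_s\rho)\|_{\Cf}\le\|\Phi''\|_{\Cf}\|P_s(\rho_n-\rho)\|_{L_\infty}\le\|\Phi''\|_{\Cf}\|\rho_n-\rho\|_{H_J}\to0$, uniformly in $s\in[0,t]$. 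The main technical obstacle is extracting summability of the coupled $(k,l)$-sum from the decay factor $\tfrac{|k|^2|l|^2}{|k|^2+|l|^2}$ in~\eqref{eq:estimate_for_derivative_of_V}, which just meets the threshold set by $I>\tfrac{d}{2}+1$; a cruder bound on this factor would force a stronger regularity assumption on $I$.
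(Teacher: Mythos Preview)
Your proposal is correct and follows essentially the same approach as the paper: both use the Fourier estimate~\eqref{eq:estimate_for_derivative_of_V} (together with Cauchy--Schwarz in $s$) to bound the Hilbert--Schmidt norm, then apply a pointwise Taylor expansion of $\Phi$ combined with the Sobolev embedding $H_J\hookrightarrow L_\infty$ to handle the remainder, and finally obtain~\eqref{eq:estimate_of_norm_of_DV} and continuity of $DV_t$ by the same mechanism. Your treatment is slightly more explicit about the summability of $\sum_{k,l}(1+|k|^2)^{-I}(1+|l|^2)^{-I}\tfrac{|k|^2|l|^2}{|k|^2+|l|^2}$ via $\tfrac{|k|^2|l|^2}{|k|^2+|l|^2}\le |k|\,|l|$, which the paper leaves implicit.
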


\begin{proof}
Using Hölder's inequality and Proposition \ref{prop:well_possedness_of_OUP_SSEP},
we get
\[
\left|V_{t}(\rho)[\varphi,\psi]\right|\le\norm{\varphi}_{H_{I}}\norm{\psi}_{H_{I}}\E\left[\norm{\zeta_{t}^{\infty}}_{H_{-I}}^{2}\right]\le C_{\rho,I,\zeta}\norm{\varphi}_{H_{I}}\norm{\psi}_{H_{I}}
\]
for all $\varphi,\psi\in\Cf^{\infty}(\Td)$. This implies that $V_{t}(\rho)$
can be extended to a continuous multilinear operator on $\left(\H[I]\right)^{2}$.
Using Proposition \ref{prop:well_possedness_of_OUP_SSEP} again, following
the proof of Lemma \ref{lem:whide_continuity_of_zeta} and applying
the estimate (\ref{eq:estimate_for_derivative_of_V}), we can show
the boundedness of the Hilbert-Schmidt norm of $V_{t}(\rho)$ given
by
\begin{align}
\norm{V_{t}(\rho)}_{\cL_{2}^{HS}}^{2} & \le C_{I}t\sup_{s\in[0,t]}\norm{\Phi(\rho_{s}^{\infty})}^{2}\le C_{I}t\norm{\Phi}_{\Cf}^{2}.\label{eq:estimate_of_HS_norm_of_V}
\end{align}

To get the (Lipschitz) continuity of $V_{t}:\H[J]\to\MLOHS[2][{\H[I]}],$
we can also follow the proof of Lemma \ref{lem:whide_continuity_of_zeta}
and use the estimate (\ref{eq:estimate_for_derivative_of_V}) to get
for $\rho,\tilde{\rho}\in\H[J]$
\begin{align*}
\norm{V_{t}(\rho)-V_{t}(\tilde{\rho})}_{\cL_{2}^{HS}}^{2} & \le C_{I}t\sup_{s\in[0,t]}\bnorm{\Phi(P_{s}\rho)-\Phi(P_{s}\tilde{\rho})}\\
 & \le\norm{\Phi'}_{\Cf}\bnorm{P_{s}\rho-P_{s}\tilde{\rho}}\le\norm{\Phi'}_{\Cf}\norm{\rho-\tilde{\rho}}\\
 & \le\norm{\Phi'}_{\Cf}\norm{\rho-\tilde{\rho}}_{H_{J}}.
\end{align*}

We next check the differentiability of $V_{t}$ at $\rho\in\H[J]$
and show that its derivative is given by
\[
DV_{t}(\rho)[h][\varphi,\psi]=2\pi^{2}\int_{0}^{t}\binn{\nabla P_{t-s}\varphi\cdot\nabla P_{t-s}\psi,\Phi'(P_{s}\rho)P_{s}h}ds.
\]
Similarly as above, we estimate
\begin{align*}
 & \norm{V_{t}(\rho+h)-V_{t}(\rho)-DV_{t}(\rho)[h]}_{\cL_{2}^{HS}}^{2}\\
 & \qquad=\sum_{k,l\in\Z^{d}}(1+|k|^{2})^{-I}(1+|l|^{2})^{-I}\left|V_{t}(\rho+h)[\tilde{\varsigma}_{k},\tilde{\varsigma}_{l}]-V_{t}(\rho)[\tilde{\varsigma}_{k},\tilde{\varsigma}_{l}]-DV_{t}(\rho)[h][\tilde{\varsigma}_{k},\tilde{\varsigma}_{l}]\right|^{2}\\
 & \qquad\le tC_{I}\sup_{s\in[0,t]}\norm{\Phi\left(P_{s}\rho+P_{s}h\right)-\Phi\left(P_{s}\rho\right)-\Phi'(P_{s}\rho)P_{s}h}^{2}\le tC_{I}\norm{\Phi''}_{\Cf}\norm{(P_{s}h)^{2}}^{2}\\
 & \qquad\le tC_{J,I}\norm{\Phi''}_{\Cf}\norm h_{H_{J}}^{4},
\end{align*}
where we used Taylor's expansion for $\Phi$, (\ref{eq:estimate_for_derivative_of_V})
and
\[
\bnorm{(P_{s}h)^{2}}^{2}\le\bnorm{\left(P_{s}h\right)^{2}}_{\Cf}^{2}=\norm{P_{s}h}_{\Cf}^{4}\le C_{J}\norm h_{\Cf}^{4}\le C_{J}\norm h_{H_{J}}^{4}
\]
due to $J>\frac{d}{2}$.

The boundedness of $DV_{t}$ follows from (\ref{eq:derivative_of_covariance_operator})
and (\ref{eq:estimate_for_derivative_of_V}). Indeed, for each $h\in\H[J]$,
one has
\begin{align*}
\norm{DV_{t}(\rho)[h]}_{\cL_{2}^{HS}}^{2} & \le4\pi^{4}t\sum_{k,l\in\Z^{d}}(1+|k|^{2})^{-I}(1+|l|^{2})^{-I}\\
 & \qquad\qquad\cdot\int_{0}^{t}\left|\binn{\nabla P_{t-s}\tilde{\varsigma}_{k}\cdot\nabla P_{t-s}\tilde{\varsigma}_{l},\Phi'(P_{s}\rho)P_{s}h}\right|^{2}ds\\
 & \le tC_{I}\sup_{s\in[0,t]}\norm{\Phi'(P_{s}\rho)P_{s}h}^{2}\le tC_{I}\norm{\Phi'}_{\Cf}\norm{P_{s}h}^{2}\\
 & =tC_{I}\norm{\Phi'}_{\Cf}\norm h^{2}\le tC_{I}\norm{\Phi'}_{\Cf}\norm h_{H_{J}}^{2}.
\end{align*}
The continuity of $DV_{t}$ can be checked similarly. This completes
the proof of the statement.
\end{proof}
\begin{lem}
\label{lem:uniform_convergenc_of_proj_of_zeta}Under the assumptions
of Lemma \ref{lem:differentiability_of_variance_V}, one has
\[
\sup_{\rho\in\H[J]}\E\left[\norm{\pr_{n}\zeta_{t}^{\infty}-\zeta_{t}^{\infty}}_{H_{-I}}^{2}\right]\to0
\]
as $n\to\infty$. 
\end{lem}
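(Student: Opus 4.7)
The plan is to exploit the explicit Gaussian structure of $\zeta_t^{\infty}$ given by Proposition \ref{prop:well_possedness_of_OUP_SSEP} and reduce the claim to a tail estimate on a Fourier series whose coefficients we can bound uniformly in $\rho$. I would interpret $\pr_{n}\zeta_t^{\infty}$ via its Fourier representation so that $\zeta_t^{\infty}-\pr_n\zeta_t^{\infty}$ is (essentially) the orthogonal projection onto the modes $\tilde{\varsigma}_k$ with $k\notin\Z_n^d$, and then estimate
\[
\E\left[\norm{\zeta_{t}^{\infty}-\pr_{n}\zeta_{t}^{\infty}}_{H_{-I}}^{2}\right]
=\sum_{k\notin\Z_{n}^{d}}(1+|k|^{2})^{-I}\,\E\left[\left|\inn{\tilde{\varsigma}_{k},\zeta_{t}^{\infty}}\right|^{2}\right].
\]

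Next, I would split the second moment into mean and variance parts using Proposition \ref{prop:well_possedness_of_OUP_SSEP}, namely
\[
\E\left[\left|\inn{\tilde{\varsigma}_{k},\zeta_{t}^{\infty}}\right|^{2}\right]
=\left|m_{t}(\zeta_{0})[\tilde{\varsigma}_{k}]\right|^{2}+V_{t}(\rho)[\tilde{\varsigma}_{k},\tilde{\varsigma}_{-k}].
\]
For the mean part, $m_t(\zeta_0)[\tilde{\varsigma}_k]=\inn{P_t\tilde\varsigma_k,\zeta_0}=e^{-2\pi^{2}|k|^{2}t}\inn{\tilde\varsigma_k,\zeta_0}$, so summing yields a quantity bounded by $\norm{\zeta_0}_{H_{-I}}^{2}$ that does not depend on $\rho$ at all and whose tail over $k\notin\Z_n^d$ vanishes as $n\to\infty$. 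For the covariance part, since $\tilde\varsigma_k$ diagonalizes $P_{t-s}$, the formula (\ref{eq:eq:definition_of_covariance_operator}) gives
\[
V_{t}(\rho)[\tilde\varsigma_k,\tilde\varsigma_{-k}]
=8\pi^{4}|k|^{2}\int_{0}^{t}e^{-4\pi^{2}|k|^{2}(t-s)}\binn{|\tilde\varsigma_k|^{2},\Phi(P_s\rho)}ds
\le 2\pi^{2}\norm{\Phi}_{\Cf}\left(1-e^{-4\pi^{2}|k|^{2}t}\right),
\]
which is bounded by a constant independent of $k$ and of $\rho$.

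Finally, I would combine these to get
\[
\sup_{\rho\in\H[J]}\E\left[\norm{\zeta_{t}^{\infty}-\pr_{n}\zeta_{t}^{\infty}}_{H_{-I}}^{2}\right]
\le\sum_{k\notin\Z_{n}^{d}}(1+|k|^{2})^{-I}\left(e^{-4\pi^{2}|k|^{2}t}|\inn{\tilde\varsigma_k,\zeta_0}|^{2}+C\norm{\Phi}_{\Cf}\right),
\]
and conclude by dominated convergence: the first contribution is the tail of a convergent sum bounded by $\norm{\zeta_0}_{H_{-I}}^{2}$, and the second is the tail of $\sum_{k}(1+|k|^{2})^{-I}$, which converges because $I>\tfrac{d}{2}+1>\tfrac{d}{2}$. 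Both tails tend to zero as $n\to\infty$, uniformly in $\rho$.

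The only real obstacle is a bookkeeping one: making precise how $\pr_n$ is understood on $H_{-I}$ (it is a priori defined for functions on $\T^d$, not on distributions) and verifying that $\zeta_t^{\infty}-\pr_n\zeta_t^{\infty}$ is indeed the Fourier tail with respect to $\{\tilde\varsigma_k\}_{k\notin\Z_n^d}$, possibly up to a bounded number of low-frequency modes that are handled identically. Once this identification is in place, the computation above is essentially automatic and, importantly, every bound depends on $\rho$ only through $\norm{\Phi}_{\Cf}$, which yields the required uniformity.
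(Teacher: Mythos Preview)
Your argument is correct and follows essentially the same route as the paper: expand $\E\norm{\zeta_t^\infty-\pr_n\zeta_t^\infty}_{H_{-I}}^2$ as a Fourier tail sum, split each coefficient into mean and variance via Proposition~\ref{prop:well_possedness_of_OUP_SSEP}, and observe that both contributions are controlled uniformly in $\rho$. The only difference is in the variance bound: the paper uses the cruder estimate $V_t(\rho)[\tilde\varsigma_k,\tilde\varsigma_k]\le C\,t\norm{\Phi}_{\Cf}|k|^2$ (coming from~\eqref{eq:estimate_for_derivative_of_V}) and then invokes $I>\tfrac{d}{2}+1$ so that $\sum_{k\notin\Z_n^d}(1+|k|^2)^{-I+1}$ is a convergent tail, whereas you integrate the heat exponential to obtain the $k$-uniform bound $V_t(\rho)[\tilde\varsigma_k,\tilde\varsigma_k]\le C\norm{\Phi}_{\Cf}$, needing only $I>\tfrac{d}{2}$. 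Your version is slightly sharper but the gain is immaterial here since $I>\tfrac{d}{2}+1$ is assumed anyway. Two minor points: for the real basis the variance is $V_t(\rho)[\tilde\varsigma_k,\tilde\varsigma_k]$ rather than $V_t(\rho)[\tilde\varsigma_k,\tilde\varsigma_{-k}]$; and the ``obstacle'' you flag about $\pr_n$ on $H_{-I}$ is not an issue, since $\pr_n$ is defined spectrally and Lemma~\ref{lem:basic_properties_of_pr}(i) already gives $\pr_n g\to g$ in $H_J$ for every $J\in\R$.
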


\begin{proof}
We rewrite
\begin{align*}
\E\left[\norm{\pr_{n}\zeta_{t}^{\infty}-\zeta_{t}^{\infty}}_{H_{-I}}^{2}\right] & =\sum_{k\not\in\Znd}(1+|k|^{2})^{-I}\E\left[\inn{\zeta_{t}^{\infty},\tilde{\varsigma}_{k}}^{2}\right]\\
 & =\sum_{k\not\in\Znd}(1+|k|^{2})^{-I}\left(V_{t}(\rho)[\tilde{\varsigma}_{k},\tilde{\varsigma}_{k}]+\E\left[\left\langle \tilde{\varsigma}_{k},\zeta_{t}^{\infty}\right\rangle \right]^{2}\right).
\end{align*}
Using Proposition \ref{prop:well_possedness_of_OUP_SSEP} and following
the proof of Lemma \ref{lem:whide_continuity_of_zeta}, in particular
(\ref{eq:estimate_for_derivative_of_V}), the expression above can
be estimated as follows
\begin{align*}
 & \sum_{k\not\in\Znd}(1+|k|^{2})^{-I}\left(V_{t}(\rho)[\tilde{\varsigma}_{k},\tilde{\varsigma}_{k}]+\left(m_{t}(\zeta;\tilde{\varsigma}_{k})\right)^{2}\right)\\
 & \qquad\le\sum_{k\not\in\Znd}(1+|k|^{2})^{-I}\left(\pi^{2}t\norm{\Phi}_{\Cf}\frac{|k|^{4}}{2|k|^{2}}+\inn{\zeta,P_{t}\tilde{\varsigma}_{k}}^{2}\right)\\
 & \qquad\le\pi^{2}t\norm{\Phi}_{\Cf}\sum_{k\not\in\Znd}(1+|k|^{2})^{-I+1}+\sum_{k\not\in\Znd}(1+|k|^{2})^{-I}e^{-8\pi^{2}|k|^{2}t}\inn{\zeta,\tilde{\varsigma}_{k}}^{2}.
\end{align*}
This implies the uniform convergence of $\E\left[\norm{\pr_{n}\zeta_{t}^{\infty}-\zeta_{t}^{\infty}}_{H_{-I}}^{2}\right]$
to zero as $n\to\infty$. 
\end{proof}
Define for $A\in\MLOHS[2][{\H[-I]}]$ and $B\in\MLOHS[2][{\H[I]}]$
\[
A:B:=\sum_{k,l\in\Z^{d}}A[\tilde{\varsigma}_{k},\tilde{\varsigma}_{l}]B[\tilde{\varsigma}_{k},\tilde{\varsigma}_{l}]
\]
and note that 
\[
|A:B|\le\norm A_{\MLOHS[2][H_{-I}]}\norm B_{\MLOHS[2][H_{I}]},
\]
according to (\ref{eq:norm_of_:}).
\begin{prop}
\label{prop:differentiability_of_semigroup_for_OU}Let $I>\frac{d}{2}+1$,
$J>\frac{d}{2}$ and $\zeta\in\H[-I]$ be fixed. Let also $(\zeta_{t}^{\infty})_{t\ge0}$
be a solution to $(\ref{eq:SPDE_for_OU_process-1})$ started from
$\zeta$, where $(\rho_{t}^{\infty})_{t\ge0}$ is a solution to the
heat equation (\ref{eq:heat_PDE-1}) with the initial condition $\rho_{0}^{\infty}=\rho\in\H[J]$.
Then for each $F\in\Cf_{l}^{2}(\H[-I])$ with bounded uniformly continuous
second derivative in the space $\MLOHS[2][{\H[-I]}]$ and $t\ge0$
the function
\[
U_{t}(\rho):=\E F(\zeta_{t}^{\infty}),\quad\rho\in\H[J],
\]
belongs to $\Cf_{l}^{1}(\H[J])$ and for each $\rho\in\H[J]$ and
$t>0$
\begin{equation}
DU_{t}(\rho)[h]=\frac{1}{2}\E\left[D^{2}F(\zeta_{t}^{\infty}):DV_{t}(\rho)\left[h\right]\right],\quad h\in\H[J],\label{eq:expression_of_DU}
\end{equation}
where $V_{t}(\rho)$ is the covariance operator of $\zeta_{t}^{\infty}$
defined by (\ref{eq:eq:definition_of_covariance_operator}).
\end{prop}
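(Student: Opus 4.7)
The central structural input is Proposition \ref{prop:well_possedness_of_OUP_SSEP}: under the SPDE (\ref{eq:SPDE_for_OU_process-1}), the random variable $\zeta_{t}^{\infty}$ is Gaussian in $\H[-I]$ with mean $m_{t}(\zeta)$ depending only on the initial datum $\zeta$, and covariance operator $V_{t}(\rho)$ depending on $\rho$ through the heat flow $P_{s}\rho$. Writing $\zeta_{t}^{\infty}\stackrel{d}{=}m_{t}(\zeta)+X^{\rho}$ with $X^{\rho}$ a centered Gaussian of covariance $V_{t}(\rho)$, the $\rho$-dependence of $U_{t}(\rho)=\E F(\zeta_{t}^{\infty})$ is thus encoded entirely through $V_{t}(\rho)$, so the question reduces to a purely Gaussian one: how does $\E F(m+X^{\rho})$ depend on the covariance $V_{t}(\rho)$? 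This is exactly the setting where Lemma \ref{lem:differentiability_of_variance_V} can be combined with a Gaussian interpolation argument.

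The plan is to use a Gaussian interpolation in the spirit of Lindeberg/Stein. Fix $\rho,h\in\H[J]$ and, on an enlarged probability space, take independent centered Gaussian variables $X,Y$ in $\H[-I]$ with covariances $V_{t}(\rho)$ and $V_{t}(\rho+h)$ respectively. Define
\[
Y_{\lambda}:=\sqrt{1-\lambda}\,X+\sqrt{\lambda}\,Y,\qquad\lambda\in[0,1],
\]
so that $Y_{\lambda}$ is centered Gaussian with covariance $V_{\lambda}=(1-\lambda)V_{t}(\rho)+\lambda V_{t}(\rho+h)$, interpolating between $X$ and $Y$. Setting $g(\lambda):=\E F(m_{t}(\zeta)+Y_{\lambda})$, differentiation in $\lambda$ combined with a Gaussian integration-by-parts (Stein's identity) applied to $X$ and $Y$ separately, after conditioning on the other, yields the key formula
\[
g'(\lambda)=\tfrac{1}{2}\E\!\left[D^{2}F(m_{t}(\zeta)+Y_{\lambda}):\bigl(V_{t}(\rho+h)-V_{t}(\rho)\bigr)\right].
\]
The pairing $A:B$ is well-defined since $D^{2}F$ takes values in $\MLOHS[2][{\H[-I]}]$ by assumption and $V_{t}(\rho+h)-V_{t}(\rho)\in\MLOHS[2][{\H[I]}]$ by Lemma \ref{lem:differentiability_of_variance_V}. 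Integrating over $\lambda\in[0,1]$ produces
\[
U_{t}(\rho+h)-U_{t}(\rho)=\tfrac{1}{2}\int_{0}^{1}\E\!\left[D^{2}F(m_{t}(\zeta)+Y_{\lambda}):\bigl(V_{t}(\rho+h)-V_{t}(\rho)\bigr)\right]d\lambda.
\]

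To extract (\ref{eq:expression_of_DU}), I linearize $V_{t}(\rho+h)-V_{t}(\rho)=DV_{t}(\rho)[h]+r(h)$ with $\|r(h)\|_{\MLOHS[2][{\H[I]}]}=o(\|h\|_{\H[J]})$ using Lemma \ref{lem:differentiability_of_variance_V} and split the integrand accordingly; the remainder is controlled uniformly in $\lambda$ via the estimate $|A:B|\le\|A\|_{HS}\|B\|_{HS}$ and the bound $\sup_{\zeta}\|D^{2}F(\zeta)\|_{HS}<\infty$. For the leading term, as $h\to0$ one has $V_{\lambda}\to V_{t}(\rho)$ in $\MLOHS[2][{\H[I]}]$ uniformly in $\lambda$, so by Lemma \ref{lem:whide_continuity_of_zeta} the laws of $Y_{\lambda}$ converge to that of $X^{\rho}$ uniformly in $\lambda$; the uniform continuity of $D^{2}F$ in the Hilbert-Schmidt topology then gives $\E[D^{2}F(m_{t}(\zeta)+Y_{\lambda}):DV_{t}(\rho)[h]]\to\E[D^{2}F(\zeta_{t}^{\infty}):DV_{t}(\rho)[h]]$ by dominated convergence, after dividing through by $\|h\|_{\H[J]}$ and using (\ref{eq:estimate_of_norm_of_DV}). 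This yields (\ref{eq:expression_of_DU}). Membership $U_{t}\in\Cf_{l}^{1}(\H[J])$ and continuity of $DU_{t}$ then follow from (\ref{eq:estimate_of_norm_of_DV}) combined with the Lipschitz continuity of $V_{t}$ proved in Lemma \ref{lem:differentiability_of_variance_V}.

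The main technical obstacle is justifying the infinite-dimensional Gaussian integration-by-parts at the level of $\H[-I]$-valued Gaussians, since the classical finite-dimensional derivation is not directly available here. My approach is to first verify the derivative formula for cylindrical functionals $F_{n}:=F\circ\ex_{n}\pr_{n}$, for which the identity for $g'(\lambda)$ reduces to a classical Price-type formula in dimension $(2n+1)^{d}$, and then to pass to the limit $n\to\infty$. The Hilbert-Schmidt hypothesis on $D^{2}F$ provides exactly the uniform control needed to transfer the pairing $A:B$ to the limit, while Lemma \ref{lem:uniform_convergenc_of_proj_of_zeta} ensures convergence of the truncated Gaussians $\pr_{n}X^{\rho}$ to $X^{\rho}$ in $L^{2}(\Omega;\H[-I])$ uniformly in $\rho$.
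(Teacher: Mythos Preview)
Your approach is correct and takes a genuinely different route from the paper's proof. The paper works entirely through finite-dimensional projections: it defines $U^{n}(\rho)=\E F(\pr_{n}\zeta_{t}^{\infty})$, parametrizes the truncated Gaussian via its square root $\sqrt{V^{n}(\rho)}\tilde{\xi}^{n}$, and differentiates in $\rho$ by the chain rule through the square-root map. Since $V^{n}(\rho)$ may be degenerate, a regularization $G_{\delta}(A)=\delta I+A$ is introduced; Lemma~\ref{lem:integration-by-parts_for_gauss} then converts the resulting $D\sqrt{\cdot}$-term into the $D^{2}f_{n}:B$ form, and two successive limits $\delta\to0$, $n\to\infty$ (each justified via uniform convergence of derivatives and Cartan's Theorem 3.6.1) yield~(\ref{eq:expression_of_DU}).

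Your interpolation argument bypasses the square-root map and its attendant $\delta$-regularization entirely: the Price identity $g'(\lambda)=\tfrac{1}{2}\E[D^{2}F:(V_{t}(\rho+h)-V_{t}(\rho))]$ gives the increment of $U_{t}$ directly as an integral in $\lambda$, and linearizing $V_{t}(\rho+h)-V_{t}(\rho)$ via Lemma~\ref{lem:differentiability_of_variance_V} extracts the derivative. This is cleaner conceptually. The price you pay is in the $h\to0$ step, where you need $\E[D^{2}F(m_{t}(\zeta)+Y_{\lambda})]\to\E[D^{2}F(\zeta_{t}^{\infty})]$ in $\MLOHS[2][{\H[-I]}]$ uniformly in $\lambda\in[0,1]$; this is where the uniform continuity hypothesis on $D^{2}F$ does real work. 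Your invocation of Lemma~\ref{lem:whide_continuity_of_zeta} here is slightly loose, since $Y_{\lambda}$ is not a priori given as an SPDE solution, but it can be realized as one with coefficient $\Phi_{s}^{\lambda}=(1-\lambda)\Phi(P_{s}\rho)+\lambda\Phi(P_{s}(\rho+h))\ge0$, so the lemma does apply (and gives the needed uniformity in $\lambda$). Both routes ultimately reduce the Gaussian integration-by-parts to a finite-dimensional statement; the paper does this before differentiating, you do it only to justify the Price formula.
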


\begin{proof}
Let $t>0$ be fixed. We will show the differentiability of $U_{t}$
on $\H[J]$, using the differentiability of the variance $V_{t}$
that follows from Lemma \ref{lem:differentiability_of_variance_V}.
Define the sequence of functions
\[
U^{n}(\rho):=\E F\left(\pr_{n}\zeta_{t}^{\infty}\right),\quad n\ge1,
\]
and show that they are continuously differentiable on $\H[J]$ and
their derivatives converge uniformly to a continuous function $\tilde{U}$.
By \citep[Theorem 3.6.1]{Cartan:1971}, we will conclude that $U_{t}\in\Cf^{1}(\H[J])$
and $DU=\tilde{U}.$

Setting $\xi^{n}:=\left(\inn{\zeta_{t}^{\infty},\tilde{\varsigma}_{k}}-m_{k}\right)_{k\in\Znd}$
for $m_{k}=\E\inn{\zeta_{t}^{\infty},\tilde{\varsigma}_{k}}$, we
can represent $U^{n}$ as follows
\[
U^{n}(\rho)=\E f_{n}\left(m^{n}+\xi^{n}\right),
\]
where
\[
f_{n}(z)=F\left(\chi_{n}(z)\right),\quad z\in\R^{\Znd},
\]
$\chi_{n}(z):=\sum_{k\in\Z_{n}^{d}}z_{k}\tilde{\varsigma}_{k}$ and
$m^{n}:=(m_{k})_{k\in\Znd}$. Note that $\xi^{n}:=\left(\inn{\zeta_{t}^{\infty},\tilde{\varsigma}_{k}}-m_{k}\right)_{k\in\Znd}$
is a centered Gaussian vector with covariance matrix
\[
V^{n}(\rho):=\left(V_{t}(\rho)[\tilde{\varsigma}_{k},\tilde{\varsigma}_{l}]\right)_{k,l\in\Z_{n}^{d}}
\]
that is non-negatively defined and symmetric. By the differentiability
of $F$, the function $f_{n}$ belongs to $\Cf_{l}^{2}(\R^{\Znd})$
and 
\begin{equation}
\frac{\partial f_{n}}{\partial z_{k}}=DF(\chi_{n})[\tilde{\varsigma}_{k}],\quad\frac{\partial^{2}f_{n}}{\partial z_{k}\partial z_{l}}=D^{2}F(\chi_{n})[\tilde{\varsigma}_{k},\tilde{\varsigma}_{l}],\quad k,l\in\Znd.\label{eq:derivative_of_f_n}
\end{equation}
Using the spectral decomposition theorem, there exists a square-root
$\sqrt{V^{n}(\rho)}$ of $V^{n}(\rho)$, that is a (unique) non-negatively
defined symmetric matrix such that $(\sqrt{V^{n}(\rho)})^{2}=V^{n}(\rho)$.
Thus, we can define $\xi^{n}=\sqrt{V^{n}(\rho)}\tilde{\xi}^{n}$ for
a standard Gaussian vector $\tilde{\xi^{n}}=(\tilde{\xi}_{k})_{k\in\Znd}$.
Therefore, the differentiability of $U^{n}$ will follow from the
differentiability of $\rho\mapsto\E f_{n}\big(m^{n}+\sqrt{V^{n}(\rho)}\tilde{\xi}^{n}\big)$.

Let $\Her_{n}$ denote the Hilbert space of symmetric matrices $(A_{k,l})_{k,l\in\Znd}$
with real entries and be equipped with the inner product 
\[
A:B:=\sum_{k,l\in\Znd}A_{k,l}B_{k,l}.
\]
The open subset of positively defined matrices from $\Her_{n}$ will
be denoted by $\Her_{n}^{+}.$ Note that the square-root function
$\sqrt{\cdot}:\Her_{n}^{+}\to\Her_{n}^{+}$ is continuously differentiable
and its derivative $(D\sqrt{A})[B]$ in a direction $B\in\Her_{n}$
satisfies
\begin{equation}
\left(D\sqrt{A}\right)[B]\sqrt{A}+\sqrt{A}\left(D\sqrt{A}\right)[B]=B,\label{eq:derivative_of_square_root}
\end{equation}
according to the expression for the derivative of the product $A=\sqrt{A}\sqrt{A}$.
We next consider for each $\delta>0$ a continuously differentiable
function $G_{\delta}(A):=\delta I+A$, $A\in\Her_{n}^{-\delta}$,
with values in $\Her_{n}^{+}$, where $I$ is the identity matrix
and 
\[
\Her_{n}^{-\delta}:=\left\{ A\in\Her_{n}:\ Ax\cdot x>-\delta|x|^{2},\quad x\in\R^{\Znd}\backslash\{0\}\right\} 
\]
is an open subset of $\Her_{n}$. Then $G_{\delta}\in\Cf^{1}(\Her_{n}^{-\delta};\Her_{n}^{+})$
and, consequently, the function
\[
K_{n,\delta}(A):=\E f_{n}\left(m^{n}+\sqrt{G_{\delta}(A)}\tilde{\xi}^{n}\right),\quad A\in\Her_{n}^{-\delta},
\]
is continuously differentiable with derivative in a direction $B\in\Her_{n}$
given by
\[
DK_{n,\delta}(A)[B]=\E\left[Df_{n}\left(m^{n}+\sqrt{G_{\delta}(A)}\tilde{\xi}^{n}\right)\cdot\left(\left(D\sqrt{G_{\delta}}(A)\right)[B]\tilde{\xi}^{n}\right)\right]
\]
due to \citep[Theorem 2.2.1]{Cartan:1971} and the dominated convergence
theorem. Using the integration-by-parts formula for a Gaussian vector
(see Lemma \ref{lem:integration-by-parts_for_gauss}), we get
\[
DK_{n,\delta}(A)[B]=\E\left[D^{2}f_{n}\left(m^{n}+\sqrt{G_{\delta}(A)}\tilde{\xi}^{n}\right):\left(\left(D\sqrt{G_{\delta}}(A)\right)[B]\sqrt{G_{\delta}(A)}\right)\right].
\]
Next, by the equality $A:(BR)=A:(RB)$ for $A,B,R\in\Her_{n}$ and
(\ref{eq:derivative_of_square_root}), we have
\begin{align}
DK_{n,\delta}(A)[B] & =\frac{1}{2}\E\left[D^{2}f_{n}\left(m^{n}+\sqrt{G_{\delta}(A)}\tilde{\xi}^{n}\right):\left(\left(D\sqrt{G_{\delta}}(A)\right)[B]\sqrt{G_{\delta}(A)}\right)\right]\nonumber \\
 & +\frac{1}{2}\E\left[D^{2}f_{n}\left(m^{n}+\sqrt{G_{\delta}(A)}\tilde{\xi}^{n}\right):\left(\sqrt{G_{\delta}(A)}\left(D\sqrt{G_{\delta}}(A)\right)[B]\right)\right]\label{eq:derivative_DK_delta}\\
 & =\frac{1}{2}\E\left[D^{2}f_{n}\left(m^{n}+\sqrt{G_{\delta}(A)}\tilde{\xi}^{n}\right):B\right]\nonumber 
\end{align}
for all $A\in\Her_{n}^{-\delta}$ and $B\in\Her_{n}$. By the differentiability
of the composition and the expression (\ref{eq:derivative_DK_delta}),
we conclude that the function $\E f_{n}\left(m^{n}+\sqrt{\delta I+V^{n}}\tilde{\xi}^{n}\right)$
is continuously differentiable and
\begin{align*}
DU^{n,\delta}(\rho) & :=D\E f_{n}\left(m^{n}+\sqrt{\delta I+V^{n}(\rho)}\tilde{\xi}^{n}\right)\\
 & =\frac{1}{2}\E\left[D^{2}f_{n}\left(m^{n}+\sqrt{\delta I+V^{n}(\rho)}\tilde{\xi}^{n}\right):DV^{n}(\rho)\right],\quad\rho\in\H[J],
\end{align*}
for all $\delta>0$. Now, taking $\delta\to0+$, and using \citep[Theorem 3.6.1]{Cartan:1971}
and the dominated convergence theorem, we get that $U^{n}\in\Cf^{1}\left(\H\right)$
and 
\begin{align*}
DU^{n}(\rho)[h] & =D\E f_{n}\left(m^{n}+\sqrt{V^{n}(\rho)}\tilde{\xi}^{n}\right)[h]\\
 & =\frac{1}{2}\E\left[D^{2}f_{n}\left(m^{n}+\xi^{n}\right):DV^{n}(\rho)[h]\right],\quad\rho,h\in\H[J].
\end{align*}
Note that the assumptions of \citep[Theorem 3.6.1]{Cartan:1971} require
the uniform convergence of the sequence $DU^{n,\delta}$ to $DU^{n}$
as $\delta\to0$. We will show this property for a more complicated
sequence of derivatives at the end of this proof. The uniform convergence
in the present case can be obtained similarly. 

In order to show the differentiability of $U_{t}$, we will use \citep[Theorem 3.6.1]{Cartan:1971}
again. We first note that 
\[
U^{n}(\rho)\to U_{t}(\rho)
\]
as $n\to\infty$ for each $\rho\in\H$, by the dominated convergence
theorem and the fact that $\pr_{n}\zeta_{t}^{\infty}\to\zeta_{t}^{\infty}$
a.s. in $\H[-I]$ as $n\to\infty$. We will next rewrite the derivative
$DU_{n}$ via the derivatives $D^{2}F$ and $DV_{t}$ in the corresponding
spaces. Using (\ref{eq:derivative_of_f_n}) and 
\[
DV_{k,l}^{n}(\rho)[h]=DV_{t}(\rho)[h][\tilde{\varsigma}_{k},\tilde{\varsigma}_{l}],
\]
we obtain
\begin{align*}
DU^{n}(\rho)[h] & =\frac{1}{2}\sum_{k,l\in\Znd}\E\left[D^{2}F(\pr_{n}\zeta_{t}^{\infty})[\tilde{\varsigma_{k}},\tilde{\varsigma_{l}}]DV_{t}(\rho)[h][\tilde{\varsigma}_{k},\tilde{\varsigma}_{l}]\right]\\
 & =\frac{1}{2}\E\left[D^{2}F(\pr_{n}\zeta_{t}^{\infty}):\pr_{n}^{\otimes2}DV_{t}(\rho)[h]\right]
\end{align*}
for all $\rho,h\in\H[J]$. We next note that $D^{2}F(\zeta)\in\MLOHS[2][{\H[-I]}]$
and $DV_{t}(\rho)[h]\in\MLOHS[2][{\H[I]}]$, by Lemma \ref{lem:differentiability_of_variance_V}.
Hence, $D^{2}F(\zeta):DV_{t}(\rho)[h]$ is well defined for each $\zeta\in\H[-I]$,
$\rho\in\H[J]$ and $h\in\H[J]$. We will show that $DU^{n}\to DU$
uniformly. Using (\ref{eq:norm_of_:}), we get
\begin{align*}
 & \left|DU^{n}(\rho)[h]-DU(\rho)[h]\right|^{2}\le\frac{1}{2}\E\left[\left|D^{2}F(\pr_{n}\zeta_{t}^{\infty}):\left(\pr_{n}^{\otimes2}DV_{t}(\rho)[h]-DV_{t}(\rho)[h]\right)\right|^{2}\right]\\
 & \qquad+\frac{1}{2}\E\left[\left|\left(D^{2}F(\pr_{n}\zeta_{t}^{\infty})-D^{2}F(\zeta_{t}^{\infty})\right):V_{t}(\rho)[h]\right|^{2}\right]\\
 & \qquad\le\frac{1}{2}\bnorm{\pr_{n}^{\otimes2}DV_{t}(\rho)[h]-DV_{t}(\rho)[h]}_{\MLOHS[2][{\H[I]}]}^{2}\E\left[\bnorm{D^{2}F(\pr_{n}\zeta_{t}^{\infty})}_{\MLOHS[2][{\H[-I]}]}^{2}\right]\\
 & \qquad+\frac{1}{2}\bnorm{DV_{t}(\rho)[h]}_{\MLOHS[2][{\H[I]}]}^{2}\E\left[\bnorm{D^{2}F(\pr_{n}\zeta_{t}^{\infty})-D^{2}F(\zeta_{t}^{\infty})}_{\MLOHS[2][{\H[-I]}]}^{2}\right]
\end{align*}
 for all $h,\rho\in\H[J]$. By (\ref{eq:estimate_of_norm_of_DV}),
\begin{align*}
\bnorm{DV(\rho)[h]}_{\MLOHS[2][{\H[I]}]}^{2} & \le tC_{I}\norm{\Phi'}_{\Cf}\norm h_{H_{J}}^{2},\quad\rho,h\in\H[J].
\end{align*}
Moreover, similarly to the proof of (\ref{eq:estimate_of_norm_of_DV}),
we conclude 
\begin{align*}
\bnorm{\tilde{V}^{n}(\rho)[h]-V(\rho)[h]}_{\MLOHS[2][{\H[I]}]}^{2} & =\sum_{k,l\not\in\Znd}(1+|k|^{2})^{-I}(1+|l|^{2})^{-I}\left|DV(\rho)[h][\tilde{\varsigma}_{k},\tilde{\varsigma}_{l}]\right|^{2}\\
 & \le\sum_{k,l\not\in\Znd}(1+|k|^{2})^{I+1}(1+|l|^{2})^{-I+1}t\norm{\Phi'}_{\Cf}\norm h_{H_{J}}^{2}\\
 & =\eps_{n}t\norm{\Phi'}_{\Cf}\norm h_{H_{J}}^{2},\quad\rho,h\in\H[J],
\end{align*}
where $\eps_{n}:=\sum_{k,l\not\in\Znd}(1+|k|^{2})^{-I+1}(1+|l|^{2})^{-I+1}\to0$
as $n\to\infty$, due to $I>\frac{d}{2}+1$.

We next fix arbitrary $\eps>0$ and choose $\delta>0$ such that 
\[
\norm{D^{2}F(\zeta)-D^{2}F(\zeta')}_{\MLOHS[2][{\H[-I]}]}<\eps
\]
for all $\zeta,\zeta'\in\H[-I]$ satisfying $\norm{\zeta-\zeta'}_{\H[-I]}<\delta$,
according to the uniform continuity of $D^{2}F$. We also take $N\in\N$
such that for all $n\ge N$
\[
\sup_{\rho\in\H[J]}\E\left[\norm{\pr_{n}\zeta_{t}^{\infty}-\zeta_{t}^{\infty}}_{H_{-I}}^{2}\right]<\eps,
\]
by Lemma \ref{lem:uniform_convergenc_of_proj_of_zeta}. Then, using
Chebyshev's inequality, we get
\begin{align*}
 & \E\left[\bnorm{D^{2}F(\pr_{n}\zeta_{t}^{\infty})-D^{2}F(\zeta_{t}^{\infty})}_{\MLOHS[2][{\H[-I]}]}^{2}\right]\\
 & \qquad=\E\left[\bnorm{D^{2}F(\pr_{n}\zeta_{t}^{\infty})-D^{2}F(\zeta_{t}^{\infty})}_{\MLOHS[2][{\H[-I]}]}^{2}\I_{\left\{ \norm{\pr_{n}\zeta_{t}^{\infty}-\zeta_{t}^{\infty}}_{H_{-I}}\ge\delta\right\} }\right]\\
 & \qquad+\E\left[\bnorm{D^{2}F(\pr_{n}\zeta_{t}^{\infty})-D^{2}F(\zeta_{t}^{\infty})}_{\MLOHS[2][{\H[-I]}]}^{2}\I_{\left\{ \norm{\pr_{n}\zeta_{t}^{\infty}-\zeta_{t}^{\infty}}_{H_{-I}}\le\delta\right\} }\right]\\
 & \qquad\le\frac{4}{\delta^{2}}\sup_{\zeta\in\H[-I]}\norm{D^{2}F(\zeta)}_{\MLOHS[2][{\H[-I]}]}^{2}\E\left[\norm{\pr_{n}\zeta_{t}^{\infty}-\zeta_{t}^{\infty}}_{H_{-I}}^{2}\right]+\eps^{2}\\
 & \qquad\le\frac{4}{\delta^{2}}\sup_{\zeta\in\H[-I]}\norm{D^{2}F(\zeta)}_{\MLOHS[2][{\H[-I]}]}^{2}\eps+\eps^{2}.
\end{align*}
This shows that 
\[
\sup_{\rho\in\H[J]}\E\left[\bnorm{D^{2}F(\pr_{n}\zeta_{t}^{\infty})-D^{2}F(\zeta_{t}^{\infty})}_{\MLOHS[2][{\H[-I]}]}^{2}\right]\to0
\]
as $n\to\infty$. Consequently, 
\[
\sup_{\rho\in\H[J]}\bnorm{DU^{n}(\rho)-DU(\rho)}_{H_{-J}}=\sup_{\rho\in\H[J]}\sup_{\norm h_{J}\le1}\left|DU^{n}(\rho)[h]-DU(\rho)[h]\right|\to0.
\]

The boundedness of $DU$ follows from the expression (\ref{eq:expression_of_DU}),
(\ref{eq:norm_of_:}) and Lemma \ref{lem:differentiability_of_variance_V}.
This completes the proof of the proposition.
\end{proof}
We next define for a function $F\in\Cf_{l,HS}^{1,2}(\H[J],\H[-I])$
the differential operator
\begin{align*}
\cG^{OU,\Phi}F(\rho,\zeta) & =2\pi^{2}\binn{\Delta D_{1}F(\rho,\zeta),\rho}+2\pi^{2}\binn{\Delta D_{2}F(\rho,\zeta),\zeta}\\
 & +2\pi^{2}\sum_{j=1}^{d}\binn{\Tr\left(\partial_{j}^{\otimes2}D_{2}^{2}F(\rho,\zeta)\right),\Phi(\rho)},\quad\rho\in\H[J+2],\ \ \zeta\in\H[-I+2].
\end{align*}

\begin{prop}
\label{prop:fokker-plank-equation} Let $I>\frac{d}{2}+1$, $J>\frac{d}{2}$,
$\Phi\in\Cf_{b}^{2}(\R)$, $F\in\Cf_{l}^{2,4}(\H[J],\H[-I])$ and
$D_{2}^{2}F$ be bounded and uniformly continuous in $\MLOHS[2][{\H[-I]}]$.
Let also
\[
U_{t}(\rho,\zeta):=\E F(\rho_{t}^{\infty},\zeta_{t}^{\infty}),\quad t\ge0,
\]
for $\rho\in\H[J]$, $\zeta\in\H[-I]$, where $(\rho_{t}^{\infty},\zeta_{t}^{\infty})$,
$t\ge0$, is a solution in $\H[J]\times\H[-I]$ to (\ref{eq:heat_PDE-1}),
(\ref{eq:SPDE_for_OU_process-1}) started from $(\rho,\zeta)$. Then
the function $U$ belongs to $\Cf^{0,1,3}([0,\infty),H_{J},H_{-I})$
and for each $T>0$
\begin{equation}
\sup_{t\in[0,T]}\norm{U_{t}}_{\Cf_{l,HS}^{1,3}}\le\Cf_{I,T}\left(\norm{\Phi'}_{\Cf}+1\right)\norm F_{\Cf_{l,HS}^{1,3}}.\label{eq:uniform_estimate_of_norm_of_U}
\end{equation}
Moreover, if $I>\frac{d}{2}+3$, then for each $\rho\in\H[J+2]$ and
$\zeta\in\H[-I+2]$ the map $t\mapsto U_{t}(\rho,\zeta)$ is continuously
differentiable, 
\begin{equation}
\partial U_{t}(\rho,\zeta)=\cG^{OU,\Phi}U_{t}(\rho,\zeta),\quad t>0,\ \rho\in\H[J+2],\ \zeta\in\H[-I+2],\label{eq:kolmogorov_equation}
\end{equation}
and $\partial U\in\Cf\left([0,\infty)\times\H[J+2]\times H_{-I+2}\right)$.
\end{prop}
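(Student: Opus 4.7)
The plan is to establish spatial regularity of $U_t$ separately in $\zeta$ and in $\rho$, and then to derive the time regularity and the Kolmogorov equation from It\^o's formula combined with the Markov property. For the $\zeta$-regularity I would exploit the linearity of (\ref{eq:SPDE_for_OU_process-1}) in its initial condition. Writing $\zeta_t^\infty = P_t\zeta + M_t^\rho$, where $M_t^\rho$ is a centered Gaussian process whose law depends only on $\rho$, one obtains $U_t(\rho,\zeta) = \E F(P_t\rho, P_t\zeta + M_t^\rho)$ and can pull the first three Fr\'echet derivatives in $\zeta$ under the expectation,
\[
D_2^k U_t(\rho,\zeta)[h_1,\dots,h_k] = \E\bigl[D_2^k F(P_t\rho, P_t\zeta + M_t^\rho)[P_t h_1,\dots,P_t h_k]\bigr], \qquad k \le 3.
\]
The contractivity of $P_t$ on $\H[-I]$, combined with the uniform continuity of $D_2^2 F$ in $\MLOHS[2][{\H[-I]}]$ and with $F \in \Cf_l^{2,4}$, yields the $\zeta$-part of the Hilbert--Schmidt bound (\ref{eq:uniform_estimate_of_norm_of_U}).

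For the $\rho$-derivative I would combine the chain rule with Proposition \ref{prop:differentiability_of_semigroup_for_OU}. The explicit $\rho$-dependence through $P_t\rho$ in the first argument of $F$ is handled by $D_1 F$ and the continuity of $P_t$, contributing $\E[D_1 F(P_t\rho, P_t\zeta + M_t^\rho)[P_t h]]$. For the implicit $\rho$-dependence through the law of $M_t^\rho$, I would apply Proposition \ref{prop:differentiability_of_semigroup_for_OU} to the single-variable function $\zeta' \mapsto F(P_t\rho, P_t\zeta + \zeta')$, treating $P_t\rho$ and $P_t\zeta$ as frozen parameters within that proposition, to obtain $\tfrac{1}{2}\E[D_2^2 F(P_t\rho, P_t\zeta + M_t^\rho) : DV_t(\rho)[h]]$. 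Summing the two contributions gives $D_1 U_t(\rho,\zeta)[h]$ in closed form, and the factor $\norm{\Phi'}_\Cf$ in (\ref{eq:uniform_estimate_of_norm_of_U}) enters through the estimate (\ref{eq:estimate_of_norm_of_DV}). The joint continuity of the derivatives in $(t,\rho,\zeta)$ required for $U \in \Cf^{0,1,3}([0,\infty),\H[J],\H[-I])$ follows from the uniform continuity of $D_2^2 F$ together with Lemma \ref{lem:whide_continuity_of_zeta}.

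For the time regularity and the Kolmogorov equation I would invoke the Markov property $U_{t+h}(\rho,\zeta) = \E U_t(\rho_h^\infty, \zeta_h^\infty)$ and apply Lemma \ref{lem:Ito_formula_for_eta} to the function $U_t \in \Cf^{0,1,3}$ on $\H[J+2] \times \H[-I+2]$. Taking expectations, dividing by $h$, and passing to the limit $h \downarrow 0$ using continuity of $s \mapsto \cG^{OU,\Phi}U_t(\rho_s^\infty,\zeta_s^\infty)$ at $s = 0$ yields $\partial_t U_t = \cG^{OU,\Phi} U_t$. The strengthened assumption $I > \tfrac{d}{2} + 3$ enters precisely here, guaranteeing that $\Delta D_2 U_t$ and $\Tr(\partial_j^{\otimes 2} D_2^2 U_t)$ pair well against elements of $\H[J+2]$ and $\H[-I+2]$ and hence that $\cG^{OU,\Phi} U_t$ is well defined on this domain.

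The main obstacle is the $\rho$-regularity step. Since $\sqrt{\Phi(\rho)}$ is not differentiable, one cannot proceed by differentiating the SPDE (\ref{eq:SPDE_for_OU_process-1}) in its initial condition, and the Gaussian integration-by-parts argument of Proposition \ref{prop:differentiability_of_semigroup_for_OU} must serve as the substitute. Care will be needed to verify that its combination with the $\zeta$-differentiation above produces the mixed Hilbert--Schmidt bounds demanded in (\ref{eq:uniform_estimate_of_norm_of_U}) and the regularity required to invoke It\^o's formula in the final step.
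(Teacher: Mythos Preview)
Your proposal is correct and follows essentially the same route as the paper. The only organizational difference is that the paper formalizes the separation of the two $\rho$-dependencies by introducing an auxiliary three-variable function $\tilde U_t(\tilde\rho,\rho,\zeta):=\E F(P_t\tilde\rho,\zeta_t^\infty)$ with $U_t(\rho,\zeta)=\tilde U_t(\rho,\rho,\zeta)$, so that differentiability in $\tilde\rho$ (explicit dependence) and in $\rho$ (law of $\zeta_t^\infty$, via Proposition~\ref{prop:differentiability_of_semigroup_for_OU}) can be established separately and then combined by the chain rule; this is exactly the decomposition you describe in words.
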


\begin{proof}
To prove the proposition, we will split the dependence of $\rho_{t}^{\infty}$
and $\zeta_{t}^{\infty}$ on the initial condition for the heat equation
(\ref{eq:heat_PDE-1}), extending $U_{t}$ by
\[
\tilde{U}_{t}(\tilde{\rho},\rho,\zeta):=\E F(\tilde{\rho}_{t}^{\infty},\zeta_{t}^{\infty}),
\]
where $(\tilde{\rho}_{t}^{\infty})_{t\ge0}$ is a solution to (\ref{eq:heat_PDE-1})
started from $\tilde{\rho}\in\H[J]$ and $(\zeta_{t}^{\infty})_{t\ge0}$
is a solution to (\ref{eq:SPDE_for_OU_process-1}) started from $\zeta\in\H[-I]$
with the diffusion coefficient depending on the solution $(\rho_{t}^{\infty})_{t\ge0}$
to the heat equation (\ref{eq:heat_PDE-1}) started from $\rho\in\H[J]$.
Then
\[
U_{t}(\rho,\zeta)=\tilde{U}_{t}(\rho,\rho,\zeta)
\]
for all $t\ge0$, $\rho\in\H[J]$ and $\zeta\in\H[-I]$. 

The continuity of $\tilde{U}$ directly follows from the mean-value
theorem (see \citep[Theorem 3.3.2]{Cartan:1971}), the continuity
$(t,\tilde{\rho})\mapsto\tilde{\rho}_{t}^{\infty}$ as a map from
$[0,\infty)\times\H[J]$ to $\H[J]$ and the continuity of $(t,\rho,\zeta)\mapsto\law\zeta_{t}^{\infty}$
in the 2-Wasserstein topology as a map from $[0,\infty)\times\H[J]\times\H[-I]$
to the space of probability distributions on $\H[-I]$ with a finite
second moment, by Lemma \ref{lem:whide_continuity_of_zeta}. 

We next show that $\tilde{U_{t}}$ is differentiable on $\H[-I]$
with respect to the third variable and its derivative in a direction
$h\in\H[-I]$ equals
\begin{equation}
D_{3}\tilde{U}_{t}(\tilde{\rho},\rho,\zeta)[h]=\E\left[D_{2}F(\tilde{\rho}_{t}^{\infty},\zeta_{t}^{\infty})[P_{t}h]\right]\label{eq:derivative_of_tilde_U}
\end{equation}
for all $\tilde{\rho},\rho\in\H[J]$, $\zeta\in\H[-I]$ and $t\ge0$.
Using the differentiability of $F$, we get
\begin{align*}
 & \left|\tilde{U}_{t}(\tilde{\rho},\rho,\zeta+h)-\tilde{U}_{t}(\tilde{\rho},\rho,\zeta)-D_{3}\tilde{U}_{t}(\tilde{\rho},\rho,\zeta)[h]\right|\\
 & \qquad=\left|\E\left[F(\tilde{\rho}_{t}^{\infty},\zeta_{t}^{\infty,h})-F(\tilde{\rho}_{t}^{\infty},\zeta_{t}^{\infty})-D_{2}F(\tilde{\rho}_{t}^{\infty},\zeta_{t}^{\infty})[P_{t}h]\right]\right|\\
 & \qquad=\left|\E\left[F(\tilde{\rho}_{t}^{\infty},\zeta_{t}^{\infty}+P_{t}h)-F(\tilde{\rho}_{t}^{\infty},\zeta_{t}^{\infty})-D_{2}F(\tilde{\rho}_{t}^{\infty},\zeta_{t}^{\infty})[P_{t}h]\right]\right|,
\end{align*}
where $(\zeta_{t}^{\infty,h})_{t\ge0}$ is a solution to (\ref{eq:SPDE_for_OU_process-1})
started from $\zeta+h$ and, by the linearity of (\ref{eq:SPDE_for_OU_process-1}),
$\zeta_{t}^{\infty,h}=\zeta_{t}^{\infty}+P_{t}h$. Consequently, we
can estimate the right hand side of the equality above by $\frac{1}{2}\norm{D_{2}^{2}F}_{\Cf}\norm{P_{t}h}_{\H[-I]}^{2}$,
according to \citep[Theorem 5.6.1]{Cartan:1971}. The continuity $D_{3}\tilde{U}$
can be proved similarly to the continuity of $\tilde{U}$. Similarly,
we can also prove that $\tilde{U}$ is continuously differentiable
with respect to $\zeta$ to the third order and continuously differentiable
with respect to $\tilde{\rho}$. Moreover, the derivatives have a
similar structure as in (\ref{eq:derivative_of_tilde_U}). Hence they
are uniformly bounded.

The continuous differentiability of $\tilde{U}$ with respect to $\rho$
and the boundedness of its derivative follows from Proposition \ref{prop:differentiability_of_semigroup_for_OU}.
Thus, $U\in\Cf_{l}^{0,1,3}([0,\infty),H_{J},H_{-I})$, by \citep[Proposition 2.6.2]{Cartan:1971}.
The fact that $D_{2}^{2}U_{t}(\rho,\zeta)\in\MLOHS[2][{\H[-I]}]$
follows from the estimate
\begin{align*}
 & \norm{D_{2}^{2}U_{t}(\rho,\zeta)}_{\MLOHS[2][{\H[-I]}]}^{2}=\sum_{k,l\in\Z^{d}}(1+|k|^{2})^{I}(1+|l|^{2})^{I}\left|\E\left[D_{2}^{2}F(\rho_{t}^{\infty},\zeta_{t}^{\infty})[P_{t}\tilde{\varsigma}_{k},P_{t}\tilde{\varsigma}_{l}]\right]\right|^{2}\\
 & \qquad=\sum_{k,l\in\Z^{d}}(1+|k|^{2})^{I}(1+|l|^{2})^{I}e^{-4\pi^{2}t\left(|k|^{2}+|l|^{2}\right)}\left|\E\left[D_{2}^{2}F(\rho_{t}^{\infty},\zeta_{t}^{\infty})[\tilde{\varsigma}_{k},\tilde{\varsigma}_{l}]\right]\right|^{2}\\
 & \qquad\le\sum_{k,l\in\Z^{d}}(1+|k|^{2})^{I}(1+|l|^{2})^{I}\E\left[\left|D_{2}^{2}F(\rho_{t}^{\infty},\zeta_{t}^{\infty})[\tilde{\varsigma}_{k},\tilde{\varsigma}_{l}]\right|^{2}\right]\\
 & \qquad=\E\left[\bnorm{D_{2}^{2}F(\rho_{t}^{\infty},\zeta_{t}^{\infty})}_{\MLOHS[2][{\H[-I]}]}^{2}\right]\le\sup_{\rho\in\H[J],\zeta\in\H[-I]}\bnorm{D_{2}^{2}F(\rho,\zeta)}_{\MLOHS[2][{\H[-I]}]}^{2}.
\end{align*}

The bound (\ref{eq:uniform_estimate_of_norm_of_U}) follows from the
latter inequality, direct estimates of the derivatives $D_{1}\tilde{U}$,
$D_{3}^{m}\tilde{U}$, $m\in[3]$, that satisfy expressions similar
to (\ref{eq:derivative_of_tilde_U}), Lemma \ref{lem:differentiability_of_variance_V}
and Proposition \ref{prop:differentiability_of_semigroup_for_OU}.

Let $\rho\in\H[J+2]$, $\zeta\in\H[-I+2]$ and $(\rho_{t}^{\infty},\zeta_{t}^{\infty})$,
$t\ge0$, be a solution to (\ref{eq:heat_PDE-1}), (\ref{eq:SPDE_for_OU_process-1})
started from $(\rho,\zeta)$. By Proposition \ref{prop:well_possedness_of_OUP_SSEP},
the process $(\rho^{\infty},\zeta^{\infty})$ takes values in $H_{J+2}\times\H[-I+2]$.
Using the Markov property of $(\rho^{\infty},\zeta^{\infty})$ and
Lemma \ref{lem:Ito_formula_for_eta}, we get for each $t\ge0$ and
$\eps>0$
\begin{align*}
U_{t+\eps}(\rho,\zeta)-U_{t}(\rho,\zeta) & =\E\left[U_{t}(\rho_{\eps}^{\infty},\zeta_{\eps}^{\infty})\right]-U_{t}(\rho,\zeta)\\
 & =2\pi^{2}\int_{0}^{\eps}\E\binn{\Delta D_{2}U_{t}(\rho_{s}^{\infty},\zeta_{s}^{\infty}),\zeta_{s}^{\infty}}ds\\
 & +2\pi^{2}\int_{0}^{t}\E\binn{\Delta D_{1}U_{t}(\rho_{s}^{\infty},\zeta_{s}^{\infty}),\rho_{s}^{\infty}}ds\\
 & +2\pi^{2}\int_{0}^{t}\sum_{j=1}^{d}\E\binn{\Tr\left(\partial_{j}^{\otimes2}D_{2}^{2}U_{t}(\rho_{s}^{\infty},\zeta_{s}^{\infty})\right),\Phi(\rho_{s}^{\infty})}ds.
\end{align*}
By the continuity of $(\rho^{\infty},\zeta^{\infty})$ in $\H[J+2]\times\H[-I+2]$,
the fact that $U\in\Cf_{l}^{0,1,2}([0,\infty),\H[J],\H[-I])$, the
estimate (\ref{eq:uniform_estimate_of_norm_of_U}) and Lemmas \ref{lem:tr_operator},
\ref{lem:norm_of_derivative_of_multilinear_operator} with the observation
that $\Phi(\rho_{t}^{\infty})$, $t\ge0$, is continuous in $\L$,
we get
\[
\lim_{\eps\to0+}\frac{U_{t+\eps}(\rho,\zeta)-U_{t}(\rho,\zeta)}{\eps}=\cG^{OU,\Phi}U_{t}(\rho,\zeta).
\]
 Taking into account that the right derivative of $(U_{t}(\rho,\zeta))_{t\ge0}$
with respect to $t$ is continuous, we conclude that $(U_{t}(\rho,\zeta))_{t\ge0}$
is continuously differentiable (in $t$) and the equality (\ref{eq:kolmogorov_equation})
holds. The continuity of $\partial U$ follows from (\ref{eq:kolmogorov_equation}).
\end{proof}

\section{Berry-Esseen bound for the initial fluctuations\protect\label{sec:Berry-Esseen-bound}}

The main result of this section is a quantified CLT for the fluctuations
of the random initialization of the SSEP $(\eta_{t}^{n})_{t\ge0}$.
Recall that $\eta_{0}^{n}$ has the distribution $\nu_{\rho_{0}^{n}}^{n}$
that is the product measure on $\EEP$ with marginals given by $\nu_{\rho_{0}^{n}}^{n}\left\{ \eta(x)=1\right\} =\rho_{0}^{n}(x)$,
$x\in\Tnd$, for a function $\rho_{0}^{n}:\Tnd\to[0,1]$. We define
the multilinear operator 
\begin{equation}
A_{\rho}[\varphi,\psi]=\inn{\rho(1-\rho)\varphi,\psi},\quad\varphi,\psi\in\H[I],\label{eq:covariance_operator_A_rho}
\end{equation}
for $\rho\in\L$ taking values in $[0,1]$ and $I>\frac{d}{2}$, and
note that it is a trace class operator since 
\begin{align*}
\sum_{k\in\Z^{d}}\frac{1}{(1+|k|^{2})^{I}}A[\tilde{\varsigma}_{k},\tilde{\varsigma}_{k}] & =\sum_{k\in\Z^{d}}\frac{1}{(1+|k|^{2})^{I}}\inn{\rho(1-\rho)\tilde{\varsigma}_{k},\tilde{\varsigma}_{k}}\\
 & \le\sum_{k\in\Z^{d}}\frac{1}{(1+|k|^{2})^{I}}\left(\norm{\rho}+\norm{\rho}^{2}\right)<\infty.
\end{align*}
Thus, by \citep[Proposition 3.15]{Hairer:2009}, there exists a centered
Gaussian random variable $\zeta$ in $\H[-I]$ with covariance $A_{\rho}$,
that is,
\[
\E\left[\inn{\zeta,\varphi}\inn{\psi,\zeta}\right]=A_{\rho}[\varphi,\psi],\quad\varphi,\psi\in\H[I].
\]

In the next statement we obtain a rate of convergence for the fluctuation
density field $\eta_{0}^{n}$ of the SSEP, started with distribution
$\nu_{\rho_{0}^{n}}^{n}$, to a Gaussian random variable with covariance
operator $A_{\rho_{0}}$. Since in this section, we do not work with
processes but only with their initial conditions, we will drop the
time-dependence in the notation throughout this section.
\begin{prop}
\label{prop:berry_essen_bound} Let $I>\frac{d}{2}+1$ and $\rho\in\Cf^{1}(\T^{d})$.
Assume that $\zeta$ is a centered Gaussian random variable in $\H[-I]$
with covariance operator $A_{\rho}$. Let also $\rho_{n}\in\Ln$,
$\eta_{n}$ have distribution $\nu_{\rho_{n}}^{n}$ and $\zeta_{n}=(2n+1)^{d/2}\left(\eta_{n}-\rho_{n}\right)$
for each $n\ge1$. Then for each $F\in\Cf_{l,HS}^{3}(\H[-I])$ and
$n\ge1$ 
\begin{align*}
\left|\E F(\ex_{n}\zeta_{n})-\E F(\zeta)\right| & \le C_{I}\left(\frac{1}{n^{1\wedge\frac{d}{2}}}(1+\norm{\nabla\rho}_{\Cf})+\norm{\rho_{n}-\rho}_{n}\right)\norm F_{\Cf_{l,HS}^{3}}.
\end{align*}
\end{prop}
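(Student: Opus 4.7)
The plan is to combine a Lindeberg-type replacement (to remove the non-Gaussianity of the product Bernoulli law) with a Gaussian interpolation in the covariance (to transport from the matching discrete covariance to the continuum target $A_{\rho}$). The starting point is to introduce a centered Gaussian random variable $\tilde\zeta_n$ in $\H[-I]$ whose covariance coincides with that of $\ex_n\zeta_n$, namely
\[
A_n[\varphi,\psi] := \binn{\rho_n(1-\rho_n)\pr_n\varphi,\pr_n\psi}_n, \qquad \varphi,\psi\in\H[I],
\]
and to split
\[
\E F(\ex_n\zeta_n)-\E F(\zeta) = \bigl[\E F(\ex_n\zeta_n)-\E F(\tilde\zeta_n)\bigr] + \bigl[\E F(\tilde\zeta_n) - \E F(\zeta)\bigr].
\]

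For the first bracket I would apply a Lindeberg replacement adapted to $\H[-I]$. Because the coordinates $\eta_n(x)$ are independent, $\ex_n\zeta_n=\sum_{x\in\Tnd}X_x$ with $X_x:=\zeta_n(x)\,\ex_n\mathbf{1}_{\{x\}}$ independent. A short Fourier calculation using $I>\frac{d}{2}+1$ gives $\norm{\ex_n\mathbf{1}_{\{x\}}}_{H_{-I}}\le C_I(2n+1)^{-d}$ and hence the deterministic bound $\norm{X_x}_{H_{-I}}\le C_I(2n+1)^{-d/2}$. Letting $Y_x$ be independent centered Gaussians in $\H[-I]$ with $\cov(Y_x)=\cov(X_x)$, so that $\tilde\zeta_n\stackrel{d}{=}\sum_x Y_x$, I would enumerate the $N=(2n+1)^d$ sites and telescope: with $T_x:=\sum_{y\prec x}X_y+\sum_{y\succ x}Y_y$, the summand $\E[F(T_x+X_x)-F(T_x+Y_x)]$ reduces, after third-order Taylor expansion around $T_x$ and using the independence of $T_x$ from $(X_x,Y_x)$ together with matching first and second moments, to a third-order remainder controlled by $\norm F_{\Cf_{l,HS}^{3}}$ and $\E\norm{X_x}_{H_{-I}}^3+\E\norm{Y_x}_{H_{-I}}^3\le C_I(2n+1)^{-3d/2}$. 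Summing over the $N=(2n+1)^d$ sites yields $|\E F(\ex_n\zeta_n)-\E F(\tilde\zeta_n)|\le C_I n^{-d/2}\norm F_{\Cf_{l,HS}^{3}}$.

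For the second bracket I would interpolate between the two Gaussians: set $A(t):=tA_n+(1-t)A_\rho$ for $t\in[0,1]$ and let $Z_t$ be a centered Gaussian in $\H[-I]$ with covariance $A(t)$. The infinite-dimensional Gaussian integration-by-parts identity, in the spirit of the computation underlying Proposition \ref{prop:differentiability_of_semigroup_for_OU}, gives
\[
\E F(\tilde\zeta_n)-\E F(\zeta) = \tfrac{1}{2}\int_0^1 \E\bigl[D^2F(Z_t):(A_n-A_\rho)\bigr]\,dt,
\]
so the HS pairing bound $|A:B|\le\norm A_{\MLOHS[2][{\H[-I]}]}\norm B_{\MLOHS[2][{\H[I]}]}$ reduces the problem to estimating $\norm{A_n-A_\rho}_{\MLOHS[2][{\H[I]}]}$. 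Inserting the intermediate form $\tilde A_n[\varphi,\psi]:=\binn{\rho(1-\rho)\pr_n\varphi,\pr_n\psi}_n$, the difference $A_n-\tilde A_n$ is of HS-norm $O(\norm{\rho_n-\rho}_n)$ via the identity $\rho_n(1-\rho_n)-\rho(1-\rho)=(\rho_n-\rho)(1-\rho_n-\rho)$, while $\tilde A_n-A_\rho$ is pure discretization and is bounded by $C_I n^{-1}(1+\norm{\nabla\rho}_{\Cf})$ using the approximation estimates between the discrete and continuous inner products on $\Cf^1$ integrands collected in Section \ref{subsec:pr_and_ex_operators}, the $\Cf^1$-regularity of $\rho$ being what fixes the optimal rate.

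The main obstacle is neither of these individual steps but the bookkeeping needed to guarantee that every constant is independent of the ambient dimension $(2n+1)^d$ of the underlying product-Bernoulli space. This dimension-freeness is exactly the reason for imposing the class $\Cf_{l,HS}^{3}$ in the hypothesis. In the Lindeberg step, the operator norm of $D^3F$ -- controlled by its Hilbert-Schmidt norm -- is what sums correctly against the deterministic bound $\sum_x\E\norm{X_x}_{H_{-I}}^3\le C_I n^{-d/2}$; in the Gaussian interpolation, the HS duality pairing $D^2F:(A_n-A_\rho)$ keeps the estimate free of any explicit factor of $N$, with all the $n$-dependence absorbed into $\norm{A_n-A_\rho}_{\MLOHS[2][{\H[I]}]}$. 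Combining the three estimates gives the claimed bound $C_I(n^{-(1\wedge d/2)}(1+\norm{\nabla\rho}_\Cf)+\norm{\rho_n-\rho}_n)\norm F_{\Cf_{l,HS}^{3}}$.
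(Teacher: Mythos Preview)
Your argument is correct and takes a genuinely different route from the paper's. The paper first splits off $\E F(\pr_n\zeta)-\E F(\zeta)$ via the mean value theorem and then compares $\ex_n\zeta_n$ directly to $\pr_n\zeta$ using Stein's method: it identifies $\ex_n\zeta_n$ with a vector $X\in\R^{\Znd}$, builds an exchangeable pair $(X,X')$ by resampling a single randomly chosen site, and invokes Meckes' multivariate normal approximation theorem. The two error terms coming out of that theorem --- one involving $\norm{D^2f_n}_{HS}\,\E\norm{E^*}_{HS}$ and one involving $\norm{D^3f_n}\,\E|X'-X|^3$ --- are then estimated to yield the three contributions $n^{-d/2}$, $n^{-1}(1+\norm{\nabla\rho}_{\Cf})$ and $\norm{\rho_n-\rho}_n$ simultaneously.

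Your decomposition separates these contributions more cleanly: the Lindeberg telescoping isolates the non-Gaussianity of the product Bernoulli law (giving $n^{-d/2}$), while the Gaussian covariance interpolation isolates the mismatch $A_n-A_\rho$ (giving $n^{-1}(1+\norm{\nabla\rho}_{\Cf})+\norm{\rho_n-\rho}_n$). The approach is more elementary --- it avoids the exchangeable-pair machinery and the appeal to an external multivariate Stein bound --- and it exploits transparently that each $X_x=\zeta_n(x)\ex_n\mathbf 1_{\{x\}}$ lives in a one-dimensional subspace, so moment matching is trivial. One small correction: in your concluding paragraph you say the operator norm of $D^3F$ is ``controlled by its Hilbert-Schmidt norm'', but the class $\Cf_{l,HS}^3$ only imposes the HS bound on $D^2F$; fortunately the Lindeberg remainder only needs $\norm{D^3F}_{\Cf(\cL_3)}$, which is already part of $\norm F_{\Cf_{l}^3}\le\norm F_{\Cf_{l,HS}^3}$, so the argument goes through unchanged.
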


\begin{proof}
Using the triangle inequality, it is enough to estimate $\E F(\ex_{n}\zeta_{n})-\E F(\pr_{n}\zeta)$
and $\E F(\pr_{n}\zeta)-\E F(\zeta)$. By the mean value theorem (see
\citep[Theorem 3.3.2]{Cartan:1971}), we obtain 
\[
\left|\E F(\pr_{n}\zeta)-\E F(\zeta)\right|\le\norm{DF}_{\Cf}\E\norm{\pr_{n}\zeta-\zeta}_{H_{-I}}.
\]
Note that $\zeta$ has a version that belongs to $H_{-I+1}$ due to
the fact that $I-1>\frac{d}{2}$ and \citep[Proposition 3.15]{Hairer:2009}.
Thus, $\E[\norm{\zeta}_{-I+1}]<\infty$. Then using Lemma \ref{lem:basic_properties_of_pr},
we get
\[
\E\norm{\pr_{n}\zeta-\zeta}_{H_{-I}}\le\frac{C_{I}}{n}\E\norm{\pr_{n}\zeta-\zeta}_{H_{-I+1}}\le\frac{C_{I}}{n}\E\norm{\zeta}_{H_{-I+1}}\le\frac{C_{I}}{n}.
\]

We next estimate $R_{I}^{n}:=\left|\E F(\pr_{n}\zeta)-\E F(\ex_{n}\zeta_{n})\right|$
by adopting Stein's method, see e.g., \citep{Meckes:2009,Reinert_Roellin:2009}
and the survey paper \citep{Ross_2011}. While in these contributions,
Stein's method is developed for finite-dimensional random variables,
the dimension of $\zeta_{n}$ diverges to infinity for $n\to\infty$.
Therefore, we need to carefully control the dependency of the occurring
constants, and to control them uniformly with respect to the dimension. 

Let $n\ge1$ be fixed. An important step in the estimation of $R_{I}^{n}$
is the identification of the (finite-dimensional) random variable
$\ex_{n}\zeta_{n}$ taking values in the Sobolev space $\H[-I]$ with
a random variable $X$ taking values in a Euclidean space, and to
then build an exchangeable pair $(X,X').$ This will allow to apply
the general finite-dimensional result from \citep[Theorem 3]{Meckes:2009}.
We will identify $\ex_{n}\zeta_{n}$ with its coordinates with respect
to the basis $\tilde{\varsigma}_{k}'':=\frac{1}{(1+|k|^{2})^{I/2}}\tilde{\varsigma}_{k}$,
$k\in\Z^{d}$, of $\H[-I]$ by defining $X_{k}:=\inn{\zeta_{n},\tilde{\varsigma}_{k}''}_{n}$
for $k\in\Znd$. Then $X=(X_{k})_{k\in\Znd}$ is a random variable
in $\R^{\Znd}$. In particular, $|X|^{2}=\norm{\ex_{n}\zeta_{n}}_{\H[-I]}^{2}.$ 

The standard approach for the construction of an exchangeable pair
for a random vector with independent coordinates is to replace a randomly
chosen coordinate by an independent one with the same distribution.
Note that the coordinates of $X$ are not independent. However, we
have the independence of the fluctuations $\zeta_{n}(x)$, $x\in\Tnd$.
Therefore, we will replace $\zeta_{n}(x)$, $x\in\Tnd$, by an independent
copy for a randomly chosen $x$. Let $\tilde{\zeta}_{n}$ be an independent
copy of $\zeta_{n}$ and $\gamma$ be a uniformly distributed random
variable on $\Tnd$ that is independent of $\zeta_{n}$ and $\tilde{\zeta}_{n}$.
Define
\[
\zeta_{n}'(x):=\zeta_{n}(x)\I_{\{\gamma\not=x\}}+\tilde{\zeta}_{n}(x)\I_{\{\gamma=x\}},\quad x\in\Tnd,
\]
and 
\[
\zeta_{n}'(x):=\zeta_{n}(x)\I_{\{\gamma\not=x\}}+\tilde{\zeta}_{n}(x)\I_{\{\gamma=x\}},\quad x\in\Tnd,
\]
and 
\begin{align*}
X'_{k} & :=\inn{\zeta'_{n},\tilde{\varsigma}_{k}''}_{n}=X_{k}+\frac{1}{(2n+1)^{d}}\left(\tilde{\zeta}_{n}(\gamma)-\zeta_{n}(\gamma)\right)\tilde{\varsigma}_{k}''(\gamma)
\end{align*}
for each $k\in\Znd$. Trivially, $(X,X')$ is an exchangeable pair,
that is, $(X,X')$ and $(X',X)$ have the same distribution.

We also need to replace the function $F:\H[-I]\to\R$ by a function
$f_{n}:\R^{\Tnd}\to\R$ such that $F(\ex_{n}\zeta_{n})=f_{n}(X)$.
Trivially, we have to take $f_{n}:=F\circ\kappa_{n}$, where $\kappa_{n}(z):=\sum_{k\in\Z^{d}}z_{k}\tilde{\varsigma}_{k}'$
for $z=(z_{k})_{k\in\Znd}$ and $\tilde{\varsigma}_{k}':=\left(1+|k|^{2}\right)^{I/2}\tilde{\varsigma}_{k}$,
$k\in\Znd$. In particular, $f\in\Cf^{3}(\R^{\Tnd})$ and
\[
\frac{\partial f_{n}}{\partial z_{k}}=DF\left(\kappa_{n}\right)\left[\tilde{\varsigma}_{k}'\right]\quad\mbox{and}\quad\frac{\partial^{2}f_{n}}{\partial z_{k}\partial z_{l}}=D^{2}F\left(\kappa_{n}\right)\left[\tilde{\varsigma}_{k}',\tilde{\varsigma}_{l}'\right]
\]
for all $k,l\in\Znd$. 

Next, for every $k\in\Znd$ we compute
\begin{align*}
\E\left[X'_{k}-X_{k}|X\right] & =\frac{1}{(2n+1)^{d}}\E\left[\left(\tilde{\zeta}_{n}(\gamma)-\zeta_{n}(\gamma)\right)\tilde{\varsigma}_{k}''(\gamma)|\zeta\right]\\
 & =\frac{1}{(2n+1)^{2d}}\sum_{x\in\Znd}\left(-\zeta_{n}(x)\right)\tilde{\varsigma}_{k}''(x)=-\frac{1}{(2n+1)^{d}}X.
\end{align*}
Moreover, for each $k,l\in\Znd$
\begin{align*}
 & \E\left[\left(X'_{k}-X_{k}\right)(X'_{l}-X_{l})|X\right]\\
 & \qquad=\frac{1}{(2n+1)^{2d}}\sum_{x,y\in\Znd}\E\left[(\zeta'_{n}(x)-\zeta_{n}(x))(\zeta'_{n}(y)-\zeta_{n}(y))|\zeta\right]\tilde{\varsigma}''_{k}(x)\tilde{\varsigma}''_{l}(y)\\
 & \qquad=\frac{1}{(2n+1)^{2d}}\sum_{x,y\in\Znd}\E\left[(\tilde{\zeta}_{n}(x)-\zeta_{n}(x))(\tilde{\zeta}_{n}(y)-\zeta_{n}(y))\I_{\{\gamma=x,\gamma=y\}}|\zeta\right]\tilde{\varsigma}''_{k}(x)\tilde{\varsigma}''_{l}(y).\\
 & \qquad=\frac{1}{(2n+1)^{3d}}\sum_{x\in\Znd}\E\left[(\tilde{\zeta}_{n}(x)-\zeta_{n}(x))^{2}|\zeta\right]\tilde{\varsigma}''_{k}(x)\tilde{\varsigma}''_{l}(x).
\end{align*}
Due to the equality
\[
\E\left[(\tilde{\zeta}_{n}(x)-\zeta_{n}(x))^{2}|\zeta\right]=\E\left[(\zeta_{n}(x))^{2}\right]+\zeta_{n}^{2}(x)=(2n+1)^{d}\rho_{n}(x)(1-\rho_{n}(x))+\zeta_{n}^{2}(x),
\]
we get
\begin{align*}
\E\left[\left(X'_{k}-X_{k}\right)(X'_{l}-X_{l})|X\right] & =\frac{1}{(2n+1)^{2d}}\sum_{x\in\Znd}\rho_{n}(x)(1-\rho_{n}(x))\tilde{\varsigma}''_{k}(x)\tilde{\varsigma}''_{l}(x)\\
 & +\frac{1}{(2n+1)^{3d}}\sum_{x\in\Znd}\zeta_{n}^{2}(x)\tilde{\varsigma}''_{k}(x)\tilde{\varsigma}''_{l}(x)\\
 & =\frac{1}{(2n+1)^{d}}\binn{\rho_{n}(1-\rho_{n})\tilde{\varsigma}_{k}'',\tilde{\varsigma}_{l}''}_{n}+\frac{1}{(2n+1)^{2d}}\binn{\zeta_{n}^{2}\tilde{\varsigma}_{k}'',\tilde{\varsigma}_{l}''}_{n}.
\end{align*}
Note that the entries of the covariance matrix $\Sigma=(\Sigma_{k,l})_{k,l\in\Znd}$
of the random vector 
\[
Z=\left(\inn{\pr_{n}\zeta,\tilde{\varsigma}_{k}''}\right)_{k\in\Znd}=\left(\inn{\zeta,\tilde{\varsigma}_{k}''}\right)_{k\in\Znd}
\]
 are given by
\[
\Sigma_{k,l}=\inn{\rho(1-\rho)\tilde{\varsigma}_{k}'',\tilde{\varsigma}_{l}''}.
\]
 We thus rewrite 
\begin{align*}
\E\left[\left(X'_{k}-X_{k}\right)(X'_{l}-X_{l})|X\right] & =\frac{2}{(2n+1)^{d}}\binn{\rho(1-\rho)\tilde{\varsigma}_{k}'',\tilde{\varsigma}_{l}''}\\
 & +\frac{1}{(2n+1)^{d}}\left[\binn{\left((\eta_{n}-\rho_{n})^{2}-\rho_{n}(1-\rho_{n})\right)\tilde{\varsigma}_{k}'',\tilde{\varsigma}_{l}''}_{n}\right]\\
 & +\frac{2}{(2n+1)^{d}}\left[\binn{\rho_{n}(1-\rho_{n})\tilde{\varsigma}_{k}'',\tilde{\varsigma}_{l}''}_{n}-\binn{\rho(1-\rho)\tilde{\varsigma}_{k}'',\tilde{\varsigma}_{l}''}\right]\\
 & =\frac{2}{(2n+1)^{d}}\Sigma_{k,l}+\frac{1}{(2n+1)^{d}}E{}_{k,l}^{*},
\end{align*}
where 
\begin{align*}
E_{k,l}^{*} & =\binn{\left((\eta_{n}-\rho_{n})^{2}-\rho_{n}(1-\rho_{n})\right)\tilde{\varsigma}_{k}'',\tilde{\varsigma}_{l}''}_{n}\\
 & +2\left[\binn{\rho_{n}(1-\rho_{n})\tilde{\varsigma}_{k}'',\tilde{\varsigma}_{l}''}_{n}-\binn{\rho(1-\rho)\tilde{\varsigma}_{k}'',\tilde{\varsigma}_{l}''}\right]\\
 & =:E_{k,l}^{1*}+2E_{k,l}^{2*}.
\end{align*}
Using \citep[Theorem 3]{Meckes:2009}, we get
\begin{align*}
\left|\E F(\ex_{n}\zeta_{n})-\E F(\pr_{n}\zeta)\right| & =\left|\E f_{n}(X)-\E f_{n}(Z)\right|\\
 & \le\frac{(2n+1)^{d}}{4(2n+1)^{d}}\norm{D^{2}f_{n}}_{\Cf(\MLOHS[2][\R^{\Tnd}])}\E\norm{E^{*}}_{\MLOHS[2][\R^{\Tnd}]}\\
 & +\frac{(2n+1)^{d}}{9}\norm{D^{3}f_{n}}_{\Cf(\MLO[3][\R^{\Tnd}])}\E\left|X'-X\right|^{3},
\end{align*}
We next estimate each term in the right hand side of the inequality
above. We start from
\begin{align*}
\norm{D^{2}f_{n}}_{\Cf(\MLOHS[2][\R^{\Tnd}])}^{2} & =\sup_{z\in\R^{\Znd}}\sum_{k,l\in\Znd}\left(\frac{\partial^{2}f_{n}}{\partial z_{k}\partial z_{l}}(z)\right)^{2}\\
 & =\sup_{z\in\R^{\Znd}}\sum_{k,l\in\Znd}\left(D^{2}F\left(\kappa_{n}(z)\right)\left[\tilde{\varsigma}_{k}',\tilde{\varsigma}_{l}'\right]\right)^{2}\\
 & =\sup_{z\in\R^{\Znd}}\sum_{k,l\in\Znd}(1+|k|^{2})^{I}(1+|l|^{2})^{I}\left(D^{2}F\left(\kappa_{n}(z)\right)\left[\tilde{\varsigma}_{k},\tilde{\varsigma}_{l}\right]\right)\\
 & \le\sup_{g\in\H[-I]}\norm{D^{2}F(g)}_{\MLOHS[2][{\H[-I]}]}^{2}=\norm{D^{2}F}_{C(\MLOHS[2][H_{-I}])}^{2}.
\end{align*}
Using Hölder's inequality and then Jensen's inequality, we get
\[
\E\left[\norm{E^{*}}_{\MLOHS[2][\R^{\Tnd}]}\right]^{2}\le2\E\left[\norm{E^{1*}}_{\MLOHS[2][\R^{\Tnd}]}^{2}\right]+8\E\left[\norm{E^{2*}}_{\MLOHS[2][\R^{\Tnd}]}^{2}\right].
\]
Rewriting
\begin{align*}
 & \E\left[\norm{E^{1*}}_{\MLOHS[2][\R^{\Tnd}]}^{2}\right]=\sum_{k,l\in\Znd}\E\left[\binn{\left((\eta_{n}-\rho_{n})^{2}-\rho_{n}(1-\rho_{n})\right)\tilde{\varsigma}_{k}'',\tilde{\varsigma}_{l}''}_{n}^{2}\right]\\
 & \qquad=\frac{1}{(2n+1)^{2d}}\sum_{k,l\in\Znd}\E\left[\sum_{x\in\Tnd}\left((\eta_{n}(x)-\rho_{n}(x))^{2}-\rho_{n}(x)(1-\rho_{n}(x))\right)\tilde{\varsigma}_{k}''(x)\tilde{\varsigma}_{l}''(x)\right]^{2}
\end{align*}
and using the independence of $\eta_{n}(x)$, $x\in\Tnd$, and the
equality $\E\left[(\eta_{n}(x)-\rho_{n}(x))^{2}\right]=\rho_{n}(x)(1-\rho_{n}(x)),$
we get
\begin{align*}
\E\left[\norm{E^{1*}}_{\MLOHS[2][\R^{\Tnd}]}^{2}\right] & =\frac{1}{(2n+1)^{2d}}\sum_{k,l\in\Znd}\frac{1}{(1+|k|^{2})^{I}(1+|l|^{2})^{I}}\\
 & \qquad\qquad\cdot\sum_{x\in\Tnd}\E\left[\left((\eta_{n}(x)-\rho_{n}(x))^{2}-\rho_{n}(x)(1-\rho_{n}(x))\right)^{2}\right]\tilde{\varsigma}_{k}^{2}(x)\tilde{\varsigma}_{l}^{2}(x)\\
 & \le\frac{16}{(2n+1)^{2d}}\sum_{k,l\in\Znd}\frac{1}{(1+|k|^{2})^{I}(1+|l|^{2})^{I}}\\
 & \qquad\qquad\cdot\sum_{x\in\Tnd}\E\left[\left((\eta_{n}(x)-\rho_{n}(x))^{2}-\rho_{n}(x)(1-\rho_{n}(x))\right)^{2}\right]\\
 & \le\frac{C_{I}}{(2n+1)^{d}}
\end{align*}
due to the boundedness of $\eta_{n}$, $\rho_{n}$ and the fact that
$I>\frac{d}{2}.$ We now consider 
\begin{align*}
\E\left[\norm{E^{2*}}_{\MLOHS[2][\R^{\Tnd}]}^{2}\right] & =\sum_{k,l\in\Znd}\left(\binn{\rho_{n}(1-\rho_{n})\tilde{\varsigma}_{k}'',\tilde{\varsigma}_{l}''}_{n}-\binn{\rho(1-\rho)\tilde{\varsigma}_{k}'',\tilde{\varsigma}_{l}''}\right)^{2}\\
 & =\sum_{k,l\in\Znd}\frac{1}{(1+|k|^{2})^{I}(1+|l|^{2})^{I}}\left(\binn{\rho_{n}(1-\rho_{n})\tilde{\varsigma}_{k},\tilde{\varsigma}_{l}}_{n}-\binn{\rho(1-\rho)\tilde{\varsigma}_{k},\tilde{\varsigma}_{l}}\right)^{2}.
\end{align*}
To estimate the sum in the right hand side, we rewrite for $\varphi\in\Cf(\Td)$
\begin{align}
 & \left|\binn{\rho_{n}(1-\rho_{n}),\varphi}_{n}-\binn{\rho(1-\rho),\varphi}\right|\nonumber \\
 & \qquad=\left|\frac{1}{(2n+1)^{d}}\sum_{x\in\Tnd}\rho_{n}(x)(1-\rho_{n}(x))\varphi(x)-\int_{\Td}\rho(y)(1-\rho(y))\varphi(y)dy\right|\label{eq:difference_for_rho_n_and_rho}\\
 & \qquad=\left|\int_{\Td}\bar{\rho}_{n}(y)(1-\bar{\rho}_{n}(y))\bar{\varphi}_{n}(y)dy-\int_{\Td}\rho(y)(1-\rho(y))\varphi(y)dy\right|,\nonumber 
\end{align}
where
\[
\bar{\rho}_{n}=\sum_{x\in\Tnd}\rho_{n}(x)\I_{\pi_{x}^{n}}\quad\text{and}\quad\bar{\varphi}_{n}=\sum_{x\in\Tnd}\varphi(x)\I_{\pi_{x}^{n}}
\]
for $\pi_{x}^{n}=\prod_{j=1}^{d}\left[x_{j},x_{j}+\frac{2\pi}{2n+1}\right)$.
Using the triangle inequality, we can bound the right hand side of
(\ref{eq:difference_for_rho_n_and_rho}) by
\begin{align*}
 & \int_{\Td}\left|\bar{\rho}_{n}(y)-\rho(y)\right|(1-\bar{\rho}_{n}(y))\left|\bar{\varphi}_{n}(y)\right|dy\\
 & \qquad+\int_{\Td}\rho(y)\left|\bar{\rho}_{n}(y)-\rho(y)\right|\left|\bar{\varphi}_{n}(y)\right|dy\\
 & \qquad+\int_{\Td}\rho(y)(1-\rho(y))\left|\varphi(y)-\bar{\varphi}_{n}(y)\right|dy\\
 & \qquad\le2\norm{\varphi}_{\Cf}\int_{\Td}\left|\bar{\rho}_{n}(y)-\rho(y)\right|dy+\frac{C}{n}\norm{\nabla\varphi}_{\Cf}.
\end{align*}
This implies that 
\begin{align*}
\E\left[\norm{E^{2*}}_{\MLOHS[2][\R^{\Tnd}]}^{2}\right] & \le\sum_{k,l\in\Znd}\frac{1}{(1+|k|^{2})^{I}(1+|l|^{2})^{I}}\\
 & \qquad\qquad\qquad\cdot\left(\norm{\tilde{\varsigma}_{k}\tilde{\varsigma}_{l}}_{\Cf}\int_{\Td}\left|\bar{\rho}_{n}(y)-\rho(y)\right|dy+\frac{C}{n}\norm{\nabla\left(\tilde{\varsigma}_{k}\tilde{\varsigma}_{l}\right)}_{\Cf}\right)^{2}\\
 & \le C_{I}\left(\frac{1}{n}+\int_{\Td}\left|\bar{\rho}_{n}(y)-\rho(y)\right|dy\right)^{2}\\
 & \le C_{I}\left(\frac{1}{n}(1+\norm{\nabla\rho}_{\Cf})+\norm{\rho_{n}-\rho}_{n}\right)^{2}.
\end{align*}

We now estimate
\begin{align*}
\norm{D^{3}f_{n}}_{\Cf(\MLO[3][\R^{\Tnd}]} & =\sup_{z\in\R^{\Znd}}\sup_{|a^{i}|\le1}\left|D^{3}f_{n}(z)(a_{1},a_{2},a_{3})\right|\\
 & =\sup_{z\in\R^{\Znd}}\sup_{|a^{i}|\le1}\left|\sum_{k,l,i}\frac{\partial^{3}f_{n}}{\partial z_{k}\partial z_{l}\partial z_{i}}(z)a_{k}^{1}a_{l}^{2}a_{i}^{3}\right|\\
 & =\sup_{z\in\R^{\Znd}}\sup_{|a^{i}|\le1}\left|\sum_{k,l,i}D^{3}F(\kappa_{n}(z))\left[\tilde{\varsigma}_{k}',\tilde{\varsigma}_{l}',\tilde{\varsigma}_{i}'\right]a_{k}^{1}a_{l}^{2}a_{i}^{3}\right|\\
 & =\sup_{z\in\R^{\Znd}}\sup_{|a^{i}|\le1}\left|D^{3}F(\kappa_{n}(z))\left[\iota_{n}(a^{1}),\iota_{n}(a^{2}),\iota_{n}(a^{3})\right]\right|,
\end{align*}
where 
\[
\iota_{n}(a)=\sum_{k\in\Znd}a_{k}\tilde{\varsigma}_{k}'\in\L
\]
for $a\in\R^{\Znd}$. Due to the identity
\[
\norm{\iota(a)}_{H_{-I}}^{2}=\sum_{k\in\Znd}\frac{1}{(1+|k|^{2})^{I}}\left(1+|k|^{2}\right)^{I}a_{k}^{2}=\sum_{k\in\Znd}a_{k}^{2}=|a|^{2},
\]
we get
\[
\norm{D^{3}f_{n}}_{\Cf}\le\sup_{z\in\R^{\Znd}}\sup_{\norm{g_{i}}_{H_{-I}}\le1}\left|D^{3}F(\chi_{n}(z))\left[g_{1},g_{2},g_{3}\right]\right|\le\norm{D^{3}F}_{\Cf(\MLO[3][{\H[-I]}])}.
\]

It only remains to estimate 
\begin{align*}
\E\left[|X'-X|^{3}\right] & =\frac{1}{(2n+1)^{3d}}\E\left[\left(\sum_{k\in\Znd}\left|\tilde{\zeta}_{n}(\gamma)-\zeta_{n}(\gamma)\right|^{2}\left|\tilde{\varsigma}_{k}''(\gamma)\right|^{2}\right)^{\frac{3}{2}}\right]\\
 & =\frac{1}{(2n+1)^{3d}}\E\left[\left|\tilde{\zeta}_{n}(\gamma)-\zeta_{n}(\gamma)\right|^{3}\left(\sum_{k\in\Znd}\frac{1}{(1+|k|^{2})^{I}}\right)^{\frac{3}{2}}\right]\\
 & =\frac{C_{I}}{(2n+1)^{3d}}\E\left[\left|\tilde{\zeta}_{n}(\gamma)-\zeta_{n}(\gamma)\right|^{3}\right]\\
 & \le\frac{C_{I}}{(2n+1)^{3d/2}}
\end{align*}
due to the bound $\left|\zeta_{n}(\gamma)-\tilde{\zeta}_{n}(\gamma)\right|\le2(2n+1)^{d/2}$
for all $x\in\Tnd$. Combining all estimates together, we get the
statement of the proposition.
\end{proof}

\section{Proof of Theorem \ref{thm:main_result}\protect\label{sec:Comparison-of-processes}}

The goal of this section is to prove Theorem \ref{thm:main_result}.
We will do so under more general assumptions on $\rho_{0}^{n}$ than
in the statement of the result. Namely, we assume that the initial
conditions $\rho_{0}^{n}$ are arbitrary functions from $\Ln$ taking
values in $[0,1]$ such that $\sup_{n\ge1}\norm{\ex_{n}\rho_{0}^{n}}_{\H[\tilde{J}]}<\infty$.
Additionally, let $J>\frac{d}{2}\vee2$, $\tilde{I}>\frac{d}{2}+1,$
$I>\tilde{I}+\frac{d}{2}+2$ and $\tilde{J}>(\tilde{I}\vee(\frac{d}{2}+4))+\frac{d}{2}+1$.
We will show that for each $T>0$ there exists a constant $C$ independent
of $F$ and $n$ such that 
\[
\sup_{t\in[0,T]}\left|\E F(\hat{\rho}_{t}^{n},\hat{\zeta}_{t}^{n})-\E F(\rho_{t}^{\infty},\eta_{t}^{\infty})\right|\le C\norm F_{\Cf_{l,HS}^{1,3}}\left(\frac{1}{n^{\frac{d}{2}\wedge1}}+\norm{\hat{\rho}_{0}^{n}-\rho_{0}}_{\H[J]}\right).
\]
Using the inequality above and Lemma \ref{lem:estimate_of_phi-ex_phi},
this immediately yields Theorem \ref{thm:main_result}.

We first assume that $F\in\Cf_{l,HS}^{2,4}(\H[J],\H[-I])$ and $D_{2}^{2}F$
is uniformly continuous in $\MLOHS[2][{\H[-I]}]$. 

Let $t\in(0,T]$ be fixed. To compare the difference $\E\left[F(\rho_{t}^{\infty},\zeta_{t}^{\infty})\right]-\E[F(\hat{\rho}_{t}^{n},\hat{\zeta}_{t}^{n})]$,
we will use the expression (\ref{eq:the_main_comparison_of_generators-2}).
Since $U_{t-s}(\hat{\rho}_{s}^{n},\hat{\zeta}_{s}^{n})$ is not well-defined
there if $\hat{\rho}_{s}^{n}$ takes values outside $[0,1],$ we will
first replace the process $\zeta^{\infty}$ by a solutions to the
SPDE (\ref{eq:SPDE_for_OU_process-1}) with $\Phi$ being a mollification
of $f(x):=x(1-x)\vee0$, $x\in\R$. More precisely, we take a non-negative
function $\phi\in\Cf^{2}(\R)$ such that $\supp\phi\in[-1,1]$ and
$\int_{\R}\phi(x)dx=1$. Then for each $\eps>0$ we define $\phi_{\eps}:=\frac{1}{\eps}\phi(\eps\cdot)$
and $\Phi_{\eps}:=\phi_{\eps}*f$. Let
\[
U_{t}^{\eps}(\rho,\zeta):=\E F(\rho_{t}^{\infty},\zeta_{t}^{\infty,\eps}),
\]
where $(\rho^{\infty},\zeta^{\infty,\eps})$ is a solution to (\ref{eq:heat_PDE-1}),
(\ref{eq:SPDE_for_OU_process-1}) in $\H[J]\times\H[-\tilde{I}]$
started from $(\rho_{0},\zeta_{0})$ with $\Phi$ replaced by $\Phi_{\eps}$.
Since $I>\frac{d}{2}+3$ and $J>\frac{d}{2}+1$, we can use Proposition
\ref{prop:differentiability_of_semigroup_for_OU} to conclude that
$U^{\eps},\partial U^{\eps},D_{1}U^{\eps}\in\Cf([0,\infty)\times\H[J+2]\times\H[-I+2])$
and $U_{t}^{\eps}\in\Cf_{l,HS}^{1,3}(\H[J],\H[-I])$ for each $t>0$.
Thus, by Lemma \ref{lem:Ito_formula_for_eta_t_dependence} and Proposition
\ref{prop:fokker-plank-equation}, we get
\begin{align*}
\E F(\hat{\rho}_{t}^{n},\hat{\zeta}_{t}^{n}) & =\E U_{t-t}^{\eps}(\hat{\rho}_{t}^{n},\hat{\zeta}_{t}^{n})\\
 & =\E U_{t}^{\eps}(\hat{\rho}_{0}^{n},\hat{\zeta}_{0}^{n})+\int_{0}^{t}\E\left[\hat{\cG}^{FF}U_{t-s}^{\eps}(\hat{\rho}_{s}^{n},\hat{\zeta}_{s}^{n})-\partial U_{t-s}^{\eps}(\hat{\rho}_{t}^{n},\hat{\zeta}_{t}^{n})\right]ds\\
 & =\E U_{t}^{\eps}(\hat{\rho}_{0}^{n},\hat{\zeta}_{0}^{n})+\int_{0}^{t}\E\left[\hat{\cG}^{FF}U_{t-s}^{\eps}(\hat{\rho}_{s}^{n},\hat{\zeta}_{s}^{n})-\cG^{OU,\Phi_{\eps}}U_{t-s}^{\eps}(\hat{\rho}_{t}^{n},\hat{\zeta}_{t}^{n})\right]ds.
\end{align*}
Applying Proposition \ref{prop:expansion_of_cG_FEP}, we obtain
\begin{align*}
 & \E F(\hat{\rho}_{t}^{n},\hat{\zeta}_{t}^{n})-\E U_{t}^{\eps}(\hat{\rho}_{0}^{n},\hat{\zeta}_{0}^{n})\\
 & \qquad=\frac{2\pi^{2}}{(2n+1)^{d}}\sum_{j=1}^{d}\int_{0}^{t}\E\binn{\Tr\left(\partial_{j}^{\otimes2}D_{2}^{2}U_{t-s}^{\eps}(\hat{\rho}_{s}^{n},\hat{\zeta}_{s}^{n})\right),\ex_{n}\left[\zeta_{s}^{n}\tau_{j}^{n}\zeta_{s}^{n}\right]}ds\\
 & \qquad+4\pi^{2}\int_{0}^{t}\sum_{j=1}^{d}\E\Bigg[\binn{\Tr\left(\partial_{j}^{\otimes2}D_{2}^{2}U_{t-s}^{\eps}(\hat{\rho}_{s}^{n},\hat{\zeta}_{s}^{n})\right),\hat{\rho}_{s}^{n}(1-\hat{\rho}_{s}^{n})}\\
 & \qquad\qquad\qquad\qquad-\binn{\Tr\left(\partial_{j}^{\otimes2}D_{2}^{2}U_{t-s}^{\eps}(\hat{\rho}_{s}^{n},\hat{\zeta}_{s}^{n})\right),\Phi_{\eps}\left(\hat{\rho}_{s}^{n}\right)}\Bigg]ds\\
 & \qquad+\int_{0}^{t}\E R_{t-s}^{n}(\rho_{s}^{n},\zeta_{s}^{n})ds,
\end{align*}
where 
\[
\left|R_{s}^{n}(\rho,\zeta)\right|\le\frac{C_{J,I,\tilde{I},T}}{n^{\frac{d}{2}\wedge1}}\norm{U_{t-s}^{\eps}}_{\Cf_{l,HS}^{1,3}}\left(1+\norm{\hat{\rho}}_{\Cf^{\lceil d/2\rceil+4}}^{2}+\norm{\hat{\rho}}_{C^{\lceil\tilde{I}\rceil}}\right)\left(1+\norm{\hat{\zeta}}_{H_{-\tilde{I}}}\right)
\]
for all $\rho\in\EHE$ and $\zeta=(2n+1)^{d/2}(\eta-\rho)$, $\eta\in\EEP$.
Consequently,
\begin{align}
 & \Bigg|\E F(\hat{\rho}_{t}^{n},\hat{\zeta}_{t}^{n})-\E U_{t}^{\eps}(\hat{\rho}_{0}^{n},\hat{\zeta}_{0}^{n})\Bigg|\nonumber \\
 & \qquad\le\frac{2\pi^{2}}{(2n+1)^{d}}\sum_{j=1}^{d}\int_{0}^{t}\left|\E\binn{\Tr\left(\partial_{j}^{\otimes2}D_{2}^{2}U_{t-s}^{\eps}(\hat{\rho}_{s}^{n},\hat{\zeta}_{s}^{n})\right),\ex_{n}\left[\zeta_{s}^{n}\tau_{j}^{n}\zeta_{s}^{n}\right]}\right|ds\nonumber \\
 & \qquad+2\pi^{2}\int_{0}^{t}\sum_{j=1}^{d}\E\left|\binn{\Tr\left(\partial_{j}^{\otimes2}D_{2}^{2}U_{t-s}^{\eps}(\hat{\rho}_{s}^{n},\hat{\zeta}_{s}^{n})\right),\hat{\rho}_{s}^{n}(1-\hat{\rho}_{s}^{n})-\Phi_{\eps}\left(\hat{\rho}_{s}^{n}\right)}\right|ds\label{eq:comparison_of_semigroup_for_SSEP}\\
 & \qquad+\frac{C_{J,I,\tilde{I},T}}{n^{\frac{d}{2}\wedge1}}\int_{0}^{t}\norm{U_{t-s}^{\eps}}_{\Cf_{l,HS}^{1,3}}\left(1+\norm{\hat{\rho}_{s}^{n}}_{\Cf^{\lceil d/2\rceil+4}}^{2}+\norm{\hat{\rho}_{s}^{n}}_{C^{\lceil\tilde{I}\rceil}}\right)\left(1+\E\norm{\hat{\zeta}_{s}^{n}}_{H_{-\tilde{I}}}\right)ds.\nonumber 
\end{align}

We next note that the function $f_{s}^{n,j,\eps}:=\Tr\left(\partial_{j}^{\otimes2}D_{2}^{2}U_{t-s}^{\eps}(\hat{\rho}_{s}^{n},\hat{\zeta}_{s}^{n})\right)$
belongs to $\H[\tilde{I}]$ due to $\tilde{I}+1+\frac{d}{2}<I$ and
\begin{align}
\norm{f_{s}^{n,j,\eps}}_{H_{\tilde{I}}} & \le C\norm{\partial_{j}^{\otimes2}D_{2}^{2}U_{t-s}^{\eps}(\hat{\rho}_{s}^{n},\hat{\zeta}_{s}^{n})}_{\MLOHS[2][{\H[-I+1]}]}\label{eq:estimate_of_f_n_for_tr}\\
 & \le C\norm{D_{2}^{2}U_{t-s}^{\eps}(\hat{\rho}_{s}^{n},\hat{\zeta}_{s}^{n})}_{\MLOHS[2][{\H[-I]}]}\le C_{I,T}(\norm{\Phi_{\eps}'}_{\Cf}+1)\norm F_{\Cf_{l,HS}^{1,3}},\nonumber 
\end{align}
according to Proposition \ref{prop:fokker-plank-equation} and Lemmas
\ref{lem:tr_operator} and \ref{lem:norm_of_derivative_of_multilinear_operator}.
Thus, by Lemma \ref{lem:estimate_of_eta_tau_eta} (recall that $\tilde{I}>\frac{d}{2}$),
the first term of (\ref{eq:comparison_of_semigroup_for_SSEP}) can
be estimated by 
\[
\frac{\tilde{C}}{n^{\frac{d}{2}\wedge1}}\E\left[\norm{f_{s}^{n,j,\eps}}_{\H[\tilde{I}]}^{2}\right]^{1/2}\le\frac{\tilde{C}}{n^{\frac{d}{2}\wedge1}}C_{I,T}(\norm{\Phi_{\eps}'}_{\Cf}+1)\norm F_{\Cf_{l,HS}^{1,3}},
\]
where the constant $\tilde{C}$ depends on $\tilde{J}$, $d$ and
$\sup_{n\ge1}\norm{\nabla\rho_{0}^{n}}_{n,\Cf}$. Note that the finiteness
of $\sup_{n\ge1}\norm{\nabla\rho_{0}^{n}}_{n,\Cf}$ follows from 
\begin{align}
\bnorm{\nabla_{n}\rho_{0}^{n}}_{n,\Cf} & \le\sum_{j=1}^{d}\bnorm{\partial_{n,j}\rho_{0}^{n}}_{n,\Cf}\le\sum_{j=1}^{d}\bnorm{\ex_{n}\partial_{n,j}\rho_{0}^{n}}_{\Cf}\nonumber \\
 & \le\sum_{j=1}^{d}\norm{\ex_{n}\partial_{n,j}\rho_{0}^{n}}_{\H[\tilde{J}-1]}\le\sum_{j=1}^{d}\norm{\ex_{n}\rho_{0}^{n}}_{\H[\tilde{J}]}\label{eq:estimate_of_grad_n_rho_n}\\
 & =d\norm{\hat{\rho}_{0}^{n}}_{\H[\tilde{J}]}\nonumber 
\end{align}
and the assumption (i) of the theorem, where starting from the second
inequality in the estimate above we used the interpolation property
(\ref{eq:ex_is_interpolation}) of $\ex_{n}$, the Sobolev embedding
theorem and then Lemma \ref{lem:estimate_of_norm_ex_partial}. 

We next estimate the second term of the right hand side of (\ref{eq:comparison_of_semigroup_for_SSEP}).
We note that $\left|\Phi_{\eps}(x)-f(x)\right|\le\eps$ for all $x\in\R$.
Thus, 
\begin{align}
 & \int_{0}^{t}\sum_{j=1}^{d}\left|\binn{\Tr\left(\partial_{j}^{\otimes2}D_{2}^{2}U_{t-s}^{\eps}(\hat{\rho}_{s}^{n},\hat{\zeta}_{s}^{n})\right),\hat{\rho}_{s}^{n}(1-\hat{\rho}_{s}^{n})-\Phi_{\eps}\left(\hat{\rho}_{s}^{n}\right)}\right|ds\nonumber \\
 & \qquad\qquad\le\int_{0}^{t}\sum_{j=1}^{d}\left|\binn{f_{s}^{n,j,\eps},f\left(\hat{\rho}_{s}^{n}\right)-\Phi_{\eps}\left(\hat{\rho}_{s}^{n}\right)}\right|ds\label{eq:bound_tr_term_in_the_main_proof}\\
 & \qquad\qquad+\int_{0}^{t}\sum_{j=1}^{d}\left|\binn{f_{s}^{n,j,\eps},\hat{\rho}_{s}^{n}(1-\hat{\rho}_{s}^{n})\I_{\left\{ \hat{\rho}_{s}^{n}\not\in[0,1]\right\} }}\right|ds.\nonumber 
\end{align}
The first term of the right hand side of (\ref{eq:bound_tr_term_in_the_main_proof})
can be estimated by $\eps C_{I,T}(\norm{\Phi_{\eps}'}_{\Cf}+1)\norm F_{\Cf_{l,HS}^{1,3}}$,
according to the Cauchy-Schwarz inequality and (\ref{eq:estimate_of_f_n_for_tr}).
Next, for each $s\in[0,t]$ we have
\begin{align*}
\left|\binn{f_{s}^{n,j,\eps},\hat{\rho}_{s}^{n}(1-\hat{\rho}_{s}^{n})\I_{\left\{ \hat{\rho}_{s}^{n}\not\in[0,1]\right\} }}\right| & \le\norm{f_{s}^{n,j,\eps}}_{\Cf}\bnorm{\hat{\rho}_{s}^{n}\I_{\left\{ \hat{\rho}_{s}^{n}\not\in[0,1]\right\} }}\bnorm{(1-\hat{\rho}_{s}^{n})\I_{\left\{ \hat{\rho}_{s}^{n}\not\in[0,1]\right\} }}\\
 & \le\norm{f_{s}^{n,j,\eps}}_{\tilde{I}}\bnorm{\hat{\rho}_{s}^{n}\I_{\left\{ \hat{\rho}_{s}^{n}<0\right\} }}\bnorm{1-\hat{\rho}_{s}^{n}}.
\end{align*}
Consider the convex function $\psi(x):=|x|\I_{\{x<0\}}$, $x\in\R$,
and note that it satisfies the triangle inequality $\psi(x+y)\le\psi(x)+\psi(y)$,
$x,y\in\R.$ Thus,
\begin{align*}
\bnorm{\hat{\rho}_{s}^{n}\I_{\left\{ \hat{\rho}_{s}^{n}<0\right\} }} & =\bnorm{\psi\left(\hat{\rho}_{s}^{n}\right)}\le\bnorm{\psi\left(\hat{\rho}_{s}^{n}-\rho_{s}^{\infty}\right)}+\bnorm{\psi\left(\rho_{s}^{\infty}\right)}\\
 & \le\bnorm{\hat{\rho}_{s}^{n}-\rho_{s}^{\infty}}+0,
\end{align*}
since $\rho_{s}^{\infty}\ge0$. Now, using the triangle inequality,
Corollary \ref{cor:comparison_discrete_and_continuous_heat_equations}
and Lemma \ref{lem:basic_properties_of_pr}, we get
\begin{align*}
\bnorm{\hat{\rho}_{s}^{n}-\rho_{s}^{\infty}} & \le\bnorm{\hat{\rho}_{s}^{n}-\pr_{n}\rho_{s}^{\infty}}+\bnorm{\pr_{n}\rho_{s}^{\infty}-\rho_{s}^{\infty}}\\
 & \le C_{T}\bnorm{\hat{\rho}_{0}^{n}-\rho_{0}}+\frac{C_{T}}{n}\bnorm{\rho_{0}}_{H_{2}}.
\end{align*}
Consequently,
\[
\bnorm{\hat{\rho}_{s}^{n}\I_{\left\{ \hat{\rho}_{s}^{n}<0\right\} }}\le C_{T}\bnorm{\hat{\rho}_{0}^{n}-\rho_{0}}+\frac{C_{T}}{n}\bnorm{\rho_{0}}_{H_{2}}
\]
for all $s\in[0,t]$. Note that
\[
\norm{1-\hat{\rho}_{s}^{n}}=\bnorm{1-\rho_{s}^{n}}_{n}\le1,
\]
according to the maximum principle. This shows that the second term
in the right hand side of (\ref{eq:bound_tr_term_in_the_main_proof})
is estimated by
\[
C_{I,T}(\norm{\Phi_{\eps}'}_{\Cf}+1)\norm F_{\Cf_{l,HS}^{1,3}}\left(\bnorm{\hat{\rho}_{0}^{n}-\rho_{0}}+\frac{1}{n}\bnorm{\rho_{0}}_{H_{2}}\right).
\]

To estimate the third term of the right hand side of (\ref{eq:comparison_of_semigroup_for_SSEP}),
we use Proposition \ref{prop:fokker-plank-equation} to control $\norm{U_{t-s}^{\eps}}_{\Cf_{l,HS}^{1,3}}$
by $C_{I,T}(\norm{\Phi_{\eps}'}_{\Cf}+1)\norm F_{\Cf_{l,HS}^{1,3}}$.
Next, recall that the sequence $\norm{\hat{\rho}_{0}^{n}}_{\H[\tilde{J}]}$,
$n\ge1$, is bounded. Since trivially $\norm{\hat{\rho}_{t}^{n}}_{\H[\tilde{J}]}\le\norm{\hat{\rho}_{0}^{n}}_{\H[\tilde{J}]}$
for all $t\ge0$, we get that $\norm{\hat{\rho}_{s}^{n}}_{\Cf^{\lceil d/2\rceil+4}}$
and $\norm{\hat{\rho}_{s}^{n}}_{\Cf^{\lceil\tilde{I}\rceil}}$ are
uniformly bounded in $n\ge1$ and $s\in[0,t]$, due to the Sobolev
embedding theorem and the fact that $\tilde{J}>\lceil d/2\rceil+4+\frac{d}{2}$
and $\tilde{J}>\lceil\tilde{I}\rceil+\frac{d}{2}$. Using Lemma \ref{lem:estimate_of_sobolev_norm_of_eta}
and (\ref{eq:estimate_of_grad_n_rho_n}), we get
\[
\E\left[\norm{\ex_{n}\zeta_{t}^{n}}_{H_{-\tilde{I}}}^{2}\right]<C_{\tilde{I}}\left(1+2\pi^{2}dt\norm{\hat{\rho}_{0}^{n}}_{\H[\tilde{J}]}\right).
\]
 This completes the proof of the fact that 
\begin{align}
 & \left|\E F(\hat{\rho}_{t}^{n},\hat{\zeta}_{t}^{n})-\E U_{t}^{\eps}(\hat{\rho}_{0}^{n},\hat{\zeta}_{0}^{n})\right|\nonumber \\
 & \qquad\le C(\norm{\Phi_{\eps}'}_{\Cf}+1)\norm F_{\Cf_{l,HS}^{1,3}}\left(\frac{1}{n^{\frac{d}{2}\wedge1}}\left(1+\bnorm{\rho_{0}}_{H_{2}}\right)+\bnorm{\hat{\rho}_{0}^{n}-\rho_{0}}+\eps\right),\label{eq:main_inequality_for_rho_n_initial_condition}
\end{align}
where the constant $C$ depends on $J,\tilde{J},I,\tilde{I},T$ and
$\sup_{n\ge1}\norm{\hat{\rho}_{0}^{n}}_{\H[\tilde{J}]}$. 

We next estimate the difference $\E U_{t}^{\eps}(\rho_{0},\zeta_{0})-\E U_{t}^{\eps}(\hat{\rho}_{0}^{n},\hat{\zeta}_{0}^{n})$.
By the triangle inequality and the mean-value theorem, we get
\begin{align}
\left|\E U_{t}^{\eps}(\rho_{0},\zeta_{0})-\E U_{t}^{\eps}(\hat{\rho}_{0}^{n},\hat{\zeta}_{0}^{n})\right| & \le\left|\E U_{t}^{\eps}(\rho_{0},\zeta_{0})-\E U_{t}^{\eps}(\rho_{0},\hat{\zeta}_{0}^{n})\right|\nonumber \\
 & +\left|\E U_{t}^{\eps}(\rho_{0},\hat{\zeta}_{0}^{n})-\E U_{t}^{\eps}(\hat{\rho}_{0}^{n},\hat{\zeta}_{0}^{n})\right|\label{eq:estimate_EU_for_for_initial_datas}\\
 & \le\left|\E U_{t}^{\eps}(\rho_{0},\hat{\zeta}_{0}^{n})-\E U_{t}^{\eps}(\hat{\rho}_{0}^{n},\hat{\zeta}_{0}^{n})\right|\nonumber \\
 & +\norm{D_{1}U_{t}^{\eps}}_{\Cf}\norm{\rho_{0}-\hat{\rho}_{0}^{n}}_{\H[J]}.\nonumber 
\end{align}
Recall that 
\[
\norm{U_{t}^{\eps}}_{\Cf_{l,HS}^{1,3}}\le C_{I,T}(\norm{\Phi_{\eps}'}_{\Cf}+1)\norm F_{\Cf_{l,HS}^{1,3}},
\]
according to Proposition \ref{prop:fokker-plank-equation}. Moreover,
by Proposition \ref{prop:berry_essen_bound}, 
\begin{align*}
\left|\E U_{t}^{\eps}(\rho_{0},\zeta_{0})-\E U_{t}^{\eps}(\rho_{0},\hat{\zeta}_{0}^{n})\right| & \le C_{I}\left(\frac{1}{n^{1\wedge\frac{d}{2}}}(1+\norm{\nabla\rho_{0}}_{\Cf})+\norm{\rho_{0}^{n}-\rho_{0}}_{n}\right)\norm{U_{t}^{\eps}}_{\Cf_{l,HS}^{3}}.
\end{align*}
We can also estimate $\norm{\nabla\rho_{0}}_{\Cf}\le\norm{\rho_{0}}_{\H[J]}$
and
\[
\norm{\rho_{0}^{n}-\rho_{0}}_{n}\le\norm{\hat{\rho}_{0}^{n}-\rho_{0}}_{\Cf}\le\norm{\hat{\rho}_{0}^{n}-\rho_{0}}_{\H[J]}.
\]
Consequently, 
\begin{align}
 & \left|\E U_{t}^{\eps}(\rho_{0},\zeta_{0})-\E U_{t}^{\eps}(\hat{\rho}_{0}^{n},\hat{\zeta}_{0}^{n})\right|\nonumber \\
 & \qquad\le C_{I,T}(\norm{\Phi_{\eps}'}_{\Cf}+1)\norm F_{\Cf_{l,HS}^{1,3}}\left(\frac{1}{n^{1\wedge\frac{d}{2}}}(1+\norm{\rho_{0}}_{\H[J]})+\norm{\hat{\rho}_{0}^{n}-\rho_{0}}_{\H[J]}\right).\label{eq:main_inequality_for_U_eps}
\end{align}
Combining the inequalities (\ref{eq:main_inequality_for_rho_n_initial_condition}),
(\ref{eq:main_inequality_for_U_eps}) and using the uniform bound
of $\norm{\Phi_{\eps}'}_{\Cf}$ in $\eps$, we get that there exists
a constant $C$ independent of $n$, $\eps$, $t$ and $F$ such that
\begin{equation}
\left|\E F(\hat{\rho}_{t}^{n},\hat{\zeta}_{t}^{n})-\E F(\rho_{t}^{\infty},\zeta_{t}^{\infty,\eps})\right|\le Ct\norm F_{\Cf_{l,HS}^{1,3}}\left(\frac{1}{n^{1\wedge\frac{d}{2}}}+\norm{\hat{\rho}_{0}^{n}-\rho_{0}}_{\H[J]}+\eps\right).\label{eq:main_estimate_with_eps}
\end{equation}

Now, making $\eps\to0+$ and using Lemma \ref{lem:whide_continuity_of_zeta},
we get the required estimate for $F\in\Cf_{l,HS}^{2,4}(\H[J],\H[-I])$
with uniformly continuous second order derivative $D_{2}F$ in $\MLOHS[2][{\H[-I]}]$.
Since the constant $C$ is independent of $F$ in the inequality (\ref{eq:main_estimate_with_eps}),
we can cover the case $F\in\Cf_{l,HS}^{1,3}(\H[J],\H[-I])$ by an
pointwise approximation argument. This completes the proof of Theorem
\ref{thm:main_result}.

\subsubsection*{Acknowledgements }

Benjamin Gess acknowledges support by the Max Planck Society through
the Research Group \textquoteleft\textquoteleft Stochastic Analysis
in the Sciences\textquotedbl . This work was funded by the Deutsche
Forschungsgemeinschaft (DFG, German Research Foundation) via -- SFB
1283/2 2021 -- 317210226, and co-funded by the European Union (ERC,
FluCo, grant agreement No. 101088488). Views and opinions expressed
are however those of the author(s)only and do not necessarily reflect
those of the European Union or of the European Research Council. Neither
the European Union nor the granting authority can be held responsible
for them. Vitalii Konarovskyi also thanks the Max Planck Institute
for Mathematics in the Sciences for its warm hospitality, where a
part of this research was carried out.

\appendix

\section{Notation and basic facts\protect\label{subsec:Preliminaries}}

The goal of this section is to introduce the basic notation that are
used throughout this work. 

\subsection{Continuous spaces}

Recall that $\T^{d}$ denotes the $d$-dimensional torus $\left(\R/\left\{ 2\pi k-\pi:\ k\in\Z\right\} \right)^{d}$.
Let $\Cf(E)$ be the space of continuous functions on $E$ and $\Cf^{m}(E)$
be a subspace of $\Cf(E)$ consisting of all $m$-times continuously
differentiable functions for $m\in\N\cup\{\infty\}$, where $E=\Td$
or $\R^{d}$. We equip $\Cf(\Td)$ and $\Cf^{m}(\Td)$ with the standard
uniform norms denoted by $\norm{\cdot}_{\Cf}$ and $\norm{\cdot}_{\Cf^{m}}$,
respectively. For $f\in\Cf^{m}(\Td)$ we write $\partial_{j}^{m}f$
for its partial derivative of $m$-th order with respect to the $j$-th
coordinate. As usual, we also set
\[
\Delta f:=\sum_{j=1}^{d}\partial_{j}^{2}f\quad\text{and}\quad\nabla f:=\left(\partial_{j}f\right)_{j\in[d]}.
\]
The set of all functions from $\Cf^{m}(\R^{d})$ that have bounded
derivatives to the $m$-th order is denoted by $\Cf_{l}^{m}(\R^{d})$.
The subset of $\Cf_{l}^{m}(\R^{d})$ consisting of all bounded functions
is denoted by $\Cf_{b}^{m}(\R^{d})$.

\textbf{Sobolev spaces.} Let $\L$ denote the Hilbert space of square-integrable
real-valued functions on $\T^{d}$ with respect to the Lebesgue measure.
The inner product on $\L$ associated with the normalized Lebasgue
measure is denoted by $\inn{\cdot,\cdot}$ and the corresponding norm
by $\norm{\cdot}$. To define a basis on $\L$ we split $\Z^{d}\backslash\{0\}$
on two disjoint subsets $\Z_{1}^{d}$ and $\Z_{2}^{d}$ such that
$\Z_{1}^{d}=-\Z_{2}^{d}$ and take
\[
\tilde{\varsigma}_{k}=\begin{cases}
2\cos k\cdot x, & k\in\Z_{1}^{d},\\
2\sin k\cdot x, & k\in\Z_{2}^{d},\\
1, & k=0,
\end{cases}
\]
for all $k\in\Z^{d}$. We also consider the complex-valued functions
\[
\varsigma_{k}=\left(e^{\i k\cdot x},\ x\in\T^{d}\right),\ \ k\in\Z^{d},
\]
that form an orthonormal basis in the Hilbert space of all square-integrable
complex-valued functions on $\T^{d}$ equipped with the standard inner
product, denoted also by $\inn{\cdot,\cdot}$. Since for each $f\in\L$
and $n\in\N$ 
\[
\pr_{n}f:=\sum_{k\in\Znd}\inn{f,\varsigma_{k}}\varsigma_{k}=\sum_{k\in\Znd}\inn{f,\tilde{\varsigma}_{k}}\tilde{\varsigma}_{k}\in\L,
\]
where $\Znd=\left\{ -n,\ldots,n\right\} ^{d}$, the set of functions
$\{\tilde{\varsigma}_{k},k\in\Z^{d}\}$ is an orthonormal basis in
$\L$. To simplify many computations later on, we will prefer to work
with $\{\varsigma_{k},k\in\Z\}$. 

For $J\ge0$ we define the Sobolev space
\[
\H:=\left\{ f\in\L:\ \norm f_{H_{J}}^{2}:=\sum_{k\in\Z^{d}}\left(1+|k|^{2}\right)^{J}\left|\inn{f,\varsigma_{k}}\right|^{2}<\infty\right\} 
\]
and $\H[-J]$ as the completion of $\L$ with respect to the norm
\[
\norm f_{H_{-J}}^{2}:=\sum_{k\in\Z^{d}}\left(1+|k|^{2}\right)^{-J}\left|\inn{f,\varsigma_{k}}\right|^{2}.
\]
It is well-known that $\H[J]\subset\L\subset\H[-J]$ for each $J>0$
and $\H[-J]$ is the dual space to $\H[J]$ with respect to the relation
$\inn{\cdot,\cdot}$. We also note that the operators $\partial_{j}$
and $\Delta$ can be naturally defined on $\H[J]$ for each $J\in\R$.
Moreover, $\partial_{j}:\H\to\H[J-1]$ and $\Delta:\H[J]\to\H[J-2]$
are bounded linear operators and
\[
\norm f_{H_{J}}=\bnorm{(1+\Delta)^{J}f}_{}
\]
 for each $J\in\R$.

\textbf{Multilinear operators.} Let $(E_{i},\norm{\cdot}_{E_{i}})$,
$i\in[2]$, be arbitrary Banach spaces. The set of all continuous
symmetric multilinear operators from $E_{1}^{m}$ to $E_{2}$ is denoted
by $\MLO[m][E_{1};E_{2}]$. We equip $\MLO[m][E_{1};E_{2}]$ with
the norm
\[
\normMLO[m][A]:=\sup_{\norm{x_{j}}_{E_{1}}\le1}\norm{A[x_{1},\ldots,x_{m}]}_{E_{2}},
\]
which makes it a Banach space (see e.g. \citep[Section 1.8]{Cartan:1971}
for more details). If $E_{2}=\R$, we simply write $\MLO[m][E_{1}]$
instead of $\MLO[m][E_{1};\R]$. If $E_{1}$ is a separable Hilbert
space with an orthonormal basis $\{z_{l},l\in\N\}$ and $E_{2}=\R$,
we define the space of Hilbert-Schmidt multilinear operators by
\[
\MLOHS[m][E_{1}]:=\left\{ A\in\MLO[m][E_{1}]:\norm A_{\cL_{m}^{HS}}^{2}:=\sum_{(l_{j})\in\N^{m}}\left|A[z_{l_{1}},\ldots,z_{l_{m}}]\right|^{2}<\infty\right\} .
\]
Note that the space $\MLOHS[m][E_{1}]$ can be defined iteratively
as the space of all Hilbert-Schmidt operators from $E_{1}$ to $\MLOHS[m-1][E_{1}]$
for $m\ge2$, where $\MLOHS[1][E_{1}]$ is identified with $E_{1}$
via the Riesz representation theorem, and then $\norm{\cdot}_{\cL_{m}^{HS}}$
coincides with the usual Hilbert-Schmidt norm. In particular, for
$E_{1}=\H[J]$ and $m=2$ one has
\begin{align}
\norm A_{\cL_{2}^{HS}}^{2} & :=\sum_{k,l\in\Z^{d}}(1+|k|^{2})^{-J}(1+|l|^{2})^{-J}\left|A[\varsigma_{k},\varsigma_{l}]\right|^{2}\nonumber \\
 & =\sum_{k,l\in\Z^{d}}(1+|k|^{2})^{-J}(1+|l|^{2})^{-J}\left|A[\tilde{\varsigma}_{k},\tilde{\varsigma}_{l}]\right|^{2}.\label{eq:hilbert_schmidt_norm}
\end{align}
A simple computation shows that $\MLO[m][{\H[J]}]$ is continuously
embedded into $\MLOHS[m][{\H[I]}]$ for $I>J+d/2$, i.e. the restriction
of $A\in\MLO[m][{\H[J]}]$ to $\left(\H[I]\right)^{m}$ belongs to
$\MLOHS[m][{\H[I]}]$ and
\begin{equation}
\norm A_{\cL_{m}^{HS}(H_{I})}\le C_{I,J,m}\norm A_{\cL_{m}(H_{J})},\label{eq:hilbert_schmidt_embedding}
\end{equation}
where the constant $C_{I,J,m}$ depends on $I,J$ and $m$.

For each $J\in\R$ and $A\in\MLO[2][{\H[J]}]$ we define the symmetric
multilinear operator
\[
\partial_{j}^{\otimes2}A[f,g]=A[\partial_{j}f,\partial_{j}g],\quad f,g\in\H[J+1],
\]
that belongs to $\MLO[2][{\H[J+1]}]$. Moreover, it is easily seen
that
\[
\bnorm{\partial_{j}^{\otimes2}A}_{\MLO[2][H_{J+1}]}\le\norm A_{\MLO[2][H_{J}]}.
\]

We will need a bounded linear operator $\Tr:\MLOHS[2][{\H[-J]}]\to\H[I]$
such that $\Tr K_{a}=a(x,x)$, where $K_{a}$ denotes the kernel multilinear
operator for a kernel $a:\left(\Td\right)^{2}\to\R$. Since the $\delta_{x}$-function
belongs to $\H[-J]$ for $J>\frac{d}{2}$, we define the function
$\Tr A:\Td\to\R$ by
\[
\Tr A(x)=A[\delta_{x},\delta_{x}].
\]
It is continuous and, by Lemma \ref{lem:tr_operator} below, belongs
to $\H[I]$ for each $I<J-\frac{d}{2}$. 

\textbf{Derivatives on Banach spaces.} Let $\Cf(E_{1};E_{2})$ be
the space of continuous functions from a Banach space $E_{1}$ to
a Banach space $E_{2}$. The subspace of $\Cf(E_{1};E_{2})$ of $m$-times
continuously Frechet differentiable functions\footnote{See \citep[Section 5]{Cartan:1971}}
is denoted by $\Cf^{m}(E_{1};E_{2})$. The subspace of $\Cf^{m}(E_{1};E_{2})$
of all bounded functions together with their derivatives to the $m$-th
order is denoted by $\Cf_{b}^{m}(E_{1};E_{2})$. We will simply write
$\Cf(E_{1})$, $\Cf^{m}(E_{1})$, $\Cf_{b}^{m}(E_{1})$ instead of
$\Cf(E_{1};\R)$, $\Cf^{m}(E_{1};\R)$, $\Cf_{b}^{m}(E_{1};\R)$,
respectively. Note that for each $k\in[m]:=\{1,\ldots,m\}$ the $k$-th
derivative $D^{k}F(x)$ of $F\in\Cf^{m}(E_{1};E_{2})$ at $x\in E_{1}$
can be identified with a continuous symmetric multilinear operator
from $\cL_{k}(E_{1};E_{2}).$ The set of functions $F$ from $\Cf^{m}(E_{1})$
whose derivatives $D^{k}F$ are bounded (in $\norm{\cdot}_{\cL_{k}}$-norm)
functions for all $k\in[m]$ is denoted by $\Cf_{l}^{m}(E_{1})$.
Note that functions from $\Cf_{l}^{m}(E_{1})$ are not bounded in
general but they are of linear growth. The semi-norm on $\Cf_{l}^{m}(E_{1})$
is defined by
\[
\norm F_{\Cf_{l}^{m}}:=\sum_{k=1}^{m}\sup_{x\in E_{1}}\norm{D^{k}F(x)}_{\cL_{k}}.
\]
If additionally $m\ge2$ and $D^{2}F$ is an $\MLOHS[2][E_{1}]$-valued
bounded function, we write $D^{2}F\in\Cf_{l,HS}^{m}(E_{1})$ and define
\begin{equation}
\norm F_{\Cf_{l,HS}^{m}}:=\norm F_{\Cf_{l}^{m}}+\sup_{x\in E_{1}}\norm{D^{2}F(x)}_{\cL_{2}^{HS}}.\label{eq:norm_on_C_l_HS}
\end{equation}
We often identify $DF(x)$ with an element from $\H[-J]$ for each
$F\in\Cf^{1}(\H[J])$ using the dual relation $\inn{\cdot,\cdot}$
between $\H[J]$ and $\H[-J]$.
\begin{rem}
Note that $F\in\Cf_{l}^{m_{1},m_{2}}(\H[J],\H[-I'])\subset\Cf_{l,HS}^{m_{1},m_{2}}(\H[J],\H[-I])$
for $I'>I+\frac{d}{2}$, according to (\ref{eq:hilbert_schmidt_embedding})
and Lemma \ref{lem:derivative_in_different_sobolev_spaces} below.
Thus, the assumption on the boundedness of the Hilbert-Schmidt norm
of $D_{2}F$ can be replaced with the differentiability of $F$ in
a larger Sobolev space.
\end{rem}

Set $x^{\times m}:=(x,\ldots,x)\in E_{1}^{m}$ for $x\in E_{1}$.
A function $F\in\Cf^{m+1}(E_{1};E_{2})$ with bounded derivative $D^{m+1}F$
can be expanded into the Taylor series
\begin{equation}
F(x)=\sum_{k=0}^{m}\frac{1}{k!}D^{k}F(x_{0})\left[(x-x_{0})^{\times k}\right]+R_{m}(x,x_{0}),\quad x\in E_{1},\label{eq:taylors_formula}
\end{equation}
where 
\[
\norm{R_{m}(x,x_{0})}_{E_{2}}\le\frac{1}{(m+1)!}\norm{D^{m+1}F}_{\cL_{m+1}}\norm{x-x_{0}}_{E_{1}}^{m+1},
\]
according to \citep[Theorem 5.6.2]{Cartan:1971}.

The subspace of $\Cf(E_{1}\times\ldots\times E_{j})$ of all functions
that are $m_{i}$-times continuously differentiable with respect to
the $i$-th variable will be denoted by $\Cf^{m_{1},\ldots,m_{2}}(E_{1},\ldots,E_{j})$
and $D_{i}^{k}$, $i\in[j]$, will denote the corresponding partial
derivatives of the $k$-th order. We similarly introduce $\Cf_{l}^{m_{1},\ldots,m_{j}}(E_{1},\ldots,E_{j})$,
$\Cf^{1,m_{1},\ldots,m_{j}}([0,\infty),E_{1},\ldots,E_{j})$ and $\norm F_{\Cf_{l}^{m_{1},\ldots,m_{j}}}.$
If $F\in\Cf\left([0,\infty)\times E_{1}\times\ldots\times E_{j}\right)$
and it is differentiable with respect to the first (time) variable,
we use a special notation $\partial F$ for its time derivative. In
this case, all other derivatives, if they exist, are denoted by $D_{1},D_{2},...,D_{j}$,
respectively. Note that $\Cf^{m,m}(E_{1},E_{2})=C^{m}(E_{1}\times E_{2})$,
according to \citep[Proposition 2.6.2]{Cartan:1971}.

\subsection{Discrete spaces\protect\label{subsec:Discrete-spaces}}

We recall that $\T_{n}^{d}:=\left\{ \frac{2\pi k}{2n+1}:\ k\in\Z_{n}^{d}\right\} $
is the $d$-dimensional torus\footnote{The choice of the scale for the torus is motivated by our argument
that relies on the discrete/continuous Fourier expansion. In particular,
to simplify the notation, we removed the constant $2\pi$ from the
exponent in the standard Fourier basis by rescaling the torus. The
odd number of points in any direction will allow us easily to jump
between complex-valued exponential basis and the real-valued $\cos$-$\sin$
basis.}, and is considered as a subset of $\T^{d}$. The space of functions
from $\Tnd$ to $\R$ equipped with the inner product 
\[
\inn{f,g}_{n}=\frac{1}{\left(2n+1\right)^{d}}\sum_{x\in\Tnd}f(x)g(x)
\]
is denoted by $\Ln$. The corresponding norm on $\Ln$ and the maximum
norm are denoted by $\norm{\cdot}_{n}$ and $\norm{\cdot}_{n,\Cf}$,
respectively.

Following \citep[Section 5.6]{Kipnis_Landim:1999}, we can write 
\begin{equation}
f(x)=\sum_{k\in\Z_{n}^{d}}\inn{f,\varsigma_{k}}_{n}\varsigma_{k}(x)=\sum_{k\in\Znd}\inn{f,\tilde{\varsigma}_{k}}_{n}\tilde{\varsigma}_{k},\quad x\in\T_{n}^{d},\label{eq:expansion_of_f_in_Ln}
\end{equation}
for each function $f\in\Ln$ due to the equality $\inn{\varsigma_{k},\varsigma_{l}}_{n}=\delta_{k,l}$
for $k,l\in\Znd$, where $\delta_{k,l}$ is the Kronecker-Delta. 

The discrete differential operators on $\Ln$ are defined by
\begin{align*}
\partial_{n,j}f(x) & :=\frac{2n+1}{2\pi}\left(f(x+e_{j}^{n})-f(x)\right),\quad x\in\Tnd,\\
\nabla_{n}f & :=\left(\partial_{n,j}f\right)_{j\in[d]}
\end{align*}
and
\[
\Delta_{n}f(x)=\frac{(2n+1)^{2}}{4\pi^{2}}\sum_{j=1}^{d}\left(f(x+e_{j})+f(x-e_{j})-2f(x)\right),\quad x\in\Tnd,
\]
where $e_{j}=e_{j}^{n}$ denote the canonical vectors, that is, $e_{j}^{n}=\left(\frac{2\pi}{2n+1}\I_{\{i=j\}}\right)_{i\in[d]}$
and we used the normalization constant $\frac{1}{2\pi}$ for the sake
of conformity with the continuous derivatives. A simple computation
shows that
\begin{equation}
\inn{\Delta_{n}f,g}_{n}=\inn{f,\Delta_{n}g}_{n}\quad\text{and}\quad\inn{\partial_{n,j}f,g}_{n}=-\inn{f,\partial_{n,j}g}_{n}\label{eq:integration_by_parts_in_discrete_space}
\end{equation}
for each $f,g\in\Ln$. We also note that for each $k\in\Z_{n}^{d}$
the equalities
\begin{equation}
\partial_{n,j}\varsigma_{k}=\mu_{k,j}^{n}\varsigma_{k}\quad\mbox{and}\quad\Delta_{n}\varsigma_{k}=-\lambda_{k}^{n}\varsigma_{k}\label{eq:eignevalues_for_discrete_basis}
\end{equation}
hold with $\mu_{k,j}^{n}:=\frac{(2n+1)}{2\pi}\left(e^{\i\frac{2\pi k_{j}}{2n+1}}-1\right)$
and $\lambda_{k}^{n}:=\frac{(2n+1)^{2}}{2\pi^{2}}\sum_{j=1}^{d}\left[1-\cos\frac{2\pi k_{j}}{2n+1}\right].$

For a function $f:\left(\Tnd\right)^{2}\to\R$, we define
\begin{align*}
\partial_{n,j}^{\otimes2}f(x_{1},x_{2}) & =\left(\partial_{n,j}g(x_{1},\cdot)\right)(x_{2}),\quad(x_{1},x_{2})\in\left(\Tnd\right)^{2},
\end{align*}
where $g(x_{1},x_{2})=\left(\partial_{n,j}g(\cdot,x_{2})\right)(x_{1})$,
and $\Tr f:\Tnd\to\R$ by
\[
\Tr f(x)=f(x,x),\quad x\in\Tnd.
\]

\subsubsection{Projection and extension operators}

Recall the expansion (\ref{eq:expansion_of_f_in_Ln}) for $f\in\Ln$.
Since the right hand side of this expansion is a well-defined smooth
real-valued function on $\Td$, we will use it for the interpolation
of $f$. More precisely, for $f\in\Ln$ define
\[
\ex_{n}f(x)=\sum_{k\in\Znd}\inn{f,\varsigma_{k}}_{n}\varsigma_{k}(x)=\sum_{k\in\Znd}\inn{f,\tilde{\varsigma}_{k}}_{n}\tilde{\varsigma}_{k}(x),\quad x\in\Td.
\]
By (\ref{eq:expansion_of_f_in_Ln}), we have
\begin{equation}
\ex_{n}f(x)=f(x),\quad x\in\Tnd.\label{eq:ex_is_interpolation}
\end{equation}
Considering a function $f$ defined on $\Td$, we will write $\ex_{n}f$
for $\ex_{n}$ applied to the restriction of $f$ to $\Tnd$.

We will also need a kind of inverse operation to $\ex_{n}$ that will
allow to transform elements from $\H[J]$ to functions on $\Tnd$
for every $J\in\R$. For the sake of this, we will use the usual projection
operator 
\[
\pr_{n}g=\sum_{k\in\Z_{n}^{d}}\inn{g,\varsigma_{k}}\varsigma_{k}=\sum_{k\in\Znd}\inn{g,\tilde{\varsigma}_{k}}\tilde{\varsigma}_{k}.
\]
For every $g\in\H[J]$, the function $\pr_{n}g$ is well-defined and
smooth on $\T^{d}$. Therefore, its restriction to $\Tnd$ is well-defined
as well, and is also denoted by $\pr_{n}g$. 

The equality
\begin{equation}
\inn{\ex_{n}f,g}=\inn{f,\pr_{n}g}_{n}\label{eq:connection_betwwen_ex_and_pr}
\end{equation}
 easily follows from the definitions of $\ex_{n}$ and $\pr_{n}$
for every $f\in\Ln$ and $g\in\H[J]$. Thus, it will be often used
to replace the discrete inner product by the continuous one and vice
versa. In particular, the equality (\ref{eq:connection_betwwen_ex_and_pr})
implies that 
\begin{equation}
\inn{\ex_{n}f,\varsigma_{k}}=\begin{cases}
\inn{f,\varsigma_{k}}_{n}, & \mbox{if}\ k\in\Z_{n}^{d},\\
0, & \mbox{otherwise,}
\end{cases}\label{eq:connection_between_ex_and_varsigma}
\end{equation}
for each $k\in\Z^{d}$. One can also easily see that 
\begin{equation}
\pr_{n}\ex_{n}f=f\quad\mbox{and}\quad\ex_{n}\pr_{n}g=\pr_{n}g\label{eq:composition_of_pr_and_ex}
\end{equation}
for all $f\in\Ln$ and $g\in\H[J]$. Thus, combining (\ref{eq:connection_betwwen_ex_and_pr})
and (\ref{eq:composition_of_pr_and_ex}), we obtain
\begin{equation}
\inn{\ex_{n}f_{1},\ex_{n}f_{2}}=\inn{f_{1},f_{2}}_{n}\quad\text{and}\quad\inn{\pr_{n}g_{1},\pr_{n}g_{2}}=\inn{\pr_{n}g_{1},\pr_{n}g_{2}}_{n}\label{eq:connection_between_continuous_and_discrete_inner_products}
\end{equation}
for all $f_{1},f_{2}\in\Ln$ and $g_{1},g_{2}\in\H[J]$. With some
abuse of notation, we set $\hat{f}:=\ex_{n}f$ and $\check{g}:=\pr_{n}g$.

For $A\in\cL_{m}(H_{J})$ we similarly define $\pr_{n}^{\otimes m}A:\left(\Tnd\right)^{m}\to\R$
by
\[
\pr_{n}^{\otimes m}A=\sum_{k\in\left(\Z_{n}^{d}\right)^{m}}A[\tilde{\varsigma}_{\times k}]\tilde{\varsigma}_{k},
\]
where 
\[
\tilde{\varsigma}_{k}=\bigotimes_{j=1}^{m}\tilde{\varsigma}_{k_{j}}\quad\text{and}\quad\tilde{\varsigma}_{\times k}=(\tilde{\varsigma}_{k_{j}})_{j\in[m]}
\]
for $k=(k_{j})_{j\in[m]}$. Similarly to $\pr_{n}f$, we will also
consider $\pr_{n}^{\otimes}A$ as a smooth function on $\Td$, that
is defined by the same expression. For $f=(f_{j})_{j\in[m]}\in(\Ln)^{m}$,
let also 
\[
\ex_{n}^{\times m}f=\left(\ex_{n}f_{j}\right)_{j\in[m]}.
\]
A simple computation yields the equality 
\begin{equation}
A\left[\ex_{n}^{\times m}f\right]=\binn{\pr_{n}^{\otimes m}A,f^{\otimes m}}_{n},\label{eq:connection_between_ex_and_pr_extension}
\end{equation}
where $f^{\otimes m}(x):=\prod_{j=1}^{m}f_{j}(x_{j})$, $x=(x_{j})_{j\in[m]}\in\left(\T_{n}^{d}\right)^{m}$
and $\inn{\cdot,\cdot}_{n}$ is the discrete inner product on $L_{2}(\T_{n}^{md}).$
We will also identify $\pr_{n}^{\otimes m}A$ with the symmetric multilinear
operator 
\[
\pr_{n}^{\otimes m}A[f]=\binn{\pr_{n}^{\otimes m}A,f^{\otimes m}},\quad f=(f_{j})_{j\in[m]}\in H_{J}^{m}.
\]

\subsection{Further notation and comments}

The natural filtration generated by a \cdl process $X_{t}$, $t\ge0$,
is denoted by $(\F_{t}^{X})_{t\ge0}.$ The distribution of a random
variable $\xi$ in a Banach space is denoted by $\law\xi$.

A constant $C$ in estimates below will be changed from line to line.
Parameters on which $C$ depends will be listed as its subscripts,
e.g. $C_{J,I}$ will mean that the constant depends on parameters
$J,I$. Since the dimension $d$ is fixed, we will not further point
out the dependence on $d$ in constants.

\section{Some operators on Sobolev spaces\protect\label{sec:Some-operators-on}}

In this section, we will prove some basic properties of $\pr_{n}$,
$\ex_{n}$ and multilinear operators on Sobolev spaces.

\subsection{$\protect\pr_{n}$ and $\protect\ex_{n}$ operators\protect\label{subsec:pr_and_ex_operators}}

Recall that $\varsigma_{k}(x)=e^{\i k\cdot x},\ x\in\T^{d},\ k\in\Z^{d}$. 
\begin{lem}
\label{lem:estimate_of_discrete_eigenvalues}For each $n\in\N$, $j\in[d]$
and $k\in\Z_{n}^{d}$ the equalities 
\begin{equation}
\partial_{n,j}\varsigma_{k}=\mu_{k,j}^{n}\varsigma_{k}\quad\mbox{and}\quad\Delta_{n}\varsigma_{k}=-\lambda_{k}^{n}\varsigma_{k}\label{eq:eigenvalues_for_discrete_operators}
\end{equation}
hold with $\mu_{k,j}^{n}=\frac{(2n+1)}{2\pi}\left(e^{\i\frac{2\pi k_{j}}{2n+1}}-1\right)$
and $\lambda_{k}^{n}=\frac{(2n+1)^{2}}{2\pi^{2}}\sum_{j=1}^{d}\left[1-\cos\frac{2\pi k_{j}}{2n+1}\right].$
Moreover, 
\begin{equation}
\frac{|k_{j}|}{\sqrt{3}}\le|\mu_{k,j}^{n}|\le|k_{j}|\quad\mbox{and}\quad\frac{|k|^{2}}{3}\le\lambda_{k}^{n}\le|k|^{2}.\label{eq:estimates_of_discrete_eigenvalues}
\end{equation}
\end{lem}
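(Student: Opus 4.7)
The plan is to establish the eigenvalue identities by direct computation from the definitions, and then derive both two-sided bounds from elementary trigonometric inequalities. First, I would use the definition of $\partial_{n,j}$ together with $e_j^n = \frac{2\pi}{2n+1}$ in the $j$-th coordinate to compute
\[
\partial_{n,j}\varsigma_k(x) = \frac{2n+1}{2\pi}\bigl(\varsigma_k(x+e_j^n) - \varsigma_k(x)\bigr) = \frac{2n+1}{2\pi}\bigl(e^{\i 2\pi k_j/(2n+1)} - 1\bigr)\varsigma_k(x),
\]
which is precisely $\mu_{k,j}^n \varsigma_k(x)$. An analogous computation yields the second identity: grouping the $+e_j$ and $-e_j$ contributions in the definition of $\Delta_n$ produces $2\cos(2\pi k_j/(2n+1)) - 2$, and summing over $j\in[d]$ recovers $-\lambda_k^n \varsigma_k$.

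For the bounds, the key step is to use the half-angle formula to rewrite
\[
|\mu_{k,j}^n|^2 = \Bigl(\frac{2n+1}{2\pi}\Bigr)^2 \cdot 2\bigl(1 - \cos(2\pi k_j/(2n+1))\bigr) = \Bigl(\frac{2n+1}{\pi}\Bigr)^2 \sin^2\!\Bigl(\frac{\pi k_j}{2n+1}\Bigr),
\]
so that $|\mu_{k,j}^n| = \frac{2n+1}{\pi}\bigl|\sin(\pi k_j/(2n+1))\bigr|$. Setting $y := \pi k_j/(2n+1)$, the constraint $|k_j|\le n$ yields $|y| \le \pi n/(2n+1) < \pi/2$, and the monotonicity of $y\mapsto \sin y/y$ on $(0,\pi/2]$ gives the classical inequalities
\[
\frac{2}{\pi}\,|y| \le |\sin y| \le |y|, \qquad |y| \le \pi/2.
\]
Since $2/\pi > 1/\sqrt{3}$, translating back through $|\mu_{k,j}^n| = \frac{2n+1}{\pi}|\sin y|$ yields $|k_j|/\sqrt{3} \le |\mu_{k,j}^n| \le |k_j|$, as claimed.

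Finally, the same half-angle identity gives
\[
\lambda_k^n = \frac{(2n+1)^2}{\pi^2}\sum_{j=1}^d \sin^2\!\Bigl(\frac{\pi k_j}{2n+1}\Bigr) = \sum_{j=1}^d |\mu_{k,j}^n|^2,
\]
so the bounds on $\lambda_k^n$ are immediate: summing $|k_j|^2/3 \le |\mu_{k,j}^n|^2 \le |k_j|^2$ over $j\in[d]$ produces $|k|^2/3 \le \lambda_k^n \le |k|^2$. The statement is elementary with no substantive obstacle; the only point requiring a moment's care is the observation that the range $|y| \le \pi n/(2n+1)$ sits strictly inside $[0,\pi/2]$, which is exactly what allows the use of $|\sin y|\ge (2/\pi)|y|$ in that interval.
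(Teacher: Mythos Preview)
Your proof is correct and follows essentially the same approach as the paper: direct computation for the eigenvalue identities, followed by elementary trigonometric bounds on the relevant interval. The paper invokes $\frac{x^2}{3}\le|e^{\i x}-1|^2\le x^2$ and $\frac{x^2}{6}\le 1-\cos x\le\frac{x^2}{2}$ on $[-\pi,\pi]$ directly, whereas you pass through the half-angle formula and the Jordan inequality $|\sin y|\ge\frac{2}{\pi}|y|$ on $[-\pi/2,\pi/2]$ and then exploit the identity $\lambda_k^n=\sum_j|\mu_{k,j}^n|^2$ to get the $\lambda$-bound for free; these are minor and equivalent variants of the same elementary argument.
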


\begin{proof}
The equalities (\ref{eq:eigenvalues_for_discrete_operators}) directly
follows from simple computations. The inequalities (\ref{eq:estimates_of_discrete_eigenvalues})
follows from 
\[
\frac{x^{2}}{3}\le\left|e^{\i x}-1\right|^{2}=\left(\cos x-1\right)^{2}+\sin^{2}x\le x^{2}
\]
and 
\[
\frac{x^{2}}{6}\le\left|1-\cos x\right|\le\frac{x^{2}}{2}
\]
for all $x\in[-\pi,\pi]$.
\end{proof}
We next recall that for each $f\in\Ln$ and $g\in\H[J]$
\[
\ex_{n}f=\sum_{k\in\Z_{n}^{d}}\inn{f,\varsigma_{k}}_{n}\varsigma_{k}\quad\mbox{and}\quad\pr_{n}g=\sum_{k\in\Z_{n}^{d}}\inn{g,\varsigma_{k}}\varsigma_{k}
\]
that are smooth functions on $\Td$. Moreover, the equality
\begin{equation}
\inn{\ex_{n}f,g}=\inn{f,\pr_{n}g}_{n}\label{eq:connection_between_descrete_inner_prod_and_continuous_one}
\end{equation}
holds. It directly follows from the fact that $\inn{\varsigma_{k},\varsigma_{l}}=\delta_{k,l}$
for all $k,l\in\Z^{d}$ and $\inn{\varsigma_{k},\varsigma_{l}}_{n}=\delta_{k,l}$
for all $k,l\in\Znd$. We next collect the basic properties of the
operator $\pr_{n}$.
\begin{lem}
\label{lem:basic_properties_of_pr}The following statements holds.
\begin{enumerate}
\item[(i)]  For each $J\in\R$ and $g\in\H[J]$
\[
\pr_{n}g\to g\quad\text{in}\ \H[J]\quad\text{and}\quad\norm{\pr_{n}g}_{H_{J}}\le\norm g_{H_{J}}.
\]
\item[(ii)]  Let $m\ge0$ and $J>m+\frac{d}{2}$. Then every function $g\in\H[J]$
has $m$ times continuously differentiable version, denoted also by
$g$, such that 
\[
\norm g_{\Cf^{m}}\le C_{m,J}\norm g_{H_{J}}.
\]
\item[(iii)]  For each $J\ge0$, $m:=\lceil J\rceil$ and every $g\in\Cf^{m}(\Td)$
\[
\norm g_{H_{J}}\le C_{m}\norm g_{\Cf^{m}}.
\]
\item[(iv)]  For each $J,I\in\R$, $J<I$, $g\in\H[I]$ and $n\ge1$
\[
\norm{g-\pr_{n}g}_{H_{J}}\le\frac{1}{n^{I-J}}\norm{g-\pr_{n}g}_{H_{I}}.
\]
In particular, for each $m\in\N_{0}$, $p\ge0$ and $J>m+p+\frac{d}{2}$
one has
\[
\norm{g-\pr_{n}g}_{\Cf^{m}}\le\frac{C_{m,p,J}}{n^{p}}\norm g_{H_{J}}.
\]
\end{enumerate}
\end{lem}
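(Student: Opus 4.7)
My plan is to obtain all four claims directly from the Fourier characterization of the Sobolev norms. Throughout, I write $\hat g(k) := \inn{g,\varsigma_k}$ so that $\norm{g}_{H_J}^2 = \sum_{k \in \Z^d}(1+|k|^2)^J |\hat g(k)|^2$ and $\pr_n g = \sum_{k \in \Z_n^d} \hat g(k)\,\varsigma_k$ simply truncates this expansion. None of the four parts strikes me as a real obstacle; the only point that needs a little care is the assembly of the final estimate in (iv).

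For (i), the norm inequality $\norm{\pr_n g}_{H_J}^2 = \sum_{k \in \Z_n^d}(1+|k|^2)^J |\hat g(k)|^2 \le \norm{g}_{H_J}^2$ is immediate by dropping the tail indices, and convergence in $\H[J]$ is the statement that this tail goes to zero, which follows from summability of the full series. For (ii), I would differentiate the Fourier series term by term: for any multi-index $\alpha$ with $|\alpha| \le m$, Cauchy--Schwarz gives
\[
|\partial^\alpha g(x)| \le \sum_{k}|k|^{|\alpha|}|\hat g(k)| \le \Biggl(\sum_k \frac{|k|^{2|\alpha|}}{(1+|k|^2)^J}\Biggr)^{1/2}\norm{g}_{H_J},
\]
and the first factor is finite (and the series for $\partial^\alpha g$ converges absolutely and uniformly) precisely because $J > m + d/2 \ge |\alpha| + d/2$, yielding both the $\Cf^m$-version and the bound.

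For (iii), I note that for $m = \lceil J\rceil \ge J$ one has the elementary inequality $(1+|k|^2)^J \le (1+|k|^2)^m \le C_m \bigl(1 + \sum_{j=1}^d |k_j|^{2m}\bigr)$. Combined with Parseval applied separately to $g$ and to $\partial_j^m g$, this gives
\[
\norm{g}_{H_J}^2 \le C_m\Bigl(\norm{g}^2 + \sum_{j=1}^d \norm{\partial_j^m g}^2\Bigr) \le C_m\,\norm{g}_{\Cf^m}^2.
\]
For (iv), the key observation is that $k \notin \Z_n^d$ forces $|k|_\infty \ge n+1$, hence $1 + |k|^2 \ge n^2$. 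Writing $(1+|k|^2)^J = (1+|k|^2)^{J-I}(1+|k|^2)^I$ and using $(1+|k|^2)^{J-I} \le n^{-2(I-J)}$ on the tail indices yields
\[
\norm{g-\pr_n g}_{H_J}^2 = \sum_{k \notin \Z_n^d}(1+|k|^2)^J|\hat g(k)|^2 \le \frac{1}{n^{2(I-J)}}\norm{g-\pr_n g}_{H_I}^2.
\]
To deduce the $\Cf^m$-statement, I would pick an intermediate exponent $J' \in (m + d/2,\, J-p]$, which is nonempty because $J > m + p + d/2$, apply (ii) to bound $\norm{g-\pr_n g}_{\Cf^m} \le C\norm{g - \pr_n g}_{H_{J'}}$, then invoke the displayed inequality with exponents $(J', J)$ together with $\norm{g-\pr_n g}_{H_J} \le \norm{g}_{H_J}$ from (i) to conclude $\norm{g-\pr_n g}_{\Cf^m} \le C_{m,p,J}\, n^{-p}\norm{g}_{H_J}$.
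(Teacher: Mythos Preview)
Your proof is correct and follows essentially the same route as the paper's: both parts (i), (iii), and the first inequality in (iv) are argued identically via the Fourier-side computation, and your assembly of the $\Cf^m$-estimate in (iv) via an intermediate exponent $J'$ is exactly what the paper means by ``follows from the first part and (ii).'' The only cosmetic difference is that for (ii) you spell out the standard Cauchy--Schwarz proof of the Sobolev embedding, whereas the paper simply cites it.
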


\begin{proof}
The statement (i) directly follows from the definitions of $\pr_{n}g$
and the norm in $\H[J]$.

The statement (ii) is the well-known Sobolev embedding theorem.

Using integration-by-parts, we next estimate
\begin{align*}
\norm g_{H_{J}}^{2} & =\sum_{k\in\Z^{d}}\left(1+|k|^{2}\right)^{J}\left|\inn{g,\varsigma_{k}}\right|^{2}\le\sum_{k\in\Z^{d}}\left(1+|k|^{2}\right)^{m}\left|\inn{g,\varsigma_{k}}\right|^{2}\\
 & \le C_{m}\sum_{k\in\Z^{d}}\left(1+|k|^{2m}\right)\left|\inn{g,\varsigma_{k}}\right|^{2}\\
 & \le C_{m}\sum_{k\in\Z^{d}}\left(1+\sum_{j=1}^{d}|k_{j}|^{2m}\right)\left|\inn{g,\varsigma_{k}}\right|^{2}\\
 & =C_{m}\sum_{k\in\Z^{d}}\left|\inn{g,\varsigma_{k}}\right|^{2}+C_{m}\sum_{j=1}^{d}\sum_{k\in\Z^{d}}\left|\inn{\partial_{j}^{m}g,\varsigma_{k}}\right|^{2}\\
 & =C_{m}\norm g^{2}+C_{m}\sum_{j=1}^{d}\norm{\partial_{j}^{m}g}^{2}\le C_{m}\norm g_{\Cf^{m}}.
\end{align*}
This implies (iii). 

According to the definition of $\pr_{n}g$, we have
\begin{align*}
\norm{g-\pr_{n}g}_{H_{J}}^{2} & =\sum_{k\not\in\Z_{n}^{d}}\left(1+|k|^{2}\right)^{J}\left|\inn{g,\varsigma_{k}}\right|^{2}\\
 & =\sum_{k\not\in\Z_{n}^{d}}\frac{\left(1+|k|^{2}\right)^{I}}{(1+|k|^{2})^{I-J}}\left|\inn{g,\varsigma_{k}}\right|^{2}\le\frac{1}{n^{2(I-J)}}\norm{g-\pr_{n}g}_{H_{I}}^{2}.
\end{align*}
The second part of (iv) directly follows from the first one and (ii).
The proof of the lemma is complete.
\end{proof}
\begin{lem}
\label{lem:continuity_of_pr_and_ex}The linear maps $\pr_{n}:\H[J]\to\Ln$
and $\ex_{n}:\Ln\to\H[J]$ are continuous for each $J\in\R$. Moreover,
$\pr_{n}\ex_{n}=\id$ and $\ex_{n}\pr_{n}=\pr_{n}$, where $\id$
denotes the identity operator and $\pr_{n}$ in the right hand side
of the second equality is considered as a map from $\H[J]$ to $\H[J]$.
\end{lem}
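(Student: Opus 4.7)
The plan is to verify each of the four assertions separately via direct Fourier expansions, since both operators are finite-rank (only Fourier modes indexed by $\Znd$ appear), which trivializes the continuity question for each fixed $n$.

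First I would establish continuity of $\ex_{n}:\Ln\to\H[J]$. Using (\ref{eq:connection_between_ex_and_varsigma}), the only nonzero Fourier coefficients of $\ex_{n}f$ are those indexed by $k\in\Znd$, and they equal $\inn{f,\varsigma_{k}}_{n}$. Hence
\[
\norm{\ex_{n}f}_{H_{J}}^{2}=\sum_{k\in\Znd}(1+|k|^{2})^{J}\left|\inn{f,\varsigma_{k}}_{n}\right|^{2}.
\]
Bounding $(1+|k|^{2})^{J}$ for $k\in\Znd$ by $(1+dn^{2})^{J\vee 0}$ and using Parseval's identity on $\Ln$ gives $\norm{\ex_{n}f}_{H_{J}}\le C_{n,J}\norm f_{n}$.

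For the continuity of $\pr_{n}:\H[J]\to\Ln$, I would again use that by definition $\pr_{n}g=\sum_{k\in\Znd}\inn{g,\varsigma_{k}}\varsigma_{k}$, viewed as a function on $\Td$ whose restriction to $\Tnd$ lies in $\Ln$. By orthonormality of $\{\varsigma_{k}:k\in\Znd\}$ in $\Ln$ (which is the basis for the Fourier expansion (\ref{eq:expansion_of_f_in_Ln})),
\[
\norm{\pr_{n}g}_{n}^{2}=\sum_{k\in\Znd}\left|\inn{g,\varsigma_{k}}\right|^{2}=\sum_{k\in\Znd}(1+|k|^{2})^{-J}(1+|k|^{2})^{J}\left|\inn{g,\varsigma_{k}}\right|^{2},
\]
and bounding $(1+|k|^{2})^{-J}$ on $\Znd$ by $(1+dn^{2})^{(-J)\vee 0}$ yields $\norm{\pr_{n}g}_{n}\le C_{n,J}\norm g_{H_{J}}$.

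For the algebraic identities, I would apply each operator to the defining series. For $f\in\Ln$, computing $\pr_{n}\ex_{n}f$ via (\ref{eq:connection_between_ex_and_varsigma}) gives $\pr_{n}\ex_{n}f=\sum_{k\in\Znd}\inn{f,\varsigma_{k}}_{n}\varsigma_{k}$, and evaluating at points of $\Tnd$ is exactly the Fourier expansion (\ref{eq:expansion_of_f_in_Ln}) of $f$, so $\pr_{n}\ex_{n}f=f$ in $\Ln$. For $g\in\H[J]$, orthonormality of $\{\varsigma_{k}:k\in\Znd\}$ in $\Ln$ gives $\inn{\pr_{n}g,\varsigma_{k}}_{n}=\inn{g,\varsigma_{k}}$ for each $k\in\Znd$, so
\[
\ex_{n}\pr_{n}g=\sum_{k\in\Znd}\inn{\pr_{n}g,\varsigma_{k}}_{n}\varsigma_{k}=\sum_{k\in\Znd}\inn{g,\varsigma_{k}}\varsigma_{k}=\pr_{n}g
\]
as a smooth function on $\Td$. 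There is no genuine obstacle in this lemma; the only subtlety is to keep straight the two interpretations of $\pr_{n}g$ — as a smooth function on $\Td$ in $\H[J]$ and as its restriction to $\Tnd$ in $\Ln$ — and to use the matching relation (\ref{eq:connection_betwwen_ex_and_pr}) consistently.
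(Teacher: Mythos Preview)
Your proposal is correct and follows essentially the same Fourier-coefficient approach as the paper. The only notable difference is in the continuity of $\pr_{n}$: you compute $\norm{\pr_{n}g}_{n}^{2}=\sum_{k\in\Znd}|\inn{g,\varsigma_{k}}|^{2}$ directly via Parseval on $\Ln$, whereas the paper argues by duality, writing $\norm{\pr_{n}g}_{n}=\sup_{f}\inn{\pr_{n}g,f}_{n}/\norm{f}_{n}=\sup_{f}\inn{g,\ex_{n}f}/\norm{f}_{n}$ and invoking the already-established boundedness of $\ex_{n}:\Ln\to\H[-J]$. Your direct computation is slightly more elementary and avoids the detour through the dual space; the paper's version has the minor aesthetic advantage of deriving both bounds from a single estimate. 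Either way the result is immediate.
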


\begin{proof}
We first show the continuity of $\ex_{n}$, that will follow from
its boundedness. Take $f\in\Ln$ and estimate
\begin{align*}
\norm{\ex_{n}f}_{H_{J}}^{2} & =\sum_{k\in\Z^{d}}(1+|k|^{2})^{J}\left|\inn{\ex_{n}f,\varsigma_{k}}\right|^{2}\\
 & =\sum_{k\in\Z^{d}}(1+|k|^{2})^{J}\left|\inn{f,\pr_{n}\varsigma_{k}}_{n}\right|^{2}\\
 & =\sum_{k\in\Znd}(1+|k|^{2})^{J}\left|\inn{f,\varsigma_{k}}_{n}\right|^{2}\\
 & \le\left[(1+|n|^{2})^{J}\vee1\right]\sum_{k\in\Znd}\left|\inn{f,\varsigma_{k}}_{n}\right|^{2}\\
 & =\left[(1+|n|^{2})^{J}\vee1\right]\norm f_{n}^{2}.
\end{align*}
Thus, $\ex_{n}$ is a bounded linear operator. 

The boundedness of $\pr_{n}$ follows from the estimate
\begin{align*}
\norm{\pr_{n}g}_{n} & =\sup_{f\in\Ln}\frac{\inn{\pr_{n}g,f}_{n}}{\norm f_{n}}=\sup_{f\in\Ln}\frac{\inn{g,\ex_{n}f}}{\norm f_{n}}\\
 & \le\sup_{f\in\Ln}\norm g_{H_{J}}\frac{\norm{\ex_{n}f}_{H_{-J}}}{\norm f_{n}}\le\norm g_{H_{J}}\left[(1+|n|^{2})^{-J}\vee1\right].
\end{align*}
for each $g\in\H[J]$, where we used the boundedness of $\ex_{n}$
in $\H[-J]$. 

Now for $f\in\Ln$ and $g\in\H[J]$ we get
\[
\pr_{n}\ex_{n}f=\pr_{n}\sum_{k\in\Z_{n}^{d}}\inn{f,\varsigma_{k}}_{n}\varsigma_{k}=f
\]
and
\begin{align*}
\ex_{n}\pr_{n}g & =\ex_{n}\sum_{k\in\Znd}\inn{g,\varsigma_{k}}\varsigma_{k}=\sum_{k\in\Znd}\inn{g,\varsigma_{k}}\ex_{n}\varsigma_{k}\\
 & =\sum_{k,l\in\Znd}\inn{g,\varsigma_{k}}\inn{\varsigma_{k},\varsigma_{l}}_{n}\varsigma_{l}=\pr_{n}g.
\end{align*}
This completes the proof of the lemma.
\end{proof}
\begin{cor}
\label{cor:connection_between_descrete_and_cont_inner_product} Let
$f_{1},f_{2}\in\Ln$ and $g_{1},g_{2}\in\H[J]$. Then $\inn{f_{1},f_{2}}_{n}=\inn{\ex_{n}f_{1},\ex_{n}f_{2}}$
and $\inn{\pr_{n}g_{1},\pr_{n}g_{2}}_{n}=\inn{\pr_{n}g_{1},\pr_{n}g_{2}}$
for each $n\ge1$.
\end{cor}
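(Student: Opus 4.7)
The plan is to derive both identities from two already-available ingredients: the adjunction formula \eqref{eq:connection_betwwen_ex_and_pr}, namely $\inn{\ex_{n}f,g}=\inn{f,\pr_{n}g}_{n}$ for $f\in\Ln$ and $g\in\H[J]$, together with the two composition identities $\pr_{n}\ex_{n}=\mathrm{id}$ on $\Ln$ and $\ex_{n}\pr_{n}=\pr_{n}$ on $\H[J]$, both provided by Lemma \ref{lem:continuity_of_pr_and_ex}. No further analytic input is needed; the argument is pure bookkeeping.

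For the first identity, I would take $f=f_{1}\in\Ln$ and $g=\ex_{n}f_{2}\in\H[J]$ (for any chosen $J$, since $\ex_{n}f_{2}$ is smooth) in \eqref{eq:connection_betwwen_ex_and_pr} to obtain $\inn{\ex_{n}f_{1},\ex_{n}f_{2}}=\inn{f_{1},\pr_{n}\ex_{n}f_{2}}_{n}$, and then collapse $\pr_{n}\ex_{n}f_{2}=f_{2}$ on the right-hand side to conclude $\inn{\ex_{n}f_{1},\ex_{n}f_{2}}=\inn{f_{1},f_{2}}_{n}$.

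For the second identity, note that $\pr_{n}g_{1},\pr_{n}g_{2}$, viewed as functions on $\T_{n}^{d}$, lie in $\Ln$, so the first identity applied to $f_{i}:=\pr_{n}g_{i}$ yields $\inn{\pr_{n}g_{1},\pr_{n}g_{2}}_{n}=\inn{\ex_{n}\pr_{n}g_{1},\ex_{n}\pr_{n}g_{2}}$. Invoking $\ex_{n}\pr_{n}g_{i}=\pr_{n}g_{i}$ (as smooth functions on $\Td$) from Lemma \ref{lem:continuity_of_pr_and_ex} turns the right-hand side into $\inn{\pr_{n}g_{1},\pr_{n}g_{2}}$, finishing the proof. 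The only mildly subtle point is that $\pr_{n}g_{i}$ plays two roles simultaneously, namely as an element of $\Ln$ in the discrete inner product and as a trigonometric polynomial in $\H[J]$ in the continuous one; since $\ex_{n}\pr_{n}=\pr_{n}$ precisely identifies these two incarnations, there is no genuine obstacle.
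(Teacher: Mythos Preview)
Your proof is correct and follows essentially the same approach as the paper: both use the adjunction formula \eqref{eq:connection_betwwen_ex_and_pr} together with the identities $\pr_{n}\ex_{n}=\id$ and $\ex_{n}\pr_{n}=\pr_{n}$ from Lemma~\ref{lem:continuity_of_pr_and_ex}, and derive the second equality from the first by substituting $f_{i}=\pr_{n}g_{i}$. The only cosmetic difference is that the paper inserts $\pr_{n}\ex_{n}$ in front of $f_{1}$ inside $\inn{\cdot,\cdot}_{n}$ before applying the adjunction, whereas you apply the adjunction with $g=\ex_{n}f_{2}$ and then collapse $\pr_{n}\ex_{n}f_{2}$; these are the same computation read in opposite directions.
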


\begin{proof}
By Lemma \ref{lem:continuity_of_pr_and_ex}, $f_{1}=\pr_{n}\ex_{n}f_{1}$.
Thus, $\inn{f_{1},f_{2}}_{n}=\inn{\pr_{n}\ex_{n}f_{1},f_{2}}_{n}=\inn{\ex_{n}f_{1},\ex_{n}f_{2}}$
due to (\ref{eq:connection_between_descrete_inner_prod_and_continuous_one}).
The second equality follows from the first one by taking $f_{i}=\pr_{n}g_{i}$
and using the fact that $\ex_{n}\pr_{n}=\pr_{n}$.
\end{proof}
\begin{rem}
The last two equalities in the proof of Lemma \ref{lem:continuity_of_pr_and_ex}
implies that for each $f\in\Ln$ and $g\in\H[J]$ 
\[
\pr_{n}\ex_{n}f(x)=\ex_{n}f(x)\quad\mbox{and}\quad\ex_{n}\pr_{n}g(x)=\pr_{n}g(x)
\]
for all $x\in\Td$.
\end{rem}

We will next focus on the approximating properties of the operator
$\ex_{n}$. Recall that considering a function $f:\Td\to\R$, we write
$\ex_{n}f$ for the operator $\ex_{n}$ applied to the restriction
of $f$ to the set $\Tnd$.
\begin{lem}
\label{lem:estimate_of_phi-ex_phi}Let $J\ge0$ and $m\in\N$ such
that $2m>J+1+\frac{d}{2}$. Then for each $f\in\Cf^{2m+1}(\T^{d})$
one has
\[
\norm{\ex_{n}f-f}_{H_{J}}\le\frac{C_{J,m}}{n}\norm f_{\Cf^{2m+1}}
\]
for all $n\ge1$. 
\end{lem}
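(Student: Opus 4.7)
The plan is to split $\ex_{n}f - f = (\ex_{n}f - \pr_{n}f) + (\pr_{n}f - f)$ and estimate each piece separately. The second piece is the standard spectral truncation error and can be bounded directly from Lemma~\ref{lem:basic_properties_of_pr}: by part (iv) with $I = J+1$ one has $\norm{\pr_{n}f-f}_{H_{J}} \le n^{-1}\norm{f-\pr_{n}f}_{H_{J+1}} \le n^{-1}\norm{f}_{H_{J+1}}$, and since the hypothesis $2m>J+1+d/2$ ensures $\lceil J+1\rceil \le 2m$, part (iii) upgrades this to $\le C_{J,m}n^{-1}\norm{f}_{\Cf^{2m+1}}$. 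So the real work is in the interpolation-error term $\ex_{n}f - \pr_{n}f$.

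The key is the aliasing identity: for $k\in\Znd$ and $f$ smooth,
\[
\inn{f,\varsigma_{k}}_{n} = \sum_{l\in\Z^{d}} \inn{f,\varsigma_{k+(2n+1)l}},
\]
which follows by expanding $f$ in its continuous Fourier series and using that $\sum_{x\in\Tnd}e^{\i m\cdot x}$ vanishes unless $m\in (2n+1)\Z^{d}$. Consequently the Fourier coefficients of $\ex_{n}f - \pr_{n}f$ are supported on $\Znd$ and equal to $\sum_{l\neq 0}\inn{f,\varsigma_{k+(2n+1)l}}$. I would then apply the Cauchy--Schwarz inequality in $l$ with weight $(1+|k+(2n+1)l|^{2})^{2m}$:
\[
\Big|\sum_{l\neq 0}\inn{f,\varsigma_{k+(2n+1)l}}\Big|^{2} \le \Big(\sum_{l\neq 0}\tfrac{1}{(1+|k+(2n+1)l|^{2})^{2m}}\Big)\Big(\sum_{l\neq 0}(1+|k+(2n+1)l|^{2})^{2m}|\inn{f,\varsigma_{k+(2n+1)l}}|^{2}\Big).
\]
For $k\in\Znd$ and $l\neq 0$ one has $|k+(2n+1)l|\ge c_{d}n|l|$ (take a coordinate of $l$ of maximal modulus and note $|k_{j}|\le n$), so the first factor is bounded by $C_{m}n^{-4m}$ since $4m > d$ under our hypotheses.

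To conclude, I would bound $(1+|k|^{2})^{J}\le C_{J}n^{2J}$ uniformly over $k\in\Znd$ and sum. The remaining double sum $\sum_{k\in\Znd}\sum_{l\neq 0}(1+|k+(2n+1)l|^{2})^{2m}|\inn{f,\varsigma_{k+(2n+1)l}}|^{2}$ reindexes over $k' = k+(2n+1)l \in \Z^{d}\setminus\Znd$ and is therefore majorized by $\norm{f}_{H_{2m}}^{2}\le C_{m}\norm{f}_{\Cf^{2m}}^{2}$ via Lemma~\ref{lem:basic_properties_of_pr}(iii). This yields
\[
\bnorm{\ex_{n}f-\pr_{n}f}_{H_{J}}^{2} \le \frac{C_{J,m}}{n^{2(2m-J)}}\norm{f}_{\Cf^{2m+1}}^{2},
\]
and since $2m-J > 1+d/2 > 1$ the rate is better than $1/n$. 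The main obstacle I anticipate is purely bookkeeping: verifying the reindexing step cleanly (so that distinct pairs $(k,l)\in\Znd\times(\Z^{d}\setminus\{0\})$ give distinct $k'\in\Z^{d}\setminus\Znd$) and checking that the exponent conditions $2m > d/2$ and $2m > J+1$ both follow from the single hypothesis $2m > J+1+d/2$; both are straightforward.
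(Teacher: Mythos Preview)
Your proof is correct and takes a genuinely different route from the paper's. Both arguments begin with the same splitting $\ex_{n}f-f=(\ex_{n}f-\pr_{n}f)+(\pr_{n}f-f)$ and handle the truncation piece identically. For the interpolation piece $\ex_{n}f-\pr_{n}f$, however, the paper does not use the aliasing identity. Instead it writes, for $k\neq 0$,
\[
\inn{f,\varsigma_{k}}_{n}-\inn{f,\varsigma_{k}}=\frac{1}{(\lambda_{k}^{n})^{m}}\inn{\Delta_{n}^{m}f,\varsigma_{k}}_{n}-\frac{1}{|k|^{2m}}\inn{\Delta^{m}f,\varsigma_{k}},
\]
and then controls separately (a) the eigenvalue discrepancy $|(\lambda_{k}^{n})^{-m}-|k|^{-2m}|$ via a Taylor expansion of $\cos$, and (b) the Riemann-sum error $|\inn{\Delta_{n}^{m}f,\varsigma_{k}}_{n}-\inn{\Delta^{m}f,\varsigma_{k}}|\le Cn^{-1}\|f\|_{\Cf^{2m+1}}$. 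This yields $|\inn{f,\varsigma_{k}}_{n}-\inn{f,\varsigma_{k}}|\le C_{m}n^{-1}|k|^{-(2m-1)}\|f\|_{\Cf^{2m+1}}$, and summing with weight $(1+|k|^{2})^{J}$ gives the rate $n^{-1}$.

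Your aliasing argument is cleaner: it avoids both the eigenvalue comparison and the quadrature-error step, and in fact gives the sharper rate $n^{-(2m-J)}$ for $\|\ex_{n}f-\pr_{n}f\|_{H_{J}}$ while requiring only $\|f\|_{H_{2m}}$ (hence $\|f\|_{\Cf^{2m}}$) rather than the full $\Cf^{2m+1}$ norm that the paper's Riemann-sum estimate consumes. The paper's approach, on the other hand, stays entirely within the operator framework $(\Delta_{n},\Delta,\lambda_{k}^{n})$ already set up in Section~\ref{subsec:Discrete-spaces} and Lemma~\ref{lem:estimate_of_discrete_eigenvalues}, so it is self-contained relative to the surrounding machinery. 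Your reindexing concern is indeed harmless: the map $(k,l)\mapsto k+(2n+1)l$ is a bijection from $\Znd\times(\Z^{d}\setminus\{0\})$ onto $\Z^{d}\setminus\Znd$.
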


\begin{proof}
Using the triangle inequality, we first get
\[
\norm{\ex_{n}f-f}_{H_{J}}\le\norm{\ex_{n}f-\pr_{n}f}_{H_{J}}+\norm{\pr_{n}f-f}_{H_{J}},
\]
where the second term in the right hand side of the estimate above
can be bounded by
\[
\norm{\pr_{n}f-f}_{H_{J}}\le\frac{1}{n}\norm{\pr_{n}f-f}_{H_{J+1}}\le\frac{2}{n}\norm f_{H_{J+1}}\le\frac{C_{m}}{n}\norm f_{\Cf^{2m+1}},
\]
according to Lemma \ref{lem:basic_properties_of_pr} and the fact
that $\lceil J\rceil+1\le2m+1$. The square of the first term can
be rewritten as
\begin{align*}
\norm{\ex_{n}f-\pr_{n}f}_{H_{J}}^{2} & =\sum_{k\in\Z_{n}^{d}}\left(1+|k|^{2}\right)^{J}\left|\inn{f,\varsigma_{k}}_{n}-\inn{f,\varsigma_{k}}\right|^{2}.
\end{align*}
Thus, we will need to estimate the difference of discrete and continuous
Fourier coefficients. Using the integration-by-parts formula and Lemma
\ref{lem:estimate_of_discrete_eigenvalues}, we get
\begin{align*}
\left|\inn{f,\varsigma_{k}}_{n}-\inn{f,\varsigma_{k}}\right| & =\left|\frac{1}{\left(\lambda_{k}^{n}\right)^{m}}\inn{f,\Delta_{n}^{m}\varsigma_{k}}_{n}-\frac{1}{|k|^{2m}}\inn{f,\Delta^{m}\varsigma_{k}}\right|\\
 & =\left|\frac{1}{\left(\lambda_{k}^{n}\right)^{m}}\inn{\Delta_{n}^{m}f,\varsigma_{k}}_{n}-\frac{1}{|k|^{2m}}\inn{\Delta^{m}f,\varsigma_{k}}\right|\\
 & \le\left|\frac{1}{\left(\lambda_{k}^{n}\right)^{m}}-\frac{1}{|k|^{2m}}\right|\left|\inn{\Delta_{n}^{m}f,\varsigma_{k}}\right|\\
 & +\frac{1}{|k|^{2m}}\left|\inn{\Delta_{n}^{m}f,\varsigma_{k}}_{n}-\inn{\Delta^{m}f,\varsigma_{k}}\right|
\end{align*}
for $k\in\Z_{n}^{d}\setminus\{0\}$.

Note that 
\[
\left|\frac{1}{\left(\lambda_{k}^{n}\right)^{m}}-\frac{1}{|k|^{2m}}\right|=\frac{1}{\left(\lambda_{k}^{n}\right)^{m}}\left|1-\left(\frac{(2n+1)^{2}}{2\pi^{2}|k|^{2}}\sum_{j=1}^{d}\left[1-\cos\frac{2\pi k_{j}}{2n+1}\right]\right)^{m}\right|.
\]
By Taylor's formula 
\[
\cos x=1-\frac{x^{2}}{2}+\frac{\cos\theta(x)}{4!}x^{4},
\]
where $\theta:\R\to\R$ is a function, we get for each $j\in[d]$
\begin{align*}
\frac{(2n+1)^{2}}{2\pi^{2}|k|^{2}}\sum_{j=1}^{d}\left[1-\cos\frac{2\pi k_{j}}{2n+1}\right] & =\frac{(2n+1)^{2}}{2\pi^{2}|k|^{2}}\sum_{j=1}^{d}\left[\frac{2\pi^{2}k_{j}^{2}}{(2n+1)^{2}}+\frac{\cos\theta_{j}(n)}{4!}\frac{16\pi^{4}k_{j}^{4}}{(2n+1)^{4}}\right]\\
 & =1+\frac{\pi^{2}}{3|k|^{2}(2n+1)^{2}}\sum_{j=1}^{d}\cos\theta_{j}(n)k_{j}^{4}=:1+z_{k}^{n},
\end{align*}
where $\theta_{j}(n):=\theta(2\pi k_{j}/(2n+1))$ and $|z_{k}^{n}|\le\frac{C|k|^{2}}{n^{2}}$
for all $k\in\Znd\backslash\{0\}$, $n\ge1$ and a constant $C>0$
is independent of $n$ and $k$. Consequently, using Taylor's formula
again for the function $x\mapsto(1+x)^{m}$, we obtain
\begin{align}
\left|\frac{1}{\left(\lambda_{k}^{n}\right)^{m}}-\frac{1}{|k|^{2m}}\right| & =\frac{1}{\left(\lambda_{k}^{n}\right)^{m}}\left|1-\left(1+z_{k}^{n}\right)^{m}\right|\le\frac{C_{m}|k|^{2}}{\left(\lambda_{k}^{n}\right)^{m}n^{2}}\le\frac{C_{m}}{|k|^{2m-2}n^{2}}\label{eq:estimate_for_eigenvalues}
\end{align}
for each $k\in\Z_{n}^{d}\setminus\{0\}$ and $n\in\N$, where we used
Lemma \ref{lem:estimate_of_discrete_eigenvalues} in the last step. 

By Taylor's formula, there exists a constant $C>0$ such that 
\[
\left|\inn{\Delta_{n}^{m}f,\varsigma_{k}}_{n}-\inn{\Delta^{m}f,\varsigma_{k}}\right|\le\frac{C}{n}\norm f_{\Cf^{2m+1}}.
\]
Combining the obtained estimates together, we conclude
\begin{align*}
\left|\inn{f,\varsigma_{k}}_{n}-\inn{f,\varsigma_{k}}\right| & \le\frac{C_{m}}{n^{2}|k|^{2m-2}}\left|\inn{\Delta_{n}^{m}f,\varsigma_{k}}\right|+\frac{C}{n|k|^{2m}}\norm f_{\Cf^{2m+1}}\\
 & \le\frac{C_{m}}{n|k|^{2m-1}}\norm f_{\Cf^{2m+1}}
\end{align*}
for all $k\in\Z_{n}^{d}\setminus\{0\}$ and $n\in\N$. Similarly,
we can estimate
\[
\left|\inn{f,\varsigma_{0}}_{n}-\inn{f,\varsigma_{0}}\right|\le\frac{C}{n}\norm f_{\Cf^{1}}.
\]
Consequently, 
\begin{align*}
\norm{\ex_{n}f-\pr_{n}f}_{H_{J}}^{2} & \le\frac{C^{2}}{n^{2}}\norm f_{\Cf^{1}}^{2}+\frac{C_{m}^{2}}{n^{2}}\norm f_{\Cf^{2m+1}}^{2}\sum_{k\in\Z_{n}^{d}\backslash\{0\}}\frac{\left(1+|k|^{2}\right)}{|k|^{4m-2}}^{J}\\
 & \le\frac{C_{J,m}}{n^{2}}\norm f_{\Cf^{2m+1}}^{2},
\end{align*}
since $2m-1-J>\frac{d}{2}$. This completes the proof of the statement.
\end{proof}
\begin{lem}
\label{lem:exchange_of_ex_and_square} For all $f\in\Ln$ and $n\ge1$
one has
\[
\bnorm{\ex_{n}f^{2}-\left(\ex_{n}f\right)^{2}}\le\frac{C_{m}}{n}\bnorm{\ex_{n}f}_{\Cf^{\lceil d/2\rceil+4}}^{2}.
\]
\end{lem}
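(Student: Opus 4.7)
The plan is to reduce the statement to the approximation estimate of Lemma~\ref{lem:estimate_of_phi-ex_phi} by exploiting the fact that $\ex_{n}$ only sees the values of a function on the lattice $\T_{n}^{d}$.

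First, I would observe the following identity. By \eqref{eq:ex_is_interpolation}, $\ex_{n}f(x)=f(x)$ for every $x\in\T_{n}^{d}$, so the two functions $f^{2}$ and $(\ex_{n}f)^{2}|_{\T_{n}^{d}}$ agree on $\T_{n}^{d}$. Since $\ex_{n}$ depends only on the values of its argument on $\T_{n}^{d}$, this gives
\[
\ex_{n}f^{2}=\ex_{n}\bigl((\ex_{n}f)^{2}\bigr).
\]
Setting $g:=(\ex_{n}f)^{2}\in\Cf^{\infty}(\T^{d})$, the quantity we wish to estimate becomes
\[
\ex_{n}f^{2}-(\ex_{n}f)^{2}=\ex_{n}g-g,
\]
which is exactly an interpolation error for the smooth function $g$.

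Next, I would fix an integer $m\geq1$ satisfying $2m>1+\tfrac{d}{2}$ and $2m+1\leq\lceil d/2\rceil+4$ (such an $m$ exists for every $d\geq1$ since the admissible interval for $m$ has length at least $1$), and apply Lemma~\ref{lem:estimate_of_phi-ex_phi} with $J=0$ to $g$:
\[
\norm{\ex_{n}g-g}\leq\frac{C_{m}}{n}\norm{g}_{\Cf^{2m+1}}.
\]
Finally, by the Leibniz rule for derivatives of a product, $\norm{(\ex_{n}f)^{2}}_{\Cf^{k}}\leq C_{k}\norm{\ex_{n}f}_{\Cf^{k}}^{2}$ for any $k$, so
\[
\norm{g}_{\Cf^{2m+1}}\leq C_{m}\norm{\ex_{n}f}_{\Cf^{2m+1}}^{2}\leq C_{m}\norm{\ex_{n}f}_{\Cf^{\lceil d/2\rceil+4}}^{2}.
\]
Combining the two inequalities gives the claim.

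There is essentially no obstacle here: the only conceptual point is the identification $\ex_{n}f^{2}=\ex_{n}((\ex_{n}f)^{2})$, which reduces the problem to an already-established interpolation bound. Everything else is the product rule and a bookkeeping choice of $m$.
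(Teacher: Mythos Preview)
Your proof is correct and follows essentially the same argument as the paper: the identification $\ex_{n}f^{2}=\ex_{n}\bigl((\ex_{n}f)^{2}\bigr)$ via \eqref{eq:ex_is_interpolation}, followed by Lemma~\ref{lem:estimate_of_phi-ex_phi} with $J=0$ and the Leibniz estimate for $\norm{(\ex_{n}f)^{2}}_{\Cf^{2m+1}}$. Your choice of $m$ matches the paper's condition $\tfrac{d}{2}+2<2m+1\le\lceil d/2\rceil+4$.
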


\begin{proof}
We first note that $f(x)=\ex_{n}f(x)$ for all $x\in\Tnd$, according
to (\ref{eq:ex_is_interpolation}). Since $\ex_{n}f^{2}$ is only
defined by values of $f^{2}$ on $\Tnd$, $\ex_{n}f^{2}=\ex_{n}\left(\ex_{n}f\right)^{2}$.
Hence, we can estimate
\begin{align*}
\bnorm{\ex_{n}f^{2}-\left(\ex_{n}f\right)^{2}} & =\bnorm{\ex_{n}\left(\ex_{n}f\right)^{2}-\left(\ex_{n}f\right)^{2}}\\
 & \le\frac{C_{m}}{n}\bnorm{\left(\ex_{n}f\right)^{2}}_{\Cf^{2m+1}}\le\frac{C_{m}}{n}\bnorm{\ex_{n}f}_{\Cf^{2m+1}}^{2}
\end{align*}
due to Lemma \ref{lem:estimate_of_phi-ex_phi} with $J=0$ and $m\in\N$
satisfying $\frac{d}{2}+2<2m+1\le\lceil d/2\rceil+4$. This completes
the proof of the statement.
\end{proof}
\begin{lem}
\label{lem:estimate_of_norm_ex_partial} For each $J\in\R$, $n\in\N$,
$j\in[d]$ and $f\in\Ln$, one has
\[
\norm{\ex_{n}\partial_{n,j}f}_{H_{J}}\le\norm{\ex_{n}f}_{H_{J+1}}.
\]
\end{lem}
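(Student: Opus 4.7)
The plan is to pass to Fourier coefficients and reduce the inequality to the pointwise bound on the discrete eigenvalues $\mu_{k,j}^{n}$ established in Lemma \ref{lem:estimate_of_discrete_eigenvalues}. Since both norms are computed via $\|g\|_{H_{J}}^{2}=\sum_{k\in\Z^{d}}(1+|k|^{2})^{J}|\langle g,\varsigma_{k}\rangle|^{2}$, and $\ex_{n}f$ (respectively $\ex_{n}\partial_{n,j}f$) has Fourier coefficients supported in $\Znd$, the inequality reduces to a coefficient-wise estimate of $|\langle \ex_{n}\partial_{n,j}f,\varsigma_{k}\rangle|$ against $|\langle \ex_{n}f,\varsigma_{k}\rangle|$ with a multiplier no larger than $(1+|k|^{2})^{1/2}$.

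First I would use the identity (\ref{eq:connection_between_ex_and_varsigma}) to get
\[
\langle \ex_{n}\partial_{n,j}f,\varsigma_{k}\rangle=\langle \partial_{n,j}f,\varsigma_{k}\rangle_{n}\qquad(k\in\Znd),
\]
and zero otherwise. Then, applying the discrete integration-by-parts formula (\ref{eq:integration_by_parts_in_discrete_space}) together with the eigenvalue relation $\partial_{n,j}\varsigma_{k}=\mu_{k,j}^{n}\varsigma_{k}$ (noting that $\partial_{n,j}\overline{\varsigma_{k}}=\overline{\mu_{k,j}^{n}}\,\overline{\varsigma_{k}}$ by conjugation), I would obtain
\[
\langle \partial_{n,j}f,\varsigma_{k}\rangle_{n}=-\langle f,\partial_{n,j}\varsigma_{k}\rangle_{n}=-\mu_{k,j}^{n}\langle f,\varsigma_{k}\rangle_{n}
\]
up to a complex conjugate that is immaterial for the modulus. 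Hence
\[
\bigl|\langle \ex_{n}\partial_{n,j}f,\varsigma_{k}\rangle\bigr|=|\mu_{k,j}^{n}|\,\bigl|\langle \ex_{n}f,\varsigma_{k}\rangle\bigr|\qquad(k\in\Znd),
\]
and both sides vanish for $k\notin\Znd$.

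Finally, invoking the estimate $|\mu_{k,j}^{n}|\le|k_{j}|\le|k|\le(1+|k|^{2})^{1/2}$ from (\ref{eq:estimates_of_discrete_eigenvalues}) gives
\[
\|\ex_{n}\partial_{n,j}f\|_{H_{J}}^{2}=\sum_{k\in\Znd}(1+|k|^{2})^{J}|\mu_{k,j}^{n}|^{2}\bigl|\langle \ex_{n}f,\varsigma_{k}\rangle\bigr|^{2}\le\sum_{k\in\Znd}(1+|k|^{2})^{J+1}\bigl|\langle \ex_{n}f,\varsigma_{k}\rangle\bigr|^{2}=\|\ex_{n}f\|_{H_{J+1}}^{2},
\]
which is the claim. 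There is no substantial obstacle here; the statement is essentially a bookkeeping exercise once one recognizes that $\partial_{n,j}$ acts diagonally on the Fourier basis with eigenvalues bounded by $|k|$, exactly matching the symbol of $\partial_{j}$ up to the passage from $H_{J+1}$ to $H_{J}$.
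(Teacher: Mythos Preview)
Your proof is correct and follows essentially the same route as the paper: expand both norms in the Fourier basis, use the eigenvalue relation $\partial_{n,j}\varsigma_{k}=\mu_{k,j}^{n}\varsigma_{k}$ together with discrete integration by parts, and then apply the bound $|\mu_{k,j}^{n}|\le|k_{j}|$ from Lemma~\ref{lem:estimate_of_discrete_eigenvalues}. The only cosmetic difference is that you are slightly more explicit about the complex-conjugate bookkeeping, which the paper absorbs into taking absolute values.
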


\begin{proof}
Using Lemma \ref{lem:estimate_of_discrete_eigenvalues}, we estimate
\begin{align*}
\norm{\ex_{n}\partial_{n,j}f}_{H_{J}}^{2} & =\sum_{k\in\Z^{d}}\left(1+|k|^{2}\right)^{J}\left|\inn{\ex_{n}\partial_{n,j}f,\varsigma_{k}}\right|^{2}=\sum_{k\in\Z_{n}^{d}}\left(1+|k|^{2}\right)^{J}\left|\inn{\partial_{n,j}f,\varsigma_{k}}_{n}\right|^{2}\\
 & =\sum_{k\in\Z_{n}^{d}}\left(1+|k|^{2}\right)^{J}\left|\inn{f,\partial_{n,j}\varsigma_{k}}_{n}\right|^{2}=\sum_{k\in\Z_{n}^{d}}\left(1+|k|^{2}\right)^{J}\left|\mu_{k,j}^{n}\right|^{2}\left|\inn{f,\varsigma_{k}}_{n}\right|^{2}\\
 & \le\sum_{k\in\Z_{n}^{d}}\left(1+|k|^{2}\right)^{J+1}\left|\inn{f,\varsigma_{k}}_{n}\right|^{2}=\norm{\ex_{n}f}_{H_{J+1}}^{2}.
\end{align*}
\end{proof}
\begin{lem}
\label{lem:norm_of_ex_of_product}Let $J\in\N_{0}$. Then for each
$f\in\Cf^{J}(\T^{d})$ and $g\in\H[J]$ one has
\[
\norm{\ex_{n}(fg)}_{H_{J}}\le C_{J}\norm{\ex_{n}f}_{\Cf^{J}}\norm{\ex_{n}g}_{H_{J}}.
\]
\end{lem}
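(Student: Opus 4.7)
The plan is to pass from the continuous $H_J$ norm of $\ex_n(fg)$ on $\T^d$ to a discrete Sobolev norm on $\T^d_n$, then use a discrete Leibniz rule together with the sup/$L_2$ split on the two factors.

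First I would use the identity $\inn{\ex_n h,\varsigma_k}=\inn{h,\varsigma_k}_n$ for $k\in\Znd$ (and zero otherwise) from Section \ref{subsec:pr_and_ex_operators} to rewrite
\[
\norm{\ex_n(fg)}_{H_J}^{2}=\sum_{k\in\Znd}\left(1+|k|^{2}\right)^{J}\left|\inn{fg,\varsigma_k}_n\right|^{2}.
\]
Since $J\in\N_0$, one has $(1+|k|^2)^J\le C_J\sum_{|\alpha|\le J}\prod_{j=1}^d|k_j|^{2\alpha_j}$, and by Lemma~\ref{lem:estimate_of_discrete_eigenvalues} each $|k_j|$ is controlled by $\sqrt{3}\,|\mu_{k,j}^{n}|$. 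Combined with the eigenvalue relation $\partial_{n,j}\varsigma_k=\mu_{k,j}^n\varsigma_k$ and the discrete integration-by-parts formula \eqref{eq:integration_by_parts_in_discrete_space} applied iteratively, this gives
\[
\prod_{j}|\mu_{k,j}^n|^{2\alpha_j}\left|\inn{fg,\varsigma_k}_n\right|^{2}=\left|\inn{\partial_n^\alpha(fg),\varsigma_k}_n\right|^{2},
\]
so by Parseval on $\Ln$ (cf.~\eqref{eq:expansion_of_f_in_Ln})
\[
\norm{\ex_n(fg)}_{H_J}^{2}\le C_J\sum_{|\alpha|\le J}\norm{\partial_n^\alpha(fg)}_n^{2}.
\]

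Next I would apply the discrete Leibniz rule. Since $\partial_{n,j}(fg)(x)=f(x+e_j^n)\partial_{n,j}g(x)+g(x)\partial_{n,j}f(x)=\tau_j^n f(x)\,\partial_{n,j}g(x)+g(x)\,\partial_{n,j}f(x)$, iteration yields
\[
\partial_n^\alpha(fg)=\sum_{\beta\le\alpha}\binom{\alpha}{\beta}\,T_{\alpha,\beta}(\partial_n^\beta f)\cdot\partial_n^{\alpha-\beta}g,
\]
where each $T_{\alpha,\beta}$ is a composition of shift operators $\tau_j^n$; in particular $\|T_{\alpha,\beta}h\|_{n,\Cf}=\|h\|_{n,\Cf}$. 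Hence, by the discrete Cauchy--Schwarz inequality,
\[
\norm{\partial_n^\alpha(fg)}_n\le C_J\sum_{\beta\le\alpha}\norm{\partial_n^\beta f}_{n,\Cf}\,\norm{\partial_n^{\alpha-\beta}g}_n.
\]

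It then remains to bound the two factors by the continuous norms on the right-hand side of the statement. For the first factor, a one-dimensional Taylor expansion of $\ex_n f\in\Cf^\infty(\T^d)$ between $x$ and $x+e_j^n$ represents $\partial_{n,j}(\ex_n f)$ as an average of $\partial_j(\ex_n f)$ over the segment, giving $\norm{\partial_n^\beta f}_{n,\Cf}\le\norm{\ex_n f}_{\Cf^{|\beta|}}\le\norm{\ex_n f}_{\Cf^J}$ iteratively (recall $f=\ex_n f$ on $\Tnd$, so $\partial_n^\beta f$ on $\Tnd$ coincides with $\partial_n^\beta\ex_n f$). For the second factor, iterating Lemma~\ref{lem:estimate_of_norm_ex_partial} and using $\norm{\cdot}_n=\norm{\ex_n\cdot}$ (Corollary~\ref{cor:connection_between_descrete_and_cont_inner_product}) yields
\[
\norm{\partial_n^{\alpha-\beta}g}_n=\norm{\ex_n\partial_n^{\alpha-\beta}g}\le\norm{\ex_n g}_{H_{|\alpha-\beta|}}\le\norm{\ex_n g}_{H_J}.
\]
Multiplying and summing over $\alpha,\beta$ yields the claim.

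I expect the main technical point to be the bookkeeping for the discrete Leibniz rule and the uniform sup-norm estimate $\norm{\partial_n^\beta f}_{n,\Cf}\le\norm{\ex_n f}_{\Cf^{|\beta|}}$; the latter is really an application of the mean value theorem on $\T^d$ applied to the smooth interpolant $\ex_n f$, but it must be iterated carefully because the intermediate shifts $\tau_j^n$ appearing in the Leibniz expansion do not commute with restriction in an obvious way. All remaining steps are routine consequences of Parseval and the discrete-to-continuous dictionary established in Section~\ref{subsec:pr_and_ex_operators}.
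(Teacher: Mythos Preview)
Your proof is correct and follows essentially the same route as the paper: pass to discrete Fourier coefficients via \eqref{eq:connection_between_ex_and_varsigma}, replace $|k_j|$ by $|\mu_{k,j}^n|$ using Lemma~\ref{lem:estimate_of_discrete_eigenvalues}, integrate by parts to discrete derivatives of $fg$, apply the discrete Leibniz rule, and finish with the mean-value/Taylor estimate for the $f$-factor and Lemma~\ref{lem:estimate_of_norm_ex_partial} for the $g$-factor. The only cosmetic difference is that the paper uses the inequality $(1+\sum_j|\mu_{k,j}^n|^2)^J\le C_J(1+\sum_j|\mu_{k,j}^n|^{2J})$ to reduce to \emph{pure} iterated derivatives $\partial_{n,j}^J$ in a single direction, which keeps the Leibniz bookkeeping one-dimensional (shifts $(\tau_j^n)^l$ in one direction only) and avoids the multi-index gymnastics you flag at the end.
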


\begin{proof}
Using Lemma \ref{lem:estimate_of_discrete_eigenvalues} and the integration-by-parts
formula, we estimate
\begin{align*}
\norm{\ex_{n}(fg)}_{H_{J}}^{2} & \le\sum_{k\in\Znd}\left(1+|k|^{2}\right)^{J}\left|\inn{fg,\varsigma_{k}}_{n}\right|^{2}\\
 & \le3^{J}\sum_{k\in\Znd}\left(1+\sum_{j=1}^{d}|\mu_{k,j}^{n}|^{2}\right)^{J}\left|\inn{fg,\varsigma_{k}}_{n}\right|^{2}\\
 & \le C_{J}\sum_{k\in\Znd}\left(1+\sum_{j=1}^{d}|\mu_{k,j}^{n}|^{2J}\right)\left|\inn{fg,\varsigma_{k}}_{n}\right|^{2}\\
 & =C_{J}\left[\norm{fg}_{n}^{2}+\sum_{j=1}^{d}\sum_{k\in\Znd}\left|\inn{\partial_{n,j}^{J}(fg),\varsigma_{k}}_{n}\right|^{2}\right]\\
 & =C_{J}\left[\norm{fg}_{n}^{2}+\sum_{j=1}^{d}\bnorm{\partial_{n,j}^{J}(fg)}_{n}^{2}\right].
\end{align*}
Iterating the equality $\partial_{n,j}(fg)=\partial_{n,j}f\tau_{j}^{n}g+f\partial_{n,j}g$,
where $\tau_{j}^{n}f(x)=f(x+e_{j}^{n})$, we get 
\[
\partial_{n,j}^{J}(fg)=\sum_{l=0}^{J}\binom{J}{l}\left[\partial_{n,j}^{l}f\right]\left[(\tau_{j}^{n})^{l}\partial_{n,j}^{J-l}g\right].
\]
Thus, using the fact that $\norm{\tau_{n,j}g}_{n}=\norm g_{n}$, we
obtain
\[
\norm{\ex_{n}(fg)}_{H_{J}}^{2}\le C_{J}\left[\norm f_{n,\Cf}^{2}\norm g_{n}^{2}+\sum_{j=1}^{d}\sum_{l=0}^{J}\bnorm{\partial_{n,j}^{l}f}_{n,\Cf}^{2}\bnorm{\partial_{n,j}^{J-l}g}_{n}^{2}\right].
\]
We next note that $\ex_{n}f(x)=f(x)$ for all $x\in\Tnd$. Thus, $\norm f_{n,\Cf}\le\norm{\ex_{n}f}_{\Cf}$.
Moreover, applying Taylor's formula to $\ex_{n}f$, we get
\begin{align*}
\norm{\partial_{n,j}^{l}f}_{\Cf}=\norm{\partial_{n,j}^{l}\ex_{n}f}_{\Cf}\le C_{l}\norm{\partial_{j}^{l}\ex_{n}f}_{\Cf} & .
\end{align*}
Consequently, we can continue the estimate as follows
\[
\norm{\ex_{n}(fg)}_{H_{J}}^{2}\le C_{J}\left[\norm{\ex_{n}f}_{\Cf}^{2}\norm{\ex_{n}g}^{2}+\sum_{j=1}^{d}\sum_{l=0}^{J}\bnorm{\partial_{j}^{l}\ex_{n}f}_{\Cf}^{2}\bnorm{\ex_{n}\partial_{n,j}^{J-l}g}^{2}\right].
\]
The statement now follows from Lemma \ref{lem:estimate_of_norm_ex_partial}.
\end{proof}
\begin{lem}
\label{lem:norm_of_ex_shift}Let $J\in\R$, $n\in\N$, $j\in[d]$
and $f\in\Ln$. Then 
\[
\norm{\ex_{n}\tau_{j}^{n}f}_{H_{J}}=\norm{\ex_{n}f}_{H_{J}}.
\]
\end{lem}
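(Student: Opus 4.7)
The plan is to reduce the identity to the observation that translations are diagonalized by the discrete Fourier basis $\{\varsigma_k\}_{k\in\Znd}$, so each Fourier coefficient of $\tau_j^n f$ differs from the corresponding one of $f$ only by a factor of modulus one.

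First, I would use the identity $\inn{\ex_n g,\varsigma_k}=\inn{g,\varsigma_k}_n$ for $k\in\Znd$ and $\inn{\ex_n g,\varsigma_k}=0$ otherwise (this is (\ref{eq:connection_between_ex_and_varsigma})) to write
\[
\norm{\ex_n\tau_j^n f}_{H_J}^2=\sum_{k\in\Znd}(1+|k|^2)^J\,\bigl|\inn{\tau_j^n f,\varsigma_k}_n\bigr|^2,
\]
and similarly for $\ex_n f$. It therefore suffices to show $|\inn{\tau_j^n f,\varsigma_k}_n|=|\inn{f,\varsigma_k}_n|$ for every $k\in\Znd$.

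Second, I would perform the change of variables $y=x+e_j^n$ in the discrete inner product. Since the sum runs over $\Tnd$ and $\Tnd$ is invariant under the shift by $e_j^n$, we obtain
\[
\inn{\tau_j^n f,\varsigma_k}_n=\frac{1}{(2n+1)^d}\sum_{x\in\Tnd}f(x+e_j^n)\varsigma_k(x)=\frac{1}{(2n+1)^d}\sum_{y\in\Tnd}f(y)\varsigma_k(y-e_j^n)=e^{-\i k\cdot e_j^n}\inn{f,\varsigma_k}_n,
\]
using $\varsigma_k(y-e_j^n)=e^{-\i k\cdot e_j^n}\varsigma_k(y)$. Since $|e^{-\i k\cdot e_j^n}|=1$, the two squared Fourier coefficients agree, and summing against the weights $(1+|k|^2)^J$ over $\Znd$ gives the claimed equality.

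There is no real obstacle here: the statement is an immediate consequence of the fact that the discrete translations act on $\varsigma_k$ by multiplication by a unimodular scalar, so the only content is to record the two identities above and combine them.
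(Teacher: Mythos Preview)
Your proof is correct and follows essentially the same approach as the paper's: both use \eqref{eq:connection_between_ex_and_varsigma} to reduce to discrete Fourier coefficients and then exploit that the shift $\tau_j^n$ acts on each $\varsigma_k$ by multiplication by a unimodular scalar. Your version makes the change of variables explicit, whereas the paper records it via the adjoint identity $\inn{\tau_j^n f,\varsigma_k}_n=\inn{f,(\tau_j^n)^{-1}\varsigma_k}_n$, but the argument is the same.
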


\begin{proof}
The statement directly follows from the following computation
\begin{align*}
\norm{\ex_{n}\tau_{j}^{n}f}_{H_{J}}^{2} & =\sum_{k\in\Znd}\left(1+|k|^{2}\right)^{J}\left|\inn{\tau_{j}^{n}f,\varsigma_{k}}_{n}\right|^{2}\\
 & =\sum_{k\in\Znd}\left(1+|k|^{2}\right)^{J}\left|\binn{f,\left(\tau_{k}^{n}\right)^{-1}\varsigma_{k}}_{n}\right|^{2}\\
 & =\sum_{k\in\Znd}\left(1+|k|^{2}\right)^{J}\left|\binn{f,\varsigma_{k}}_{n}\right|^{2}=\norm{\ex_{n}f}_{H_{J}}.
\end{align*}
\end{proof}
\begin{lem}
\label{lem:discrete_and_continuous_laplace_operator}There exists
a constant $C>0$ such that for each $J\in\R$ and $g\in\H[J+2]$
the inequality
\[
\bnorm{\ex_{n}\Delta_{n}\pr_{n}g-\pr_{n}\Delta g}_{H_{J}}\le\frac{C}{n}\norm g_{H_{J+2}}
\]
holds.
\end{lem}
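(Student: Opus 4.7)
The strategy is to expand the difference in the Fourier basis $\{\varsigma_k\}_{k\in\Z^d}$ and then leverage the Taylor expansion of the eigenvalues of $\Delta_n$, following the same pattern as estimate \eqref{eq:estimate_for_eigenvalues} in the proof of Lemma~\ref{lem:estimate_of_phi-ex_phi}.

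\textbf{Step 1 (Fourier representation).} All four operators $\pr_n$, $\ex_n$, $\Delta$, $\Delta_n$ are diagonal in the basis $\{\varsigma_k\}$: writing $\pr_n g=\sum_{k\in\Z_n^d}\langle g,\varsigma_k\rangle\varsigma_k$ and using $\Delta_n\varsigma_k=-\lambda_k^n\varsigma_k$ on $\Tnd$ (Lemma~\ref{lem:estimate_of_discrete_eigenvalues}) together with $\ex_n\varsigma_k=\varsigma_k$ for $k\in\Z_n^d$, one computes
\[
\ex_n\Delta_n\pr_n g-\pr_n\Delta g\ =\ \sum_{k\in\Z_n^d}(|k|^2-\lambda_k^n)\,\langle g,\varsigma_k\rangle\,\varsigma_k,
\]
so that, by Parseval,
\[
\bigl\|\ex_n\Delta_n\pr_n g-\pr_n\Delta g\bigr\|_{H_J}^2\ =\ \sum_{k\in\Z_n^d}(1+|k|^2)^J(|k|^2-\lambda_k^n)^2\,|\langle g,\varsigma_k\rangle|^2.
\]

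\textbf{Step 2 (Eigenvalue discrepancy).} With $y_j=2\pi k_j/(2n+1)\in[-\pi,\pi]$, the Taylor expansion $1-\cos y_j=y_j^2/2-y_j^4\cos\theta_j/24$ yields
\[
\Bigl|k_j^2-\tfrac{(2n+1)^2}{2\pi^2}(1-\cos y_j)\Bigr|\le\frac{C\,k_j^4}{n^2},
\]
and summing over $j$ gives $||k|^2-\lambda_k^n|\le C|k|^4/n^2$. Combining this with the crude bound $||k|^2-\lambda_k^n|\le\tfrac{2}{3}|k|^2$ coming from Lemma~\ref{lem:estimate_of_discrete_eigenvalues} and the cap $|k|\le\sqrt{d}\,n$ valid on $\Z_n^d$, I will derive the pointwise estimate
\[
||k|^2-\lambda_k^n|\ \le\ \frac{C\,(1+|k|^2)}{n}\qquad(k\in\Z_n^d).
\]

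\textbf{Step 3 (Assembly).} Substituting the pointwise bound from Step~2 into the Fourier identity from Step~1,
\[
\bigl\|\ex_n\Delta_n\pr_n g-\pr_n\Delta g\bigr\|_{H_J}^2\ \le\ \frac{C^2}{n^2}\sum_{k\in\Z_n^d}(1+|k|^2)^{J+2}|\langle g,\varsigma_k\rangle|^2\ \le\ \frac{C^2}{n^2}\|g\|_{H_{J+2}}^2,
\]
and taking square roots delivers the claim.

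The main obstacle is Step~2: Taylor alone gives $C|k|^4/n^2$, which is only sharp in the regime $|k|\ll n$; for modes with $|k|$ comparable to $n$, $\lambda_k^n$ can differ from $|k|^2$ by a quantity of order $|k|^2$, so the Taylor bound must be interpolated carefully with the uniform two-sided bound $|k|^2/3\le\lambda_k^n\le|k|^2$ and the range constraint $|k|\le\sqrt d\,n$ to yield the linear-in-$(1+|k|^2)$ bound with the desired $1/n$ factor. Once this interpolation is in hand, Steps~1 and~3 are purely algebraic.
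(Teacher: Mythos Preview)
Your Steps~1 and~3 match the paper's argument exactly (Fourier diagonalization, then summing against $\|g\|_{H_{J+2}}$). The gap is entirely in Step~2: the pointwise inequality $\bigl||k|^2-\lambda_k^n\bigr|\le C(1+|k|^2)/n$ for $k\in\Z_n^d$ that you propose to derive is false, and no interpolation between the Taylor bound $C|k|^4/n^2$ and the crude bound $\tfrac23|k|^2$ can produce it. At the extremal mode $k=(n,\ldots,n)$ one has $2\pi k_j/(2n+1)\to\pi$, hence $\lambda_k^n/|k|^2\to4/\pi^2$, and therefore $\bigl||k|^2-\lambda_k^n\bigr|=(1-4/\pi^2+o(1))\,|k|^2\approx 0.595\,|k|^2$, whereas your target bound would force this to be $O(|k|^2/n)$. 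Testing the lemma itself with $g=\varsigma_k$ for this $k$ gives
\[
\frac{\bigl\|\ex_n\Delta_n\pr_n g-\pr_n\Delta g\bigr\|_{H_J}}{\|g\|_{H_{J+2}}}=\frac{\bigl||k|^2-\lambda_k^n\bigr|}{1+|k|^2}\longrightarrow 1-\frac{4}{\pi^2}>0,
\]
so the inequality with a uniform $C/n$ cannot hold as stated.

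For comparison, the paper's own proof makes the same slip: from $\lambda_k^n=|k|^2+(2n+1)^{-2}\sum_j k_j^4 a_k^n$ it passes directly to $|\lambda_k^n-|k|^2|^2\le C|k|^4/(2n+1)^2$, whereas the expansion actually yields $|\lambda_k^n-|k|^2|^2\le C|k|^8/(2n+1)^4$. What the Taylor argument \emph{does} give cleanly is
\[
\bigl\|\ex_n\Delta_n\pr_n g-\pr_n\Delta g\bigr\|_{H_J}\le\frac{C}{n^{2}}\,\|g\|_{H_{J+4}},
\]
obtained by absorbing the full $|k|^8$ into four extra derivatives; this is what is effectively needed downstream once the regularity indices are adjusted. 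You were right to flag Step~2 as the obstacle; the resolution is to relax the Sobolev exponent on the right-hand side, not to sharpen the eigenvalue comparison.
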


\begin{proof}
Using integration-by-parts formula Lemma \ref{lem:estimate_of_discrete_eigenvalues}
and (\ref{eq:connection_betwwen_ex_and_pr}), we compute

\begin{align*}
\bnorm{\ex_{n}\Delta_{n}\pr_{n}g-\pr_{n}\Delta g}_{H_{J}}^{2} & =\sum_{k\in\Z^{d}}(1+|k|^{2})^{J}\left|\inn{\ex_{n}\Delta_{n}\pr_{n}g,\varsigma_{k}}-\inn{\pr_{n}\Delta g,\varsigma_{k}}\right|^{2}\\
 & =\sum_{k\in\Znd}(1+|k|^{2})^{J}\left|\inn{\Delta_{n}\pr_{n}g,\varsigma_{k}}_{n}-\inn{\Delta g,\varsigma_{k}}\right|^{2}\\
 & =\sum_{k\in\Znd}(1+|k|^{2})^{J}\left|\lambda_{k}^{n}\inn{\pr_{n}g,\varsigma_{k}}_{n}-|k|^{2}\inn{g,\varsigma_{k}}\right|^{2}\\
 & =\sum_{k\in\Znd}(1+|k|^{2})^{J}\left|\lambda_{k}^{n}-|k|^{2}\right|^{2}\left|\inn{g,\varsigma_{k}}\right|^{2},
\end{align*}
where we used the equality $\inn{\pr_{n}g,\varsigma_{k}}_{n}=\inn{g,\ex_{n}\varsigma_{k}}=\inn{g,\varsigma_{k}}$
in the last step. Using Taylor's expansion, we get 
\[
\lambda_{k}^{n}=\frac{(2n+1)^{2}}{2\pi^{2}}\sum_{j=1}^{d}\left[1-\cos\frac{2\pi k_{j}}{2n+1}\right]=|k|^{2}+\frac{1}{(2n+1)^{2}}\sum_{j=1}^{d}k_{j}^{4}a_{k}^{n},
\]
where the family $|a_{k}^{n}|$, $k\in\Znd$, $n\ge1$, is bounded
by an universal constant. Thus, 
\begin{align*}
\bnorm{\ex_{n}\Delta_{n}\pr_{n}g-\pr_{n}\Delta g}_{H_{J}}^{2} & \le\frac{C}{(2n+1)^{2}}\sum_{k\in\Znd}(1+|k|^{2})^{J}\left|k\right|^{4}\left|\inn{g,\varsigma_{k}}\right|^{2}\\
 & \le\frac{C}{n^{2}}\norm g_{H_{J+2}}^{2}.
\end{align*}
This completes the proof of the lemma.
\end{proof}
\begin{cor}
\label{cor:comparison_discrete_and_continuous_heat_equations}Let
$(\rho_{t}^{\infty})_{t\ge0}$ and $(\rho_{t}^{n})_{t\ge0}$, $n\ge1$,
be solutions to (\ref{eq:heat_PDE-1}) and (\ref{eq:descrete_heat_equation}),
respectively. Let also $\rho_{0}^{\infty}\in\H[2]$. Then for each
$T>0$ there exists a constant $C_{T}>0$ such that
\[
\norm{\ex_{n}\rho_{t}^{n}-\pr_{n}\rho_{t}^{\infty}}\le C_{T}\norm{\ex_{n}\rho_{0}^{n}-\rho_{0}^{\infty}}+\frac{C_{T}}{n}\norm{\rho_{0}^{\infty}}_{\H[2]}
\]
for all $t\in[0,T]$ and $n\ge1$.
\end{cor}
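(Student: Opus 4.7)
The plan is to work entirely in the Fourier basis $\{\tilde\varsigma_k : k \in \Z^d\}$. Both $\ex_n \rho_t^n$ and $\pr_n \rho_t^\infty$ lie in the finite-dimensional subspace $V_n := \mathrm{span}\{\tilde\varsigma_k : k \in \Znd\}$, and by Lemma \ref{lem:estimate_of_discrete_eigenvalues} together with $\Delta \tilde\varsigma_k = -|k|^2 \tilde\varsigma_k$, both flows diagonalize explicitly. Using the identity (\ref{eq:connection_between_ex_and_varsigma}) together with $\inn{\pr_n g, \tilde\varsigma_k} = \inn{g, \tilde\varsigma_k}$ for $k \in \Znd$, we have
\begin{equation*}
\inn{\ex_n \rho_t^n, \tilde\varsigma_k} = e^{-2\pi^2 \lambda_k^n t}\, \inn{\ex_n \rho_0^n, \tilde\varsigma_k}, \qquad \inn{\pr_n \rho_t^\infty, \tilde\varsigma_k} = e^{-2\pi^2 |k|^2 t}\, \inn{\rho_0^\infty, \tilde\varsigma_k}
\end{equation*}
for all $k \in \Znd$ and $t \ge 0$.

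Decomposing the Fourier coefficients of the difference as
\begin{equation*}
\inn{\ex_n \rho_t^n - \pr_n \rho_t^\infty, \tilde\varsigma_k} = e^{-2\pi^2 \lambda_k^n t}\, \inn{\ex_n \rho_0^n - \rho_0^\infty, \tilde\varsigma_k} + \bigl(e^{-2\pi^2 \lambda_k^n t} - e^{-2\pi^2 |k|^2 t}\bigr)\, \inn{\rho_0^\infty, \tilde\varsigma_k},
\end{equation*}
I will square, apply $(a+b)^2 \le 2a^2 + 2b^2$, and sum over $k \in \Znd$ via Parseval's identity on $V_n$. The first resulting sum is bounded by $2\|\ex_n \rho_0^n - \rho_0^\infty\|^2$ (via $0 \le e^{-2\pi^2 \lambda_k^n t} \le 1$ and Bessel's inequality). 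For the second sum, I will combine three ingredients: the mean value theorem with the lower bound $\lambda_k^n \ge |k|^2/3$ from Lemma \ref{lem:estimate_of_discrete_eigenvalues} to get
\begin{equation*}
\bigl(e^{-2\pi^2 \lambda_k^n t} - e^{-2\pi^2 |k|^2 t}\bigr)^2 \le 4\pi^4 t^2 (\lambda_k^n - |k|^2)^2 e^{-4\pi^2 |k|^2 t/3};
\end{equation*}
the eigenvalue expansion $\lambda_k^n = |k|^2 + (2n+1)^{-2} \sum_j k_j^4 a_k^n$ with $|a_k^n|$ bounded uniformly in $k \in \Znd$ and $n \ge 1$, derived in the proof of Lemma \ref{lem:discrete_and_continuous_laplace_operator}, which gives $(\lambda_k^n - |k|^2)^2 \le C |k|^8/n^4$; and the elementary inequality $(|k|^2 t)^2 e^{-c|k|^2 t} \le C_c$ with $c = 4\pi^2/3$, which trades two powers of $|k|^2$ for uniformity in $t \in [0,T]$. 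Together these yield the pointwise bound $\bigl(e^{-2\pi^2 \lambda_k^n t} - e^{-2\pi^2 |k|^2 t}\bigr)^2 \le C\, |k|^4/n^4$, so the second sum is at most $C n^{-4} \|\rho_0^\infty\|_{H_{2}}^2$, which is even stronger than the $n^{-2}$ rate claimed. Taking square roots concludes.

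The main (though modest) obstacle is the high-frequency regime $|k| \sim n$: the naive bound $|e^{-a} - e^{-b}| \le |a-b|$ would produce a factor of $|k|^4$ summable only against the stronger $H_{4}$ norm of $\rho_0^\infty$. It is essential to retain the parabolic smoothing $e^{-c|k|^2 t}$ stemming from the fact that both semigroups dissipate at comparable rates (quantified by $\lambda_k^n \asymp |k|^2$ via Lemma \ref{lem:estimate_of_discrete_eigenvalues}), which converts two powers of $|k|$ into a uniform-in-$t$ constant and brings the regularity requirement down to $\|\rho_0^\infty\|_{H_{2}}$.
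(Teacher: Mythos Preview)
Your proof is correct and takes a genuinely different route from the paper. The paper proceeds by an energy estimate: it writes $\norm{\ex_n\rho_t^n-\pr_n\rho_t^\infty}^2=\norm{\rho_t^n-\pr_n\rho_t^\infty}_n^2$, differentiates in $t$, integrates by parts to produce $-\sum_j\norm{\partial_{n,j}(\rho_s^n-\pr_n\rho_s^\infty)}_n^2$, controls the residual $\norm{\Delta_n\pr_n\rho_s^\infty-\pr_n\Delta\rho_s^\infty}_n$ by $Cn^{-1}\norm{\rho_0^\infty}_{H_2}$ via Lemma~\ref{lem:discrete_and_continuous_laplace_operator}, and closes with Gr\"onwall. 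That argument is robust---it does not rely on simultaneous diagonalization---but it produces a $T$-dependent constant $C_T\sim e^{cT}$ and the rate $n^{-1}$. Your approach exploits the fact that $\Delta_n$ and $\Delta$ share the Fourier eigenbasis on $V_n$, writes the difference of semigroups mode by mode, and uses parabolic smoothing $(|k|^2 t)^2 e^{-c|k|^2 t}\le C$ to absorb the $|k|^4$ from the eigenvalue defect $|\lambda_k^n-|k|^2|\le C|k|^4/n^2$. This yields the sharper rate $n^{-2}$ on the second term and a $T$-independent constant throughout, so you in fact prove a stronger statement than the corollary claims. The only cosmetic point is that you invoke the eigenvalue expansion from inside the proof of Lemma~\ref{lem:discrete_and_continuous_laplace_operator}; since that expansion is a one-line Taylor computation, this is harmless.
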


\begin{proof}
Using Corollary \ref{cor:connection_between_descrete_and_cont_inner_product},
we easily get
\begin{align*}
\norm{\ex_{n}\rho_{t}^{n}-\pr_{n}\rho_{t}^{\infty}}^{2} & =\norm{\rho_{t}^{n}-\pr_{n}\rho_{t}^{\infty}}_{n}^{2}=\\
 & =\norm{\rho_{0}^{n}-\pr_{n}\rho_{0}^{\infty}}_{n}^{2}+4\pi^{2}\int_{0}^{t}\inn{\rho_{s}^{n}-\pr_{n}\rho_{s}^{\infty},\Delta_{n}\rho_{s}^{n}-\pr_{n}\Delta\rho_{s}^{\infty}}_{n}ds.
\end{align*}
Integrating-by-parts and using the Cauchy-Schwarz inequality, we can
estimate
\begin{align*}
\inn{\rho_{s}^{n}-\pr_{n}\rho_{s}^{\infty},\Delta_{n}\rho_{s}^{n}-\pr_{n}\Delta\rho_{s}^{\infty}}_{n} & =\inn{\rho_{s}^{n}-\pr_{n}\rho_{s}^{\infty},\Delta_{n}\rho_{s}^{n}-\Delta_{n}\pr_{n}\rho_{s}^{\infty}}_{n}\\
 & +\inn{\rho_{s}^{n}-\pr_{n}\rho_{s}^{\infty},\Delta_{n}\pr_{n}\rho_{s}^{\infty}-\pr_{n}\Delta\rho_{s}^{\infty}}_{n}\\
 & \le-\sum_{j=1}^{d}\bnorm{\partial_{n,j}\left(\rho_{s}^{n}-\pr_{n}\rho_{s}^{\infty}\right)}_{n}^{2}\\
 & +\bnorm{\rho_{s}^{n}-\pr_{n}\rho_{s}^{\infty}}_{n}\bnorm{\Delta_{n}\pr_{n}\rho_{s}^{\infty}-\pr_{n}\Delta\rho_{s}^{\infty}}_{n}\\
 & \le\frac{1}{2}\bnorm{\rho_{s}^{n}-\pr_{n}\rho_{s}^{\infty}}_{n}^{2}+\frac{1}{2}\bnorm{\Delta_{n}\pr_{n}\rho_{s}^{\infty}-\pr_{n}\Delta\rho_{s}^{\infty}}_{n}^{2}.
\end{align*}
According to Corollary \ref{cor:connection_between_descrete_and_cont_inner_product}
and Lemma \ref{lem:discrete_and_continuous_laplace_operator}, the
bounds
\begin{align*}
\bnorm{\Delta_{n}\pr_{n}\rho_{s}^{\infty}-\pr_{n}\Delta\rho_{s}^{\infty}}_{n}^{2} & =\bnorm{\ex_{n}\Delta_{n}\pr_{n}\rho_{s}^{\infty}-\pr_{n}\Delta\rho_{s}^{\infty}}^{2}\\
 & \le\frac{C}{n^{2}}\norm{\rho_{s}^{\infty}}_{H_{2}}^{2}\le\frac{C}{n^{2}}\norm{\rho_{0}^{\infty}}_{H_{2}}^{2}
\end{align*}
hold. Consequently, we obtain 
\[
\norm{\ex_{n}\rho_{t}^{n}-\pr_{n}\rho_{t}^{\infty}}^{2}\le\norm{\rho_{0}^{n}-\pr_{n}\rho_{0}^{\infty}}_{n}^{2}+2\pi^{2}\int_{0}^{t}\bnorm{\rho_{s}^{n}-\pr_{n}\rho_{s}^{\infty}}_{n}^{2}ds+\frac{Ct}{n^{2}}\norm{\rho_{0}^{\infty}}_{H_{2}}
\]
for all $t\ge0$. Using Grönwall's inequality, we conclude
\[
\norm{\ex_{n}\rho_{t}^{n}-\pr_{n}\rho_{t}^{\infty}}^{2}\le C_{T}\norm{\ex_{n}\rho_{0}^{n}-\pr_{n}\rho_{0}^{\infty}}^{2}+\frac{C_{T}}{n^{2}}\norm{\rho_{0}^{\infty}}_{H_{2}}^{2}
\]
that completes the proof of the corollary.
\end{proof}

\subsection{Multilinear operators on Sobolev spaces\protect\label{subsec:Multilinear-operators-on-H}}

Recall that $\MLO[m][{\H[J]}]$ denotes the space of all continuous
multilinear operators from $\left(\H\right)^{m}$ to $\R$ equipped
with the norm
\[
\normMLO[m][A]=\sup_{\norm{f_{j}}_{H_{J}}\le1}\left|A[f_{1},\ldots,f_{m}]\right|,
\]
and the subset of $\MLO[m][{\H[J]}]$ consisting of multilinear operators
with finite Hilbert-Schmidt norm (\ref{eq:hilbert_schmidt_norm})
is denoted by $\MLOHS[m][{\H[J]}]$. 

Since for each $J<I$ one has $\H[I]\subset\H[J]$ and $\norm{\cdot}_{H_{J}}\le\norm{\cdot}_{H_{I}}$,
the space $\MLO[m][{\H[J]}]$ is continuously embedded into $\MLO[m][{\H[I]}]$
and $\norm{\cdot}_{\MLO[m][{\H[I]}]}\le\norm{\cdot}_{\MLO[m][{\H[J]}]}$.
We next show the continuous embedding of $\MLOHS[m][{\H[J]}]$ into
$\MLO[m][{\H[I]}]$.
\begin{lem}
\label{lem:norm_of_multilinear_operator}For each $I,J\in\R$ with
$I>J+\frac{d}{2}$ one has
\[
\norm A_{\MLOHS[m][{\H[I]}]}\le C_{I-J,m}\norm A_{\MLO[m][{\H[J]}]}
\]
for all $A\in\MLOHS[m][{\H[I]}]$. In particular, the space $\MLOHS[m][{\H[I]}]$
is continuously embedded into $\MLO[m][{\H[J]}]$.
\end{lem}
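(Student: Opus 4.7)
The plan is to bound the Hilbert-Schmidt norm in $\MLOHS[m][{\H[I]}]$ termwise by the operator norm in $\MLO[m][{\H[J]}]$, using the fact that the basis functions $\tilde{\varsigma}_k$ carry only polynomial weight in $|k|$ between the different Sobolev scales, so the dual sum converges under the hypothesis $I > J + d/2$.

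More concretely, I would start from the expression \eqref{eq:hilbert_schmidt_norm} for $\norm{A}_{\MLOHS[m][{\H[I]}]}^2$, namely
\[
\norm{A}_{\MLOHS[m][{\H[I]}]}^2 = \sum_{k_1,\ldots,k_m\in\Z^d} \prod_{j=1}^{m}(1+|k_j|^2)^{-I}\,\bigl|A[\tilde{\varsigma}_{k_1},\ldots,\tilde{\varsigma}_{k_m}]\bigr|^2.
\]
The key elementary observation is that $\tilde{\varsigma}_k$ is a linear combination of at most two exponentials $\varsigma_{\pm k}$, so the Sobolev norm computation gives $\norm{\tilde{\varsigma}_k}_{H_J}^2 \le 2(1+|k|^2)^J$ for every $k\in\Z^d$ and every $J\in\R$. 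Applying the multilinear operator bound
\[
\bigl|A[\tilde{\varsigma}_{k_1},\ldots,\tilde{\varsigma}_{k_m}]\bigr| \le \norm{A}_{\MLO[m][{\H[J]}]}\prod_{j=1}^{m}\norm{\tilde{\varsigma}_{k_j}}_{H_J},
\]
substituting into the sum, and collecting the $(1+|k_j|^2)$-factors, we obtain
\[
\norm{A}_{\MLOHS[m][{\H[I]}]}^2 \le 2^{m}\norm{A}_{\MLO[m][{\H[J]}]}^2\left(\sum_{k\in\Z^d}(1+|k|^2)^{-(I-J)}\right)^{m}.
\]

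The only point requiring the hypothesis is the convergence of the last sum, which is a standard fact: $\sum_{k\in\Z^d}(1+|k|^2)^{-s}<\infty$ exactly when $2s>d$, i.e.\ when $I-J>d/2$. Setting $C_{I-J,m}:=2^{m/2}\bigl(\sum_{k}(1+|k|^2)^{-(I-J)}\bigr)^{m/2}$ yields the claimed inequality. There is no substantial obstacle here; the embedding assertion then follows by restricting an operator defined on $(\H[J])^m$ to $(\H[I])^m$ (since $\H[I]\hookrightarrow\H[J]$) and applying the bound just proved.
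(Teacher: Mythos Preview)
Your proof is correct and is essentially identical to the paper's argument: both start from the Hilbert--Schmidt expansion, bound each $|A[\tilde{\varsigma}_{k_1},\ldots,\tilde{\varsigma}_{k_m}]|$ by $\norm{A}_{\MLO[m][{\H[J]}]}\prod_j\norm{\tilde{\varsigma}_{k_j}}_{H_J}$, and reduce to the convergence of $\sum_k(1+|k|^2)^{-(I-J)}$ under $I-J>d/2$. Your reading of the embedding direction (restrict from $\H[J]$ to $\H[I]$) also matches how the paper uses the result, cf.\ \eqref{eq:hilbert_schmidt_embedding}.
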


\begin{proof}
The statement follows from the straightforward estimate 

\begin{align*}
\norm A_{\MLOHS[m][{\H[I]}]}^{2} & =\sum_{k_{1},\ldots,k_{m}\in\Z^{d}}\prod_{j=1}^{m}\left(1+|k_{j}|^{2}\right)^{-I}\left|A[\tilde{\varsigma}_{k_{1}},\ldots,\tilde{\varsigma}_{k_{m}}]\right|^{2}\\
 & \le\norm A_{\MLO[m][{\H[J]}]}\sum_{k_{1},\ldots,k_{m}\in\Z^{d}}\prod_{j=1}^{m}\left(1+|k_{j}|^{2}\right)^{-I}\norm{\tilde{\varsigma}_{j}}_{H_{J}}^{2}\\
 & \le\norm A_{\MLO[m][{\H[J]}]}\sum_{k_{1},\ldots,k_{m}\in\Z^{d}}\prod_{j=1}^{m}\left(1+|k_{j}|^{2}\right)^{-I+J}.
\end{align*}
This completes the proof of the lemma.
\end{proof}
We will further focus on the case $m=2$. Take $a\in\Cf^{J,J}(\T^{d},\T^{d})$
for some even $J\in\N_{0}$ and define the multilinear operator $K_{a}$
with kernel $a$ by
\begin{align*}
K_{a}(f,g):=\inn{f\otimes g,a} & =\sum_{k,l\in\Z^{d}}\inn{\varsigma_{(k,l)},a}\inn{f,\varsigma_{k}}\inn{g,\varsigma_{l}}\\
 & =\sum_{k,l\in\Z^{d}}\inn{a,\varsigma_{(-k,-l)}}\inn{f,\varsigma_{k}}\inn{g,\varsigma_{l}},\quad f,g\in\Cf(\Td),
\end{align*}
where $\varsigma_{(k,l)}(x,y)=\varsigma_{k}\otimes\varsigma_{l}(x,y)=\varsigma_{k}(x)\varsigma_{l}(y)$,
$x,y\in\Td$. Then the operator $K_{a}$ can be uniquely extended
to a multilinear operator on $\H[-J]$, denoted also by $K_{a}$.
Moreover, it is a Hilbert-Schmidt operator satisfying
\begin{equation}
\norm{K_{a}}_{\cL_{2}^{HS}(H_{-J})}\le C\normC[J,J][a].\label{eq:norm_of_kernel_multilinear_operator}
\end{equation}
Indeed, this directly follows from the following computation 
\begin{align*}
\norm{K_{a}}_{\cL_{2}^{HS}}^{2} & =\sum_{k,l\in\Z^{d}}(1+|k|^{2})^{J}(1+|l|^{2})^{J}\left|\inn{a,\varsigma_{(k,l)}}\right|^{2}\\
 & =\sum_{k,l\in\Z^{d}}\left|\binn{a,(1+\Delta)^{J/2}\otimes(1+\Delta)^{J/2}\varsigma_{k}\otimes\varsigma_{l}}\right|^{2}\\
 & =\sum_{k,l\in\Z^{d}}\left|\binn{(1+\Delta)^{J/2}\otimes(1+\Delta)^{J/2}a,\varsigma_{k}\otimes\varsigma_{l}}\right|^{2}\\
 & =\bnorm{(1+\Delta)^{J/2}\otimes(1+\Delta)^{J/2}a}^{2}\le C\norm a_{\Cf^{J,J}}^{2},
\end{align*}
where we have used the integration-by-parts passing from the second
to the third line. 

Since we usually work with the Fourier basis $\{\varsigma_{k},k\in\Z^{d}\}$
instead of $\{\tilde{\varsigma}_{k},k\in\Z^{d}\}$, we will extend
$A\in\cL_{m}(\H[J])$ linearly with respect to each component to the
set of complex valued square integrable function, following e.g. the
definition of the kernel operator $K_{a}$. In this case, a simple
computation shows that
\[
\pr_{n}^{\otimes m}A:=\sum_{k\in\left(\Znd\right)^{m}}A[\varsigma_{-(\times k)}]\varsigma_{k}=\sum_{k\in\left(\Znd\right)^{m}}A[\tilde{\varsigma}_{\times k}]\tilde{\varsigma}_{k}
\]
for all $n\in\N$, where $\varsigma_{\times k}=(\varsigma_{k_{j}})_{j\in[m]}$,
$\tilde{\varsigma}_{\times k}=\left(\tilde{\varsigma}_{k_{j}}\right)_{j\in[m]}$,
$\varsigma_{k}(x)=\prod_{j=1}^{m}\varsigma_{k_{j}}(x_{j})$ and $\tilde{\varsigma}_{k}(x)=\prod_{j=1}^{m}\tilde{\varsigma}_{k_{j}}(x_{j})$
for $k=(k_{j})_{j\in[m]}$ and $x=(x_{j})_{j\in[m]}$. Thus, for each
$f=(f_{j})_{j\in[m]}\in\left(\H[J]\right)^{m}$ we have
\begin{align}
\inn{f^{\otimes},\pr_{n}^{\otimes m}A} & =\sum_{k\in\left(\Znd\right)^{m}}A[\varsigma_{\times k}]\binn{f,\varsigma_{k}}=A[\pr_{n}f]\to A[f],\quad n\to\infty,\label{eq:expansion_of_multilinear_operator_in_fourier_series}
\end{align}
where $\pr_{n}f:=\left(\pr_{n}f_{j}\right)_{j\in[m]}$ and $f^{\otimes}(x)=\prod_{j=1}^{m}f_{j}(x_{j})$,
$x=(x_{j})_{j\in[m]}$, according to the continuity of $A$ and the
convergence $\pr_{n}f_{j}\to f_{j}$ in $\H[J]$. Thus, we can consider
$\pr_{n}^{\otimes m}$ as an analog of the operator $\pr_{n}$. 

The following statement is an analog of Lemma \ref{lem:basic_properties_of_pr}
(iv).
\begin{lem}
\label{lem:orthogonal_projection_for_multilinear_oper}For each $J,I\in\R$,
$J<I$, a multi-linear operator $A\in\MLOHS[2][{\H[J]}]$ and $n\ge1$
the kernel operator $K_{\pr_{n}^{\otimes2}A}$ belongs to $\MLOHS[2][{\H[J]}]$
and
\[
\bnorm{A-K_{\pr_{n}^{\otimes2}A}}_{\MLOHS[2][{\H[I]}]}^{2}\le\frac{1}{n^{I-J}}\bnorm{A-K_{\pr_{n}^{\otimes2}A}}_{\MLOHS[2][{\H[J]}]}^{2}.
\]
\end{lem}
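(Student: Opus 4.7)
The plan is to read off the matrix coefficients of $K_{\pr_{n}^{\otimes2}A}$ in the basis $\{\tilde{\varsigma}_{k},\ k\in\Z^{d}\}$ and then deduce the inequality by the same argument as for Lemma~\ref{lem:basic_properties_of_pr}(iv), with an index split on whether $(p,q)\in\Znd\times\Znd$ or not.

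First, I would unpack the definitions. Since $\pr_{n}^{\otimes2}A=\sum_{k,l\in\Znd}A[\tilde{\varsigma}_{k},\tilde{\varsigma}_{l}]\,\tilde{\varsigma}_{k}\otimes\tilde{\varsigma}_{l}$ and the family $\{\tilde{\varsigma}_{k}\}$ is orthonormal in $\L$, the kernel operator evaluated on basis pairs satisfies
\[
K_{\pr_{n}^{\otimes2}A}[\tilde{\varsigma}_{p},\tilde{\varsigma}_{q}]=\binn{\tilde{\varsigma}_{p}\otimes\tilde{\varsigma}_{q},\pr_{n}^{\otimes2}A}=\begin{cases}A[\tilde{\varsigma}_{p},\tilde{\varsigma}_{q}], & (p,q)\in\Znd\times\Znd,\\ 0, & \text{otherwise.}\end{cases}
\]
In particular $K_{\pr_{n}^{\otimes2}A}$ has only finitely many non-zero matrix entries in this basis, each bounded by the corresponding entry of $A$, hence $K_{\pr_{n}^{\otimes2}A}\in\MLOHS[2][{\H[J]}]$ with Hilbert--Schmidt norm dominated by $\norm{A}_{\MLOHS[2][{\H[J]}]}$, and consequently $A-K_{\pr_{n}^{\otimes2}A}\in\MLOHS[2][{\H[J]}]$ as well.

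Second, the formula above yields
\[
(A-K_{\pr_{n}^{\otimes2}A})[\tilde{\varsigma}_{p},\tilde{\varsigma}_{q}]=A[\tilde{\varsigma}_{p},\tilde{\varsigma}_{q}]\,\I_{\{(p,q)\notin\Znd\times\Znd\}}.
\]
For any $(p,q)\notin\Znd\times\Znd$, either $\|p\|_{\infty}>n$ or $\|q\|_{\infty}>n$, hence $\max(1+|p|^{2},\,1+|q|^{2})\ge n^{2}$. I would extract this factor in the Hilbert--Schmidt sum via the bound
\[
(1+|p|^{2})^{-I}(1+|q|^{2})^{-I}\le n^{-2(I-J)}(1+|p|^{2})^{-J}(1+|q|^{2})^{-J},
\]
valid for every such $(p,q)$. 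Summing over all $(p,q)\notin\Znd\times\Znd$ and using the formula (\ref{eq:hilbert_schmidt_norm}) for the Hilbert--Schmidt norm gives
\[
\bnorm{A-K_{\pr_{n}^{\otimes2}A}}_{\MLOHS[2][{\H[I]}]}^{2}\le n^{-2(I-J)}\bnorm{A-K_{\pr_{n}^{\otimes2}A}}_{\MLOHS[2][{\H[J]}]}^{2},
\]
which is stronger than (and therefore implies, since $n\ge1$ and $I>J$) the claimed inequality with exponent $I-J$.

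There is really no serious obstacle here; the only point that requires a moment's thought is the bookkeeping of the three subregions of $\Z^{d}\times\Z^{d}\setminus\Znd\times\Znd$, but since the estimate $n^{-2(I-J)}$ holds uniformly across all of them the case distinction is cosmetic. The whole argument parallels the proof of Lemma~\ref{lem:basic_properties_of_pr}(iv), and it is enough to notice that the matrix of $A-K_{\pr_{n}^{\otimes2}A}$ is supported precisely on the ``high frequency'' complement of $\Znd\times\Znd$.
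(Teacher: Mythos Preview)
Your proof is correct and follows essentially the same route as the paper: identify the matrix coefficients of $K_{\pr_{n}^{\otimes2}A}$ in the Fourier basis, observe that the difference $A-K_{\pr_{n}^{\otimes2}A}$ is supported on $(\Z^{d})^{2}\setminus(\Znd)^{2}$, and then extract a factor of $n^{-2(I-J)}$ from the weights, exactly as in Lemma~\ref{lem:basic_properties_of_pr}(iv). Your observation that the argument in fact gives the sharper exponent $2(I-J)$ matches what the paper's own computation yields as well.
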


\begin{proof}
The fact that $K_{\pr_{n}^{\otimes2}A}\in\MLOHS[2][{\H[J]}]$ follows
from the definitions of the kernel operator and (\ref{eq:norm_of_kernel_multilinear_operator}).
We next estimate
\begin{align*}
\bnorm{A-K_{\pr_{n}^{\otimes2}A}}_{\MLOHS[2][{\H[I]}]}^{2} & =\sum_{k,l\in\Z^{d}}(1+|k|^{2})^{-I}(1+|l|^{2})^{-I}\left|A[\varsigma_{k},\varsigma_{l}]-\inn{\varsigma_{(k,l)},\pr_{n}^{\otimes2}A}\right|^{2}\\
 & =\sum_{(k,l)\not\in(\Z_{n}^{d})^{2}}(1+|k|^{2})^{-I}(1+|l|^{2})^{-I}|A[\varsigma_{k},\varsigma_{l}]|^{2}\\
 & \le\frac{1}{n^{2(I-J)}}\sum_{(k,l)\not\in(\Z_{n}^{d})^{2}}(1+|k|^{2})^{-J}(1+|l|^{2})^{-J}|A[\varsigma_{k},\varsigma_{l}]|^{2}\\
 & =\frac{1}{n^{2(I-J)}}\bnorm{A-K_{\pr_{n}^{\otimes2}A}}_{\MLOHS[2][{\H[J]}]}^{2}.
\end{align*}
This completes the proof of the lemma.
\end{proof}
We next define a bounded linear operator $\Tr:\MLOHS[2][{\H[-J]}]\to\H[I]$
for some $J$ and $I$ such that $\Tr K_{a}=a(x,x)$ for a kernel
$a$. Note that the $\delta_{x}$-function belongs to $H_{-J}$ for
$J>\frac{d}{2}$ due to the inequality
\begin{align*}
\norm{\delta_{x}}_{H_{-J}}^{2} & =\sum_{k\in\Z^{d}}(1+|k|^{2})^{-J}\left|\inn{\delta_{x},\varsigma_{k}}\right|^{2}=\sum_{k\in\Z^{d}}(1+|k|^{2})^{-J}|\varsigma_{k}(x)|^{2}\\
 & =\sum_{k\in\Z^{d}}(1+|k|^{2})^{-J}<\infty.
\end{align*}
 
\begin{lem}
\label{lem:tr_operator}Let $J>\frac{d}{2}$ and $I<J-\frac{d}{2}$.
Then for each $A\in\MLOHS[2][{\H[-J]}]$ the function $\Tr A$, defined
by 
\begin{align*}
\Tr A(x) & :=A[\delta_{x},\delta_{x}]
\end{align*}
is continuous, belongs to $\H[I]$ and 
\begin{equation}
\Tr A(x)=\sum_{l\in\Z^{d}}\left(\sum_{k\in\Z^{d}}A[\varsigma_{k-l},\varsigma_{-k}]\right)\varsigma_{l}(x)\label{eq:expansion_of_Tr}
\end{equation}
for all $x\in\Td$. Moreover, $\Tr:\MLOHS[2][{\H[-J]}]\to\H[I]$ is
a bounded linear operator satisfying $\Tr K_{a}(x)=a(x,x)$, $x\in\Td$,
for each $a\in\Cf^{m,m}(\T^{d},\T^{d})$, where $m\ge J$ is an even
number.
\end{lem}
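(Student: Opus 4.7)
The plan is to expand $\delta_x$ in the Fourier basis $\{\varsigma_k\}_{k\in\Z^d}$ and analyze the resulting double series for $A[\delta_x,\delta_x]$. Since $\inn{\delta_x,\varsigma_k}=\varsigma_{-k}(x)=e^{-\i k\cdot x}$, the identity
\[
\norm{\delta_x}_{H_{-J}}^2=\sum_{k\in\Z^d}(1+|k|^2)^{-J}
\]
holds uniformly in $x$ for $J>\tfrac{d}{2}$, and a dominated-convergence argument on the same series with $|\varsigma_{-k}(x)-\varsigma_{-k}(y)|^{2}$ shows that $x\mapsto\delta_x$ is continuous into $H_{-J}$. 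Because $\MLOHS[2][{\H[-J]}]\subset\MLO[2][{\H[-J]}]$ by Lemma \ref{lem:norm_of_multilinear_operator}, bilinearity and continuity of $A$ immediately yield continuity of $\Tr A(x)=A[\delta_x,\delta_x]$ on $\Td$, without any appeal to Sobolev embedding.

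Next I truncate $\delta_x^{(N)}:=\sum_{|k|\le N}\varsigma_{-k}(x)\varsigma_k$ and note that $\norm{\delta_x-\delta_x^{(N)}}_{H_{-J}}^2=\sum_{|k|>N}(1+|k|^2)^{-J}$ vanishes uniformly in $x$. Hence $A[\delta_x^{(N)},\delta_x^{(N)}]\to\Tr A(x)$ uniformly on $\Td$, and in particular in $L_2(\Td)$. A direct computation of the $l$-th Fourier coefficient of the trigonometric polynomial $A[\delta_x^{(N)},\delta_x^{(N)}]$ gives
\[
\int_{\Td}A[\delta_x^{(N)},\delta_x^{(N)}]\,\varsigma_{-l}(x)\,dx=\sum_{|k|,|k'|\le N}A[\varsigma_k,\varsigma_{k'}]\int_{\Td}e^{-\i(k+k'+l)\cdot x}\,dx=\sum_{\substack{|k|\le N\\|k+l|\le N}}A[\varsigma_k,\varsigma_{-k-l}],
\]
and, after the substitution $m=k-l$ in the inner sum, the limit $N\to\infty$ produces the candidate coefficient $c_l:=\sum_k A[\varsigma_{k-l},\varsigma_{-k}]$ provided this series converges absolutely. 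The latter follows from Cauchy–Schwarz and the HS-norm, since
\[
\sum_k|A[\varsigma_{k-l},\varsigma_{-k}]|\le\norm{A}_{\cL_2^{HS}}\left(\sum_k(1+|k-l|^2)^{-J}(1+|k|^2)^{-J}\right)^{\!1/2},
\]
and the latter sum is finite by $J>\tfrac{d}{2}$. By the $L_2$ convergence, the Fourier coefficients of $\Tr A$ are precisely the $c_l$, which is the expansion (\ref{eq:expansion_of_Tr}).

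For the $H_I$-bound and the continuity of $\Tr:\MLOHS[2][{\H[-J]}]\to\H[I]$, I estimate $S_l:=\sum_k(1+|k-l|^2)^{-J}(1+|k|^2)^{-J}$ by a Peetre-type splitting into $|k|\le|l|/2$ and $|k|>|l|/2$: in each region one of the two factors is bounded by $C(1+|l|^2)^{-J}$, while the sum of the remaining factor converges thanks to $J>\tfrac{d}{2}$, giving $S_l\le C_J(1+|l|^2)^{-J}$. Setting $T_l:=\sum_k(1+|k-l|^2)^J(1+|k|^2)^J|A[\varsigma_{k-l},\varsigma_{-k}]|^2$, the substitution $m=k-l$ shows $\sum_l T_l=\norm{A}_{\MLOHS[2][{\H[-J]}]}^2$, and Cauchy–Schwarz gives $|c_l|^2\le S_l T_l$, hence
\[
\norm{\Tr A}_{H_I}^2=\sum_l(1+|l|^2)^I|c_l|^2\le C_J\sum_l(1+|l|^2)^{I-J}T_l\le C_{I,J}\norm{A}_{\MLOHS[2][{\H[-J]}]}^2
\]
for every $I<J-\tfrac{d}{2}$ (with slack). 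Finally, for $a\in\Cf^{m,m}$ with $m\ge J$ even, the identity $\Tr K_a(x)=a(x,x)$ follows from (\ref{eq:expansion_of_Tr}) by computing $c_l(K_a)=\sum_k K_a[\varsigma_{k-l},\varsigma_{-k}]=\sum_m\widehat a(m,l-m)$, which is precisely the $l$-th Fourier coefficient of the diagonal $x\mapsto a(x,x)$. The main technical care lies in the joint justification of the double-series rearrangement and in the uniform-in-$x$ $H_{-J}$-tail bound that allows the Fourier coefficients to be extracted through the $N\to\infty$ limit; once these are in place the HS-norm does the rest.
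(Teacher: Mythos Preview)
Your proof is correct and follows the same overall route as the paper: continuity of $x\mapsto\delta_x$ in $H_{-J}$, Fourier expansion of $\delta_x$, absolute convergence of the double series via Cauchy--Schwarz against the Hilbert--Schmidt norm, and rearrangement into (\ref{eq:expansion_of_Tr}); the kernel identity $\Tr K_a(x)=a(x,x)$ is then immediate. The one noteworthy difference is in the $H_I$ estimate: the paper bounds $|c_l|^2\le S_l\,\|A\|_{\cL_2^{HS}}^2$ and then shows $\sum_l(1+|l|^2)^I S_l<\infty$, which uses (and genuinely needs) $I<J-\tfrac d2$; you instead keep the $l$-dependent factor $T_l$ with $\sum_l T_l=\|A\|_{\cL_2^{HS}}^2$ and combine it with the Peetre bound $S_l\le C_J(1+|l|^2)^{-J}$, which yields the conclusion already for $I\le J$ and hence with room to spare under the stated hypothesis. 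One small quibble: the inclusion $\cL_2^{HS}(H_{-J})\subset\cL_2(H_{-J})$ you invoke is the elementary ``Hilbert--Schmidt implies bounded'' fact, not the content of Lemma~\ref{lem:norm_of_multilinear_operator}.
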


\begin{proof}
The continuity of $\Tr A$ as a map from $\Td$ to $\R$ follows from
the continuity of $A:\left(\H[-J]\right)^{2}\to\R$ and $\delta_{\cdot}:\T^{d}\to\H[-J]$.
By the definition of $\Tr$ and (\ref{eq:expansion_of_multilinear_operator_in_fourier_series}),
we have 
\begin{align*}
\Tr A(x) & =\lim_{n\to\infty}\inn{\delta_{x}\otimes\delta_{x},\pr_{n}^{\otimes2}A}=\sum_{k,l\in\Z^{d}}A[\varsigma_{k},\varsigma_{l}]\binn{\delta_{x},\varsigma_{k}}\inn{\delta_{x},\varsigma_{l}}\\
 & =\sum_{k,l\in\Z^{d}}A[\varsigma_{k},\varsigma_{l}]\varsigma_{-k}(x)\varsigma_{-l}(x).
\end{align*}
The series above converges absolutely because
\begin{align*}
\sum_{k,l\in\Z^{d}}\left|A(\varsigma_{k},\varsigma_{l})\varsigma_{-k}(x)\varsigma_{-l}(x)\right| & =\sum_{k,l\in\Z^{d}}\frac{1}{(1+|k|^{2})^{J/2}(1+|l|^{2})^{J/2}}\\
 & \qquad\qquad\qquad\cdot(1+|k|^{2})^{J/2}(1+|l|^{2})^{J/2}\left|A[\varsigma_{k},\varsigma_{l}]\right|\\
 & \le\left(\sum_{k,l\in\Z^{d}}\frac{1}{(1+|k|^{2})^{J}(1+|l|^{2})^{J}}\right)^{\frac{1}{2}}\\
 & \qquad\qquad\qquad\cdot\left(\sum_{k,l\in\Z^{d}}(1+|k|^{2})^{J}(1+|l|^{2})^{J}\left|A[\varsigma_{k},\varsigma_{l}]\right|^{2}\right)^{\frac{1}{2}}\\
 & \le C_{J}\norm A_{\cL_{2}^{HS}(H_{-J})},
\end{align*}
where we have used the Cauchy-Schwarz inequality in the second step.
Thus, we may interchange the summands in the series, to get the expression
(\ref{eq:expansion_of_Tr}). 

We next show that $\Tr A\in\H[I]$ for $I<J-\frac{d}{2}$. Similarly
to the estimate above, we conclude
\begin{align*}
\left|\sum_{k\in\Z^{d}}A[\varsigma_{k-l},\varsigma_{-k}]\right|^{2} & \le\norm A_{\cL_{2}^{HS}}^{2}\sum_{k\in\Z^{d}}\frac{1}{(1+|k-l|^{2})^{J}(1+|k|^{2})^{J}}
\end{align*}
since $J>\frac{d}{2}$. Thus, 
\begin{align*}
\norm{\Tr A}_{H_{I}}^{2} & =\sum_{l\in\Z^{d}}(1+|l|^{2})^{I}\left|\sum_{k\in\Z^{d}}A[\varsigma_{k-l},\varsigma_{-k}]\right|^{2}\\
 & \le\norm A_{\cL_{2}^{HS}}^{2}\sum_{l\in\Z^{d}}(1+|l|^{2})^{I}\sum_{k\in\Z^{d}}\frac{1}{(1+|k-l|^{2})^{J}(1+|k|^{2})^{J}}\\
 & \le\norm A_{\cL_{2}^{HS}}^{2}\sum_{k,l\in\Z^{d}}\frac{(1+|l+k|^{2})^{I}}{(1+|l|^{2})^{J}(1+|k|^{2})^{J}}<\infty
\end{align*}
due to $J-I>\frac{d}{2}$.

We note that $K_{a}\in\MLOHS[2][{\H[-J]}]$, by (\ref{eq:norm_of_kernel_multilinear_operator}),
and trivially $\Tr K_{a}(x)=K_{a}[\delta_{x},\delta_{x}]=a(x,x)$
for all $x\in\Td$. This completes the proof of the lemma.
\end{proof}

Define the mixed derivative of a multilinear operator from $\MLO[2][{\H[J]}]$
by 
\[
\partial_{j}^{\otimes2}A[f,g]=-A[\partial_{j}f,\partial_{j}g],\quad f,g\in\H[J+1].
\]
The following statement easily follows from the definition of $\partial_{j}$
on $\H[J]$.
\begin{lem}
\label{lem:norm_of_derivative_of_multilinear_operator} For each $J\in\R$
and $A\in\MLOHS[2][{\H[J]}]$ the multilinear operator $\partial_{j}^{\otimes2}A$
is well-defined and belongs to $\MLOHS[2][{\H[J+1]}]$. Moreover,
\[
\bnorm{\partial_{j}^{\otimes2}A}_{\MLOHS[2][{\H[J+1]}]}\le\norm A_{\MLOHS[2][{\H[J]}]}.
\]
\end{lem}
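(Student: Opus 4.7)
The proof is a direct computation in the Fourier basis. First I would verify that $\partial_j^{\otimes 2} A$ is a well-defined continuous symmetric bilinear form on $\H[J+1]$: since $\partial_j\colon \H[J+1]\to \H[J]$ is a bounded linear operator with operator norm $\le 1$ (which is visible from $\partial_j \varsigma_k = \i k_j\varsigma_k$ together with $|k_j|^2\le 1+|k|^2$), the map $(f,g)\mapsto A[\partial_j f,\partial_j g]$ is a composition of bounded multilinear maps. Symmetry is inherited from $A$.

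The main calculation is the Hilbert–Schmidt bound. Using the complex Fourier basis $\{\varsigma_k\}_{k\in\Z^d}$ and the identity $\partial_j\varsigma_k=\i k_j\varsigma_k$, I would compute
\[
\bigl(\partial_j^{\otimes 2}A\bigr)[\varsigma_k,\varsigma_l]
= \pm A[\partial_j\varsigma_k,\partial_j\varsigma_l]
= \mp k_j l_j\, A[\varsigma_k,\varsigma_l],
\]
so that
\[
\bigl|\bigl(\partial_j^{\otimes 2}A\bigr)[\varsigma_k,\varsigma_l]\bigr|^2
= k_j^2 l_j^2\,|A[\varsigma_k,\varsigma_l]|^2.
\]
Plugging this into the definition of the Hilbert–Schmidt norm on $\MLOHS[2][{\H[J+1]}]$ and applying the elementary inequalities $k_j^2\le 1+|k|^2$ and $l_j^2\le 1+|l|^2$, I obtain
\[
\bnorm{\partial_j^{\otimes 2}A}_{\MLOHS[2][{\H[J+1]}]}^{2}
=\sum_{k,l\in\Z^{d}}\frac{k_j^2 l_j^2\,|A[\varsigma_k,\varsigma_l]|^2}{(1+|k|^2)^{J+1}(1+|l|^2)^{J+1}}
\le \sum_{k,l\in\Z^{d}}\frac{|A[\varsigma_k,\varsigma_l]|^2}{(1+|k|^2)^{J}(1+|l|^2)^{J}}
=\norm{A}_{\MLOHS[2][{\H[J]}]}^{2}.
\]

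There is no real obstacle here: once one recognizes that the Hilbert–Schmidt norm (\ref{eq:hilbert_schmidt_norm}) is insensitive to whether one uses the complex exponentials $\varsigma_k$ or the real trigonometric basis $\tilde{\varsigma}_k$, the Fourier diagonalization of $\partial_j$ reduces the statement to a termwise inequality. The only minor subtlety to mention is justifying that the sums converge and that the finite partial sums of $A[\varsigma_k,\varsigma_l]\varsigma_k\otimes\varsigma_l$-type expansions commute with the action of $\partial_j^{\otimes 2}$ on the test pair $(\varsigma_k,\varsigma_l)$, which is immediate since for each fixed $k,l$ the evaluation is just a scalar multiple.
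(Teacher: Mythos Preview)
Your proof is correct and is exactly the computation the paper has in mind; the paper itself states only that the lemma ``easily follows from the definition of $\partial_j$ on $\H[J]$'' and gives no further details, so your Fourier-basis calculation is precisely the intended argument.
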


\begin{rem}
(i) The statement of Lemma \ref{lem:norm_of_derivative_of_multilinear_operator}
remains true, if we replace $\cL_{2}^{HS}$ by $\cL_{2}$ and $\norm{\cdot}_{\cL_{2}^{HS}}$
by $\norm{\cdot}_{\cL_{2}}$.

(ii) According integration-by-parts formula, we have the equality
$\partial_{j}^{\otimes2}K_{a}=K_{\partial_{j}^{\otimes2}a}$ for each
$j\in[d]$. 
\end{rem}

With some abuse of notation we will also set 
\[
\Tr f(x)=f(x,x),\quad x\in\Tnd,
\]
if $f\in(\Ln)^{2}$.
\begin{prop}
\label{prop:expansion_of_diffusion_term_for_ssep} Let $J-1-\frac{d}{2}>I\ge0$
and $j\in[d]$. Then for every $A\in\MLOHS[2][{\H[-J]}]$ and $n\in\N$
the estimate
\begin{equation}
\bnorm{\ex_{n}\left[\Tr\partial_{n,j}^{\otimes2}\pr_{n}^{\otimes2}A\right]}_{H_{I}}\le C_{J,I}\norm A_{\MLOHS[2][{\H[-J]}]}\label{eq:estimate_of_ex_for_diff_of_Tr}
\end{equation}
holds. Moreover, if $J>d+2$ then for each $A\in\MLOHS[2][{\H[-J]}]$
and $n\in\N$
\[
\max_{x\in\T_{n}^{d}}\left|\partial_{n,j}^{\otimes2}\pr_{n}^{\otimes2}A(x,x)-\pr_{n}\Tr\left(\partial_{j}^{\otimes2}A\right)(x)\right|\le\frac{C_{J}}{n}\norm A_{\MLOHS[2][{\H[-J]}]}.
\]
\end{prop}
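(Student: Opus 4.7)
The plan is to work out both sides explicitly in the Fourier basis and compare coefficients. Using $\partial_{n,j}\varsigma_k=\mu_{k,j}^n\varsigma_k$ from Lemma~\ref{lem:estimate_of_discrete_eigenvalues} together with the formula for $\pr_n^{\otimes 2}A$, one has
\[
\partial_{n,j}^{\otimes 2}\pr_n^{\otimes 2}A(x,x)=\sum_{k_1,k_2\in\Znd}A[\varsigma_{-k_1},\varsigma_{-k_2}]\,\mu_{k_1,j}^n\mu_{k_2,j}^n\,\varsigma_{k_1+k_2}(x),
\]
and on $\Tnd$ the exponential $\varsigma_{k_1+k_2}$ coincides with $\varsigma_{[k_1+k_2]_n}$, where $[\cdot]_n$ denotes the unique representative in $\Znd$ modulo $2n+1$ in each coordinate. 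Consequently
\[
\ex_n\!\left[\Tr\partial_{n,j}^{\otimes 2}\pr_n^{\otimes 2}A\right](x)=\sum_{l\in\Znd}c_l^n\varsigma_l(x),\qquad c_l^n=\sum_{\substack{k_1,k_2\in\Znd\\ [k_1+k_2]_n=l}}A[\varsigma_{-k_1},\varsigma_{-k_2}]\mu_{k_1,j}^n\mu_{k_2,j}^n.
\]

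For the first estimate the crucial structural fact is the aliasing bound $(1+|l|^2)\le C(1+|k_1|^2)(1+|k_2|^2)$ whenever $[k_1+k_2]_n=l$: either $l=k_1+k_2$ outright, or else some coordinate of $k_1+k_2$ exceeds $n$, which forces $|k_1|+|k_2|\ge n+1\ge c|l|$. Combined with $|\mu_{k,j}^n|\le (1+|k|^2)^{1/2}$ and Cauchy--Schwarz on the inner sum, one obtains $(1+|l|^2)^I|c_l^n|^2\le C_I\,T_l^n\cdot S_l^n$, where $T_l^n$ is the natural $\cL_2^{HS}(H_{-J})$-weighted sum and $S_l^n=\sum_{k_1}[(1+|k_1|^2)(1+|[l-k_1]_n|^2)]^{-(J-I-1)}$. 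A further Cauchy--Schwarz in $S_l^n$ together with the change of variables $k_2=[l-k_1]_n$ (which is a bijection on $\Znd$) bounds $S_l^n$ uniformly in $l$ under the hypothesis $J-1-\tfrac{d}{2}>I$. Summing in $l$ and interchanging the order of summation, using that $(k_1,l)\mapsto(k_1,[l-k_1]_n)$ is a bijection on $(\Znd)^2$, recovers $\|A\|_{\cL_2^{HS}(H_{-J})}^2$ and yields the claim.

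For the second estimate I would use the Taylor expansion $\mu_{k,j}^n=ik_j+O(k_j^2/n)$ coming from $|e^{ix}-1-ix|\le x^2/2$, and express $\pr_n\Tr(\partial_j^{\otimes 2}A)$ via the Fourier formula of Lemma~\ref{lem:tr_operator}. The difference of Fourier coefficients at each $l\in\Znd$ splits naturally into three parts: a diagonal part over $(k_1,k_2)\in(\Znd)^2$ with $k_1+k_2\in\Znd$ and symbol discrepancy $\mu_{k_1,j}^n\mu_{k_2,j}^n-(ik_{1,j})(ik_{2,j})$; an aliasing part over $(\Znd)^2$ with $k_1+k_2\notin\Znd$; and a truncation part from $(k_1,k_2)\notin(\Znd)^2$ that contributes only to $\pr_n\Tr(\partial_j^{\otimes 2}A)$. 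Bounding $\max_{x\in\Tnd}|\cdot|$ by $\sum_l|c_l^{\mathrm{diff}}|$ via $|\varsigma_l(x)|=1$, the diagonal part gains a factor $C/n$ from the Taylor estimate, namely $|\mu_{k_1,j}^n\mu_{k_2,j}^n+k_{1,j}k_{2,j}|\le \tfrac{C}{n}\big[(1+|k_1|^2)(1+|k_2|^2)^{1/2}+(1+|k_1|^2)^{1/2}(1+|k_2|^2)\big]$, and Cauchy--Schwarz against $\|A\|_{\cL_2^{HS}(H_{-J})}$ gives $C_J/n$ under $J>2+\tfrac{d}{2}$. For the aliasing and truncation parts, at least one of $|k_1|,|k_2|$ is of order $n$, and the tail estimate $\sum_{|k|\ge n}(1+|k|^2)^{-(J-1)}\le C_J\,n^{d-2(J-1)}$ produces $C_J/n^{\,J-1-d/2}$, which is $\le C_J/n$ under $J>d+2$.

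The main obstacle will be the careful treatment of aliasing. Because $\partial_{n,j}^{\otimes 2}\pr_n^{\otimes 2}A$ is intrinsically only defined on the discrete torus, off-diagonal Fourier modes $\varsigma_{k_1+k_2}$ fold back into $\Znd$, and there is no termwise comparison with the continuous object $\pr_n\Tr(\partial_j^{\otimes 2}A)$. The case splits above isolate aliasing effects into the high-frequency regime $|k_1|+|k_2|\gtrsim n$, where the regularity surplus encoded in $\|A\|_{\cL_2^{HS}(H_{-J})}$ is enough to absorb the error. The differing sign conventions for $\partial_j^{\otimes 2}$ on multilinear operators versus $\partial_{n,j}^{\otimes 2}$ on functions are harmless: they reduce to the near-identity $\mu_{k,j}^n\to ik_j$ at the discrete level and do not affect the rates.
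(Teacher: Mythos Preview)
Your approach to the first estimate is essentially the paper's: both compute the discrete Fourier coefficients of $\Tr\partial_{n,j}^{\otimes2}\pr_n^{\otimes2}A$, use Cauchy--Schwarz, and exploit the aliasing inequality $|l|\le|k_1+k_2|$ (equivalently your $(1+|l|^2)\le C(1+|k_1|^2)(1+|k_2|^2)$) when $[k_1+k_2]_n=l$. The paper pulls $\|A\|_{\cL_2^{HS}}^2$ out in one stroke and then sums over $k$; you keep the $l$-localized piece $T_l^n$ and bound $S_l^n$ uniformly before summing. The bookkeeping differs, the idea does not.

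For the second estimate your route genuinely diverges from the paper's. The paper splits via a triangle inequality into
\[
\bigl|\partial_{n,j}^{\otimes2}\pr_n^{\otimes2}A(x,x)-\partial_j^{\otimes2}\pr_n^{\otimes2}A(x,x)\bigr|
+\bigl\|\Tr K_{\partial_j^{\otimes2}\pr_n^{\otimes2}A}-\pr_n\Tr(\partial_j^{\otimes2}A)\bigr\|_\Cf,
\]
bounds the first term by a direct Taylor argument on $\mu_{k,j}^n$, and treats the second term via the abstract mapping property of $\Tr:\cL_2^{HS}(H_{-J+2})\to H_{\tilde I}$ (Lemma~\ref{lem:tr_operator}) together with a projection estimate and Sobolev embedding $H_{\tilde I}\hookrightarrow\Cf$; the requirement $\tilde I\in(d/2,\,J-2-d/2)$ is exactly where the hypothesis $J>d+2$ enters. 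Your argument is more hands-on: you compare Fourier coefficients of the two objects directly, bound $\max_x$ by the $\ell^1$-sum of differences, and split into a diagonal symbol-error piece, an aliasing piece, and a truncation piece. This avoids invoking the $\Tr$ machinery and, as your own estimates show, the aliasing and truncation pieces give $C_J n^{-(J-1-d/2)}$, so the argument in fact closes already under $J>\tfrac{d}{2}+2$; the stronger hypothesis $J>d+2$ is an artifact of the paper's route through the $\Tr$ operator. Your approach is thus slightly more elementary and sharper in its regularity requirement, while the paper's is more structural and reuses the $\Tr$-calculus developed elsewhere.
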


\begin{proof}
We first prove the estimate (\ref{eq:estimate_of_ex_for_diff_of_Tr}).
Setting 
\[
R_{n}(x):=\Tr\partial_{n,j}^{\otimes2}\pr_{n}^{\otimes2}A(x)=\partial_{n,j}^{\otimes2}\pr_{n}^{\otimes2}A(x,x),\quad x\in\Tnd,
\]
 and using (\ref{eq:connection_between_ex_and_varsigma}), we get
\begin{align*}
\norm{\ex_{n}R_{n}}_{H_{I}}^{2} & =\sum_{k\in\Z^{d}}\left(1+|k|^{2}\right)^{I}\left|\inn{\ex_{n}R_{n},\varsigma_{k}}\right|^{2}=\sum_{k\in\Znd}\left(1+|k|^{2}\right)^{I}\left|\inn{R_{n},\varsigma_{k}}_{n}\right|^{2}\\
 & =\sum_{k\in\Znd}\left(1+|k|^{2}\right)^{I}\left|\binn{\Tr\partial_{n,j}^{\otimes2}\pr_{n}^{\otimes2}A,\varsigma_{k}}_{n}\right|^{2}.
\end{align*}
By (\ref{eq:eigenvalues_for_discrete_operators}), we can write for
$x\in\Tnd$
\begin{align*}
\Tr\partial_{n,j}^{\otimes2}\pr_{n}^{\otimes2}A(x) & =\sum_{l,\tilde{l}\in\Znd}A[\varsigma_{-l},\varsigma_{-\tilde{l}}]\left(\partial_{n,j}\varsigma_{l}(x)\partial_{n,j}\varsigma_{\tilde{l}}(x)\right)\\
 & =\sum_{l,\tilde{l}\in\Znd}\mu_{l,j}\mu_{\tilde{l},j}A[\varsigma_{-l},\varsigma_{-\tilde{l}}]\varsigma_{l+\tilde{l}}(x)
\end{align*}
and thus, using the periodicity of $\varsigma_{l+\tilde{l}}$ on $\Tnd$,
we estimate
\begin{align}
 & \left|\binn{\Tr\partial_{n,j}^{\otimes2}\pr_{n}^{\otimes2}A,\varsigma_{k}}_{n}\right|^{2}=\left|\sum_{l,\tilde{l}\in\Znd}\mu_{l,j}\mu_{\tilde{l},j}A[\varsigma_{l},\varsigma_{\tilde{l}}]\inn{\varsigma_{l+\tilde{l}},\varsigma_{k}}_{n}\right|^{2}\nonumber \\
 & \qquad\le\left|\sum_{l,\tilde{l}\in\Znd}\mu_{l,j}\mu_{\tilde{l},j}A[\varsigma_{l},\varsigma_{\tilde{l}}]\I_{\{l+\tilde{l}=k\mod (2n+1)\}}\right|^{2}\nonumber \\
 & \qquad\le\norm A_{\MLOHS[2][{\H[-J]}]}^{2}\sum_{l,\tilde{l}\in\Znd}\frac{|l|^{2}|\tilde{l}|^{2}}{(1+|l|^{2})^{J}(1+|\tilde{l}|^{2})^{J}}\I_{\{l+\tilde{l}=k\mod (2n+1)\}}\label{eq:estimate_of_discrete_tr_of_pr}\\
 & \qquad\le\norm A_{\MLOHS[2][{\H[-J]}]}^{2}\sum_{l,\tilde{l}\in\Znd}\frac{1}{(1+|l|^{2})^{J-1}(1+|\tilde{l}|^{2})^{J-1}}\I_{\{l+\tilde{l}=k\mod (2n+1)\}}.\nonumber 
\end{align}
Combining the estimates together and using the fact that $|k|\I_{\{l+\tilde{l}=k\mod (2n+1)\}}\le|l+\tilde{l}|\I_{\{l+\tilde{l}=k\mod (2n+1)\}}$,
we obtain
\begin{align*}
\norm{\ex_{n}R_{n}}_{H_{I}}^{2} & \le\norm A_{\MLOHS[2][{\H[-J]}]}^{2}\sum_{k\in\Znd}\left(1+|k|^{2}\right)^{I}\\
 & \qquad\qquad\qquad\cdot\sum_{l,\tilde{l}\in\Znd}\frac{1}{(1+|l|^{2})^{J-1}(1+|\tilde{l}|^{2})^{J-1}}\I_{\{l+\tilde{l}=k\mod (2n+1)\}}\\
 & \le\norm A_{\MLOHS[2][{\H[-J]}]}^{2}\sum_{k\in\Znd}\sum_{l,\tilde{l}\in\Znd}\frac{\left(1+|l+\tilde{l}|^{2}\right)^{I}}{(1+|l|^{2})^{J-1}(1+|\tilde{l}|^{2})^{J-1}}\I_{\{l+\tilde{l}=k\mod (2n+1)\}}\\
 & =3^{d}\norm A_{\MLOHS[2][{\H[-J]}]}^{2}\sum_{l,\tilde{l}\in\Znd}\frac{\left(1+|l+\tilde{l}|^{2}\right)^{I}}{(1+|l|^{2})^{J-1}(1+|\tilde{l}|^{2})^{J-1}}<C_{J,I}\norm A_{\MLOHS[2][{\H[-J]}]}^{2},
\end{align*}
since $J-1-I>\frac{d}{2}$. 

To get the second part of the statement, we first use (\ref{eq:eigenvalues_for_discrete_operators})
and the Cauchy-Schwarz inequality to estimate for each $x,y\in\T_{n}^{d}$
\begin{align*}
 & \left|\partial_{n,j}^{\otimes2}\pr_{n}^{\otimes2}A(x,y)-\partial_{j}^{\otimes2}\pr_{n}^{\otimes2}A(x,y)\right|^{2}\\
 & \qquad=\left|\sum_{k,l\in\Znd}A[\varsigma_{-k},\varsigma_{-l}]\partial_{n,j}\varsigma_{k}(x)\partial_{n,j}\varsigma_{l}(y)-\sum_{k,l\in\Znd}A[\varsigma_{-k},\varsigma_{-l}]\partial_{j}\varsigma_{k}(x)\partial_{j}\varsigma_{l}(y)\right|^{2}\\
 & \qquad\le\left[\sum_{k,l\in\Z_{n}^{d}}|A[\varsigma_{-k},\varsigma_{-l}]|\left|\mu_{k,j}^{n}\mu_{l,j}^{n}+k_{j}l_{j}\right|\right]^{2}\\
 & \qquad\le\sum_{k,l\in\Znd}|A[\varsigma_{k},\varsigma_{l}]|^{2}(1+|k|^{2})^{J}(1+|l|^{2})^{J}\\
 & \qquad\qquad\qquad\cdot\sum_{k,l\in\Znd}(1+|k|^{2})^{-J}(1+|l|^{2})^{-J}\left|\mu_{k,j}^{n}\mu_{l,j}^{n}+k_{j}l_{j}\right|^{2}\\
 & \qquad\le2\norm A_{\MLOHS[2][{\H[-J]}]}^{2}\sum_{k,l\in\Znd}(1+|k|^{2})^{-J}(1+|l|^{2})^{-J}\\
 & \qquad\qquad\qquad\cdot\left[|\mu_{l,j}^{n}|^{2}\left|\mu_{k,j}^{n}-\i k_{j}\right|^{2}+|k_{j}|^{2}|\mu_{l,j}^{n}-\i l_{j}|^{2}\right].
\end{align*}
By Lemma \ref{lem:estimate_of_discrete_eigenvalues} and Taylor's
expansion 
\[
\mu_{k,j}^{n}=\i k_{j}-\frac{k_{j}^{2}}{n}\theta_{n,j}
\]
for some $\theta_{j,n}\in\C$ such that $|\theta_{j,n}|\le1$, we
can continue the estimate as follow
\begin{align*}
 & \frac{2}{n^{2}}\norm A_{\MLOHS[2][{\H[-J]}]}^{2}\sum_{k,l\in\Znd}(1+|k|^{2})^{-J}(1+|l|^{2})^{-J}\left[l_{j}^{2}k_{j}^{4}+k_{j}^{2}l_{j}^{4}\right]\\
 & \qquad\le\frac{4}{n^{2}}\norm A_{\MLOHS[2][{\H[-J]}]}^{2}\sum_{k,l\in\Znd}(1+|k|^{2})^{-J+2}(1+|l|^{2})^{-J+2}\\
 & \qquad\le\frac{C_{J}}{n^{2}}\norm A_{\MLOHS[2][{\H[-J]}]}^{2}
\end{align*}
since $J>\frac{d}{2}+2.$ Thus,
\begin{equation}
\max_{x\in\Tnd}\left|\partial_{n,j}^{\otimes2}\pr_{n}^{\otimes2}A(x,x)-\partial_{j}^{\otimes2}\pr_{n}^{\otimes2}A(x,x)^{2}\right|\le\frac{C_{J}}{n}\norm A_{\MLOHS[2][{\H[-J]}]}^{2}.\label{eq:first_estimate_for_Tr}
\end{equation}

We next compute the norm
\begin{align}
 & \bnorm{K_{\partial_{j}^{\otimes2}\pr_{n}^{\otimes2}A}-\partial_{j}^{\otimes2}A}_{\MLOHS[2][{\H[-J+2]}]}^{2}\nonumber \\
 & \qquad=\sum_{k,l\in\Z^{d}}(1+|k|^{2})^{J-2}(1+|l|^{2})^{J-2}\left|K_{\partial_{j}^{\otimes2}\pr_{n}^{\otimes2}A}[\varsigma_{k},\varsigma_{l}]-\partial_{j}^{\otimes2}A[\varsigma_{k},\varsigma_{l}]\right|^{2}\label{eq:estimate_of_difference_K_and_A}\\
 & \qquad=\sum_{k,l\in\Z^{d}}(1+|k|^{2})^{J-2}(1+|l|^{2})^{J-2}\left|\binn{\varsigma_{k}\otimes\varsigma_{l},\partial_{j}^{\otimes2}\pr_{n}^{\otimes2}A}-A[\partial_{j}\varsigma_{k},\partial_{j}\varsigma_{l}]\right|^{2}.\nonumber 
\end{align}
Using the definition of $\pr_{n}^{\otimes2}$ and (\ref{eq:eigenvalues_for_discrete_operators}),
we continue the equality as follows
\begin{align*}
 & \sum_{k,l\in\Z^{d}}(1+|k|^{2})^{J-2}(1+|l|^{2})^{J-2}\left|\binn{\partial_{j}\varsigma_{k}\otimes\partial_{j}\varsigma_{l},\pr_{n}^{\otimes2}A}-A[\partial_{j}\varsigma_{k},\partial_{j}\varsigma_{l}]\right|^{2}\\
 & \qquad=\sum_{k,l\not\in\Znd}(1+|k|^{2})^{J-2}(1+|l|^{2})^{J-2}k_{j}^{2}l_{j}^{2}\left|A[\varsigma_{k},\varsigma_{l}]\right|^{2}\\
 & \qquad\le\sum_{k,l\not\in\Znd}(1+|k|^{2})^{J-1}(1+|l|^{2})^{J-1}\left|A[\varsigma_{k},\varsigma_{l}]\right|^{2}\le\frac{1}{n^{2}}\norm A_{\MLOHS[2][{\H[-J]}]}^{2}.
\end{align*}

Now, taking $\tilde{I}\in\left(\frac{d}{2},J-2-d/2\right)$ and applying
Lemmas \ref{lem:basic_properties_of_pr}, \ref{lem:tr_operator},
\ref{lem:norm_of_derivative_of_multilinear_operator} and (\ref{eq:estimate_of_difference_K_and_A}),
we obtain 
\begin{align}
 & \bnorm{\Tr K_{\partial_{j}^{\otimes2}\pr_{n}^{\otimes2}A}-\pr_{n}\Tr\partial_{j}^{\otimes2}A}_{\Cf}\le\bnorm{\Tr K_{\partial_{j}^{\otimes2}\pr_{n}^{\otimes2}A}-\Tr\partial_{j}^{\otimes2}A}_{\Cf}\nonumber \\
 & \qquad+\bnorm{\Tr\partial_{j}^{\otimes2}A-\pr_{n}\Tr\partial_{j}^{\otimes2}A}_{\Cf}\nonumber \\
 & \qquad\le C_{J}\bnorm{\Tr K_{\partial_{j}^{\otimes2}\pr_{n}^{\otimes2}A}-\Tr\partial_{j}^{\otimes2}A}_{H_{\tilde{I}}}+\bnorm{\Tr\partial_{j}^{\otimes2}A-\pr_{n}\Tr\partial_{j}^{\otimes2}A}_{\H[\tilde{I}]}\nonumber \\
 & \qquad\le C_{J,\tilde{I}}\bnorm{K_{\partial_{j}^{\otimes2}\pr_{n}^{\otimes2}A}-\partial_{j}^{\otimes2}A}_{\MLOHS[2][{\H[-J+2]}]}+\frac{C_{\tilde{I}}}{n}\bnorm{\Tr\partial_{j}^{\otimes2}A}_{\H[\tilde{I}+1]}\label{eq:second_estimate_for_Tr}\\
 & \qquad\le\frac{C_{J,\tilde{I}}}{n}\norm A_{\MLOHS[2][{\H[-J]}]}+\frac{C_{\tilde{I}}}{n}\norm{\partial_{j}^{\otimes2}A}_{\MLOHS[2][{\H[-J+1]}]}\nonumber \\
 & \qquad\le\frac{C_{J,\tilde{I}}}{n}\norm A_{\MLOHS[2][{\H[-J]}]}.\nonumber 
\end{align}
Thus, 
\begin{align*}
\max_{x\in\T_{n}^{d}}\Big|\partial_{n,j}^{\otimes2}\pr_{n}^{\otimes2}A(x,x) & -\pr_{n}\Tr\partial_{j}^{\otimes2}A(x)\Big|\le\max_{x\in\T_{n}^{d}}\left|\partial_{n,j}^{\otimes2}\pr_{n}^{\otimes2}A(x,x)-\partial_{j}^{\otimes2}\pr_{n}^{\otimes2}A(x,x)\right|\\
 & +\norm{\Tr K_{\partial_{j}^{\otimes2}\pr_{n}^{\otimes2}A}-\pr_{n}\Tr\partial_{j}^{\otimes2}A}_{\Cf}\le\frac{C}{n}\norm A_{\MLOHS[2][{\H[-J]}]},
\end{align*}
due to the triangle inequality the estimates (\ref{eq:first_estimate_for_Tr}),
(\ref{eq:second_estimate_for_Tr}) and the fact that $\Tr K_{\partial_{j}^{\otimes2}\pr_{n}^{\otimes2}A}=\partial_{j}^{\otimes2}\pr_{n}^{\otimes2}A(x,x)$,
$x\in\T^{d}$ (see Lemma \ref{lem:tr_operator}). This completes the
proof of the proposition.
\end{proof}
For $A\in\MLOHS[2][{\H[-J]}]$ and $B\in\MLOHS[2][{\H[J]}]$ we define
\[
A:B:=\sum_{k,l\in\Z^{d}}A[\tilde{\varsigma}_{k},\tilde{\varsigma}_{l}]B[\tilde{\varsigma}_{k},\tilde{\varsigma}_{l}],
\]
The series above absolutely converges and
\begin{align}
|A:B|^{2} & \le\left(\sum_{k,l\in\Z^{d}}\left|A[\tilde{\varsigma}_{k},\tilde{\varsigma}_{l}]B[\tilde{\varsigma}_{k},\tilde{\varsigma}_{l}]\right|\right)^{2}\nonumber \\
 & =\sum_{k,l\in\Z^{d}}(1+|k|^{2})^{-J}(1+|l|^{2})^{-J}\left|A[\tilde{\varsigma}_{k},\tilde{\varsigma}_{l}]\right|^{2}\label{eq:norm_of_:}\\
 & \qquad\qquad\qquad\cdot\sum_{k,l\in\Z^{d}}(1+|k|^{2})^{J}(1+|l|^{2})^{J}\left|B[\tilde{\varsigma}_{k},\tilde{\varsigma}_{l}]\right|^{2}\nonumber \\
 & =\norm A_{\MLOHS[2][{\H[-J]}]}^{2}\norm B_{\MLOHS[2][{\H[J]}]}^{2},\nonumber 
\end{align}
where we Lemma (\ref{eq:SPDE_for_OU_process-1}) and Hölder's inequality. 

\subsection{Differentiable functions on $\protect\H[J]$\protect\label{subsec:Differentiable-functions-on-H}}

In this section, we will investigate some differential properties
of functions defined on Sobolev spaces.

Recall that for $J<I$ we have $\H[I]\subset\H[J].$ Abusing notation,
the restrictions of a function $F:\H[J]^{k}\to\R$ to $\H[I]^{k}$
will be denoted also by $F$ for $k\in\N$. Let $(E,\norm{\cdot}_{E})$
denote a Banach space. The following statement directly follows from
the inequality $\norm{\cdot}_{\H[J]}\le\norm{\cdot}_{\H[I]}$. 
\begin{lem}
\label{lem:restriction_of_multilinear_operator}For each $J,I\in\R$,
$J<I,$ and $k\in\N$ the space $\MLO[k][{\H[J];E}]$ is continuously
embedded into $\MLO[k][{\H[I];E}]$ and 
\[
\norm A_{\cL_{k}(H_{I};E)}\le\norm A_{\cL_{k}(H_{J};E)}
\]
for all $A\in\MLO[k][{\H[J];E}]$.
\end{lem}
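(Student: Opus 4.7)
The plan is to unpack the definitions and use the elementary embedding $\H[I]\hookrightarrow\H[J]$ for $J<I$. The key observation, which is already mentioned just before the statement, is that for any $f\in\H[I]$ one has $\|f\|_{H_J}\le\|f\|_{H_I}$; this follows immediately from the Fourier definition of the Sobolev norms, since $(1+|k|^2)^{J}\le(1+|k|^2)^{I}$ for every $k\in\Z^d$.

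Given this, the argument is a one-line sup estimate. Fix $A\in\MLO[k][{\H[J];E}]$. Since $\H[I]^k\subset\H[J]^k$, the restriction of $A$ to $\H[I]^k$ is a well-defined $k$-linear map into $E$, and $A[f_1,\dots,f_k]$ makes sense for $f_j\in\H[I]$. For any $f_1,\dots,f_k\in\H[I]$ with $\|f_j\|_{H_I}\le 1$, multilinearity and the definition of $\|A\|_{\cL_k(H_J;E)}$ yield
\[
\|A[f_1,\dots,f_k]\|_E\le\|A\|_{\cL_k(H_J;E)}\prod_{j=1}^k\|f_j\|_{H_J}\le\|A\|_{\cL_k(H_J;E)}\prod_{j=1}^k\|f_j\|_{H_I}\le\|A\|_{\cL_k(H_J;E)},
\]
and taking the supremum over such $f_1,\dots,f_k$ gives the claimed bound $\|A\|_{\cL_k(H_I;E)}\le\|A\|_{\cL_k(H_J;E)}$. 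Continuity of the embedding $\MLO[k][{\H[J];E}]\hookrightarrow\MLO[k][{\H[I];E}]$ is then immediate from the linearity of restriction together with this norm inequality.

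There is essentially no obstacle here: the entire content is the monotonicity $\|\cdot\|_{H_J}\le\|\cdot\|_{H_I}$ for $J<I$ combined with the definition of the operator norm via a supremum over unit balls. No density or extension argument is needed, because we are restricting (not extending) the multilinear map; in particular we do not have to worry about whether $A$ extends continuously from $\H[I]^k$ back to $\H[J]^k$.
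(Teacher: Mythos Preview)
Your proof is correct and takes exactly the same approach as the paper, which simply notes that the statement ``directly follows from the inequality $\norm{\cdot}_{H_J}\le\norm{\cdot}_{H_I}$.'' Your version just spells out this one-line observation in slightly more detail.
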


Recall that a function $F:H_{J}\to E$ is differentiable\footnote{see \citep[Definition on p. 25]{Cartan:1971}}
at $f\in H_{J}$ if it is continuous at $f$ and there exists a bounded
linear map $DF(f)$ from $\H[J]$ to $E$ such that 
\[
F(f+h)=F(f)+DF(f)[h]+o(\norm h_{\H[J]})
\]
as $h\to0$. Following \citep[Section 5]{Cartan:1971}, we defined
the $k$-th derivative $D^{k}F(f)$ of a map $F:\H[J]\to\R$ at $f\in\H[J]$
as an element of $\MLO[k][{\H[J]}]$ that is identified with the derivative
of $D^{k-1}F:\H[J]\to\MLO[k-1][{\H[J]}]$ at $f$. Considering a differentiable
function defined on a Sobolev space $\H[J]$, we often consider its
restriction to a smaller Sobolev space $\H[I]$ with $J<I$. The following
statement guarantees the preservation of the differentiability. To
point out that $D^{k}F$ is the derivative of $F$ with respect to
the topology of the space $\H[J]$ in the next statements, we will
write $D_{J}^{k}F$ instead.
\begin{lem}
\label{lem:derivative_in_different_sobolev_spaces}Let $J,I\in\R$,
$J<I$, and $F:\H[J]\to E$ be a differentiable function at $f\in\H[I]$
in the space $\H[J]$. Then $F$ is differentiable at $f$ in the
topology of the space $\H[I]$, $D_{I}F(f)$ coincide with the restriction
of $D_{J}F(f)$ to $\H[I]$ and
\[
\norm{D_{I}F(f)}_{\MLO[1][{\H[I];E}]}\le\norm{D_{J}F(f)}_{\MLO[1][{\H[J];E}]}.
\]
\end{lem}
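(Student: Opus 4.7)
The plan is to reduce the statement to the trivial fact that the embedding $\H[I]\hookrightarrow\H[J]$ has operator norm at most one, together with the uniqueness of the Frechet derivative. The whole argument is essentially a ``change of ambient topology'' for the local linear approximation.

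First I would unpack the assumption. By hypothesis, there exists a bounded linear map $D_{J}F(f)\in\MLO[1][{\H[J];E}]$ such that for $h\in\H[J]$,
\[
F(f+h)=F(f)+D_{J}F(f)[h]+r(h),\qquad \norm{r(h)}_{E}=o(\norm{h}_{\H[J]})\ \text{as}\ h\to 0\ \text{in}\ \H[J].
\]
Since $\H[I]\subset\H[J]$ and $\norm{h}_{\H[J]}\le\norm{h}_{\H[I]}$ for $h\in\H[I]$, any $h\in\H[I]$ with $\norm{h}_{\H[I]}\to 0$ also satisfies $\norm{h}_{\H[J]}\to 0$, and so $F(f+h)$ is defined for $h$ small in $\H[I]$ and the same expansion holds with the error satisfying $\norm{r(h)}_{E}=o(\norm{h}_{\H[J]})=o(\norm{h}_{\H[I]})$ as $h\to 0$ in $\H[I]$.

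Next I would observe that the restriction $T:=D_{J}F(f)\!\upharpoonright_{\H[I]}$ is a bounded linear map from $\H[I]$ to $E$: linearity is inherited, and by the embedding inequality $\norm{T}_{\MLO[1][{\H[I];E}]}=\sup_{\norm{h}_{\H[I]}\le 1}\norm{D_{J}F(f)[h]}_{E}\le\sup_{\norm{h}_{\H[J]}\le 1}\norm{D_{J}F(f)[h]}_{E}=\norm{D_{J}F(f)}_{\MLO[1][{\H[J];E}]}$, which is exactly the claimed norm bound (and is the single-variable analog of Lemma \ref{lem:restriction_of_multilinear_operator}). Combined with the expansion above, this shows that $F$ is Frechet differentiable at $f$ as a map $\H[I]\to E$ with a derivative that coincides with $T$. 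By uniqueness of the Frechet derivative in the $\H[I]$-topology, $D_{I}F(f)=T$, completing the proof.

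There is no real obstacle here: the argument is a routine consequence of the continuous inclusion $\H[I]\hookrightarrow\H[J]$ with constant $1$, and the only minor point to state clearly is the uniqueness of the derivative, which justifies identifying the restriction of $D_{J}F(f)$ with $D_{I}F(f)$. Continuity of $F$ at $f$ in the $\H[I]$-topology, needed for differentiability in Cartan's sense, follows from continuity in the weaker $\H[J]$-topology.
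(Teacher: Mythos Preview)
Your proof is correct and follows essentially the same approach as the paper: both use the continuous embedding $\H[I]\hookrightarrow\H[J]$ with $\norm{\cdot}_{\H[J]}\le\norm{\cdot}_{\H[I]}$ to transfer the $o$-estimate from the $\H[J]$-norm to the $\H[I]$-norm and to obtain the operator norm bound via Lemma~\ref{lem:restriction_of_multilinear_operator}. Your explicit mention of uniqueness of the Frechet derivative is a nice clarification but not a different idea.
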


\begin{proof}
The continuity of $F$ at $f$ in the space $\H[I]$ trivially follows
from the continuous embedding of $\H[I]$ into $\H[J]$. According
to Lemma \ref{lem:restriction_of_multilinear_operator}, $D_{J}F(f)\in\MLO[k][{\H[I];E}]$
and 
\[
\norm{D_{J}F(f)}_{\cL_{k}(H_{I};E)}\le\norm{D_{J}F(f)}_{\cL_{k}(H_{J};E)}.
\]
 We have only to show that $D_{I}F(f)[h]=D_{J}F(f)[h]$, $h\in\H[I]$.
Using the differentiability of $F$ in $\H[J]$ at $f$ and the fact
that $\normH[J][\cdot]\le\normH[I][\cdot]$, we get 
\begin{align*}
F(f+h) & =F(f)+D_{J}(f)[h]+o(\normH[J][h])\\
 & =F(f)+D_{J}(f)[h]+o(\normH[I][h]).
\end{align*}
This completes the proof of the lemma.
\end{proof}
The following corollary is the direct consequence of Lemma \ref{lem:derivative_in_different_sobolev_spaces}.
\begin{cor}
For each $J,I\in\R$, $J<I,$ and $m\in\N_{0}$ the space $\Cf^{m}(\H[J])$
is a subset of $\Cf^{m}(\H[I])$. Moreover, for each $F\in\Cf^{m}(\H[J])$,
$k\in[m]$, $f\in\H[I]$ the derivatives $D_{J}^{k}F(f)$ and $D_{I}^{k}F(f)$
coincide on $\H[I]^{k}$ and 
\[
\norm{D_{I}^{k}F(f)}_{\cL_{k}(H_{I})}\le\norm{D_{J}^{k}F(f)}_{\cL_{k}(H_{J})}.
\]
\end{cor}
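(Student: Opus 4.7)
The plan is to induct on $m\in\N_{0}$, using Lemma \ref{lem:derivative_in_different_sobolev_spaces} at each step to pass from differentiability in $\H[J]$-topology to differentiability in $\H[I]$-topology, and Lemma \ref{lem:restriction_of_multilinear_operator} to control the norms of the resulting multilinear derivatives.

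The base case $m=0$ is immediate: since $\|\cdot\|_{H_{J}}\le\|\cdot\|_{H_{I}}$, the inclusion $\H[I]\hookrightarrow\H[J]$ is continuous, so any $F\in\Cf^{0}(\H[J])$, restricted to $\H[I]$, is continuous in the stronger $\H[I]$-topology. For the inductive step, assume the statement holds for $m-1$ and take $F\in\Cf^{m}(\H[J])$. By definition, the map $D_{J}^{k-1}F:\H[J]\to\MLO[k-1][{\H[J]}]$ is Fr\'echet differentiable at every $f\in\H[J]\supset\H[I]$. Applying Lemma \ref{lem:derivative_in_different_sobolev_spaces} with Banach space $E=\MLO[k-1][{\H[J]}]$ to the function $D_{J}^{k-1}F$, we conclude that this map is also differentiable at $f\in\H[I]$ in the topology of $\H[I]$, its derivative is the restriction of $D_{J}^{k}F(f)$ to $\H[I]$, and the norm inequality
\[
\bnorm{D_{J}^{k}F(f)\big|_{\H[I]}}_{\MLO[1][\H[I];\MLO[k-1][\H[J]]]}\le\bnorm{D_{J}^{k}F(f)}_{\MLO[1][\H[J];\MLO[k-1][\H[J]]]}
\]
holds. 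Composing with the continuous restriction $\MLO[k-1][{\H[J]}]\hookrightarrow\MLO[k-1][{\H[I]}]$ furnished by Lemma \ref{lem:restriction_of_multilinear_operator} and invoking the canonical isometric identification $\MLO[k][{\H[I]}]\cong\MLO[1][\H[I];\MLO[k-1][\H[I]]]$, one identifies the $k$-th Fr\'echet derivative of $F$ in $\H[I]$-topology at $f$ with the restriction of $D_{J}^{k}F(f)$ to $\H[I]^{k}$; the asserted norm bound follows by a second application of Lemma \ref{lem:restriction_of_multilinear_operator}.

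Continuity of $D_{I}^{k}F:\H[I]\to\MLO[k][{\H[I]}]$ follows by writing it as the composition of the continuous inclusion $\iota:\H[I]\hookrightarrow\H[J]$, the continuous map $D_{J}^{k}F:\H[J]\to\MLO[k][{\H[J]}]$, and the continuous restriction $\MLO[k][{\H[J]}]\to\MLO[k][{\H[I]}]$. The main (and essentially only) conceptual point is the iterated identification of higher derivatives as multilinear operators on $\H[I]$ rather than on $\H[J]$; this is purely bookkeeping via the canonical isomorphism above and does not require any new estimates beyond those already supplied by Lemmas \ref{lem:restriction_of_multilinear_operator} and \ref{lem:derivative_in_different_sobolev_spaces}, so no genuinely hard step arises.
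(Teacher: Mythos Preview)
Your proposal is correct and is essentially the natural unpacking of the paper's one-line justification (``direct consequence of Lemma \ref{lem:derivative_in_different_sobolev_spaces}''): you induct on $m$, use Lemma \ref{lem:derivative_in_different_sobolev_spaces} to transfer differentiability to the $\H[I]$-topology, and use Lemma \ref{lem:restriction_of_multilinear_operator} for the norm bounds. The only point worth making explicit is that the inductive hypothesis is what identifies $D_{I}^{k-1}F$ with $r\circ D_{J}^{k-1}F\circ\iota$ (where $r$ is restriction and $\iota$ the inclusion), so that differentiating the latter indeed yields $D_{I}^{k}F$; you implicitly use this but do not state it.
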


Our further goal will be to investigate the differentiability of $F\circ\ex_{n}:\Ln\to\R$
for $F\in\Cf^{m}(\H[J])$. Using the fact that $\ex_{n}:\Ln\to\H[J]$
is a continuous linear operator, it is continuously differentiable
with
\[
D\ex_{n}(f)[h]=\ex_{n}h
\]
for each $f,h\in\Ln$.
\begin{lem}
\label{lem:differentiability_of_F_ex_n}Let $F\in\Cf^{1}(\H)$ for
some $m\in\N$ and $J\in\R$. Then the function $F\circ\ex_{n}$ belongs
to $\Cf^{1}(\Ln)$ and 
\[
D(F\circ\ex_{n})=\pr_{n}DF\circ\ex_{n}.
\]
\end{lem}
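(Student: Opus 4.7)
The plan is to invoke the chain rule together with the adjoint relation between $\ex_n$ and $\pr_n$, and then read off the identification of $D(F\circ\ex_n)(f)$ as an element of $\Ln$ via the Riesz representation associated to the discrete inner product.

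First I would recall that $\ex_n : \Ln \to \H[J]$ is a bounded linear operator (Lemma \ref{lem:continuity_of_pr_and_ex}), hence continuously Fr\'echet differentiable with $D\ex_n(f)[h] = \ex_n h$ for every $f,h \in \Ln$. Since $F \in \Cf^1(\H[J])$ by hypothesis, the composition theorem for Fr\'echet derivatives (e.g.\ \citep[Theorem 2.2.1]{Cartan:1971}) applies to $F\circ\ex_n : \Ln \to \R$ and yields $F\circ\ex_n \in \Cf^1(\Ln)$ together with the formula
\[
D(F\circ\ex_n)(f)[h] = DF(\ex_n f)\bigl[\ex_n h\bigr], \qquad f,h \in \Ln.
\]

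Next I would rewrite the right hand side using the fact that, by the convention in Section \ref{subsec:Preliminaries}, $DF(\ex_n f)$ is identified with an element of $\H[-J]$ through the duality pairing $\inn{\cdot,\cdot}$. Thus $DF(\ex_n f)[\ex_n h] = \inn{DF(\ex_n f), \ex_n h}$, and the adjoint relation (\ref{eq:connection_betwwen_ex_and_pr}) gives
\[
\inn{DF(\ex_n f), \ex_n h} = \binn{\pr_n DF(\ex_n f), h}_n.
\]
Identifying the derivative of $F\circ\ex_n$ at $f$ with the unique element of $\Ln$ that represents it with respect to $\inn{\cdot,\cdot}_n$, this shows $D(F\circ\ex_n)(f) = \pr_n DF(\ex_n f)$, i.e.\ $D(F\circ\ex_n) = \pr_n DF \circ \ex_n$ as claimed.

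Finally I would check continuity of this derivative, which is needed for $F\circ\ex_n \in \Cf^1(\Ln)$: continuity follows from the continuity of $f \mapsto \ex_n f$ from $\Ln$ to $\H[J]$ (Lemma \ref{lem:continuity_of_pr_and_ex}), the continuity of $DF : \H[J] \to \H[-J]$ (by assumption $F \in \Cf^1(\H[J])$), and the continuity of $\pr_n : \H[-J] \to \Ln$ (again Lemma \ref{lem:continuity_of_pr_and_ex}). No step is genuinely hard here; the only mild subtlety worth making explicit is the reinterpretation of the output of $DF$ on $\ex_n h$ as the $\Ln$-inner product $\inn{\pr_n DF(\ex_n f), h}_n$, which is precisely what the adjoint identity \eqref{eq:connection_betwwen_ex_and_pr} is designed to encode.
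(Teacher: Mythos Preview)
Your proof is correct and follows essentially the same route as the paper: apply the chain rule (via \citep[Theorem 2.2.1]{Cartan:1971}) to the composition of $F$ with the bounded linear map $\ex_n$, then use the adjoint identity \eqref{eq:connection_betwwen_ex_and_pr} to rewrite $DF(\ex_n f)[\ex_n h]=\inn{DF(\ex_n f),\ex_n h}=\inn{\pr_n DF(\ex_n f),h}_n$. Your added remark on the continuity of $f\mapsto \pr_n DF(\ex_n f)$ is a small elaboration the paper leaves implicit, but the argument is otherwise the same.
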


\begin{proof}
The differentiability of $F\circ\ex_{n}$ follows from \citep[Theorem 2.2.1]{Cartan:1971}
and the differentiability of $F:\H[J]\to\R$ and $\ex_{n}:\Ln\to\H[J]$.
We will only compute the derivative of $D(F\circ\ex_{n})$. Taking
$f,h\in\Ln$ and using the chain rule and (\ref{eq:connection_betwwen_ex_and_pr}),
we compute
\begin{align*}
D(F\circ\ex_{n})(f)[h] & =DF(\ex_{n}f)\left[D\ex_{n}(f)[h]\right]=DF(\ex_{n}f)[\ex_{n}h]\\
 & =\inn{DF(\ex_{n}f),\ex_{n}h}=\inn{\pr_{n}(DF)(\ex_{n}f),h}_{n}.
\end{align*}
This completes the proof of the statement.
\end{proof}

\section{Some additional facts and proofs\protect\label{subsec:Some-additional-facts}}

We recall that $\Her_{n}$ denotes the Hilbert space of symmetric
matrices $A=(A_{k,l})_{k,l\in\Znd}$ with real-valued entries equipped
with the inner product 
\[
A:B=\sum_{k,l\in\Znd}A_{k,l}B_{k,l}.
\]
An open subset of positively defined matrices from $\Her_{n}$ is
denoted by $\Her_{n}^{+}.$
\begin{lem}
\label{lem:integration-by-parts_for_gauss}Let $A=(A_{k,l})_{k,l\in\Znd}\in\Her_{n}^{+}$
and $B\in\Her_{n}$. Then for a standard Gaussian vector $\zeta$
in $\R^{\Znd}$ and $f\in\Cf_{l}^{2}(\R^{\Znd})$ the integration-by-parts
formula 
\[
\E\left[Df(A\zeta)\cdot(B\zeta)\right]=\E\left[D^{2}f(A\zeta):\left(BA\right)\right]
\]
holds.
\end{lem}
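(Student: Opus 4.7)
The plan is to reduce the identity to the classical one-dimensional Stein/Gaussian integration-by-parts formula
\[
\E[\zeta_l\, g(\zeta)] = \E[\partial_l g(\zeta)], \quad \zeta \sim \mathcal{N}(0, I_{\Znd}),
\]
applied coordinate by coordinate. First I would expand the left-hand side in the standard basis of $\R^{\Znd}$:
\[
\E\left[Df(A\zeta)\cdot(B\zeta)\right] = \sum_{k,l \in \Znd} B_{k,l}\, \E\left[\partial_k f(A\zeta)\, \zeta_l\right].
\]

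Next I would apply the one-dimensional Gaussian integration-by-parts formula to each summand with $g(\zeta) := \partial_k f(A\zeta)$. Since $f \in \Cf_l^2(\R^{\Znd})$, the function $g$ and its partial derivatives have at most linear growth, so the standard Stein identity applies (this uses a routine justification via the density $e^{-|\zeta|^2/2}/(2\pi)^{|\Znd|/2}$ and dominated convergence, or a direct truncation argument). By the chain rule,
\[
\partial_l g(\zeta) = \sum_{m \in \Znd} \partial_m \partial_k f(A\zeta)\, A_{m,l} = \sum_{m \in \Znd} D^2 f(A\zeta)_{k,m}\, A_{l,m},
\]
where we used the symmetry $A_{m,l} = A_{l,m}$.

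Combining the two displays,
\[
\E\left[Df(A\zeta)\cdot(B\zeta)\right] = \sum_{k,l,m} B_{k,l}\, A_{l,m}\, \E\left[D^2 f(A\zeta)_{k,m}\right] = \sum_{k,m} (BA)_{k,m}\, \E\left[D^2 f(A\zeta)_{k,m}\right],
\]
which is exactly $\E\left[D^2 f(A\zeta) : (BA)\right]$ by the definition of the Frobenius inner product on $\Her_n$. The only mildly delicate step is the justification of Stein's identity under the linear growth assumption on $Df$ and $D^2 f$ implicit in $f \in \Cf_l^2$; this is standard and presents no obstacle, so the main conceptual content is simply the bookkeeping that the symmetry of $A$ produces $BA$ rather than $AB^T$ or $A^TB$ on the right-hand side. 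Note that positive definiteness of $A$ is not actually needed for this identity — only $A \in \Her_n$ matters for the computation — but the statement is stated for $A \in \Her_n^+$ presumably because it is invoked through the square root construction in the proof of Proposition \ref{prop:differentiability_of_semigroup_for_OU}.
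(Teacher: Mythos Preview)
Your proof is correct. Both your argument and the paper's reduce to the Gaussian integration-by-parts identity, but the bookkeeping is organized differently: the paper first substitutes $\eta := A\zeta$ (a Gaussian with covariance $A^2$) and writes $B\zeta = BA^{-1}\eta$, then applies the identity $\E[\eta_l\,\partial_k f(\eta)] = \sum_{\tilde k}(A^2)_{l,\tilde k}\,\E[\partial_{\tilde k}\partial_k f(\eta)]$ for the non-standard Gaussian $\eta$, so that $BA^{-1}\cdot A^2 = BA$ appears at the end. You instead keep the standard Gaussian $\zeta$ and apply the simplest Stein identity $\E[\zeta_l g(\zeta)] = \E[\partial_l g(\zeta)]$, letting the chain rule produce the factor of $A$. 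Your route is marginally more elementary in that it never inverts $A$, so as you observe it works for any $A\in\Her_n$, whereas the paper's computation uses $A\in\Her_n^+$ through $R=BA^{-1}$.
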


\begin{proof}
Setting $R:=BA^{-1}$, $\eta:=A\zeta$, and using the integration-by-parts
formula, we get
\begin{align*}
\E\left[Df\left(A\zeta\right)\cdot\left(B\zeta\right)\right] & =\E\left[Df\left(A\zeta\right)\cdot\left(BA^{-1}A\zeta\right)\right]\\
 & =\sum_{k,l\in\Znd}\E\left[\frac{\partial f}{\partial x_{k}}(\eta)R_{k,l}\eta_{l}\right]\\
 & =\sum_{k,l,\tilde{k}\in\Znd}\E\left[\frac{\partial^{2}f}{\partial x_{k}\partial x_{\tilde{k}}}(\eta)R_{k,l}\cov(\eta_{l},\eta_{\tilde{k}})\right]\\
 & =\sum_{k,l,\tilde{k}\in Z_{n}^{d}}\E\left[\frac{\partial^{2}f}{\partial x_{k}\partial x_{\tilde{k}}}(\eta)R_{k,l}(A^{2})_{l,\tilde{k}}\right]\\
 & =\sum_{k,\tilde{k}\in Z_{n}^{d}}\E\left[\frac{\partial^{2}f}{\partial x_{k}\partial x_{\tilde{k}}}(\eta)(RA^{2})_{k,\tilde{k}}\right]\\
 & =\E\left[D^{2}f(A\zeta):(BA)\right].
\end{align*}
This completes the proof of the lemma. 
\end{proof}
For completeness of the presentation, we next provide the estimate
of the term 
\[
I:=\frac{1}{(2n+1)^{d}}\sum_{x\not=y\in\Tnd}\E\left[\left(\eta_{t}^{n}(x)-\rho_{t}^{n}(x)\right)\left(\eta_{t}^{n}(y)-\rho_{t}^{n}(y)\right)\right]\varphi_{n}(x)\varphi_{n}(y)
\]
in the proof of Lemma \citep[p. 32]{Ravishankar:1992}, following
the proof of the main theorem in \citep[p. 32]{Ravishankar:1992}. 
\begin{proof}[Proof. (Estimate of off-diagonal sum in the proof of Lemma \ref{lem:estimate_of_inner_product_of_eta_with_varphi}) ]
 Set for $x,y\in\Tnd$
\begin{align*}
V(t,x,y) & :=\frac{1}{(2n+1)^{d}}\E(\eta_{t}^{n}(x)-\rho_{t}^{n}(x))(\eta_{t}^{n}(y)-\rho_{t}^{n}(y))\\
 & =\E\eta_{t}^{n}(x)\eta_{t}^{n}(y)-\rho_{t}^{n}(x)\rho_{t}^{n}(y),
\end{align*}
where the letter equality follows from the definition of $\rho_{t}^{n}$.
Applying the generator $2\pi^{2}\Delta_{n}\otimes\cG_{n}^{EP}$ of
the Markov process $\left(\rho_{t}^{n},\eta_{t}^{n}\right)$, $t\ge0$,
to the function $G(x,y;\eta,\rho)=\eta(x)\eta(y)-\rho(x)\rho(y)$
for fixed $x,y\in\Tnd$, $x\not=y$, we get 
\[
\left(2\pi^{2}\Delta_{n}\otimes\cG_{n}^{EP}\right)G(x,y;\cdot,\cdot)(\eta,\rho)=\cG_{n}^{EP}\left[\eta(x)\eta(y)\right]-2\pi^{2}\Delta_{n}\left[\rho(x)\rho(y)\right].
\]
Now we separately rewrite
\begin{align*}
\cG_{n}^{EP}\left[\eta(x)\eta(y)\right] & =\frac{(2n+1)^{2}}{2}\sum_{j=1}^{d}\sum_{z\in\T_{n}^{d}}\left(\eta^{z\leftrightarrow z+e_{j}}(x)\eta^{z\leftrightarrow z+e_{j}}(y)-\eta(x)\eta(y)\right)\\
 & =\frac{(2n+1)^{2}}{2}\sum_{e\in E_{1}}\left(\eta(x+e)\eta(y)-\eta(x)\eta(y)\right)\left(1-\I_{\left\{ x+e=y\right\} }\right)\\
 & +\frac{(2n+1)^{2}}{2}\sum_{e\in E_{1}}\left(\eta(x)\eta(y-e)-\eta(x)\eta(y)\right)\left(1-\I_{\left\{ x+e=y\right\} }\right),
\end{align*}
where the summation is taken over $E_{1}:=\left\{ \pm e_{j},j\in[d]\right\} $.
We also note that 
\begin{align*}
2\pi^{2}\Delta_{n}\left[\rho(x)\rho(y)\right] & =\frac{(2n+1)^{2}}{2}\sum_{j=1}^{d}\left(\rho(x+e_{j})\rho(y)+\rho(x-e_{j})\rho(y)-2\rho(x)\rho(y)\right)\\
 & +\frac{(2n+1)^{2}}{2}\sum_{j=1}^{d}\left(\rho(x)\rho(y+e_{j})+\rho(x)\rho(y-e_{j})-2\rho(x)\rho(y)\right)\\
 & =\frac{(2n+1)^{2}}{2}\sum_{e\in E_{1}}\left(\rho(x+e)\rho(y)-\rho(x)\rho(y)\right)\\
 & +\frac{(2n+1)^{2}}{2}\sum_{e\in E_{1}}\left(\rho(x)\rho(y-e)-\rho(x)\rho(y)\right).
\end{align*}
Hence 

\begin{align*}
 & \left(2\pi^{2}\Delta_{n}\otimes\cG_{n}^{EP}\right)G(x,y;\eta,\rho)\\
 & \qquad=\frac{(2n+1)^{2}}{2}\sum_{e\in E_{1}}\left(\eta(x+e)\eta(y)-\eta(x)\eta(y)\right)\left(1-\I_{\left\{ x+e=y\right\} }\right)\\
 & \qquad+\frac{(2n+1)^{2}}{2}\sum_{e\in E_{1}}\left(\eta(x)\eta(y-e)-\eta(x)\eta(y)\right)\left(1-\I_{\left\{ x+e=y\right\} }\right)\\
 & \qquad-\frac{(2n+1)^{2}}{2}\sum_{e\in E_{1}}\left(\rho(x+e)\rho(y)-\rho(x)\rho(y)\right)\\
 & \qquad-\frac{(2n+1)^{2}}{2}\sum_{e\in E_{1}}\left(\rho(x)\rho(y-e)-\rho(x)\rho(y)\right)\\
 & \qquad=\frac{(2n+1)^{2}}{2}\sum_{e\in E_{1}}\left(G(x+e,y;\eta,\rho)-G(x,y;\eta,\rho)\right)\left(1-\I_{\left\{ x+e=y\right\} }\right)\\
 & \qquad+\frac{(2n+1)^{2}}{2}\sum_{e\in E_{1}}\left(G(x,y-e;\eta,\rho)-G(x,y;\eta,\rho)\right)\left(1-\I_{\left\{ x+e=y\right\} }\right)\\
 & \qquad-\frac{(2n+1)^{2}}{2}\sum_{e\in E_{1}}\left(\rho(x)-\rho(y)\right)^{2}\I_{\left\{ x+e=y\right\} }.
\end{align*}
This implies that the function
\[
V(t,x,y)=\E\left[\eta_{t}^{n}(x)\eta_{t}^{n}(y)\right]-\rho_{t}^{n}(x)\rho_{t}^{n}(y)
\]
is a solution to the following differential equation
\[
\frac{d}{dt}V(t,x,y)=\cL V(t,x,y)-\frac{(2n+1)^{2}}{2}\sum_{e\in E_{1}}\left(\rho_{t}^{n}(x)-\rho_{t}^{n}(y)\right)^{2}\I_{\left\{ x+e=y\right\} },
\]
where
\begin{align*}
\cL V(x,y) & =\frac{(2n+1)^{2}}{2}\sum_{e\in E_{1}}\left(V(x+e,y)-V(x,y)\right)\left(1-\I_{\left\{ x+e=y\right\} }\right)\\
 & +\frac{(2n+1)^{2}}{2}\sum_{e\in E_{1}}\left(V(x,y-e)-V(x,y)\right)\left(1-\I_{\left\{ x+e=y\right\} }\right).
\end{align*}

Note that $\cL$ is the generator of the process $\{X_{t},Y_{t}\}$,
$t\ge0$, on $\T_{n}^{d}\times\T_{n}^{d}$ that evolves as an exclusion
process with two particles. Let $P_{t}(x,y;u,v)$, $u,v\in\Tnd$,
be its semigroup. Then
\begin{align*}
V(t,x,y) & =P_{t}V(0,x,y)\\
 & -\frac{(2n+1)^{2}}{2}\int_{0}^{t}\sum_{u,v\in\T_{n}^{d}}\sum_{e\in E_{1}}P_{t-s}(x,y;u,v)\left(\rho_{s}^{n}(u)-\rho_{s}^{n}(v)\right)^{2}\I_{\left\{ u+e=v\right\} }ds\\
 & =P_{t}V(0,x,y)\\
 & -\frac{(2n+1)^{2}}{2}\int_{0}^{t}\sum_{e\in E_{1}}\sum_{u\in\T_{n}^{d}}P_{t-s}(x,y;u,u+e)\left(\rho_{s}^{n}(u)-\rho_{s}^{n}(u+e)\right)^{2}ds.
\end{align*}
Due to the independents of $\eta_{0}^{n}(x)$ and $\eta_{0}^{n}(y)$
we conclude $V(0,x,y)=0.$ Therefore, $P_{t}V(0,x,y)=0.$ Thus,
\[
V(t,x,y)=-\frac{(2n+1)^{2}}{2}\int_{0}^{t}\sum_{e\in E_{1}}\sum_{u\in\T_{n}^{d}}P_{t-s}(x,y;u,u+e)\left(\rho_{s}^{n}(u)-\rho_{s}^{n}(u+e)\right)^{2}ds.
\]

Consequently, we can estimate
\begin{align*}
|I| & \le\frac{(2n+1)^{2}}{2}\frac{1}{(2n+1)^{d}}\sum_{e\in E_{1}}\sum_{u\in\T_{n}^{d}}\sum_{x\not=y}|\varphi_{n}(x)||\varphi_{n}(y)|\\
 & \qquad\qquad\qquad\cdot\int_{0}^{t}P_{t-s}(x,y;u,u+e)\left(\rho_{s}^{n}(u)-\rho_{s}^{n}(u+e)\right)^{2}ds\\
 & \le\frac{2\pi^{2}}{(2n+1)^{d}}\sup_{s\in[0,t]}\max_{u\in\Tnd}\left|\nabla_{n}\rho_{s}^{n}(u)\right|^{2}\\
 & \qquad\qquad\qquad\cdot\sum_{e\in E_{1}}\sum_{u\in\T_{n}^{d}}\sum_{x\not=y}|\varphi_{n}(x)||\varphi_{n}(y)|\int_{0}^{t}P_{t-s}(x,y;u,u+e)ds.
\end{align*}
Using the duality of the SSEP, we get for each $e\in E_{1}$
\begin{align*}
 & \frac{1}{(2n+1)^{d}}\sum_{u\in\T_{n}^{d}}\sum_{x\not=y}|\varphi_{n}(x)||\varphi_{n}(y)|\int_{0}^{t}P_{t-s}(x,y;u,u+e)ds\\
 & \qquad=\frac{1}{(2n+1)^{d}}\sum_{u\in\T_{n}^{d}}\sum_{x\not=y}|\varphi_{n}(x)||\varphi_{n}(y)|\int_{0}^{t}P_{t-s}(u,u+e;x,y)ds\\
 & \qquad\le\frac{\norm{\varphi_{n}}_{n,\Cf}}{(2n+1)^{d}}\sum_{u\in\T_{n}^{d}}\sum_{x\in\Tnd}|\varphi_{n}(x)|\int_{0}^{t}P_{t-s}(u,u+e;x,\Tnd)ds.
\end{align*}
Since 
\[
P_{t-s}(u,u+e;x,\Tnd)=P_{t-s}^{0}(u;x)+P_{t-s}^{0}(u+e;x),
\]
where $P_{t}^{0}$ is the transition kernel for a single particle
executing a random walk in $\Tnd$, we get for $e\in E_{1}$
\begin{align*}
 & \frac{1}{(2n+1)^{d}}\sum_{u\in\T_{n}^{d}}\sum_{x\in\Tnd}|\varphi_{n}(x)|\int_{0}^{t}P_{t-s}(u,u+e;x,\Tnd)ds\\
 & \qquad=\frac{1}{(2n+1)^{d}}\sum_{u\in\T_{n}^{d}}\sum_{x\in\Tnd}|\varphi_{n}(x)|\int_{0}^{t}\left[P_{t-s}^{0}(u;x)+P_{t-s}^{0}(u+e;x)\right]ds\\
 & \qquad=\frac{1}{(2n+1)^{d}}\sum_{u\in\T_{n}^{d}}\sum_{x\in\Tnd}|\varphi_{n}(x)|\int_{0}^{t}\left[P_{t-s}^{0}(x;u)+P_{t-s}^{0}(x;u+e)\right]ds\\
 & \qquad=\frac{t}{(2n+1)^{d}}\sum_{x\in\Tnd}|\varphi_{n}(x)|\le t\norm{\varphi_{n}}_{n,\Cf}.
\end{align*}
Combining the estimates above, we conclude
\[
|I|\le2\pi^{2}\sup_{s\in[0,t]}\max_{u\in\Tnd}\left|\nabla_{n}\rho_{s}^{n}(u)\right|^{2}\norm{\pr_{n}\varphi}_{n,\Cf}^{2}t.
\]
This completes the estimate.
\end{proof}

\end{document}